\newtheoremstyle{theorem}{11pt}{11pt}{\slshape}{}{\mathbfseries}{.}{.5em}{}
\newtheoremstyle{note}{11pt}{11pt}{}{}{\mathbfseries}{.}{.5em}{}
\theoremstyle{plain}
  \newtheorem{theoreme}{Th\'{e}or\`{e}me}[section]
  \newtheorem{proposition}[theoreme]{Proposition}
  \newtheorem{lemme}[theoreme]{Lemme}
  \newtheorem{corollaire}[theoreme]{Corollaire}
  \newtheorem{question}[theoreme]{Question}
\theoremstyle{definition}
  \newtheorem{definition}[theoreme]{D\'{e}finition}
  \newtheorem*{notations}{Notations}
\theoremstyle{remark}
  \newtheorem{example}[theoreme]{Exemple}
  \newtheorem{examples}[theoreme]{Exemples}
  \newtheorem{remarque}[theoreme]{Remarque}
  \newtheorem{remarks}[theoreme]{Remarques}
\let\mathcal\mathcal
\let\mathbf\mathbf
\def\O{{\mathcal O}}
\def\zp{{\Z_p}}
\def\qp{{\Q_p}}
\def\p1{{\mathbf P}^1}
\def\qp{\mathbf{Q}_p}
\def\zp{\mathbf{Z}_p}
\def\z{\mathbf{Z}}
\def\cp{\mathbf{C}_p}
\def\q{\mathbf{Q}}
\def\con{\mathcal{C}^0}
\def\Spa{\mathrm{Spa}}
\def\epsilon{\varepsilon}
\begin{document}
\title[Banach-Colmez et faisceaux cohÈrents sur la courbe de Fargues-Fontaine]{Espaces de Banach-Colmez et faisceaux cohÈrents sur la courbe de Fargues-Fontaine}
\author{Arthur-C\'esar Le Bras}
\address{Ecole Normale SupÈrieure, 45 rue d'Ulm, 75005 Paris, France et Institut de Math\'ematiques de Jussieu, 4 place Jussieu, 75005 Paris, France}
\email{arthur-cesar.le-bras@imj-prg.fr \\ lebras@dma.ens.fr}

\begin{abstract} Nous donnons une nouvelle dÈfinition, plus simple mais Èquivalente, de la catÈgorie abÈlienne des espaces de Banach-Colmez (introduite par Colmez dans \cite{bc}), et nous expliquons la relation prÈcise de cette catÈgorie avec celle des faisceaux cohÈrents sur la courbe de Fargues-Fontaine. On passe d'une catÈgorie ‡ l'autre en modifiant la $t$-structure sur la catÈgorie dÈrivÈe. Chemin faisant, nous obtenons une description de la cohomologie pro-Ètale du disque ouvert et de l'espace affine d'intÈrÍt indÈpendant.
 \end{abstract}

\begin{altabstract} We give a new definition, simpler but equivalent, of the abelian category of Banach-Colmez spaces introduced by Colmez in \cite{bc}, and we explain the precise relationship with the category of coherent sheaves on the Fargues-Fontaine curve. One goes from one category to the other by changing the $t$-structure on the derived category. Along the way, we obtain a description of the pro-Ètale cohomology of the open disk and the affine space, of independent interest. 
\end{altabstract}
\setcounter{tocdepth}{3}

\maketitle

\stepcounter{tocdepth}
{\Small
\tableofcontents
}

\section{Introduction}

Les travaux rÈcents de Fargues (\cite{farguesconj}, \cite{farguescdc}) et Scholze (\cite{berkeley}) sur la gÈomÈtrisation de la correspondance de Langlands locale mËnent naturellement ‡ la rencontre des espaces de Banach-Colmez et ‡ l'Ètude de certaines questions les concernant. Ces objets avaient ÈtÈ introduits par Colmez, suite ‡ certaines constructions de Fontaine, sous le nom d'\textit{Espaces Vectoriels de dimension finie} (noter les lettres capitales) il y a quinze ans (\cite{bc}), avec pour objectif d'obtenir une nouvelle preuve de la conjecture \og faiblement admissible implique admissible \fg{} en thÈorie de Hodge $p$-adique. Rappelons briËvement de quoi il s'agissait.

Soit $C$ le complÈtÈ d'une clÙture algÈbrique de $\qp$. Colmez dÈfinit les espaces de Banach-Colmez comme foncteurs sur la catÈgorie des $C$-algËbres \textit{sympathiques}\footnote{Les algËbres sympathiques sont un certain type d'algËbres perfectoÔdes. Pour la dÈfinition prÈcise, voir la dÈfinition \ref{defsympa}.} ‡ valeurs dans les $\qp$-espaces de Banach. Deux exemples simples de tels foncteurs sont les suivants : d'une part, si $V$ est un $\qp$-espace vectoriel de dimension finie, le foncteur qui ‡ une algËbre sympathique $\Lambda$ associe $V$, notÈ encore $V$ et que l'on appellera un \textit{$\qp$-Espace Vectoriel de dimension finie} ; d'autre part, si $W$ est un $C$-espace vectoriel de dimension finie, le foncteur qui ‡ une algËbre sympathique $\Lambda$ associe $\Lambda \otimes_C W$, notÈ encore $W$ et que l'on appellera un \textit{$C$-Espace Vectoriel de dimension finie}. Un espace de Banach-Colmez est alors un foncteur ne diffÈrant d'un $C$-Espace Vectoriel de dimension finie que par des $\qp$-Espaces Vectoriels de dimension finie : par dÈfinition, tout espace de Banach-Colmez admet une prÈsentation comme quotient par un $\qp$-Espace Vectoriel de dimension finie $V'$ d'une extension d'un $C$-Espace Vectoriel de dimension finie $W$ par un $\qp$-Espace Vectoriel de dimension finie $V$. Cela permet d'attacher ‡ une telle prÈsentation d'un espace de Banach-Colmez deux entiers : sa \textit{dimension} $\dim_C W$ et sa \textit{hauteur} $\dim_{\qp} V - \dim_{\qp} V'$. Cette dÈfinition peut sembler un peu Ètrange, mais Colmez montre que la catÈgorie des espaces de Banach-Colmez est une catÈgorie abÈlienne, que le foncteur d'Èvaluation sur $C$ est exact et conservatif et que les fonctions dimension et hauteur ne dÈpendent pas de la prÈsentation et dÈfinissent deux fonctions additives sur cette catÈgorie. De faÁon remarquable et suprenante, car leur dÈfinition n'en fait pas mention, l'Ètude des espaces de Banach-Colmez fait naturellement apparaÓtre certains anneaux de Fontaine utilisÈs en thÈorie de Hodge $p$-adique.

Les propriÈtÈs de la catÈgorie des espaces de Banach-Colmez ont ensuite ÈtÈ explorÈes par Fontaine et Pl˚t (\cite{FontKa}, \cite{plut}). Elles font fortement penser aux propriÈtÈs bien connues de la catÈgorie des faisceaux cohÈrents sur une courbe. Ceci a amenÈ Fargues et Fontaine, en conjonction avec d'autres indices, ‡ deviner l'existence d'une courbe\footnote{A ce sujet, on pourra consulter avec profit la prÈface de \cite{FF}.}, qui porte aujourd'hui leur nom, dont l'Ètude devrait reflÈter les rÈsultats fondamentaux de la thÈorie de Hodge $p$-adique. Le dÈveloppement de la thÈorie a pris quelques annÈes et l'on dispose maintenant de plusieurs points de vue sur la courbe de Fargues-Fontaine : le point de vue \og algÈbrique \fg{}, qui Ètait la perspective initiale de Fargues et Fontaine ; le point de vue \og analytique \fg{}, qui est souvent le plus pratique ; et enfin le point de vue de la thÈorie des diamants de Scholze, qui est le plus abstrait mais a l'intÈrÍt de faire comprendre pourquoi la courbe de Fargues-Fontaine est un objet naturel et fondamental pour la thÈorie $p$-adique. La dÈfinition la plus rapide de la courbe de Fargues-Fontaine $X$ (pour le corps $\qp$) est sa dÈfinition adique. Notons $C^{\flat}$ le basculÈ du corps perfectoÔde $C$ et soit $p^{\flat} \in C^{\flat}$ tel que $(p^{\flat})^{\sharp} =p$. Soit 
\[ Y = \mathrm{Spa}(W(\O_{C^{\flat}}),W(\O_{C^{\flat}})) \backslash V(p [p^{\flat}]). \]
L'espace adique $Y$ est muni d'un opÈrateur $\varphi$ agissant proprement discontin˚ment ; on dÈfinit 
\[ X=Y/\varphi^{\z}. \]
Les sections globales des faisceaux cohÈrents sur la courbe de Fargues-Fontaine sont les $C$-points d'espaces de Banach-Colmez. Il Ètait donc naturel ‡ ce stade de se demander quelle relation prÈcise entretiennent la catÈgorie des faisceaux cohÈrents sur la courbe de Fargues-Fontaine et celle des espaces de Banach-Colmez. La solution n'est pas immÈdiate, car on se convainc facilement que ces deux catÈgories abÈliennes ne sont pas Èquivalentes. L'objectif de cet article est de rÈpondre ‡ cette question et d'explorer quelques problËmes liÈs.

\subsection{RÈsultats principaux et plan de l'article}
Nous commenÁons par redÈfinir la catÈgorie des espaces de Banach-Colmez dans la section \ref{sec1}. La dÈfinition que nous en donnons est plus Èconomique et plus naturelle que celle de Colmez, mais moins explicite. Notons $\mathrm{Perf}_C$ la catÈgorie des espaces perfectoÔdes sur $C$. Nous munissons $\mathrm{Perf}_C$ de la topologie pro-Ètale \footnote{On pourrait aussi bien utiliser la $v$-topologie de Scholze.}. Deux exemples simples de faisceaux sur ce site ‡ valeurs dans la catÈgorie des $\qp$-espaces vectoriels sont le faisceau $\mathbf{G}_a$, qui ‡ $S \in \mathrm{Perf}_C$ associe $\O_S(S)$, et le faisceau constant $\underline{\qp}$ associÈ ‡ $\qp$. 

\begin{definition} \label{maindefintro}
La catÈgorie $\mathcal{BC}$ des \textit{espaces de Banach-Colmez} est la plus petite sous-catÈgorie abÈlienne stable par extensions contenant les faisceaux $\underline{\qp}$ et $\mathbf{G}_a$, de la catÈgorie des faisceaux de $\qp$-espaces vectoriels sur $\mathrm{Perf}_{C,\mathrm{pro\acute{e}t}}$. 
\end{definition}

Cette dÈfinition donne en fait naissance ‡ une catÈgorie Èquivalente ‡ la catÈgorie originale de Colmez\footnote{Que les espaces de Banach-Colmez puissent Ítre dÈfinis comme faisceaux pro-Ètales avait d'ailleurs ÈtÈ pressenti par Colmez (\cite[p. 5]{bc}).}. Des exemples naturels d'espaces de Banach-Colmez, hors des exemples Èvidents, sont fournis par la thÈorie des groupes $p$-divisibles. Cela explique\footnote{Selon le point de vue...} l'apparition d'anneaux de Fontaine dans l'Ètude de $\mathcal{BC}$. Nous dÈcrivons briËvement comment ‡ la fin de la section \ref{sec1}. 
\\

Afin de relier la catÈgorie $\mathcal{BC}$ ‡ la courbe de Fargues-Fontaine, il nous faut considÈrer une $t$-structure diffÈrente de la $t$-structure standard sur la catÈgorie dÈrivÈe bornÈe $D(X)=D^b(\mathrm{Coh}_X)$ de la catÈgorie abÈlienne $\mathrm{Coh}_X$ des faisceaux cohÈrents sur $X$. La catÈgorie $\mathrm{Coh}_X$ est trËs bien comprise, gr‚ce ‡ \cite{FF} : on peut dÈfinir des fonctions rang et degrÈ, et la filtration de Harder-Narasimhan d'un fibrÈ vectoriel sur $X$ ; on dispose d'un thÈorËme de classification des fibrÈs sur $X$, qui rappelle le thÈorËme de Grothendieck pour les fibrÈs sur $\mathbf{P}^1$. Ces rÈsultats sont rappelÈs dans la section \ref{faisceauxsurlacourbe}. 

C'est prÈcisÈment l'existence d'un formalisme de Harder-Narasimhan sur $X$ qui va nous permettre de fabriquer une nouvelle catÈgorie abÈlienne reliÈe ‡ $\mathcal{BC}$. ConsidÈrons la sous-catÈgorie pleine suivante de $D(X)$ :
\[ \mathrm{Coh}_X^- = \{ \mathcal{F} \in D(X), H^i(\mathcal{F}) = 0 ~ \mathrm{pour} ~ i\neq -1,0, H^{-1}(\mathcal{F})<0, H^0(\mathcal{F}) \geq 0 \}, \]
la notation $\mathcal{G} <0$ (resp. $\geq 0$), pour $\mathcal{G} \in \mathrm{Coh}_X$, signifiant que tous les quotients successifs de la filtration de Harder-Narsimhan de $\mathcal{G}$ sont ‡ pentes strictement nÈgatives (resp. positives). La thÈorie gÈnÈrale des \textit{paires de torsion} et du \textit{tilting} montre que $\mathrm{Coh}_X^-$ est un coeur abÈlien de $D(X)$. 

Notre rÈsultat principal, dÈmontrÈ dans la section \ref{sectionfinale}, est alors le suivant. Bien que la courbe $X$ ne vive pas au-dessus de $\mathrm{Spa}(C)$, on peut dÈfinir un morphisme $\tau$ du site des espaces perfectoÔdes sur $X$, muni de la topologie pro-Ètale, vers $\mathrm{Perf}_{C,\mathrm{pro\acute{e}t}}$, qui induit un morphisme $\tau_*$ au niveau des topos correspondants. Dans l'ÈnoncÈ qui suit, on utilise implicitement l'Èquivalence donnÈe par le thÈorËme de puretÈ de Scholze entre $\mathrm{Perf}_{C,\mathrm{pro\acute{e}t}}$ et $\mathrm{Perf}_{C^{\flat},\mathrm{pro\acute{e}t}}$. 
\begin{theoreme} \label{mainintroduction}
Le foncteur cohomologie $R^0\tau_*$ induit une Èquivalence de catÈgories abÈliennes entre $\mathrm{Coh}_X^-$ et $\mathcal{BC}$. 
\end{theoreme}

\begin{remarque}
Fontaine \cite{mailfontaine} a obtenu indÈpendamment un analogue \og Galois-Èquivariant \fg{} du thÈorËme \ref{mainintroduction}, mais avec pour dÈfinition de l'analogue galoisien de $\mathcal{BC}$ celle de \cite{FontKa}, qui est plus proche de la dÈfinition originale de Colmez que de la dÈfinition \ref{maindefintro}.
\end{remarque}

L'exactitude du foncteur $R^0\tau_*$ en restriction ‡ $\mathrm{Coh}_X^-$ dÈcoule des propriÈtÈs de la cohomologie des faisceaux cohÈrents sur la version relative de la courbe de Fargues-Fontaine, qui permettent de donner une autre dÈfinition de $\mathrm{Coh}_X^-$ : voir la section \ref{perversitÈ}. Un certain nombre de corollaires du thÈorËme sont rassemblÈs dans la section \ref{sectionfinale} : en particulier, le fait que la catÈgorie $\mathcal{BC}$ ne dÈpend que de $C^{\flat}$, le fait que les espaces de Banach-Colmez sont des diamants et une caractÈrisation cohomologique des algËbres sympathiques.

Il est facile de voir que l'image par $R^0\tau_*$ de $\mathrm{Coh}_X^-$ tombe dans $\mathcal{BC}$. Le reste de la preuve est plus difficile. Le point clÈ est de rÈussir ‡ dÈcrire les groupes d'extensions entre les faisceaux $\underline{\qp}$ et $\mathbf{G}_a$ dans la catÈgorie des faisceaux de $\qp$-espaces vectoriels sur $\mathrm{Perf}_{C,\mathrm{pro\acute{e}t}}$, en petits degrÈs et de les comparer aux groupes d'extensions analogues dans $\mathrm{Coh}_X^-$, que l'on sait calculer plus facilement. Ces calculs sont effectuÈs dans la section \ref{extproÈtales} de ce texte.

\begin{remarque} Les calculs de certains groupes d'extensions dans la catÈgorie des faisceaux de $\qp$-espaces vectoriels sur $\mathrm{Perf}_{C,\mathrm{pro\acute{e}t}}$ entre les faisceaux $\qp$ et $\mathbf{G}_a$, en petit degrÈ sont rÈminiscents de ceux de Breen \cite{breenperf} en caractÈristique $p$, mÍme s'ils sont considÈrablement plus simples (la seule chose dont nous ayons besoin est la rÈsolution partielle explicite utilisÈe par Berthelot-Breen-Messing dans \cite{bbm}) et beaucoup moins forts. Dans le cas des extensions de $\mathbf{G}_a$ par lui-mÍme, nous montrons que 
\[ \mathrm{Hom}(\mathbf{G}_a,\mathbf{G}_a) = C ~ ; ~ \mathrm{Ext}^1(\mathbf{G}_a,\mathbf{G}_a)= C.\]
Il ne fait aucun doute que l'on devrait avoir $\mathrm{Ext}^i(\mathbf{G}_a,\mathbf{G}_a)=0$ en tout degrÈ $>1$. Dans \cite{breenperf}, Breen prouve que 
\[ \mathrm{Ext}^i(\mathbf{G}_a,\mathbf{G}_a) = 0 \]
pour tout $i>0$, o˘ cette fois-ci le groupe considÈrÈ est un groupe d'extensions dans la catÈgorie des faisceaux de $\mathbf{F}_p$-espaces vectoriels sur le site des schÈmas \textit{parfaits} sur une base parfaite de caractÈristique $p$, muni de la topologie Ètale (ou fppf)\footnote{L'asymÈtrie apparente (pour $i=1$) entre les deux situations n'en est pas vraiment une : si dans la dÈfinition des espaces de Banach-Colmez, on remplace le corps $\qp$ par un corps local $E$ de caractÈristique $p$, $\mathrm{Ext}^1(\mathbf{G}_a,\mathbf{G}_a)$ s'annule aussi dans la catÈgorie des faisceaux de $E$-espaces vectoriels sur $\mathrm{Perf}_{C,\mathrm{pro\acute{e}t}}$.}. 
\end{remarque}

L'usage de la rÈsolution partielle explicite de \cite{bbm} ramËne le calcul des groupes d'extensions ‡ des calculs de cohomologie pro-Ètale, que nous faisons donc au prÈalable dans la section \ref{calculcohom}. La connaissance de ces groupes en petit degrÈ est suffisante, mais nous avons choisi de mener le calcul en tout degrÈ. Plus prÈcisÈment, nous dÈmontrons le rÈsultat suivant, d'intÈrÍt indÈpendant de l'Ètude de la catÈgorie $\mathcal{BC}$.

\begin{theoreme}
Soit $n\geq 1$ et $i\geq 0$. Notons 
\[ \O(\mathbf{A}_C^n) \overset{d_0} \longrightarrow \Omega^1(\mathbf{A}_C^n) \overset{d_1} \longrightarrow \dots \overset{d_{n-1}} \longrightarrow \Omega^n(\mathbf{A}_C^n) \]
le complexe des sections globales du complexe de de Rham de $\mathbf{A}_C^n$. Alors, d'une part, pour tout $i\geq 0$,
\[ H^i(\mathbf{A}_C^n,\mathbf{G}_a) = \Omega^i(\mathbf{A}_C^n). \]
D'autre part, $H^0(\mathbf{A}_C^n,\qp)=\qp$ et pour tout $i > 0$, on a un isomorphisme :
\[ H^i(\mathbf{A}_C^n,\qp) =\mathrm{Ker}(d_{i}) = \mathrm{Im}(d_{i-1}) \subset \Omega^i(\mathbf{A}_C^n). \]
Tous les groupes de cohomologie considÈrÈs sont des groupes de cohomologie pro-Ètale.
\end{theoreme}

\begin{remarque}
Ce rÈsultat a ÈtÈ Ègalement obtenu par Colmez et Nizio\l{} \cite{cn2} ; voir la remarque \ref{cdnsynto}.
\end{remarque}

Dans le cas du faisceau $\qp$, le lecteur notera le contraste avec la cohomologie Ètale, qui est nulle en degrÈ strictement positif. La premiËre partie du thÈorËme se dÈduit facilement des rÈsultats de \cite{Shodge} ; la seconde est nettement plus dÈlicate. Pour $i=0,1$, qui sont les seuls degrÈs nÈcessaires pour l'application ‡ la dÈmonstration de l'Èquivalence entre $\mathrm{Coh}_X^-$ et $\mathcal{BC}$, on peut tout faire ‡ l'aide de la suite exacte de Kummer. En degrÈ quelconque, nous y parvenons ‡ l'aide de la version \og faisceautique \fg{} d'une variante de la suite exacte bien connue en thÈorie de Hodge $p$-adique :
\[ 0 \to \qp \to B_{\rm cris}^{\varphi=1} \to B_{\rm dR}/B_{\rm dR}^+ \to 0. \]
Les mÍmes mÈthodes s'appliquent au disque unitÈ ouvert en dimension quelconque. Elles devraient permettre plus gÈnÈralement d'Ètudier la cohomologie pro-Ètale des espaces rigides Stein lisses dÈfinis sur une extension finie de $\qp$\footnote{Ce point de vue est esquiss\'e \`a la fin de \cite{ACLB}.}. 
\\

Terminons cette introduction en soulignant l'analogie frappante entre la catÈgorie $\mathcal{BC}$ des espaces de Banach-Colmez et celle des groupes quasi-algÈbriques unipotents (ou proalgÈbriques unipotents) de Serre (\cite{Serre}). Non seulement les dÈfinitions de ces deux catÈgories ont un air de famille, mais ces objets font leur apparition dans des contextes similaires. Ils interviennent tous deux en thÈorie du corps de classes : voir respectivement \cite{Serre} et \cite{farguescdc}. Les groupes de cohomologie plate (fppf) de $\mu_p$ sur une variÈtÈ propre et lisse sur un corps parfait de caractÈristique $p$ sont les points de groupes algÈbriques unipotents (\cite{milne}), de mÍme que la cohomologie syntomique gÈomÈtrique donne naissance ‡ des espaces de Banach-Colmez (\cite{niziol}). Il conviendrait d'expliquer et d'explorer cette analogie.

\begin{notations} On note $C^{\flat}$ le basculÈ de $C$. Il s'agit d'un corps valuÈ complet algÈbriquement clos de caractÈristique $p$, dont l'anneau des entiers $\O_{C^{\flat}}$ est
\[ \O_{C^{\flat}} := \underset{x \mapsto x^p} \varprojlim ~ \O_C/p, \]
qui s'identifie aussi (comme monoÔde multiplicatif) ‡ $\varprojlim_{x \mapsto x^p} \O_C$.

On fixe $\epsilon \in \O_{C^{\flat}}$, avec $\epsilon^{(0)}=1$, $\epsilon^{(1)} \neq 1$, un systËme compatible de racines de l'unitÈ et on note 
\[ t = \sum_{n=1}^{\infty} (-1)^{n+1} \frac{([\epsilon]-1)^n}{n} \]
le \og $2i\pi$ de Fontaine \fg{}.
\end{notations}

\subsection{Remerciements}
Mes plus vifs remerciements vont ‡ Laurent Fargues qui m'a fait dÈcouvrir les espaces de Banach-Colmez et m'a posÈ la question qui est ‡ l'origine de ce texte. Sans ses nombreuses suggestions, ses corrections et ses encouragements, ce travail n'aurait probablement pas abouti. Je remercie Peter Scholze pour deux suggestions dÈcisives et une invitation ‡ l'UniversitÈ de Bonn lors de la rÈdaction de ce texte, ainsi que Jean-Marc Fontaine pour son intÈrÍt pour mon travail. Je tiens Ègalement ‡ remercier Gabriel Dospinescu pour des conversations utiles sur la cohomologie pro-Ètale, Matthew Morrow pour ses commentaires sur \cite{BMS}, et tous les deux pour leur patience ‡ l'Ècoute de mes approximations. Bien que je n'aie finalement pas eu ‡ faire usage des techniques de \cite{breen}, je suis reconnaissant ‡ Lawrence Breen pour ses explications ‡ leur sujet. Enfin, Quentin Guignard et Joaqu\'in Rodrigues Jacinto ont eu la gentillesse de me faire part de leurs commentaires sur une version prÈliminaire de ce texte ; je les en remercie.

\section{La catÈgorie des espaces de Banach-Colmez} \label{sec1}

Dans cette section, nous dÈfinissons les espaces de Banach-Colmez comme faisceaux pro-Ètales sur la catÈgorie des espaces perfectoÔdes sur $\mathrm{Spa}(C)$ (dÈfinition \ref{defbc}). Nous expliquons ensuite pourquoi les \textit{revÍtements universels} de groupes $p$-divisibles sur $\O_C$ donnent naissance ‡ des espaces de Banach-Colmez, ce qui permet d'en fabriquer des exemples explicites (corollaire \ref{explicite}). 

\subsection{Espaces perfectoÔdes, topologie pro-Ètale et $v$-topologie}

Dans tout ce paragraphe, $K=C$ ou $K=C^{\flat}$. 
\begin{definition}
Une \textit{$K$-algËbre perfectoÔde} est une $K$-algËbre de Banach $R$ \textit{uniforme}, i.e. telle que l'ensemble $R^{\circ}$ des ÈlÈments de $R$ ‡ puissances bornÈes soit bornÈ, et telle que le Frobenius $\Phi : R^{\circ}/p \to R^{\circ}/p$ soit surjectif (en particulier, une $C^{\flat}$-algËbre perfectoÔde est simplement une $C^{\flat}$-algËbre de Banach uniforme et parfaite). 

Si $R$ est une $C$-algËbre perfectoÔde, le \textit{basculement} $R^{\flat}$ est dÈfini par
\[ R^{\flat,\circ} = \underset{\Phi} \varprojlim ~ R^{\circ}/p \quad ; \quad R^{\flat} = R^{\flat,\circ} \otimes_{\O_{C^{\flat}}} C^{\flat}. \]
C'est une $C^{\flat}$-algËbre perfectoÔde, et $(R^{\flat})^{\circ}=R^{\flat,\circ}$. On dispose d'une application continue et multiplicative $R^{\flat}=\varprojlim_{x \mapsto x^p} R \to R$ de projection sur la premiËre coordonnÈe, notÈe $f \mapsto f^{\sharp}$.  
\end{definition}

\begin{definition}
Une \textit{$K$-algËbre affinoÔde perfectoÔde} est un couple $(R,R^+)$, avec $R$ une $K$-algËbre perfectoÔde et $R^+\subset R^{\circ}$ un sous-anneau ouvert intÈgralement clos.
\end{definition}

\begin{theoreme}
Soit $(R,R^+)$ une $K$-algËbre perfectoÔde et $X=\mathrm{Spa}(R,R^+)$. Alors $\O_X$ est un faisceau et pour tout ouvert rationnel $U \subset X$, $\O_X(U)$ est encore une $K$-algËbre perfectoÔde. 

Si $(R^{\flat},R^{\flat +})$ est la basculÈe de $(R,R^+)$, l'application qui ‡ $x\in X$ associe $x^{\flat} \in X^{\flat} = \mathrm{Spa}(R^{\flat},R^{\flat +})$, dÈfini par $|f(x^{\flat})|=|f^{\sharp}(x)|$\footnote{Rappelons que $|f(x)|$ est la notation consacrÈe pour $x(f)$.}, si $f \in R$, induit un homÈmorphisme entre $X$ et $X^{\flat}$ qui identifie les ouverts rationnels. 
\end{theoreme}

\begin{definition}
Un \textit{espace perfectoÔde} sur $K$ est un espace adique sur $K$ recouvert par des ouverts isomorphes ‡ $\mathrm{Spa}(R,R^+)$, pour certaines $K$-algËbres perfectoÔdes $(R,R^+)$. On notera $\mathrm{Perf}_K$ la catÈgorie des espaces perfectoÔdes sur $K$.
\end{definition}

Introduisons maintenant deux topologies de Grothendieck sur la catÈgorie $\mathrm{Perf}_K$, la topologie pro-Ètale et la $v$-topologie.

\begin{definition}
Soit $f : Y \to X$ un morphisme entre espaces adiques analytiques\footnote{Rappelons qu'un point d'un espace adique est dit \textit{non analytique} si la valuation correspondante a un noyau ouvert. Un espace adique est dit \textit{analytique} si aucun de ses points n'est non analytique. De faÁon Èquivalente, il peut Ítre recouvert par des ouverts $\mathrm{Spa}(R,R^+)$ avec $R$ un anneau de Huber-Tate. En particulier, un espace perfectoÔde est analytique.} sur $K$. Le morphisme $f$ est dit \textit{affinoÔde pro-Ètale} si $X=\mathrm{Spa}(R,R^+)$ et $Y=\mathrm{Spa}(S,S^+)$ sont affinoÔdes et si $Y=\varprojlim Y_i \to X$ s'Ècrit comme limite projective cofiltrante de morphismes Ètales $Y_i \to X$, avec $Y_i=\mathrm{Spa}(S_i,S_i^+)$ affinoÔde. Le morphisme $f : X \to Y$ est dit \textit{pro-Ètale} s'il est affinoÔde pro-Ètale localement sur $X$ et sur $Y$.
\end{definition}

\begin{definition}
i) Le \textit{gros site pro-Ètale de $K$}, notÈ $\mathrm{Perf}_{K,\mathrm{pro\acute{e}t}}$\footnote{Dans la suite, on aura ‡ considÈrer des groupes d'extensions entre faisceaux sur le site $\mathrm{Perf}_{C,\mathrm{pro\acute{e}t}}$. Pour Èviter les problËmes de thÈorie des ensembles, on se restreindra aux espaces perfectoÔdes $X$ sur $C$, avec $\mathrm{card}(|X|) < \kappa$, $\kappa$ Ètant un cardinal inaccessible (donc non dÈnombrable). Cela suffit ‡ garantir que tout objet du site est localement sympathique (voir plus loin). MÍme convention pour les groupes de cohomologie pro-Ètale et les groupes de $v$-cohomologie d'un espace adique analytique.}, est la topologie de Grothendieck sur $\mathrm{Perf}_K$ pour laquelle une famille de morphismes $\{f_i : S_i \to S, i\in I \}$ est un recouvrement si chaque $f_i$ est pro-Ètale et si pour tout ouvert quasi-compact $U$ de $S$, il existe un ensemble fini d'indices $J\subset I$ et des ouverts quasi-compacts $U_i \subset S_i$ pour chaque $i\in J$, tels que $U=\bigcup_{i\in J} f_i(U_i)$.

ii) Soit $Y$ un espace adique analytique sur $K$. Le \textit{petit site pro-Ètale de $Y$} est la topologie de Grothendieck sur la catÈgorie des $f : X \to Y$ pro-Ètales sur $Y$, $X \in \mathrm{Perf}_K$, avec les recouvrements dÈfinis de faÁon analogue ‡ ce qui prÈcËde.  

iii) Le \textit{$v$-site de $K$}, notÈ $\mathrm{Perf}_{K,v}$, est la topologie de Grothendieck sur $\mathrm{Perf}_{K}$ pour laquelle une famille de morphismes $\{f_i : S_i \to S, i\in I \}$ est un recouvrement si pour tout ouvert quasi-compact $U$ de $S$, il existe un ensemble fini d'indices $J\subset I$ et des ouverts quasi-compacts $U_i \subset S_i$ pour chaque $i\in J$, tels que $U=\bigcup_{i\in J} f_i(U_i)$.
\end{definition}

\begin{theoreme}\label{eqscholze}
Le foncteur basculement induit des Èquivalences entre sites $\mathrm{Perf}_{C,\mathrm{pro\acute{e}t}} \simeq \mathrm{Perf}_{C^{\flat},\mathrm{pro\acute{e}t}}$ et $\mathrm{Perf}_{C,v} \simeq \mathrm{Perf}_{C^{\flat},v}$.
\end{theoreme}

\subsection{La catÈgorie $\mathcal{BC}$}
Donnons deux exemples simples de faisceaux pour la $v$-topologie (et donc pour la topologie pro-Ètale).

\begin{theoreme}
Le prÈfaisceau $\mathbf{G}_a$ qui ‡ $S \in \mathrm{Perf}_C$ associe $\O_S(S)$ est un faisceau pour la $v$-topologie (et donc aussi pour la topologie pro-Ètale).
\end{theoreme}
La preuve de ce rÈsultat est difficile (\cite[Th. 8.7]{S17}). Le faisceau $\mathbf{G}_a$ est reprÈsentÈ par $\mathbf{A}_C^1$, la droite affine adique sur $C$ (qui n'est pas un espace perfectoÔde).

\begin{proposition}
Soit $T$ un espace topologique. Le prÈfaisceau $\underline{T}$ qui envoie $S \in \mathrm{Perf}_C$ sur $\con(|S|,T)$ est un faisceau pour la $v$-topologie (et donc aussi pour la topologie pro-Ètale). Si $T$ est totalement discontinu et $S$ quasi-compact quasi-sÈparÈ, $\underline{T}(S)=\con(\pi_0(S),T)$. 
\end{proposition}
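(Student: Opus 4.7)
Le plan est de traiter s\'epar\'ement l'axiome de faisceau pour la $v$-topologie, puis l'explicitation des sections sur un $S$ quasi-compact quasi-s\'epar\'e lorsque $T$ est totalement discontinu.

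L'entr\'ee g\'eom\'etrique cl\'e pour le premier point est la suivante : si $\{f_i : S_i \to S\}$ est un $v$-recouvrement dans $\mathrm{Perf}_C$, l'application $\bigsqcup_i |S_i| \to |S|$ est un quotient topologique surjectif, et plus pr\'ecis\'ement $|S|$ s'identifie au co\'egalisateur, dans la cat\'egorie des espaces topologiques, du diagramme $\bigsqcup_{i,j} |S_i \times_S S_j| \rightrightarrows \bigsqcup_i |S_i|$. Ce r\'esultat, bien connu en th\'eorie des diamants (cf. Scholze, \emph{\'Etale cohomology of diamonds}), constitue le coeur de l'argument : pour un recouvrement par un unique morphisme quasi-compact $T \to S$, il se ram\`ene \`a l'\'enonc\'e que $|T| \to |S|$ est une identification dont les fibres sont d\'ecrites par $|T \times_S T|$. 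Une fois ce fait admis, l'axiome de faisceau pour $\underline{T} = \con(|{-}|,T)$ est formel, puisque $\con(-,T) = \ho_{\mathrm{Top}}(-,T)$ transforme co\'egalisateurs en \'egalisateurs ; on obtient ainsi le diagramme \'egalisateur
\[ \con(|S|, T) \lra \prod_i \con(|S_i|, T) \rightrightarrows \prod_{i,j} \con(|S_i \times_S S_j|, T). \]

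Pour la seconde assertion, on suppose $S$ quasi-compact quasi-s\'epar\'e, de sorte que $|S|$ est un espace topologique spectral ; l'ensemble $\pi_0(|S|)$ de ses composantes connexes, muni de la topologie quotient, est alors classiquement un espace profini, et la projection $|S| \twoheadrightarrow \pi_0(|S|)$ est ferm\'ee. Toute application continue $g : |S| \to T$ \`a valeurs dans un espace totalement discontinu est constante sur chaque composante connexe, car l'image d'un connexe est connexe et les connexes de $T$ sont r\'eduits \`a un point ; elle se factorise donc ensemblistement par $\pi_0(|S|)$, et la propri\'et\'e universelle de la topologie quotient assure la continuit\'e de cette factorisation. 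La compos\'ee avec la projection $|S| \to \pi_0(|S|)$ fournit l'inverse, d'o\`u la bijection annonc\'ee $\underline{T}(S) = \con(\pi_0(|S|), T)$.

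L'obstacle principal est clairement l'\'enonc\'e de descente topologique invoqu\'e au premier point ; le reste rel\`eve de la topologie g\'en\'erale. Une subtilit\'e mineure tient \`a la r\'eduction de l'axiome de faisceau pour une famille de recouvrement quelconque $\{S_i \to S\}$ au cas d'un morphisme surjectif unique via le coproduit $\bigsqcup_i S_i$, ce qui n\'ecessite la petitesse essentielle de la famille --- assur\'ee par la convention de cardinalit\'e sur $\mathrm{Perf}_C$ rappel\'ee dans la d\'efinition du site.
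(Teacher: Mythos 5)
Votre preuve est correcte et repose sur le m\^eme point cl\'e que la d\'emonstration de l'article, \`a savoir que les $v$-recouvrements induisent des quotients topologiques (l'article renvoie \`a \cite[Lem. 4.2.12]{bs}, le fait essentiel \'etant qu'une surjection quasi-compacte quasi-s\'epar\'ee entre espaces perfecto\"{\i}des qcqs est g\'en\'eralisante, donc une application quotient), apr\`es quoi l'axiome de faisceau est formel. Votre reformulation en termes de co\'egalisateur --- qui utilise en outre la surjectivit\'e de $|S_i \times_S S_j| \to |S_i| \times_{|S|} |S_j|$ --- et votre v\'erification explicite du cas $T$ totalement discontinu via $\pi_0(S)$ ne font qu'expliciter ce que la preuve de l'article laisse implicite.
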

\begin{proof}
La preuve est la mÍme que celle de \cite[Lem. 4.2.12]{bs} : le point clÈ est qu'un morphisme quasi-compact quasi-sÈparÈ surjectif $f : S' \to S$ entre espaces perfectoÔdes quasi-compacts quasi-sÈparÈs est une application quotient au niveau des espaces topologiques sous-jacents (car gÈnÈralisante), i.e. si $U\subset S$, $U$ est ouvert dans $S$ si et seulement si $f^{-1}(U)$ l'est dans $S'$. 
\end{proof}

Si $T$ est supposÈ profini, en Ècrivant $T$ comme limite projective d'ensembles finis, on obtient que $\underline{T}$ est en fait reprÈsentÈ par l'espace perfectoÔde $\mathrm{Spa}(\con(T,C),\con(T,\mathcal{O}_C))$.

\begin{definition} 
On appellera \textit{$\qp$-Espace Vectoriel de dimension finie} un faisceau de la forme $\underline{V}$, o˘ $V$ est un $\qp$-espace vectoriel de dimension finie\footnote{Pour allÈger les notations, on notera le plus souvent simplement $V$ au lieu de $\underline{V}$.} et \textit{$C$-Espace Vectoriel de dimension finie} un faisceau de la forme $W \otimes_C \mathbf{G}_a$, o˘ $W$ est un $C$-espace vectoriel de dimension finie.
\end{definition}

Ce texte est consacrÈ ‡ l'Ètude de la catÈgorie abÈlienne suivante.
\begin{definition}\label{defbc}
La catÈgorie $\mathcal{BC}$ des \textit{espaces de Banach-Colmez} est la plus petite sous-catÈgorie abÈlienne stable par extensions contenant les faisceaux $\underline{\qp}$ et $\mathbf{G}_a$, de la catÈgorie des faisceaux de $\qp$-espaces vectoriels sur $\mathrm{Perf}_{C,\mathrm{pro\acute{e}t}}$. 
\end{definition}
En particulier, cette catÈgorie contient Èvidemment tous les $\qp$-Espaces Vectoriels de dimension finie et tous les $C$-Espaces Vectoriels de dimension finie. Dans la suite, on notera simplement $\qp$ au lieu de $\underline{\qp}$, pour allÈger les notations. 

\begin{remarks} \label{remcarp}
a) On obtiendrait une catÈgorie Èquivalente en remplaÁant $\mathrm{Perf}_{C,\mathrm{pro\acute{e}t}}$ par $\mathrm{Perf}_{C,v}$ (cf. la remarque \ref{vtopproet}). 

b) Cette dÈfinition est en fait Èquivalente ‡ la dÈfinition originale de Colmez, qui Ètait formulÈe diffÈremment, comme on le verra plus loin (corollaire \ref{deforiginale}).

c) On pourrait remplacer $\qp$ par un corps local $E$ de caractÈristique $p$, $C$ par $C^{\flat}$, et dÈfinir la catÈgorie des \textit{$E$-espaces de Banach-Colmez} comme plus petite sous-catÈgorie abÈlienne stable par extensions contenant les faisceaux $\underline{E}$ et $\mathbf{G}_a$, de la catÈgorie des faisceaux de $E$-espaces vectoriels sur $\mathrm{Perf}_{C^{\flat},\mathrm{pro\acute{e}t}}$ (noter que dÈsormais $\mathbf{G}_a$ est reprÈsentable par un objet de $\mathrm{Perf}_{C^{\flat}}$, ‡ savoir $\mathbf{A}_{C^{\flat}}^{1,\mathrm{perf}}$). 
\end{remarks}

\subsection{RevÍtements universels de groupes $p$-divisibles} \label{revunivpdiv}

Comment fabriquer des exemples intÈressants d'espaces de Banach-Colmez ? Une mÈthode possible, qui a l'avantage d'Ítre de nature gÈomÈtrique, est d'utiliser la thÈorie des groupes $p$-divisibles. Les rÈsultats de ce paragraphe sont dus ‡ Fargues-Fontaine (\cite{FF}) et Scholze-Weinstein (\cite{SW}) ; nous les rappelons pour la commoditÈ du lecteur. 
\\

Soit $G$ un groupe $p$-divisible sur $\O_C$. Son \textit{revÍtement universel} $\tilde{G}$ est le prÈfaisceau qui associe ‡ une $C$-algËbre perfectoÔde $R$ le $\qp$-espace vectoriel
\[ \tilde{G}(R) = \underset{\times p} \varprojlim ~ \underset{k} \varprojlim ~ \underset{n} \varinjlim ~ G[p^n](R^{\circ}/p^k). \]
Ce prÈfaisceau sera vu dans la suite comme un prÈfaisceau sur $\mathrm{Perf}_{C,\mathrm{pro\acute{e}t}}$, par restriction. 

\begin{example}
Si $G$ est un groupe $p$-divisible \textit{Ètale} sur $\O_C$, $G$ est isomorphe ‡ $T(G) \otimes \qp/\zp$, $T(G)$ Ètant le module de Tate de $G$. Dans ce cas, on a immÈdiatement $\tilde{G} = V(G)$ et $\tilde{G}$ est donc un $\qp$-Espace Vectoriel de dimension finie. 
\end{example}

\begin{proposition}
Le foncteur $G \mapsto \tilde{G}$ transforme les isogÈnies en isomorphismes. De plus, si $A$ est une $\O_C$-algËbre $p$-adique, $\tilde{G}(A) \simeq \tilde{G}(A/p)$.
\end{proposition}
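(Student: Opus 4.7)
For the first assertion, the key observation is that, by construction, $\tilde G$ is a functor on which multiplication by $p$ acts as an automorphism (it is defined as an inverse limit along $\times p$). Given an isogeny $f : G \to G'$ whose kernel has order $p^N$, there is a complementary isogeny $f' : G' \to G$ such that $f' \circ f = [p^N]_G$ and $f \circ f' = [p^N]_{G'}$. Applying the functor $G \mapsto \tilde G$, the compositions $\tilde{f'} \circ \tilde f$ and $\tilde f \circ \tilde{f'}$ become the automorphisms $[p^N]$, which forces $\tilde f$ itself to be an isomorphism.

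For the second assertion, the strategy is to reduce the computation of $\tilde G(A)$ to the behavior of $G$ on the truncations $A/p^k$. Since $A$ is $p$-adic and each $G[p^n]$ is finite flat over $\O_C$, one has $G[p^n](A) = \varprojlim_k G[p^n](A/p^k)$; interchanging the various nested limits and colimits yields
\[ \tilde G(A) = \varprojlim_k \tilde G(A/p^k). \]
It therefore suffices to show that, for every $k \geq 1$, the reduction map $\tilde G(A/p^{k+1}) \to \tilde G(A/p^k)$ is an isomorphism: the projective system is then stationary and equal to $\tilde G(A/p)$. Using the connected-étale exact sequence $0 \to G^0 \to G \to G^{\mathrm{et}} \to 0$, one handles the étale part at once (étale torsion lifts uniquely along nilpotent thickenings) and reduces to the case of $G^0$. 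Choosing a formal coordinate on $G^0$, the kernel $K_k$ of $G^0(A/p^{k+1}) \to G^0(A/p^k)$ is identified with $p^k A/p^{k+1}A \simeq A/p$, and the formal group law $[p](x) = px + O(x^2)$ shows that multiplication by $p$ kills $K_k$ as soon as $k \geq 1$ (indeed $px \in p^{k+1}A/p^{k+1}A = 0$ and $x^i \in p^{ki}A/p^{k+1}A = 0$ for $i \geq 2$). Hence $\varprojlim_{\times p} K_k = 0$, and an elementary telescoping argument (setting $x_n := p^{N-n} \tilde y_N$ for $N$ sufficiently large and $\tilde y_N$ any chosen lift) provides surjectivity of $\varprojlim_{\times p}$ applied to the reduction, yielding the desired isomorphism.

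\textbf{Main obstacle.} The most delicate point will be the rigorous manipulation of the nested limits and colimits (in particular, the interchange of $\varinjlim_n$, $\varprojlim_k$ and $\varprojlim_{\times p}$) together with the exactness of the connected-étale sequence at the level of $p^\infty$-torsion; these ultimately rest on the formal smoothness of $G^0$, which guarantees the existence of the required liftings along nilpotent thickenings — possibly at the cost of raising the order of $p$-torsion, which is harmless after passage to the direct limit over $n$.
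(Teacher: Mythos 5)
Your proposal is correct, and on the first assertion it coincides with the paper's one-line proof: multiplication by $p$ is by construction an automorphism of $\tilde{G}$, and introducing the complementary isogeny $f'$ with $f'\circ f=[p^N]$ and $f\circ f'=[p^N]$ is just the standard way of making that remark effective. For the second assertion the paper gives no argument at all --- it simply cites \cite[Prop. 4.5.2]{FF} --- so your route is ``different'' only in that it is self-contained: what you sketch is essentially the argument of loc. cit. (and of Scholze--Weinstein): commute $\varprojlim_{\times p}$ with $\varprojlim_k$ (harmless, limits commute with limits), show each $\tilde{G}(A/p^{k+1})\to\tilde{G}(A/p^k)$ is bijective by killing the kernel with $p$ and lifting via formal smoothness, then telescope. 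Two remarks on the point you yourself flag as the main obstacle. For injectivity you do not need the connexe-\'etale sequence at all: a torsion point of $G$ over $A/p^{k+1}$ reducing to the zero section modulo the square-zero ideal $p^kA/p^{k+1}$ automatically factors through the formal completion of $G$ along its identity section, hence through $\widehat{G^0}$, so your coordinate computation applies directly to the kernel of $G(A/p^{k+1})\to G(A/p^k)$ and shows it is killed by $p$ once $k\geq 1$. For surjectivity the d\'evissage must contend with the fact that the sequence $0\to G^0\to G\to G^{\mathrm{et}}\to 0$ need not split over $A/p^{k+1}$, so ``reducing to $G^0$'' is not literal; the clean fix, in the spirit of your appeal to formal smoothness, is to note that the fiber of $G\to G^{\mathrm{et}}$ over the unique lift of the \'etale part of $\bar{y}_n$ is an fppf torsor under $G^0$, hence (being fppf-locally isomorphic to the formally smooth $G^0$) itself formally smooth, and it has an $A/p^k$-point, so it has an $A/p^{k+1}$-point. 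As you observe, the lift may only be $p^{m+1}$-torsion when $\bar{y}_n$ is $p^m$-torsion, which the colimit over $n$ absorbs, and your telescoping ($p^m\tilde{y}_{n+m}$ is independent of $m\geq 1$ because the ambiguity is $p$-torsion) then yields the desired element of the universal cover. In short: the paper buys brevity with a citation; your version makes explicit the mechanism that the citation hides, namely that all obstructions to lifting along $A/p^{k+1}\to A/p^k$ are bounded $p$-torsion and are therefore annihilated by $\varprojlim_{\times p}$.
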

\begin{proof}
La premiËre affirmation est une consÈquence immÈdiate du fait que la multiplication par $p$ est un isomorphisme de $\tilde{G}$. Pour la seconde, voir par exemple \cite[Prop. 4.5.2]{FF}.
\end{proof}

Notons $H$ le groupe $p$-divisible sur $k=\bar{\mathbf{F}}_p$ obtenu en rÈduisant $G$ modulo l'idÈal maximal de $\O_C$. On sait (\cite[Th. 5.1.4]{SW}) que $G \otimes_{\O_C} \O_C/p$ est quasi-isogËne ‡ $H \otimes_k \O_C/p$. De la proposition prÈcÈdente on dÈduit alors facilement :
\begin{lemme}
Soit $G$ un groupe $p$-divisible sur $\O_C$ et $H$ sa fibre spÈciale. Notons $\tilde{H}^{\rm can}$ le relËvement canonique\footnote{Il s'agit du foncteur associant $\tilde{H}(R \otimes_{W(k)} k)$ ‡ une $W(k)$-algËbre adique $R$. Cf. \cite[\S 4.6.3]{FF}.} de $\tilde{H}$ ‡ $W(k)$. Alors $\tilde{G} \simeq \tilde{H}^{\rm can} \otimes_{W(k)} \O_C$. 
\end{lemme}

La suite exacte connexe-Ètale de $H$ :
\[ 0 \to H^{\circ} \to H \to H^{\rm et} \to 0 \]
est scindÈe, puisque $k$ est un corps parfait. On en dÈduit immÈdiatement que $\tilde{G}$ est la somme directe d'un $\qp$-Espace Vectoriel de dimension finie et du revÍtement universel d'un groupe $p$-divisible connexe en fibre spÈciale.

On peut donc supposer dÈsormais $G$ connexe en fibre spÈciale (i.e. $H$ connexe). Dans ce cas, le morphisme de systËmes projectifs
\[ \xymatrix{
     H \ar@{=}[d] & \ar[l]^p  H \ar[d]^V & \ar[l]^p   H \ar[d]^{V^2} & \ar[l] \dots \\
     H & \ar[l]_F H^{(p^{-1})} & \ar[l]_F H^{(p^{-2})} & \ar[l]  \dots
  } \]
induit un isomorphisme
\[ \tilde{H}(R^{\circ}/p) \simeq H(R^{\flat \circ}), \] 
puisque $H$ Ètant connexe, il existe une isogÈnie $u$ et un entier $n\geq 1$ tel que $F^n=p \circ u$. Il existe un entier $d$ tel que $H$ est (non canoniquement) isomorphe ‡ $\mathrm{Spf}(k[[T_1,\dots,T_d]])$, et donc $H(R^{\flat \circ}) \simeq (R^{\flat \circ\circ})^d$. On en dÈduit que $\tilde{G}$ est reprÈsentable par l'espace perfectoÔde qui est l'espace adique fibre gÈnÈrique sur $C$ du formel $\mathrm{Spf}(\O_C[[T_1^{1/p^{\infty}},\dots,T_d^{1/p^{\infty}}]])$. Finalement, on a montrÈ dans tous les cas :

\begin{proposition}
Si $G$ est un groupe $p$-divisible sur $\O_C$, $\tilde{G}$ est reprÈsentÈ par un espace perfectoÔde (c'est donc en particulier un faisceau). 
\end{proposition}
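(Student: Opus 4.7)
Le plan de la preuve suit d'assez pr�s les indications d�j� donn�es juste avant l'�nonc�. L'id�e directrice est de ramener la question � celle de la repr�sentabilit� du rev�tement universel d'un groupe $p$-divisible formel sur $k=\bar{\mathbf F}_p$, puis d'exhiber un mod�le explicite par une limite projective de disques.

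Premi�re �tape : r�duction au cas connexe en fibre sp�ciale. On utilise le lemme rappel� dans le texte, $\tilde G\simeq \tilde H^{\rm can}\otimes_{W(k)} \O_C$, o� $H$ est la fibre sp�ciale, puis la suite exacte connexe-�tale de $H$ qui scinde $H \simeq H^\circ \oplus H^{\rm et}$ (le corps $k$ �tant parfait). La partie �tale fournit un $\qp$-Espace Vectoriel de dimension finie $V(G)$, repr�sentable par un espace perfecto�de d'apr�s la discussion ant�rieure (produit fini de copies de $\mathrm{Spa}(\con(\qp,C),\con(\qp,\O_C))$ via l'�criture de $\qp$ comme colimite de r�seaux profinis). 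Comme la somme directe de deux faisceaux repr�sentables par des espaces perfecto�des l'est encore, il suffit de traiter le cas $H$ connexe.

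Deuxi�me �tape : identification de $\tilde G$ comme foncteur. Dans le cas connexe, on utilise l'isomorphisme de Fargues--Fontaine d�j� �voqu�,
\[ \tilde H(R^\circ/p) \simeq H(R^{\flat\circ}), \]
qui provient du fait que le Frobenius de $H$ admet une section apr�s multiplication par une puissance de $p$ (on a une isog�nie $u$ et un entier $n$ tels que $F^n=p\circ u$, ce qui permet d'identifier les deux syst�mes projectifs). Combin� avec $\tilde G(R)\simeq \tilde G(R^\circ/p)$ (formule de rel�vement pour les alg�bres $p$-adiques) et avec la formule de changement de base $\tilde G \simeq \tilde H^{\rm can}\otimes_{W(k)}\O_C$, on obtient $\tilde G(R)\simeq H(R^{\flat\circ})$ fonctoriellement en la $C$-alg�bre perfecto�de $R$.

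Troisi�me �tape : repr�sentabilit�. En choisissant un isomorphisme (non canonique) $H\simeq \mathrm{Spf}(k[[T_1,\dots,T_d]])$, on a $H(R^{\flat\circ})\simeq (R^{\flat\circ\circ})^d$. Or, pour $R$ une $C$-alg�bre perfecto�de, $R^{\flat\circ\circ}$ s'identifie fonctoriellement aux $S$-points (o� $S=\mathrm{Spa}(R,R^+)$) du disque ouvert perfecto�de sur $C$, c'est-�-dire � la fibre g�n�rique adique de $\mathrm{Spf}(\O_C[[T^{1/p^\infty}]])$. Par produit, $\tilde G$ est donc repr�sent� par
\[ \mathrm{Spf}(\O_C[[T_1^{1/p^\infty},\dots,T_d^{1/p^\infty}]])^{\rm ad}_\eta, \]
qui est un espace perfecto�de sur $C$. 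Ceci ach�ve la d�monstration ; comme tout foncteur repr�sentable par un espace adique est un faisceau pour la topologie analytique, et comme les topologies pro-�tale et $v$ sont plus fines, c'est en particulier un faisceau pour ces topologies.

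L'obstacle principal dans ce sch�ma est le passage $\tilde H(R^\circ/p)\simeq H(R^{\flat\circ})$ : c'est l� que l'hypoth�se de connexit� est cruciale (via l'existence de $n$ avec $F^n=p\circ u$), et c'est ce qui fait intervenir le basculement $R^\flat$ et explique l'apparition naturelle d'anneaux de Fontaine. Le reste est formel ou est un simple d�vissage � partir des r�sultats rappel�s au d�but de la section.
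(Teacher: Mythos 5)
Votre d\'emonstration suit essentiellement la m\^eme route que le texte (dont la \og preuve \fg{} est pr\'ecis\'ement la discussion qui pr\'ec\`ede l'\'enonc\'e) : scindage de la suite connexe-\'etale de la fibre sp\'eciale gr\^ace \`a la perfection de $k$, la partie \'etale donnant le $\qp$-Espace Vectoriel $V(G)$ ; puis, pour $H$ connexe, identification $\tilde G(R)\simeq \tilde H(R^{\circ}/p) \simeq H(R^{\flat\circ})\simeq (R^{\flat\circ\circ})^d$ via l'existence de $n$ avec $F^n=p\circ u$ ; enfin repr\'esentabilit\'e par la fibre g\'en\'erique adique de $\mathrm{Spf}(\O_C[[T_1^{1/p^{\infty}},\dots,T_d^{1/p^{\infty}}]])$. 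Sur ce point central, l'argument est correct et co\"incide avec celui du papier.

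Une r\'eserve cependant sur votre derni\`ere phrase : un faisceau pour une topologie donn\'ee n'est \emph{pas} automatiquement un faisceau pour une topologie plus fine --- l'implication va dans l'autre sens, une topologie plus fine ayant davantage de recouvrements et imposant donc une condition de descente plus forte. Le fait que les pr\'efaisceaux repr\'esentables sur $\mathrm{Perf}_C$ soient des faisceaux pour la topologie pro-\'etale (et m\^eme pour la $v$-topologie) est un th\'eor\`eme non trivial de descente, du m\^eme type que celui cit\'e dans le texte pour $\mathbf{G}_a$ (cf. \cite[Th. 8.7]{S17}) ; c'est lui qu'il faut invoquer pour justifier la parenth\`ese de l'\'enonc\'e. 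Accessoirement, $\qp$ n'\'etant pas profini, l'espace repr\'esentant $\underline{\qp}$ n'est pas un $\mathrm{Spa}(\con(\qp,C),\con(\qp,\O_C))$ mais la r\'eunion croissante des $\mathrm{Spa}(\con(p^{-n}\zp,C),\con(p^{-n}\zp,\O_C))$, comme votre remarque sur les r\'eseaux le sugg\`ere d'ailleurs correctement.
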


\begin{proposition} 
Si $G$ est un groupe $p$-divisible sur $\O_C$, le faisceau $\tilde{G}$ est un espace de Banach-Colmez. 
\end{proposition}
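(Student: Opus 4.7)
L'id\'ee est d'exhiber $\tilde G$ comme extension d'un $C$-Espace Vectoriel de dimension finie par un $\qp$-Espace Vectoriel de dimension finie, ce qui suffira puisque $\mathcal{BC}$ est, par d\'efinition, stable par extensions.

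Je me ram\`enerais d'abord au cas o\`u $G$ est connexe. D'apr\`es le lemme pr\'ec\'edent, $\tilde G \simeq \tilde H^{\mathrm{can}} \otimes_{W(k)} \O_C$, o\`u $H$ est la fibre sp\'eciale de $G$. Puisque $k$ est parfait, la suite exacte connexe-\'etale de $H$ est scind\'ee, et le rev\^etement universel du facteur \'etale est un $\qp$-Espace Vectoriel de dimension finie, comme not\'e dans l'exemple ci-dessus. La cat\'egorie $\mathcal{BC}$ \'etant stable par extensions (et donc par sommes directes), il suffit de traiter le cas o\`u $G$ est connexe.

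Dans ce cas, la strat\'egie est d'exploiter le logarithme formel $\log_G : G \to \mathrm{Lie}(G) \otimes_{\O_C} \mathbf{G}_a$. Ce morphisme n'est d\'efini que sur un voisinage de l'origine, mais apr\`es passage au rev\^etement universel on peut le prolonger en un morphisme de faisceaux pro-\'etales
\[ \log : \tilde G \to \mathrm{Lie}(G) \otimes_{\O_C} \mathbf{G}_a, \]
en posant $\log(x) := p^{-n} \log_G(p^n x)$ pour $n$ assez grand pour que $p^n x$ appartienne au domaine de convergence de $\log_G$ (la d\'efinition est ind\'ependante de $n$ puisque $\log_G$ est \'equivariant pour la multiplication par $p$). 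Comme $\mathrm{Lie}(G) \otimes_{\O_C} \mathbf{G}_a$ est, apr\`es choix d'une base, un $C$-Espace Vectoriel de dimension finie, et comme le noyau de $\log$ s'identifie par construction au module de Tate rationnel $V_p(G) := T_p(G) \otimes_{\zp} \qp$, un $\qp$-Espace Vectoriel de dimension finie de rang $\mathrm{ht}(G)-\dim(G)$, il ne reste qu'\`a \'etablir la surjectivit\'e de $\log$.

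Ce dernier point constitue selon moi le principal obstacle. Il s'agit de montrer que sur un espace perfecto\"ide $S$, toute section $f$ de $\mathrm{Lie}(G) \otimes_{\O_C} \mathbf{G}_a$ admet pro-\'etale localement un ant\'ec\'edent par $\log$. L'id\'ee est que $p^n f$ appartient au domaine de convergence de $\exp_G$ pour $n$ assez grand, ce qui fournit un rel\`evement $g \in G(S)$ de $p^n f$ par $\log_G$ ; il s'agit ensuite d'extraire des racines $p^n$-i\`emes successives de $g$ dans $G$, op\'eration permise pro-\'etale localement puisque la multiplication par $p$ sur $G$ est un morphisme fini qui devient \'etale apr\`es inversion de $p$. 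On obtient alors la suite exacte courte
\[ 0 \to V_p(G) \to \tilde G \to \mathrm{Lie}(G) \otimes_{\O_C} \mathbf{G}_a \to 0 \]
de faisceaux pro-\'etales, qui exhibe $\tilde G$ comme extension dans $\mathcal{BC}$.
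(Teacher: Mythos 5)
Your proposal is correct and follows essentially the same route as the paper: reduction to the case where the special fibre is connected via its (split) suite exacte connexe-\'etale, then realization of $\tilde{G}$ as an extension
\[ 0 \to V(G) \to \tilde{G} \to \mathrm{Lie}(G)\otimes \mathbf{G}_a \to 0 \]
of a $C$-Espace Vectoriel de dimension finie by a $\qp$-Espace Vectoriel de dimension finie, via the logarithm. The only substantive difference is how the exactness of this sequence is sourced. The paper simply quotes Fargues (\cite[Th. 1.2]{granalyt}): the sequence $0 \to T(G)\otimes \qp/\zp \to \mathcal{G}_{\eta}^{\rm ad} \to \mathrm{Lie}(G)\otimes\mathbf{G}_a \to 0$ is exact, $\log$ being a covering in de Jong's sense, and then applies $\varprojlim$ along multiplication by $p$ (exactness being preserved since the transition maps on the kernels are surjective). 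You instead reprove the surjectivity by hand --- exponentiate $p^n f$ for $n\gg 0$, then extract successive $p$-th roots along $[p]$, which is finite \'etale on the generic fibre --- and this is in substance the proof of the quoted theorem, so your route is self-contained where the paper's is a citation. Two small points to make your sketch airtight: the smallness of $p^n f$ requires $S$ affino\"ide (so that $f$ has bounded norm), which is harmless since surjectivity of a morphism of pro-\'etale sheaves may be checked locally; and for $G$ connected the logarithm in fact converges on all of $\mathcal{G}_{\eta}^{\rm ad}$, so the prolongation device $\log(x)=p^{-n}\log_G(p^n x)$ is not actually needed in the case to which you have reduced.

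One factual slip, harmless for the proof: the kernel $V(G)$ has $\qp$-dimension $\mathrm{ht}(G)$, not $\mathrm{ht}(G)-\dim(G)$. Over $\O_C$ the scheme-theoretic generic fibre of $G$ is \'etale with all torsion rational, so $\mathcal{G}_{\eta}^{\rm ad}[p^{\infty}] = T(G)\otimes\qp/\zp$ has full height even when the special fibre is connected; for instance $G=\mu_{p^{\infty}}$ has $\mathrm{ht}=\dim=1$ and the sequence specializes to the fundamental exact sequence $0 \to \qp \to (B_{\rm cris}^+)^{\varphi=p} \to \mathbf{G}_a \to 0$, whose kernel is one-dimensional, whereas $\mathrm{ht}-\dim=0$. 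Since the proposition only needs $V(G)$ to be of finite dimension, your conclusion stands as written.
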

\begin{proof}
Comme on l'a notÈ plus haut, on peut supposer $H$ connexe. Alors $\tilde{G}$ est obtenu en prenant la limite inverse sur la multiplication par $p$ du faisceau associÈ ‡ la fibre gÈnÈrique adique $\mathcal{G}_{\eta}^{\rm ad}$ du schÈma en groupes formel $\mathcal{G}$ obtenu en complÈtant formellement $G$ le long de sa section neutre. Cette fibre gÈnÈrique adique est ‡ distinguer de la fibre gÈnÈrique schÈmatique de $G$, qui est un schÈma en groupes Ètale sur $\mathrm{Spec}(C)$, donc constant de faisceau associÈ $T(G) \otimes \qp/\zp$ ainsi que de la fibre gÈnÈrique formelle $\mathcal{G} \hat{\otimes}_{\O_C} C$ de $\mathcal{G}$, qui est isomorphe ‡ $\mathrm{Lie}(G) \otimes \widehat{\mathbf{G}}_a$.

On a une suite exacte d'espaces adiques en groupes (\cite[Th. 1.2]{granalyt}) : 
\[ 0 \to T(G) \otimes \qp/\zp = \mathcal{G}_{\eta}^{\rm ad}[p^{\infty}] \to \mathcal{G}_{\eta}^{\rm ad} \longrightarrow \mathrm{Lie}(G) \otimes \mathbf{G}_a = (\mathcal{G} \hat{\otimes}_{\O_C} C)^{\rm ad} \to 0, \]
donnÈe par l'application logarithme $\log :  \mathcal{G}_{\eta}^{\rm ad} \to \mathrm{Lie}(G) \otimes \mathbf{G}_a$, qui est un revÍtement au sens de de Jong. On peut appliquer ‡ cette suite exacte le foncteur $\varprojlim_{\mathbf{N}} (\cdot)$, les morphismes de transition Ètant $(\cdot) \overset{\times p} \longrightarrow (\cdot)$. On obtient une suite exacte de faisceaux pro-Ètales :
\begin{eqnarray} 0 \to V(G) \to \tilde{G} \to \mathrm{Lie}(G) \otimes \mathbf{G}_a \to 0. \label{log} \end{eqnarray}
Cette suite exacte montre  $\tilde{G}$ est un espace de Banach-Colmez.
\end{proof}

Le lemme de Yoneda et le fait que les perfectoÔdes forment une base de la topologie pro-Ètale permettent de voir la catÈgorie dont les objets sont les revÍtements universels de groupes $p$-divisibles sur $\O_C$ (et les morphismes les morphismes d'espaces adiques en groupes) comme une sous-catÈgorie pleine de la catÈgorie des faisceaux abÈliens sur $\mathrm{Perf}_{C,\mathrm{pro\acute{e}t}}$.

\begin{definition}
On note $\mathcal{BC}^{\rm rep}$ la sous-catÈgorie pleine de $\mathcal{BC}$ formÈe des revÍtements universels de groupes $p$-divisibles.
\end{definition}

La thÈorie des groupes $p$-divisibles permet de dÈcrire explicitement les objets de $\mathcal{BC}^{\rm rep}$ : 

Soit $R$ une $C$-algËbre perfectoÔde. Le complÈtÈ $p$-adique $A_{\rm cris}(R)$ de l'enveloppe ‡ puissances divisÈes de la surjection $W(R^{\flat \circ}) \to R^{\circ}/p$ est l'Èpaississement ‡ puissances divisÈes $p$-adiquement complet universel de $R^{\circ}/p$. La construction de $A_{\rm cris}(R)$ est fonctorielle en $R$. On note $B_{\rm cris}^+(R)=A_{\rm cris}(R)[1/p]$ et simplement $B_{\rm cris}^+$ si $R=C$. Si $\Gamma$ est un groupe $p$-divisible sur $R^{\circ}/p$, on notera $\mathbf{M}(\Gamma)$ l'Èvaluation du cristal de DieudonnÈ de $\Gamma$ sur $A_{\rm cris}(R)$. Si $\Gamma$ provient par extension des scalaires de $k$ ‡ $R^{\circ}/p$ d'un groupe $p$-divisible $H$, $\mathbf{M}(\Gamma)$ est simplement $M(H) \otimes_L A_{\rm cris}(R)$, $M(H)$ Ètant le module de DieudonnÈ usuel de $H$. On a le thÈorËme suivant (\cite[Th. A]{SW}) :

\begin{theoreme}\label{scholzeweinstein}
Si $R$ est une $C$-algËbre perfectoÔde, le foncteur $\Gamma \mapsto \mathbf{M}(\Gamma)[1/p]$ de la catÈgorie des groupes $p$-divisibles sur $R^{\circ}/p$ ‡ isogÈnie prËs dans la catÈgorie des $B_{\rm cris}^+(R)$-modules finis projectifs avec Frobenius est pleinement fidËle.
\end{theoreme}

On en dÈduit la

\begin{proposition}
Soit $R$ une $C$-algËbre affinoÔde perfectoÔde. On a des isomorphismes fonctoriels en $R$
\[ \tilde{G}(R) = (B_{\rm cris}^+(R) \otimes_{\breve{\q}_p} M(H))^{\varphi=p}. \]
\end{proposition}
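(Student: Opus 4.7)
The plan is to deduce the formula from the Scholze--Weinstein fully faithfulness theorem (Theorem \ref{scholzeweinstein}) combined with the preceding lemma identifying $G$ with the canonical lift of $H$.

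First, I would invoke the preceding proposition to reduce to a computation modulo $p$: since $R^\circ$ is $p$-adic, $\tilde{G}(R) = \tilde{G}(R^\circ) = \tilde{G}(R^\circ/p)$. Unwinding the definition, $\tilde{G}(R^\circ/p) = \varprojlim_{\times p}\,\varinjlim_n G[p^n](R^\circ/p)$ can be reinterpreted as the group of morphisms $\qp/\zp \to G\otimes_{\O_C} R^\circ/p$ in the isogeny category of $p$-divisible groups over $R^\circ/p$: a compatible system $(x_n)$ with $p\,x_{n+1}=x_n$ and $p^n x_n = 0$ is, after inverting $p$, exactly the datum of such a morphism.

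Next, I would apply the fully faithful covariant functor $\Gamma \mapsto \mathbf{M}(\Gamma)[1/p]$ of Theorem \ref{scholzeweinstein}, which translates this Hom-group into the set of $\varphi$-equivariant $B_{\rm cris}^+(R)$-linear maps from $\mathbf{M}(\qp/\zp\otimes R^\circ/p)[1/p]$ to $\mathbf{M}(G\otimes R^\circ/p)[1/p]$. The two Dieudonn\'e modules can then be computed explicitly. By the preceding lemma $G \simeq \tilde{H}^{\rm can}\otimes_{W(k)} \O_C$, so $\mathbf{M}(G\otimes R^\circ/p)[1/p]$ is obtained from $M(H)$ by base change along $W(k) \to A_{\rm cris}(R)$ followed by inverting $p$, yielding $M(H)\otimes_{\breve{\q}_p} B_{\rm cris}^+(R)$ with its tensor product Frobenius. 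On the source side, $\qp/\zp$ is \'etale and comes by base change from $\mathbf{F}_p$, whose covariant Dieudonn\'e module (in the convention compatible with Theorem \ref{scholzeweinstein}) is $\zp$ with Frobenius $p$; evaluating on $A_{\rm cris}(R)$ and inverting $p$ gives $B_{\rm cris}^+(R)$ with Frobenius $p\,\varphi$, so that a $\varphi$-equivariant map out of it to $M(H)\otimes_{\breve{\q}_p} B_{\rm cris}^+(R)$ is exactly an element $x$ of the target satisfying $\varphi(x) = p\,x$.

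Combining these identifications yields $\tilde{G}(R) = (B_{\rm cris}^+(R)\otimes_{\breve{\q}_p} M(H))^{\varphi=p}$, and functoriality in $R$ is clear from the construction. The argument is essentially a direct translation through Scholze--Weinstein; the only step requiring care, and the main obstacle to making the proof rigorous, is the bookkeeping of the Frobenius normalization on the Dieudonn\'e module of $\qp/\zp$ in the chosen convention, which is precisely what forces the $\varphi = p$ eigenvalue condition on the right-hand side rather than $\varphi = 1$.
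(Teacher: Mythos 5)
Your proof is correct and takes essentially the same route as the paper's (which compresses it into three lines): reduce to $R^{\circ}/p$ via $\tilde{G}(R)=\tilde{H}(R^{\circ}/p)$, identify this with $\mathrm{Hom}_{R^{\circ}/p}(\qp/\zp,\cdot)[1/p]$ in the isogeny category, apply the full faithfulness of Theorem \ref{scholzeweinstein}, and conclude by the explicit description of the two Dieudonn\'e modules, the covariant module of $\qp/\zp$ carrying Frobenius $p\sigma$, which is exactly what produces the eigenvalue condition $\varphi=p$. One cosmetic slip: the lemma you cite identifies the universal covers, $\tilde{G}\simeq \tilde{H}^{\rm can}\otimes_{W(k)}\O_C$, not $G$ itself; the statement you actually need is the quasi-isogeny between $G\otimes_{\O_C}\O_C/p$ and $H\otimes_k \O_C/p$ recorded just before that lemma, which does give your identification $\mathbf{M}(G\otimes R^{\circ}/p)[1/p]\simeq M(H)\otimes_{\breve{\q}_p} B_{\rm cris}^+(R)$.
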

\begin{proof}
Il suffit de remarquer que 
\begin{align*}
\tilde{G}(R) =\tilde{H}(R^{\circ}/p) &= \mathrm{Hom}_{R^{\circ}/p}(\qp/\zp,H)[1/p] \\
&= \mathrm{Hom}_{B_{\rm cris}^+(R),\varphi}(\mathbf{M}(\qp/\zp)[1/p],\mathbf{M}(H)[1/p])
\end{align*}
(la derniËre ÈgalitÈ venant du thÈorËme \ref{scholzeweinstein}) et d'utiliser la description de $\mathbf{M}(H)$. 
\end{proof}

\begin{remarque}\label{utile}
On a observÈ plus haut que si $H$ est connexe
\[ \tilde{G}(R) = H(R^{\flat \circ}) \]
pour toute $C$-algËbre perfectoÔde $R$. Autrement dit, via l'Èquivalence de Scholze $\mathrm{Perf}_{C,\mathrm{pro\acute{e}t}}\simeq \mathrm{Perf}_{C^{\flat},\mathrm{pro\acute{e}t}}$, le faisceau $\tilde{G}$ correspond ‡ la fibre gÈnÈrique adique de $H \otimes \O_C^{\flat}$. Par pleine fidÈlitÈ du foncteur de DieudonnÈ (utilisÈe cette fois-ci sur $\O_{C^{\flat}}$), si $G$ et $G'$ sont deux groupes $p$-divisibles sur $\O_C$ connexes en fibre spÈciale :
\[ \mathrm{Hom}(\tilde{G},\tilde{G'})=\mathrm{Hom}(H \otimes \O_{C^{\flat}},H' \otimes \O_{C^{\flat}})=\mathrm{Hom}_{B_{\rm cris}^+,\varphi}(B_{\rm cris}^+ \otimes_L M(H),B_{\rm cris}^+ \otimes_L M(H')), \]
les deux premiers $\mathrm{Hom}$ Ètant des $\mathrm{Hom}$ comme faisceaux.
Cette remarque sera utile plus loin.
\end{remarque}

\begin{example}
Soit $G= \mu_{p^{\infty}}$. Alors $\tilde{G}(R)=B_{\rm cris}^+(R)^{\varphi=p}$. Si $R=C$, la suite obtenue en prenant les $C$-points de la suite exacte \eqref{log} pour $G$ reste exacte et c'est la suite exacte\footnote{Il serait plus habituel d'Ècrire $\qp(1)$ au lieu de $\qp$, mais nous voyons simplement ici cette suite exacte comme suite des $C$-points d'espaces de Banach-Colmez, sans nous prÈoccuper de l'action de Galois.} 
\[ 0 \to \qp \to (B_{\rm cris}^+)^{\varphi=p} \overset{\theta} \longrightarrow C \to 0, \]
souvent appelÈe \textit{suite exacte fondamentale de la thÈorie de Hodge $p$-adique}, qui apparaÓt un peu partout en thÈorie de Hodge $p$-adique et est ‡ l'origine de la thÈorie des espaces de Banach-Colmez.
\end{example}

\begin{corollaire}\label{explicite}
Tout objet de $\mathcal{BC}^{\rm rep}$ est somme directe de faisceaux $\mathbf{U}_{\lambda}:=(B_{\rm cris}^+(\cdot))^{\varphi^h=p^d}$, avec $0 \leq \lambda=\frac{d}{h} \leq 1$.
\end{corollaire}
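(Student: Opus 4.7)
Le plan est de combiner la description explicite de $\tilde{G}(R)$ via le module de Dieudonn� de la fibre sp�ciale avec la classification de Dieudonn�-Manin des isocristaux sur $k=\bar{\mathbf{F}}_p$. D'abord, gr�ce � la scission de la suite connexe-�tale sur $k$ d�j� utilis�e plus haut, on se ram�ne � traiter s�par�ment le cas �tale (o� $\tilde{G}=V(G)$ est somme directe de copies de $\mathbf{U}_0=\qp$, puisque le faisceau $\qp$ co�ncide avec $(B_{\rm cris}^+)^{\varphi=1}$) et le cas o� $H$ est connexe.

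Dans le cas connexe, je m'appuie sur la remarque \ref{utile} : le foncteur $H\mapsto M(H)\otimes_L B_{\rm cris}^+$ est pleinement fid�le sur la cat�gorie � isog�nie pr�s, et $\tilde{G}(R)=(B_{\rm cris}^+(R)\otimes_L M(H))^{\varphi=p}$ fonctoriellement en $R$. Par la classification de Dieudonn�-Manin, puisque $k$ est alg�briquement clos, l'isocristal $M(H)$ sur $L=W(k)[1/p]$ se d�compose en somme directe $\bigoplus_i D_{\lambda_i}$ de sous-isocristaux simples de pentes $\lambda_i=d_i/h_i$. Pour un groupe $p$-divisible, on a $0\leq \lambda_i\leq 1$. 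La pleine fid�lit� permet de relever cette d�composition en une d�composition d'isog�nie de $H\otimes\O_{C^\flat}$, et donc (le foncteur $\tilde{\cdot}$ transformant les isog�nies en isomorphismes) en une v�ritable d�composition en somme directe du faisceau $\tilde{G}$ en les faisceaux $(B_{\rm cris}^+(\cdot)\otimes_L D_{\lambda_i})^{\varphi=p}$.

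Il reste � identifier chacun de ces facteurs avec un $\mathbf{U}_{\lambda_i}$. Pour un isocristal simple $D_\lambda$ de pente $\lambda=d/h$ (avec $\mathrm{pgcd}(d,h)=1$), on choisit une base $(e_0,\dots,e_{h-1})$ dans laquelle $\varphi$ est d�crit par $\varphi(e_i)=e_{i+1}$ pour $i<h-1$ et $\varphi(e_{h-1})=p^{h-d}e_0$ (la normalisation �tant fix�e par le cas $H=\mu_{p^\infty}$, $\lambda=1$, qui doit redonner l'exemple $\tilde{G}(R)=B_{\rm cris}^+(R)^{\varphi=p}=\mathbf{U}_1$). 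L'�quation $\varphi(x)=px$ pour $x=\sum a_i e_i$ devient un syst�me triangulaire qui d�termine $(a_1,\dots,a_{h-1})$ � partir de $a_0$, la condition de bouclage se r�duisant � $\varphi^h(a_0)=p^d a_0$. L'application $x\mapsto a_0$ fournit alors un isomorphisme fonctoriel $(B_{\rm cris}^+(R)\otimes_L D_\lambda)^{\varphi=p}\simeq B_{\rm cris}^+(R)^{\varphi^h=p^d}=\mathbf{U}_\lambda(R)$.

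L'obstacle principal est d'ordre formel plut�t que technique : il faut soigneusement justifier le passage d'une d�composition d'isocristaux (a priori seulement � isog�nie pr�s sur le $p$-divisible) � une v�ritable d�composition en somme directe du faisceau $\tilde{G}$. C'est ici que l'invariance par isog�nie de $\tilde{\cdot}$ (proposition rappel�e ci-dessus) combin�e � la pleine fid�lit� de la remarque \ref{utile} est cruciale : les projecteurs fournis par la d�composition de Dieudonn�-Manin deviennent, apr�s application de $\tilde{\cdot}$, des projecteurs authentiques sur $\tilde{G}$, scindant le faisceau en la somme directe annonc�e.
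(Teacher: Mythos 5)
Your proof is correct and follows essentially the same route as the paper: Dieudonn\'e--Manin semisimplicity combined with the functorial isomorphism $\tilde{G}(R)=(B_{\rm cris}^+(R)\otimes_{\breve{\q}_p} M(H))^{\varphi=p}$ of the preceding proposition, with the identification $(B_{\rm cris}^+(R)\otimes_{\breve{\q}_p} D_\lambda)^{\varphi=p}\simeq B_{\rm cris}^+(R)^{\varphi^h=p^d}$ (left implicit in the paper) worked out explicitly and consistently, your normalization $\varphi(e_{h-1})=p^{h-d}e_0$ being the correct one for the convention in force, as your anchoring to the case $\mu_{p^{\infty}}$ shows. Note only that your detour through lifting projectors to quasi-isogenies of $H\otimes\O_{C^{\flat}}$ is superfluous: since the displayed isomorphism is functorial in $R$ and tensor product and $\varphi=p$ invariants commute with finite direct sums, the Dieudonn\'e--Manin decomposition of $M(H)$ already yields the direct-sum decomposition of the sheaf $\tilde{G}$ on a basis of the topology, hence as sheaves.
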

\begin{proof}
C'est un corollaire direct de la classification de DieudonnÈ-Manin qui affirme que la catÈgorie des isocristaux associÈs aux groupes $p$-divisibles sur $k$ est semi-simple, avec un unique objet simple de pente $\lambda$ pour tout $\lambda \in \q \cap [0,1]$, et de la proposition prÈcÈdente.
\end{proof}  

Ce corollaire donne une description agrÈable de la sous-catÈgorie $\mathcal{BC}^{\rm rep}$. Toutefois ‡ ce stade, la structure de la catÈgorie $\mathcal{BC}$ tout entiËre reste encore mystÈrieuse : elle ne sera ÈlucidÈe que dans la partie \ref{sectionfinale}. Pour mieux comprendre la catÈgorie $\mathcal{BC}$, il nous faut commencer par analyser les morphismes et extensions entre $\qp$-Espaces Vectoriels de dimension finie et $C$-Espaces Vectoriels de dimension finie. 

\section{Quelques calculs de cohomologie pro-Ètale} \label{calculcohom}

Pour mener ‡ bien le calcul des groupes d'extensions de la section \ref{extproÈtales}, il nous faut au prÈalable Ítre capable de dÈcrire certains groupes de cohomologie pro-Ètale. Ces calculs ont un intÈrÍt en soi indÈpendamment du problËme que nous avons en vue et les mÈthodes utilisÈes permettent de donner des rÈsultats plus gÈnÈraux et plus prÈcis. Nous les effectuons dans le cas qui nous intÈresse : celui de l'espace affine, pour les faisceaux $\qp$ et $\mathbf{G}_a$. Les mÍmes techniques permettent de calculer la cohomologie du disque unitÈ ouvert en toute dimension.  
\\

\textit{Sauf mention du contraire, tous les groupes de cohomologie considÈrÈs sont des groupes de cohomologie pro-Ètale.}
\\

Nous allons dÈmontrer les rÈsultats suivants\footnote{Dans la suite nous ignorons systÈmatiquement l'action de Galois et donc les twists ‡ la Tate dans l'ÈnoncÈ des rÈsultats, bien qu'il soit facile de les suivre ‡ la trace.}. 
\begin{theoreme} \label{cohga}
Soit $n\geq 1$ et $i\geq 0$. On a :
\[ H^i(\mathbf{A}_C^n,\mathbf{G}_a) = \Omega^i(\mathbf{A}_C^n). \]
\end{theoreme}

\begin{theoreme} \label{cohespaffine}
Notons 
\[ \O(\mathbf{A}_C^n) \overset{d_0} \longrightarrow \Omega^1(\mathbf{A}_C^n) \overset{d_1} \longrightarrow \dots \overset{d_{n-1}} \longrightarrow \Omega^n(\mathbf{A}_C^n) \]
le complexe des sections globales du complexe de de Rham de $\mathbf{A}_C^n$. Alors $H^0(\mathbf{A}_C^n,\qp)=\qp$ et pour tout $i > 0$, on a un isomorphisme :
\[ H^i(\mathbf{A}_C^n,\qp) =\mathrm{Ker}(d_{i}) = \mathrm{Im}(d_{i-1}) \subset \Omega^i(\mathbf{A}_C^n). \]
\end{theoreme}

\begin{remarque}
Les cohomologies Ètale et pro-Ètale du faisceau constant $\z/p^k$ sur l'espace affine sont les mÍmes, d'aprËs la proposition \ref{competproet} ci-dessous. Or Berkovich \cite{berko} a prouvÈ que pour tout $i>0$, $H_{\mathrm{\acute{e}t}}^i(\mathbf{A}_C^n,\z/p^k)=0$. On en dÈduit facilement que $H^i(\mathbf{A}_C^n,\zp)=0$ pour tout $i>0$. Cela ne contredit Èvidemment pas le thÈorËme \ref{cohespaffine}, puisque l'espace affine n'est pas quasi-compact.
\end{remarque}

\begin{proof}[DÈmonstration du thÈorËme \ref{cohga}] 
La cohomologie de $\mathbf{G}_a$ se calcule aisÈment, gr‚ce au rÈsultat remarquable suivant\footnote{Qui est, notamment, ‡ l'origine de la suite spectrale de Hodge-Tate.} (\cite[Prop. 3.23]{CDM}, o˘ le faisceau que nous notons $\mathbf{G}_a$ est appelÈ $\widehat{\O}$) :

\begin{proposition} \label{scholzega}
Soit $n\geq 1$ et $\nu' : \widetilde{\mathbf{A}}_{C,\mathrm{pro\acute{e}t}}^n \to \widetilde{\mathbf{A}}_{C,\mathrm{\acute{e}t}}^n$ le morphisme du topos pro-Ètale vers le topos Ètale de $\mathbf{A}_C^n$. On a 
\[ R^i \nu'_* \mathbf{G}_{a} = \Omega_{\mathbf{A}_C^n}^i, \]
pour tout $i\geq 0$ (en particulier ces faisceaux sont nuls si $i>n$).
\end{proposition}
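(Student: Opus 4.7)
L'�nonc� �tant local sur le site �tale de $\mathbf{A}_C^n$, il suffit de calculer les sections $H^i(U,\mathbf{G}_a)$ pour un syst�me cofinal de voisinages �tales affino�des $U=\mathrm{Spa}(R,R^+)$ de chaque point g�om�trique de $\mathbf{A}_C^n$. Quitte � r�tr�cir $U$, on peut toujours supposer l'existence d'une application �tale $U \to \mathbf{T}_C^n = \mathrm{Spa}(C\langle T_1^{\pm 1}, \ldots, T_n^{\pm 1} \rangle, \O_C\langle T_1^{\pm 1}, \ldots, T_n^{\pm 1} \rangle)$ vers un tore, ce qui fournit des coordonn�es �tales $T_1, \dots, T_n$ sur $U$.

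L'id�e est ensuite de construire un rev�tement pro-�tale affino�de perfecto�de $U_\infty \to U$ en adjoignant toutes les racines $p$-i�mes des coordonn�es $T_i$. C'est un torseur pro-�tale sous un groupe profini $\Gamma \simeq \zp^n$ (au twist de Tate pr�s), et on note $R_\infty$ la $C$-alg�bre perfecto�de correspondante. Le calcul cl� de Scholze, combinant puret� presque et annulation de la cohomologie coh�rente sur un affino�de perfecto�de, donne $H^0(U_\infty,\mathbf{G}_a) = R_\infty$ et $H^i(U_\infty,\mathbf{G}_a) = 0$ pour $i>0$. La suite spectrale de Cartan-Leray du rev�tement $U_\infty \to U$ fournit alors des isomorphismes
\[ H^i(U,\mathbf{G}_a) \simeq H^i_{\mathrm{cont}}(\Gamma, R_\infty). \]

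Pour calculer cette cohomologie continue, on d�compose $R_\infty$ comme somme directe topologique de ses composantes isotypiques sous $\Gamma$ : la partie $\Gamma$-invariante vaut $R$, tandis que les autres composantes sont tu�es par la structure presque apr�s inversion de $p$. On est ainsi ramen� � calculer $H^i_{\mathrm{cont}}(\Gamma,R)$, qui par le complexe de Koszul standard d'un groupe ab�lien pro-$p$ de rang $n$ vaut $\wedge^i R^n \simeq \Omega^i_U(U)$, l'identification faisant correspondre les g�n�rateurs de $\Gamma$ aux $d\log(T_j)$, c'est-�-dire, via les coordonn�es �tales, aux $dT_j$ sur $U$. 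Ces isomorphismes sont compatibles � la restriction aux sous-affino�des �tales, et donnent l'identification voulue des faisceaux $R^i\nu'_*\mathbf{G}_a \simeq \Omega^i_{\mathbf{A}_C^n}$.

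L'obstacle principal r�side dans le contr�le pr�cis de la math�matique presque : l'annulation de $H^i(U_\infty,\mathbf{G}_a)$ et la d�composition isotypique de $R_\infty$ ne sont valables qu'au sens presque, et il faut justifier que les termes presque nuls disparaissent effectivement apr�s inversion de $p$ et passage � la cohomologie continue de $\Gamma$. Ce point technique est au coeur des travaux de Scholze sur la cohomologie $p$-adique des vari�t�s rigides lisses, et constitue pr�cis�ment le contenu de \cite[Prop. 3.23]{CDM}, que l'on peut donc directement invoquer.
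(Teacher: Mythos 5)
Votre démonstration est correcte et suit essentiellement la même voie que l'article : celui-ci se contente en effet de renvoyer à \cite[Prop. 3.23]{CDM}, et votre esquisse (coordonnées étales vers un tore, tour perfectoïde obtenue en extrayant les racines $p$-ièmes des coordonnées, annulation presque de la cohomologie sur l'affinoïde perfectoïde, suite spectrale de Cartan-Leray, décomposition isotypique sous $\Gamma\simeq\zp^n$ et complexe de Koszul) reproduit précisément la preuve standard de ce résultat de Scholze, que vous invoquez d'ailleurs vous-même pour le point technique de presque-mathématiques. Signalez seulement deux points mineurs : $d\log T_j=dT_j/T_j$ et $dT_j$ diffèrent par une unité (les deux familles sont des bases de $\Omega^1_U$ puisque les $T_j$ sont inversibles sur le tore), et la compatibilité au recollement que vous affirmez repose sur la canonicité de l'isomorphisme (indépendance du choix des coordonnées, via l'extension de Faltings), point également couvert par la référence citée.
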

Comme $\mathbf{A}_C^n$ est Stein, donc n'a pas de cohomologie cohÈrente en degrÈ positif, on en dÈduit que pour tout $i$,
\[ H^i(\mathbf{A}_C^n,\mathbf{G}_a) = \Omega^i(\mathbf{A}_C^n), \]
gr‚ce ‡ la suite spectrale de Leray pour le morphisme $\nu$.
\end{proof}

La dÈmonstration du thÈorËme \ref{cohespaffine} est plus difficile et le reste de cette section lui est consacrÈ. Nous commenÁons par le cas de la dimension $1$ ‡ l'aide de la suite exacte de Kummer et de thÈorËmes d'annulation de Berkovich\footnote{Dans la suite, nous utiliserons sans plus de commentaire le fait que la cohomologie Ètale d'un espace analytique Hausdorff au sens de Berkovich est la mÍme que celle de la variÈtÈ rigide ou de l'espace adique associÈ. Voir \cite[Ch. 8]{huber}.}. Puis nous traitons le cas gÈnÈral ‡ l'aide de la version faisceautique de la \textit{suite exacte fondamentale} en thÈorie de Hodge $p$-adique. Notons que nous n'utiliserons dans la suite du texte que le calcul de la cohomologie en degrÈs $0$ et $1$, pour lequel la suite exacte de Kummer suffit. \textit{Le lecteur qui le souhaite peut donc sauter en premiËre lecture le paragraphe \ref{dimgenerale}}.

\subsection{Cohomologie de la droite affine}  

Rappelons tout d'abord la proposition suivante, qui servira constamment (\cite[Prop. 13.2.2]{ega3} et \cite[Rem. 13.2.4]{ega3}).

\begin{proposition} \label{mittag}
Soit $I$ un ensemble ordonnÈ filtrant ayant une partie cofinale dÈnombrable et $(A_i)_{i\in I}$ un systËme projectif de groupes abÈliens. On note $f_{ij} : A_j \to A_i$ pour $j\geq i$. Si $(A_i)_i$ vÈrifie la condition de Mittag-Leffler,
\[ R^1 \underset{i} \varprojlim A_i = 0. \]
La conclusion reste valable si l'on suppose que chacun des groupes $A_i$ est muni d'une structure d'espace mÈtrique complet compatible ‡ la structure de groupe, que les $f_{ij}$ sont uniformÈment continues et que pour tout $i$, il existe $j\geq i$ tel que pour tout $k\geq j$, $f_{ik}(A_k)$ est dense dans $f_{ij}(A_j)$. 
\end{proposition}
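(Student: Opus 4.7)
The plan is to prove both parts by the standard reduction to the countable case, where $R^1\varprojlim$ is computed by a simple complex.

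\textbf{Step 1: Reduction to $I = \mathbf{N}$.} Using the countable cofinal subset of $I$, I would replace $(A_i)_{i\in I}$ by the inverse subsystem indexed over a cofinal copy of $\mathbf{N}$. Since both $\varprojlim$ and $R^1\varprojlim$ are unchanged by restriction to a cofinal subset (a general fact for inverse limits indexed by filtered posets), this reduces the problem to the case $I=\mathbf{N}$. Moreover the Mittag-Leffler condition and the density condition clearly pass to cofinal subsystems.

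\textbf{Step 2: Recall the explicit description.} For $I = \mathbf{N}$, there is a standard two-term resolution giving
\[ R^1\varprojlim_n A_n = \operatorname{coker}\Bigl(\prod_n A_n \xrightarrow{d} \prod_n A_n\Bigr), \quad d\bigl((a_n)_n\bigr) = \bigl(a_n - f_{n,n+1}(a_{n+1})\bigr)_n. \]
So the question reduces to: given $(b_n) \in \prod_n A_n$, can we solve $a_n - f_{n,n+1}(a_{n+1}) = b_n$? Unwinding this recursion formally gives $a_n = \sum_{k\geq 0} f_{n,n+k}(b_{n+k})$, so the heart of the matter is making sense of this sum.

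\textbf{Step 3: The purely algebraic Mittag-Leffler case.} Let $B_n = \bigcap_{j\geq n} f_{nj}(A_j)$, which by Mittag-Leffler coincides with $f_{nj}(A_j)$ for all $j$ large enough. The transition maps restrict to \emph{surjective} maps $B_{n+1} \to B_n$: given $b \in B_n$, pick $j$ large so $b = f_{nj}(a)$ for some $a \in A_j$, and check that $f_{n+1,j}(a) \in B_{n+1}$ lifts $b$. For the inverse system $(B_n)$ with surjective transition maps, the solvability of $a_n = b_n + f_{n,n+1}(a_{n+1})$ is then immediate by downward induction (start anywhere, lift one step at a time), giving $R^1\varprojlim B_n = 0$. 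A short diagram chase using that $\varprojlim A_n = \varprojlim B_n$ shows that $R^1\varprojlim A_n = 0$ as well.

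\textbf{Step 4: The topological variant.} This is where I expect the main work to lie. Given $(b_n)$, I would build $(a_n)$ by making the formal series $a_n = \sum_k f_{n,n+k}(b_{n+k})$ converge. Fix $i$ and choose $j\geq i$ as in the hypothesis, so that for $k\geq j$ the image $f_{ik}(A_k)$ is dense in $f_{ij}(A_j)$. The strategy: replace the given $b_n$ by $b_n + d(c)_n$ for some $(c_n)$, thereby modifying each $b_n$ by a controllable error, so that the resulting series converges in $A_i$ for every $i$. Concretely, using density one inductively picks $c_n \in A_n$ such that $b_n - f_{n,n+1}(c_{n+1}) + c_n$ is small when mapped down to $A_i$ for all $i \leq n$; uniform continuity of the transition maps, together with completeness of each $A_i$, then guarantees that the partial sums form a Cauchy sequence in every $A_i$ and so converge to an element $a_i$ compatible with the transition maps. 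The main obstacle is keeping the estimates consistent across all indices $i$ simultaneously; this is precisely where the uniform continuity hypothesis on the $f_{ij}$ is indispensable, letting one propagate the smallness of error terms downward through the system.
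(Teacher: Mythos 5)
The paper offers no proof of this proposition at all: it is imported wholesale from EGA III (the paper cites \cite{ega3}, Prop.~13.2.2 for the Mittag-Leffler case and Rem.~13.2.4 for the topological variant), so there is no internal argument to compare yours against. What you have written is essentially a reconstruction of the classical argument behind those references, and Steps 1--3 are correct as they stand: invariance of $\varprojlim$ and $R^1\varprojlim$ under restriction to a cofinal subset, the stability of both hypotheses under this restriction, the two-term complex computing $R^1\varprojlim$ over $\mathbf{N}$, and the stable-image subsystem $B_n$ with surjective transition maps. One point in Step 3 needs repair, though: the concluding ``diagram chase'' does not follow from $\varprojlim A_n = \varprojlim B_n$ alone. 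What one actually uses is that the quotient system $C_n = A_n/B_n$ is pro-zero --- after reindexing along a subsequence so that $f_{n,n+1}(A_{n+1}) = B_n$, the transition maps of $(C_n)$ vanish, so $d$ is visibly surjective on $\prod_n C_n$; one then solves the equation modulo $(B_n)$ (concretely, $a'_n = b_n$ works, with defect $b_n - d(a')_n = f_{n,n+1}(b_{n+1}) \in B_n$) and corrects inside $(B_n)$ using the surjectivity you established. This is standard and fixable, but the reason you gave is not the operative one.

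In Step 4 your mechanism is the right one, with one imprecision worth flagging: the density hypothesis only allows approximation \emph{one level down}. After reindexing so that the hypothesis holds between consecutive indices, the element $b_n + c_n$ lies in $A_n$, and what you can approximate is its image $f_{n-1,n}(b_n+c_n) \in f_{n-1,n}(A_n)$ by elements of $f_{n-1,n+1}(A_{n+1})$; there is no reason $b_n + c_n$ itself should lie in the closure of $f_{n,n+1}(A_{n+1})$. So the corrected term $b'_n = b_n + c_n - f_{n,n+1}(c_{n+1})$ can be made $2^{-n}$-small in $A_i$ only for $i \leq n-1$, not for all $i \leq n$ as you wrote --- which is still enough, since for fixed $i$ all but one term of the series $\sum_{k\geq 0} f_{i,i+k}(b'_{i+k})$ is then controlled, uniform continuity (applied to the finitely many maps $f_{i,n-1}$) converting smallness at level $n-1$ into smallness at every lower level. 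One should also make explicit that ``compatible � la structure de groupe'' is used to mean the metric is translation-invariant, so that a series whose terms are $2^{-n}$-small has Cauchy partial sums; completeness then produces $a_i$ with $d(a) = b + d(c)$, hence $d(a-c)=b$. With these two adjustments your outline closes into a complete proof, matching the EGA argument the paper invokes.
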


Un autre fait utile est le suivant.
\begin{proposition} \label{replete}
Soit $I$ un ensemble ordonnÈ filtrant ayant une partie cofinale dÈnombrable et $(\mathcal{F}_i)_{i\in I}$ un systËme projectif de faisceaux abÈliens pour la topologie pro-Ètale sur un espace adique analytique $X$, avec morphismes de transition surjectifs. Alors
\[ \forall k>0, ~ R^k ~ \underset{i} \varprojlim ~ \mathcal{F}_i = 0. \]
\end{proposition}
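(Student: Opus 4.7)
The plan is to reduce the statement to the vanishing of $R^1 \varprojlim$ via the Milnor short exact sequence, and to obtain the latter from the \emph{repleteness} of the pro-étale topos in the sense of Bhatt--Scholze.

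First, using the countable cofinal part of $I$, I would reduce to the case $I = \mathbb{N}$, so that one deals with a tower $(\mathcal{F}_n)_{n \in \mathbb{N}}$ with surjective transition maps $f_n : \mathcal{F}_{n+1} \twoheadrightarrow \mathcal{F}_n$. The Milnor construction yields a complex of sheaves
\[
0 \longrightarrow \varprojlim_n \mathcal{F}_n \longrightarrow \prod_n \mathcal{F}_n \xrightarrow{\ 1-\mathrm{sh}\ } \prod_n \mathcal{F}_n \longrightarrow 0,
\]
where $\mathrm{sh}((s_n)_n) = (f_n(s_{n+1}))_n$. Since arbitrary products of sheaves are exact and preserve injectives, $R^k \prod = 0$ for every $k > 0$; assuming exactness of this complex, the resulting long exact sequence of derived functors collapses and gives $R^k \varprojlim_n \mathcal{F}_n = 0$ for all $k \geq 1$ simultaneously.

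The crux is therefore surjectivity of $1 - \mathrm{sh}$ as a morphism of sheaves. A straightforward diagonal-lifting argument shows that this is equivalent to the assertion that any tower of sheaves with surjective transition maps has surjective projections from its inverse limit onto each term. This is precisely the defining feature of a \emph{replete} topos in the sense of Bhatt--Scholze. The pro-étale topos of an analytic adic space is replete: the pro-étale site is stable under cofiltered inverse limits of affinoid pro-étale morphisms (it is built into the definition recalled above), so that a surjective tower of pro-étale covers $(U_n \to U)$ can be replaced by a single pro-étale cover $\widetilde{U} \to U$ over which the equations $s_n - f_n(s_{n+1}) = t_n$ may be solved inductively, starting from any lift of $s_0$ and using the surjectivity of $f_n$ at each step.

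The main technical point is thus the verification of repleteness for the pro-étale topos of an analytic adic space, but this is essentially formal and parallels the argument of Bhatt--Scholze in the schematic case; the only geometric input needed is the stability of the pro-étale site under cofiltered limits. Once repleteness is granted, the desired vanishing follows immediately from the Milnor-sequence argument above, uniformly in all degrees $k \geq 1$.
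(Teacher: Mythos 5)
Your overall architecture is exactly that of the paper: the paper's entire proof of this proposition is the citation \cite[Prop. 3.1.10]{bs}, and the Milnor-sequence-plus-repleteness argument you give is precisely the proof of that proposition of Bhatt--Scholze, supplemented by the verification (implicitly assumed by the paper) that the pro-\'etale topos of an analytic adic space is replete. Your geometric input for repleteness --- stability of affinoid pro-\'etale covers under countable inverse limits, allowing one to solve the lifting problems over a single pro-\'etale cover $\widetilde{U} \to U$ --- is the right one, and the reduction from a filtered $I$ with countable cofinal subset to a tower indexed by $\mathbf{N}$ is standard and unproblematic.

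There is, however, one step that is wrong as written: \og arbitrary products of sheaves are exact \fg{} is false in a general topos. The failure of countable products to preserve epimorphisms is exactly the phenomenon that makes $R^k \varprojlim$ of surjective towers nonvanishing in, say, the fppf topos, and it is precisely what repleteness is designed to repair; so you cannot quote it as a generality before repleteness is on the table. (Preservation of injectives is indeed automatic, $\prod$ being right adjoint to the exact diagonal functor, but that alone does not give $R^k \prod = 0$; exactness is what you need.) The correct statement is: \emph{in a replete topos, countable products are exact}. This is itself deduced from repleteness by writing a countable product of epimorphisms $\prod_n \mathcal{F}_n \to \prod_n \mathcal{G}_n$ as the inverse limit of the tower $\prod_{n \leq N} \mathcal{F}_n \times \prod_{n > N} \mathcal{G}_n$, whose transition maps are surjective (using the zero sections of the abelian sheaves $\mathcal{F}_n$ and the surjectivity of $\mathcal{F}_n \to \mathcal{G}_n$). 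Repleteness must therefore be invoked twice --- once to get $R^k \prod = 0$ for countable products, once for the surjectivity of $1 - \mathrm{sh}$ --- and your proof becomes correct after reordering: establish repleteness of the pro-\'etale topos first, deduce exactness of countable products, and only then run the Milnor sequence. With that repair your argument coincides with the proof of the result the paper cites.
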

\begin{proof}
Voir \cite[Prop. 3.1.10]{bs}.
\end{proof}

On utilisera Ègalement le rÈsultat de comparaison suivant.

\begin{proposition} \label{competproet}
Soit $X$ un espace adique analytique sur $C$ et $\mathcal{F}$ un faisceau Ètale de groupes abÈliens sur $X$. Notons $\nu : \widetilde{X}_{\mathrm{pro\acute{e}t}} \to \widetilde{X}_{\mathrm{\acute{e}t}}$ le morphisme de topos. Alors pour tout $i \geq 0$,
\[ H_{\mathrm{pro\acute{e}t}}^i(X,\nu^* \mathcal{F}) = H_{\mathrm{\acute{e}t}}^i(X,\mathcal{F}). \]
\end{proposition}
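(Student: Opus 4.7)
The strategy is to deduce the statement from a stronger local claim: that the adjunction unit $\mathcal{F} \to R\nu_* \nu^* \mathcal{F}$ is an isomorphism in $D^+(X_{\mathrm{\acute{e}t}})$. Granting this, the Leray spectral sequence
\[ E_2^{i,j} = H^i_{\mathrm{\acute{e}t}}(X, R^j \nu_* \nu^* \mathcal{F}) \Longrightarrow H^{i+j}_{\mathrm{pro\acute{e}t}}(X, \nu^* \mathcal{F}) \]
degenerates to yield the desired identification.

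First I would handle $H^0$, i.e.\ show $\nu_* \nu^* \mathcal{F} = \mathcal{F}$. The étale site of $X$ is a full subsite of the pro-étale site (every étale cover is a pro-étale cover and no new relations are introduced among étale objects), so an étale sheaf extends canonically to a pro-étale presheaf satisfying the étale sheaf condition; its pro-étale sheafification $\nu^*\mathcal{F}$ still evaluates to $\mathcal{F}(U)$ on any étale $U \to X$, and thus $\nu_* \nu^* \mathcal{F} = \mathcal{F}$ as étale sheaves.

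The heart of the argument is the vanishing $R^j \nu_* \nu^* \mathcal{F} = 0$ for $j \geq 1$. This sheaf is the étale sheafification of $U \mapsto H^j_{\mathrm{pro\acute{e}t}}(U, \nu^* \mathcal{F})$, so it is enough to show that any class on a qcqs étale $U \to X$ is killed after an étale refinement. Pick a pro-étale hypercover representing the class; by a standard cofinality argument any such cover is refined by one of the form $V = \varprojlim V_i \to U$ with each $V_i \to U$ étale and qcqs, and similarly for its self-products. The Čech-to-derived spectral sequence then reduces the problem to computing $H^j_{\mathrm{pro\acute{e}t}}(V, \nu^* \mathcal{F})$ on such limits. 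Here I would invoke continuity of étale cohomology along cofiltered limits of qcqs adic spaces with affinoid transition maps (Huber) to identify this with $\varinjlim_i H^j_{\mathrm{\acute{e}t}}(V_i, \mathcal{F})$; each representing étale class on some $V_i$ can then be trivialised by a further étale refinement, producing the desired étale cover of $U$ on which the original pro-étale class dies.

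The main technical obstacle is precisely this last continuity step: one needs to know that for an étale sheaf $\mathcal{F}$, the natural map $\varinjlim_i H^j_{\mathrm{\acute{e}t}}(V_i, \mathcal{F}) \to H^j_{\mathrm{pro\acute{e}t}}(V, \nu^* \mathcal{F})$ is an isomorphism for cofiltered limits of qcqs étale maps; this is the pro-étale analogue of Huber's passage-to-the-limit theorem, and it is what forces the introduction of the pro-étale site in Scholze's framework in the first place. Once it is in place, the vanishing of $R^j \nu_*\nu^*\mathcal{F}$ — and hence the proposition — follows formally. The argument is essentially that of Lemma~3.16 of Scholze's \emph{$p$-adic Hodge theory for rigid-analytic varieties}, whose statement is identical after unravelling.
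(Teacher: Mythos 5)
There is a genuine gap, and it is a matter of which pro-étale site you are working on. Your skeleton (show the adjunction unit $\mathcal{F} \to R\nu_*\nu^*\mathcal{F}$ is an isomorphism, then apply Leray) is the right shape, but the pro-étale site of this paper is \emph{not} the site of \cite{Shodge} (pro-objects of the étale site of a locally noetherian adic space): by the definition given in section 2, the objects of the small pro-étale site of $X$ are \emph{perfectoid} spaces pro-étale over $X$. Consequently an étale morphism $U \to X$ (with $U$, say, a rigid space) is in general not an object of $X_{\mathrm{pro\acute{e}t}}$, so your first step --- that the étale site is a full subsite, and that $\nu^*\mathcal{F}$ ``still evaluates to $\mathcal{F}(U)$ on any étale $U$'' --- does not typecheck; already the identity $\nu_*\nu^*\mathcal{F}=\mathcal{F}$ requires a comparison of the two worlds rather than a formal remark. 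For the same reason your key technical input, \cite[Lem. 3.16]{Shodge}, is not ``identical after unravelling'': it is proved for the pro-étale site of pro-systems over a locally noetherian $X$, whereas here $X$ is an arbitrary analytic adic space over $C$ (the proposition is applied in the paper to perfectoid and non-noetherian objects as well), and transporting the continuity statement to the perfectoid-objects site is essentially the content of what is to be proved, not an available citation.

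The paper's own proof is a reduction to the diamond setting: by definition of the sites and the tilting equivalence $\mathrm{Perf}_{C,\mathrm{pro\acute{e}t}} \simeq \mathrm{Perf}_{C^{\flat},\mathrm{pro\acute{e}t}}$, the pro-étale cohomology of $X$ agrees with that of the diamond $X^{\diamond}$; the étale cohomologies agree because the étale sites of $X$ and $X^{\diamond}$ coincide, which rests on the nontrivial fact that étale morphisms descend along pro-étale covers (second part of the proof of \cite[Prop. 9.7]{S17}) --- a point your argument would also need and does not address; and the comparison for diamonds is then \cite[Prop. 14.8]{S17}. The proof of that proposition does follow your general strategy (reduction to qcqs $V = \varprojlim V_i$, continuity, killing classes by further refinement, with strictly totally disconnected perfectoid spaces playing the role of points), so your plan is salvageable, but only after (a) replacing the false ``full subsite'' step by the identification of étale topoi via pro-étale descent of étale maps, and (b) proving, rather than citing, the continuity statement in the diamantine setting.
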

\begin{proof}
Ceci est dÈmontrÈ dans \cite[Prop. 14.8]{S17}, avec $X$ remplacÈ par $X^{\diamond}$, le diamant associÈ ‡ l'espace adique analytique $X$. Or la cohomologie pro-Ètale de $X$ est la mÍme que celle de $X^{\diamond}$, par dÈfinition des sites pro-Ètales et l'Èquivalence $\mathrm{Perf}_{C,\mathrm{pro\acute{e}t}} \simeq \mathrm{Perf}_{C^{\flat},\mathrm{pro\acute{e}t}}$. La cohomologie Ètale de $X$ est aussi la mÍme que celle de $X^{\diamond}$ car les sites Ètales de $X$ et $X^{\diamond}$ sont les mÍmes : en effet, les morphismes Ètales descendent pour la topologie pro-Ètale\footnote{Et mÍme pour la $v$-topologie, cf. \cite[Prop 9.7]{S17}.} (voir la deuxiËme partie de la preuve de \cite[Prop. 9.7]{S17}). 
\end{proof}

\begin{proposition}\label{cohodisque}
On note $D$ le disque unitÈ ouvert de dimension $1$ sur $C$. On a $H^i(D,\qp)=0$ pour tout $i>1$. De plus, $H^1(D,\qp)$ s'identifie ‡ l'espace $\O(D)_0$ des fonctions rigides analytiques sur $D$ nulles en zÈro. 
\end{proposition}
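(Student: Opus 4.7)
Ma strat\'egie suit la suggestion de l'introduction et repose sur la suite exacte de Kummer. Je partirais de la suite exacte
\[ 0 \to \mu_{p^n} \to \mathbf{G}_m \overset{p^n}{\longrightarrow} \mathbf{G}_m \to 0 \]
en topologie \'etale, dont les groupes de cohomologie co\"incident avec leurs analogues pro-\'etales via la proposition \ref{competproet}. Le disque ouvert $D$ \'etant Stein, on a $\mathrm{Pic}(D) = H^1(D, \mathbf{G}_m) = 0$, et les th\'eor\`emes d'annulation de Berkovich donnent $H^i(D, \mathbf{G}_m) = 0$ pour $i \geq 2$. La suite exacte longue associ\'ee \`a Kummer donne alors $H^0(D, \mu_{p^n}) = \mu_{p^n}$, $H^1(D, \mu_{p^n}) = \mathcal{O}(D)^*/(\mathcal{O}(D)^*)^{p^n}$, et $H^i(D, \mu_{p^n}) = 0$ pour $i \geq 2$.

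Ensuite, je passerais \`a la limite sur $n$. Les morphismes de transition $\mu_{p^{n+1}} \to \mu_{p^n}$ \'etant des surjections \'etales, donc des couvertures pro-\'etales, la proposition \ref{replete} assure que $\zp(1) = R\varprojlim_n \mu_{p^n}$ comme faisceau pro-\'etale, d'o\`u $R\Gamma(D, \zp(1)) = R\varprojlim_n R\Gamma(D, \mu_{p^n})$. La suite de Milnor qui en r\'esulte, combin\'ee avec l'annulation du $R^1\varprojlim$ des groupes finis $\mu_{p^n}(C)$ par Mittag-Leffler (proposition \ref{mittag}), donne $H^i(D, \zp(1)) = 0$ pour $i \geq 2$ et $H^1(D, \zp(1)) = \varprojlim_n \mathcal{O}(D)^*/(\mathcal{O}(D)^*)^{p^n}$. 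Apr\`es inversion de $p$ et identification $\zp(1) \simeq \zp$ via l'\'el\'ement $t$ fix\'e dans les notations, ceci donne imm\'ediatement $H^i(D, \qp) = 0$ pour $i \geq 2$.

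Il reste alors \`a identifier $H^1(D, \qp)$ avec $\mathcal{O}(D)_0$. La d\'ecomposition $\mathcal{O}(D)^* = C^* \times (1 + T\mathcal{O}(D))$ fournie par l'\'evaluation en $0$ permet d'\'ecarter la contribution de $C^*$, qui est $p$-divisible et s'annule donc dans la limite projective apr\`es tensorisation avec $\qp$. Il reste \`a \'etablir, via le logarithme $p$-adique, un isomorphisme $\bigl(\varprojlim_n (1 + T\mathcal{O}(D))/(\cdot)^{p^n}\bigr) \otimes_{\zp} \qp \simeq T\mathcal{O}(D) = \mathcal{O}(D)_0$.

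L'obstacle principal est cette derni\`ere identification: bien que formellement naturelle via la paire $\log/\exp$, la convergence de ces s\'eries sur le disque ouvert est d\'elicate, et la structure Fr\'echet (non Banach) de $\mathcal{O}(D)$ impose une analyse soigneuse. Je proc\'ederais par exhaustion, en \'ecrivant $D = \bigcup_r D_r$ comme r\'eunion de sous-disques ferm\'es de rayon $r \to 1$, afin de me ramener \`a des alg\`ebres de Tate sur lesquelles $\log$ converge sur un sous-groupe ouvert convenable de $1 + T\mathcal{O}(D_r)$, puis en passant \`a la limite inductive en $r$ pour recoller l'isomorphisme sur $D$ tout entier.
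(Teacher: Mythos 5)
Votre strat\'egie via la suite de Kummer est bien celle de la premi\`ere partie de la preuve du texte, mais vous l'appliquez directement au disque ouvert $D$, et c'est l\`a que deux \'etapes s'effondrent. D'abord, l'affirmation \og $D$ \'etant Stein, $\mathrm{Pic}(D)=H^1(D,\mathbf{G}_m)=0$ \fg{} est fausse : le th\'eor\`eme B ne contr\^ole que la cohomologie coh\'erente ($H^1(D,\O_D)=0$), pas $H^1(D,\O_D^*)$, et comme $C$ n'est pas sph\'eriquement complet, un th\'eor\`eme classique de Lazard fournit des diviseurs non principaux sur le disque ouvert, donc $\mathrm{Pic}(D)\neq 0$. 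Toute la suite de votre calcul des $H^i(D,\mu_{p^n})$ h\'eriterait alors de termes $\mathrm{Pic}(D)[p^n]$ et $\mathrm{Pic}(D)/p^n$ incontr\^ol\'es. C'est pr\'ecis\'ement pour cela que la preuve du texte travaille avec les boules ferm\'ees $\bar{B}_n$, o\`u $\mathrm{Pic}(\bar{B}_n)=0$ (\cite{lutk}) et o\`u les annulations de Berkovich s'appliquent, avant de recoller par des suites de Milnor. Au passage, la d\'ecomposition $\O(D)^*=C^*\times(1+T\O(D))$ est \'egalement fausse : $1+p^{-1}T$ appartient \`a $1+T\O(D)$ mais s'annule en $T=-p$ ; les unit\'es de $\O(D)$ sont les $\sum_i a_iT^i$ avec $|a_i|\leq |a_0|$ pour tout $i\geq 1$.

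Le second trou est plus structurel : \og apr\`es inversion de $p$ \fg{} ne permet pas de passer de $H^i(D,\zp(1))$ \`a $H^i(D,\qp)$, car $D$ n'est pas quasi-compact -- c'est exactement la mise en garde de la remarque suivant le th\'eor\`eme \ref{cohespaffine}. Test concret : votre m\'ethode appliqu\'ee \`a $\mathbf{A}_C^1$ donnerait $\varprojlim_n \O(\mathbf{A}_C^1)^*/(\O(\mathbf{A}_C^1)^*)^{p^n}=\varprojlim_n C^*/(C^*)^{p^n}=1$ (les fonctions enti\`eres sans z\'eros sont constantes et $C^*$ est $p$-divisible), d'o\`u $H^1(\mathbf{A}_C^1,\qp)=0$, en contradiction avec le corollaire \ref{cohodroiteaff}. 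Il faut inverser $p$ sur chaque $\bar{B}_n$ quasi-compacte, \emph{puis} prendre $\varprojlim_n$ : les \'el\'ements non born\'es de $\O(D)_0$ proviennent justement de d\'enominateurs $p^{k_n}$ qui croissent avec $n$ dans $\varprojlim_n H^1(\bar{B}_n,\qp)$, ce que $(\varprojlim_n)[1/p]$ ne voit pas. De m\^eme, votre identification finale par $\log$ ne peut pas se d\'efinir globalement : $\O(D)_0$ n'est pas $p$-adiquement complet, donc $\varprojlim_n U/U^{p^n}$ n'a pas d'image naturelle dedans ; le m\'ecanisme cl\'e du texte est de restreindre $\log f$ de $\bar{B}_n$ \`a la boule strictement plus petite $\bar{B}_{n-1}$, o\`u il devient born\'e, l'espace $\O(\bar{B}_{n-1})_0^{\leq r_n}$ \'etant alors $p$-adiquement complet, ce qui permet d'\'etendre la fl\`eche \`a $\varprojlim_k M_n/M_n^{p^k}$. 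Votre dernier paragraphe (exhaustion par des disques ferm\'es) va dans la bonne direction, mais le recollement est une limite \emph{projective} -- et non \og inductive \fg{} -- avec $R^1\varprojlim$ tu\'e par Mittag-Leffler topologique (densit\'e des restrictions, proposition \ref{mittag}), comme dans le texte.
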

\begin{proof}
Pour tout $n>1$, notons $\bar{B}_n= \mathrm{Spa}(C\langle p^{-1/n} T \rangle)$ la boule fermÈe de rayon $p^{-1/n}$.

Montrons dans un premier temps que $H_{\rm \acute{e}t}^i(\bar{B}_n,\qp)$\footnote{Si $X$ est un espace adique, pour nous $H_{\rm \acute{e}t}^i(X,\zp) = \varprojlim H_{\rm \acute{e}t}^i(X,\z/p^k)$ et $H_{\rm \acute{e}t}^i(X,\qp)=H_{\rm \acute{e}t}^i(X,\zp)[1/p]$, \textit{par dÈfinition}.} est nul pour $i>1$. D'aprËs \cite[Th. 4.2.6]{berk}, $H_{\rm \acute{e}t}^i(\bar{B}_n,\z/p^k)=0$ si $i>2$. On a donc $H_{\rm \acute{e}t}^i(\bar{B}_n,\qp)=0$ si $i>2$.  La suite exacte de Kummer donne une suite exacte longue
\begin{align*} 0 \to \O(\bar{B}_n)^*/(\O(\bar{B}_n)^*)^{p^k} \to H_{\mathrm{\acute{e}t}}^1(\bar{B}_n,\z/p^k) & \to H_{\mathrm{\acute{e}t}}^1(\bar{B}_n,\mathbf{G}_m) \\
& \to H_{\mathrm{\acute{e}t}}^1(\bar{B}_n,\mathbf{G}_m) \to H_{\mathrm{\acute{e}t}}^2(\bar{B}_n,\z/p^k) \to H_{\mathrm{\acute{e}t}}^2(\bar{B}_n,\mathbf{G}_m). \end{align*}
Or d'aprËs \cite[Lem. 6.1.2]{berk}, $H_{\mathrm{\acute{e}t}}^2(\bar{B}_n,\mathbf{G}_m)=0$ ; comme on a aussi $\mathrm{Pic}(\bar{B}_n)=0$ (\cite[Satz 1]{lutk}), on obtient 
\[ H_{\mathrm{\acute{e}t}}^2(\bar{B}_n,\z/p^k)=0. \]
On en dÈduit que $H_{\rm \acute{e}t}^2(\bar{B}_n,\qp)=0$.

Calculons maintenant $H^1(D,\qp)$. On regarde la suite exacte\footnote{Dont l'existence dÈcoule de la proposition \ref{replete}, mais se vÈrifie facilement dans ce cas en utilisant \cite[Prop. 13.3.1]{ega3}.} :
\[ 0 \to R^1 \underset{n} \varprojlim ~ H^{0}(\bar{B}_n,\qp) \to H^1(D,\qp) \to \underset{n} \varprojlim ~ H^1(\bar{B}_n,\qp) \to 0. \]
Le terme de gauche est nul (proposition \ref{mittag}). Or $H^1(\bar{B}_n,\qp)= H^1(\bar{B}_n,\zp)[1/p]$ par quasi-compacitÈ de $\bar{B}_n$ (\cite[Lem. 21.17.1]{stacks}) et on a une suite exacte (proposition \ref{replete}) :
\[ 0 \to R^1 \underset{k} \varprojlim ~ H^{0}(\bar{B}_n,\z/p^k) \to H^1(\bar{B}_n,\zp) \to \underset{k} \varprojlim ~ H^1(\bar{B}_n,\z/p^k) \to 0, \]
o˘ ‡ nouveau le terme de gauche est nul. La cohomologie du faisceau $\z/p^k$ sur $\bar{B}_n$ se calcule indiffÈremment pour la topologie Ètale ou pro-Ètale, d'aprËs la proposition \ref{competproet}. La suite exacte de Kummer donne donc que pour tout $n$,
\[ H_{\mathrm{\acute{e}t}}^1(\bar{B}_n,\zp)=\underset{k} \varprojlim ~ \O(\bar{B}_n)^*/(\O(\bar{B}_n)^*)^{p^k}. \]
On a 
\[ \O(\bar{B}_n)^*= C^* \cdot \{ 1+ \sum_{i\geq 1} a_i X^i, \forall i, |a_i|p^{-i/n} < 1 \}. \]
Notons $M_n=\{ 1+ \sum_{i\geq 1} a_i X^i, \forall i, |a_i|p^{-i/n} < 1 \}$, de sorte que l'on a aussi $H_{\mathrm{\acute{e}t}}^1(\bar{B}_n,\zp)= \varprojlim ~ M_n/M_n^{p^k}$.  

Si $f \in M_n$, $\log \circ f$ est bien dÈfinie et est un ÈlÈment de l'espace $\O(\bar{B}_n)_0$ des fonctions rigides analytiques sur $\bar{B}_n$ nulles en zÈro. Notons $\Phi_n(f)$ la restriction de cet ÈlÈment ‡ $\bar{B}_{n-1}$. Nous affirmons que l'application $\Phi_n$ s'Ètend ‡ $\varprojlim M_n/M_n^{p^k}$. En effet, soit $f\in M_n$. Alors $\Phi_n(f) \in \O(\bar{B}_{n-1})_0^{\leq r_n}$, o˘ $r_n=\sup \{ |z|, z \in \log(B(1,p^{-1/(n(n-1))})\}$ et o˘ $\O(\cdot)_0^{\leq r}$ pour $r>0$ dÈsigne l'ensemble des fonctions bornÈes par $r$. Si $f \in M_n^{p^k}$, $\Phi_n(f) \in p^k \O(\bar{B}_{n-1})_0^{\leq r_n}$. On en dÈduit que $\Phi_n$ induit une flËche de $\varprojlim ~ M_n/M_n^{p^k}$ vers le complÈtÈ $p$-adique de $\O(\bar{B}_{n-1})_0^{\leq r_n}$, qui est $\O(\bar{B}_{n-1})_0^{\leq r_n}$ lui-mÍme, et donc en particulier une flËche de $\varprojlim ~ M_n/M_n^{p^k}$ vers $\O(\bar{B}_{n-1})_0$. On a donc dÈfini pour chaque $n$ un morphisme 
\[ \Phi_n : H^1(\bar{B}_n,\qp) = (\underset{k} \varprojlim ~ M_n/M_n^{p^k})[1/p] \to \O(\bar{B}_{n-1})_0. \]
Ces flËches sont Èvidemment compatibles quand $n$ varie et donnent donc un morphisme continu 
\[ \Phi : H^1(D,\qp) \to \underset{n} \varprojlim ~ \O(\bar{B}_{n-1})_0 = \O(D)_0. \] 
C'est un isomorphisme : pour le voir, construisons la bijection rÈciproque. Soit $g \in \O(D)_0$, vue comme une suite $(g_n)_n$ d'ÈlÈments de $\O(\bar{B}_{n-1})_0$. Pour tout $n$, $g_n$ est bornÈe, donc il existe $k_n \geq 0$ tel que $p^{k_n} g_n$ soit ‡ valeurs dans la boule centrÈe en $0$ de rayon $p^{-1/(p-1)}$. Alors $\exp(g_n)$ est bien dÈfini ; c'est une fonction inversible sur $\bar{B}_{n-1}$, que l'on peut en particulier voir comment un ÈlÈment de $H^1(\bar{B}_{n-1},\qp)$. On pose alors $f_n = p^{-k_n} \exp(g_n) \in H^1(\bar{B}_{n-1},\qp)$ (pour la structure de $\qp$-espace vectoriel de $H^1(\bar{B}_{n-1},\qp)$). Alors $f=(f_n)_n$ est un antÈcÈdent de $g$ par $\Phi$.  

Pour conclure la preuve de la proposition, il ne reste plus qu'‡ montrer que les $H^i(D,\qp)$ sont nuls pour $i>1$. Soit $B_n$ la boule ouverte de rayon $p^{-1/n}$. On a une suite exacte
\[ 0 \to R^1 \underset{n} \varprojlim ~ H^{1}(B_n,\qp) \to H^2(D,\qp) \to \underset{n} \varprojlim ~ H^2(B_n,\qp) \to 0. \]
Le terme de gauche s'annule, car les flËches de restriction $\O(B_{n+1})_0 \to \O(B_n)_0$ sont d'image dense (proposition \ref{mittag}). De plus dans la limite inverse de droite, on peut remplacer les groupes de cohomologie pro-Ètale par des groupes de cohomologie Ètale. En effet, on peut remplacer les $B_n$ par le systËme cofinal des $\bar{B}_n$. Alors, comme on l'a vu ci-dessus, $H^i(\bar{B}_n,\qp)=H_{\rm \acute{e}t}^i(\bar{B}_n,\qp)$ pour tout $i$ (facile). On en dÈduit avec ce qu'on a dit plus haut que le terme de droite est Ègalement nul et donc $H^2(D,\qp)=0$. Enfin pour $i\geq 3$, on utilise la suite exacte
\[  0 \to R^1 \underset{n} \varprojlim ~ H^{i-1}(\bar{B}_n,\qp) \to H^i(D,\qp) \to \underset{n} \varprojlim ~ H^i(\bar{B}_n,\qp) \to 0 \] 
et le fait que les termes de gauche et de droite sont nuls (toujours parce qu'on peut remplacer cohomologie pro-Ètale par cohomologie Ètale et par le dÈbut de la dÈmonstration).
\end{proof}

\begin{remarque}
MÍme en dimension $1$, il semble plus dÈlicat de dÈcrire \textit{explicitement} le $H^1$ Ètale ‡ coefficients $\qp$ de la boule ouverte ou fermÈe, ou le $H^1$ pro-Ètale du disque fermÈ.
\end{remarque}

On en dÈduit le cas $n=1$ du thÈorËme \ref{cohespaffine}.
\begin{corollaire}\label{cohodroiteaff}
On a $H^i(\mathbf{A}_C^1,\qp)=0$ pour tout $i>1$. De plus, $H^0(\mathbf{A}_C^1,\qp)=\qp$ et $H^1(\mathbf{A}_C^1,\qp)$ s'identifie ‡ l'espace $\O(\mathbf{A}_C^1)_0$ des fonctions rigides analytiques sur $\mathbf{A}_C^1$, nulles en $0$.
\end{corollaire}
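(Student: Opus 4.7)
La strat�gie est d'appliquer mutatis mutandis la d�monstration de la proposition \ref{cohodisque}, en rempla�ant l'exhaustion du disque unit� ouvert par des boules ferm�es de rayon tendant vers $1^-$, par une exhaustion de la droite affine par des boules ferm�es de rayon tendant vers $+\infty$. Pour $n\geq 1$, posons $\bar{B}^{(n)}=\mathrm{Spa}(C\langle p^n T\rangle)$, la boule ferm�e de rayon $p^n$, de sorte que $\bar{B}^{(1)}\subset \bar{B}^{(2)}\subset \cdots$ et $\mathbf{A}_C^1=\bigcup_n \bar{B}^{(n)}$.

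Puisque $C$ est alg�briquement clos et complet, chaque $\bar{B}^{(n)}$ est isomorphe, par changement de variable, � la boule ferm�e unit� standard. Les calculs men�s dans la preuve de la proposition \ref{cohodisque} pour les boules $\bar{B}_n$ s'appliquent donc tels quels aux $\bar{B}^{(n)}$ : on a $H^i(\bar{B}^{(n)},\qp)=0$ pour tout $i\geq 2$ (par les annulations de Berkovich en cohomologie �tale � coefficients $\z/p^k$ combin�es � la suite exacte de Kummer et � la proposition \ref{competproet}), et l'application logarithme fournit un isomorphisme $H^1(\bar{B}^{(n)},\qp)\simeq \O(\bar{B}^{(n-1)})_0$, la restriction � une boule strictement plus petite servant � garantir la convergence de $\log$. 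Aucune des �tapes ne d�pend essentiellement du rayon choisi.

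Il ne reste alors qu'� passer � la limite. Les propositions \ref{mittag} et \ref{replete} fournissent, pour chaque $i\geq 0$, une suite exacte courte
\[ 0 \to R^1 \underset{n} \varprojlim ~ H^{i-1}(\bar{B}^{(n)},\qp) \to H^i(\mathbf{A}_C^1,\qp) \to \underset{n} \varprojlim ~ H^i(\bar{B}^{(n)},\qp) \to 0. \]
Pour $i=0$, on trouve imm�diatement $H^0(\mathbf{A}_C^1,\qp)=\qp$. Pour $i=1$, le terme de gauche s'annule (syst�me projectif constant �gal � $\qp$) et le terme de droite se calcule comme $\varprojlim_n \O(\bar{B}^{(n-1)})_0 = \O(\mathbf{A}_C^1)_0$, d'o� l'isomorphisme cherch�. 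Pour $i\geq 2$, le terme de droite s'annule par ce qui pr�c�de ; le terme de gauche s'annule �galement, en appliquant la proposition \ref{mittag} au syst�me $(\O(\bar{B}^{(n-1)})_0)_n$ pour $i=2$ (les fl�ches de restriction �tant d'image dense puisqu'elles contiennent les polyn�mes en $T$ nuls en $0$, denses dans tout $\O(\bar{B}^{(n-1)})_0$ pour la norme sup), et trivialement pour $i\geq 3$.

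Je n'entrevois aucun obstacle s�rieux, le travail technique principal ayant d�j� �t� accompli dans la proposition \ref{cohodisque}. Le seul point m�ritant une v�rification attentive est la compatibilit� des isomorphismes $H^1(\bar{B}^{(n)},\qp)\simeq \O(\bar{B}^{(n-1)})_0$ avec les fl�ches de transition du syst�me projectif, afin de garantir que la limite s'identifie bien � l'espace $\O(\mathbf{A}_C^1)_0$ des fonctions rigides analytiques sur $\mathbf{A}_C^1$ nulles en z�ro.
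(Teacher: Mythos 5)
Votre strat\'egie est correcte et suit pour l'essentiel la m\^eme route que l'article, mais la preuve qui y est donn\'ee est plus courte que ce que vous proposez : on \'epuise $\mathbf{A}_C^1$ par les disques \emph{ouverts} $D_m$ de rayon croissant et on applique directement la proposition \ref{cohodisque} (valable pour tout rayon apr\`es changement de variable), de sorte que l'analyse des boules ferm\'ees n'est jamais refaite ; seules les propositions \ref{mittag} et \ref{replete} interviennent ensuite, exactement comme chez vous. Votre variante par boules ferm\'ees fonctionne aussi, mais le point que vous signalez vous-m\^eme en conclusion demande d'\^etre reformul\'e plut\^ot que simplement v\'erifi\'e : la m\'ethode de la proposition \ref{cohodisque} ne fournit \emph{pas} d'isomorphisme niveau par niveau $H^1(\bar{B}^{(n)},\qp)\simeq \O(\bar{B}^{(n-1)})_0$. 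Le logarithme ne donne qu'un morphisme $\Phi_n : H^1(\bar{B}^{(n)},\qp) \to \O(\bar{B}^{(n-1)})_0$, et l'application r\'eciproque construite via $g \mapsto p^{-k}\exp(g)$ aboutit dans $H^1(\bar{B}^{(n-1)},\qp)$, c'est-\`a-dire avec un d\'ecalage d'indice ; on obtient ainsi un pro-isomorphisme entre les deux tours, et non des isomorphismes terme \`a terme --- c'est d'ailleurs pourquoi la proposition \ref{cohodisque} ne conclut qu'au niveau de la limite. Cela suffit n\'eanmoins pour votre argument : un pro-isomorphisme identifie les $\varprojlim$ et les $R^1\varprojlim$ des deux syst\`emes, et la densit\'e des images des fl\`eches de restriction dans la tour $(\O(\bar{B}^{(n)})_0)_n$ (les polyn\^omes nuls en $0$ y sont denses) se transf\`ere \`a la tour des $H^1$, ce qui donne l'annulation de $R^1\varprojlim_n H^1(\bar{B}^{(n)},\qp)$ par la proposition \ref{mittag}. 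Avec cette correction, votre d\'emonstration est compl\`ete ; notez simplement que le passage par les disques ouverts, o\`u la proposition \ref{cohodisque} fournit un vrai isomorphisme $H^1(D_m,\qp)\simeq \O(D_m)_0$ \`a chaque \'etage, \'evite enti\`erement cette subtilit\'e.
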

\begin{proof}
Notons pour tout $m>0$, $D_m$ la boule ouverte de rayon $m$. L'existence de la suite exacte
\[ 0 \to R^1 \underset{m} \varprojlim ~ H^{i-1}(D_m,\qp) \to H^i(\mathbf{A}_C^1,\qp) \to \underset{m} \varprojlim ~ H^i(D_m,\qp) \to 0 \]
et la proposition \ref{cohodisque} donnent immÈdiatement le rÈsultat (pour $i=2$, on utilise une fois de plus la proposition \ref{mittag}). 
\end{proof}

Notons qu'exactement la mÍme preuve donne Ègalement le rÈsultat suivant, en dimension quelconque. C'est de celui-ci que nous ferons usage dans la section \ref{extproÈtales}.
\begin{proposition}
Soit $n\geq 1$. On a $H^0(\mathbf{A}_C^n,\qp)=\qp$ et $H^1(\mathbf{A}_C^n,\qp)$ s'identifie ‡ l'espace des fonctions rigides analytiques sur $\mathbf{A}_C^n$, nulles en $0$.
\end{proposition}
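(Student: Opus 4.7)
The plan is to repeat the argument of Corollary \ref{cohodroiteaff}, replacing disks by polydisks throughout. First I would exhaust $\mathbf{A}_C^n$ by the open polydisks $D_m^n$ of polyradius $(m,\dots,m)$ and use the short exact sequence provided by Proposition \ref{replete} (and Proposition \ref{mittag}):
\[ 0 \to R^1 \varprojlim_m H^{i-1}(D_m^n, \qp) \to H^i(\mathbf{A}_C^n, \qp) \to \varprojlim_m H^i(D_m^n, \qp) \to 0. \]
For $i=0$, each $D_m^n$ is connected, so $H^0(D_m^n, \qp) = \qp$ and the inverse limit is $\qp$. For $i=1$, the left-hand $R^1 \varprojlim \qp$ vanishes by Mittag-Leffler, reducing the problem to showing that $H^1(D_m^n, \qp) \simeq \O(D_m^n)_0$.

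To compute $H^1(D_m^n, \qp)$, I would in turn exhaust $D_m^n$ by closed polydisks $\bar{B}_{m,k}^n$ of polyradius $(p^{-1/k},\dots,p^{-1/k})$ and run the same inverse limit argument. For a closed polydisk $\bar{B}$, the Kummer sequence---whose étale cohomology agrees with its pro-étale counterpart for $\z/p^l$-coefficients by Proposition \ref{competproet}---combined with the vanishings $\mathrm{Pic}(\bar{B}) = 0$ and $H^2_{\mathrm{\acute{e}t}}(\bar{B}, \mathbf{G}_m) = 0$, yields
\[ H^1(\bar{B}, \zp) = \varprojlim_l \O(\bar{B})^* / (\O(\bar{B})^*)^{p^l}. \]
Writing $\O(\bar{B})^* = C^* \cdot M$ with $M$ the group of principal units $1 + f$ where $f$ lies in the ideal of elements of norm $<1$, the $p$-adic logarithm then identifies $H^1(\bar{B}, \qp)$ with the space $\O(\bar{B}')_0$ of analytic functions vanishing at the origin on a slightly smaller polydisk $\bar{B}'$, via the same explicit inverse constructed from $\exp$ as in the one-variable computation of Proposition \ref{cohodisque}.

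Taking the inverse limit over $k$, using Mittag-Leffler applied to the $p$-adically complete targets (the maps between spaces of bounded analytic functions vanishing at zero have dense image on successively smaller polydisks), and then the inverse limit over $m$, one obtains the desired identification $H^1(\mathbf{A}_C^n, \qp) \simeq \O(\mathbf{A}_C^n)_0$. The main point to check is not cohomological but rather the two vanishings $\mathrm{Pic}(\bar{B}^n) = 0$ and $H^2_{\mathrm{\acute{e}t}}(\bar{B}^n, \mathbf{G}_m) = 0$ for polydisks of arbitrary dimension; both are known and generalize the one-dimensional inputs \cite[Satz 1]{lutk} and \cite[Lem. 6.1.2]{berk} used in the proof of Proposition \ref{cohodisque}. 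Once these are in hand, the rest of the argument is a word-for-word translation of the one-variable proof, which is precisely the parallelism invoked by the author.
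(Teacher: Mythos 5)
Your proof is correct and is precisely the paper's: the author disposes of this proposition with the single remark that \og exactement la m\^eme preuve \fg{} as in Proposition \ref{cohodisque} and Corollary \ref{cohodroiteaff} applies, which is the word-for-word polydisk translation you spell out (the unit group of a closed polydisk again decomposes as $C^*$ times principal units, and L\"utkebohmert's triviality theorem is valid for polydisks of arbitrary dimension). The only remark worth adding is that, since the statement concerns only $H^0$ and $H^1$, the Kummer sequence requires just $\mathrm{Pic}(\bar{B})=0$, so the higher-dimensional vanishing $H^2_{\mathrm{\acute{e}t}}(\bar{B},\mathbf{G}_m)=0$ --- the one input you flag whose cited reference is genuinely one-dimensional --- is in fact not needed here.
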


\subsection{Cohomologie de l'espace affine de dimension arbitraire} \label{dimgenerale}
Nous allons maintenant dÈmontrer le thÈorËme \ref{cohespaffine} sans restriction sur la dimension. L'idÈe est d'exploiter la suite exacte de faisceaux pro-Ètales\footnote{Cette idÈe (en remplaÁant $\mathbb{B}$ par $\mathbb{B}_{\rm cris}$) nous a ÈtÈ suggÈrÈe par Gabriel Dospinescu.} :
\begin{eqnarray} 0 \to \qp \to \mathbb{B}[1/t]^{\varphi=1} \to \mathbb{B}_{\mathrm{dR}}/\mathbb{B}_{\mathrm{dR}}^+ \to 0. \label{fonda} \end{eqnarray}
La dÈfinition des faisceaux de pÈriodes qui apparaissent dans cette suite exacte et quelques unes de leurs propriÈtÈs sont rappelÈes dans l'appendice \ref{appendice} ; le point clÈ est que cohomologie de ces faisceaux est plus accessible que celle de $\qp$. 
\\

CommenÁons par l'analyse de la cohomologie de $\mathbb{B}_{\mathrm{dR}}/\mathbb{B}_{\mathrm{dR}}^+$.

Rappelons tout d'abord quelques rÈsultats sur la cohomologie des variÈtÈs rigides lisses Stein (\cite{GK1}). 
\begin{proposition} \label{stein}
Soit $X$ un espace Stein lisse sur un corps $p$-adique $K$, de dimension $d$. Les groupes de cohomologie de de Rham de $X$ (au sens de \cite{GK1}) sont les groupes de cohomologie du complexe
\[ \O(X) \to \Omega^1(X) \to \dots \to \Omega^d(X). \]
Les diffÈrentielles sont strictes et les $H_{\rm dR}^i(X)$ ont donc une topologie naturelle, qui en fait des $K$-espaces de FrÈchet. La topologie du dual topologique de ces $K$-espaces de FrÈchet est la topologie localement convexe la plus fine.
\end{proposition}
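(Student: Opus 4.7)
The plan is to deduce the statement from the Stein hypothesis on $X$ in three stages: first reducing the hypercohomology of $\Omega_X^{\bullet}$ to a naive complex of global sections, then endowing each $\Omega^i(X)$ with a Fr\'echet topology, and finally proving the strictness of the differentials (which is the technical crux).

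First, since $X$ is smooth of dimension $d$, the de Rham complex $\Omega_X^{\bullet}$ is a complex of coherent $\O_X$-modules concentrated in degrees $0$ through $d$. Because $X$ is Stein, the analytic analogue of Cartan's theorem B (Kiehl) yields $H^j(X, \Omega^i) = 0$ for all $i$ and all $j > 0$. The hypercohomology spectral sequence $E_1^{i,j} = H^j(X, \Omega^i) \Rightarrow H_{\rm dR}^{i+j}(X)$ therefore degenerates on the $E_1$ page, giving the desired identification of $H_{\rm dR}^i(X)$ with the cohomology in degree $i$ of the global-sections complex.

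Second, by definition of a Stein space one chooses an exhaustion $X = \bigcup_n U_n$ by affinoid opens with $U_n$ relatively compact in $U_{n+1}$. Each $\Omega^i(U_n)$ is then a $K$-Banach space, the transition maps $\Omega^i(U_{n+1}) \to \Omega^i(U_n)$ have dense (in fact relatively compact) image, and $\Omega^i(X) = \varprojlim_n \Omega^i(U_n)$ inherits a structure of nuclear $K$-Fr\'echet space.

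The main obstacle is to establish the strictness of each differential $d_i$, i.e.\ the closedness of $\mathrm{Im}(d_i)$ in $\Omega^{i+1}(X)$. The strategy is to combine the local Poincar\'e lemma on each affinoid $U_n$ (allowing one to solve $d_i \omega = \eta$ whenever $\eta$ is a cocycle) with a Mittag-Leffler argument for the inverse systems $(Z^i(\Omega^{\bullet}(U_n)))_n$ and $(B^i(\Omega^{\bullet}(U_n)))_n$. Concretely, one verifies that the transition maps between these systems of Banach subspaces have dense image, so that the version of Proposition \ref{mittag} for complete metric groups applies and the natural map $\varprojlim_n B^i(\Omega^{\bullet}(U_n)) \to \varprojlim_n Z^i(\Omega^{\bullet}(U_n))$ has closed image equal to a Fr\'echet subspace of $\Omega^{i+1}(X)$. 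Once strictness is granted, each $H_{\rm dR}^i(X)$ is the quotient of a closed Fr\'echet subspace of $\Omega^i(X)$ by a closed subspace, hence inherits a canonical Fr\'echet topology.

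Finally, being a quotient of a nuclear Fr\'echet space by a closed subspace, $H_{\rm dR}^i(X)$ is itself nuclear Fr\'echet. For such spaces, standard duality theory (reflexivity of nuclear Fr\'echets and the fact that every linear form on a barrelled nuclear Fr\'echet space is continuous for the finest locally convex topology) shows that the strong topology on the topological dual coincides with the finest locally convex topology, which yields the last assertion.
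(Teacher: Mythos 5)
Your first two steps (Kiehl's Theorem B plus degeneration of the hypercohomology spectral sequence, and the nuclear Fr\'echet structure on $\Omega^i(X)$) are sound, but the strictness argument rests on a false statement: there is no Poincar\'e lemma on rigid affinoids. Already for the closed unit disk $\mathrm{Sp}(K\langle T\rangle)$ the map $d\colon K\langle T\rangle \to K\langle T\rangle\,dT$ is not surjective, since the formal antiderivative $\sum_n a_nT^{n+1}/(n+1)$ of a convergent series need not converge (take $|a_n|\to 0$ with $|a_n/(n+1)|$ unbounded); the de Rham cohomology of an affinoid is in general infinite-dimensional and non-Hausdorff. Consequently $B^i(\Omega^{\bullet}(U_n))$ is \emph{not} a closed -- hence not a Banach -- subspace of $\Omega^{i+1}(U_n)$, and your Mittag-Leffler argument for the systems $(B^i(U_n))_n$ and $(Z^i(U_n))_n$ has no footing. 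Moreover, even where it applies, Mittag-Leffler only kills $R^1\varprojlim$; it does not by itself show that $\mathrm{Im}(d_i)$ is closed in $\Omega^{i+1}(X)$: one would only get a closed subspace $\varprojlim_n \overline{B^i(U_n)}$ containing $\mathrm{Im}(d_i)$, and the equality is exactly the nontrivial point, which fails at the naive affinoid level. The missing idea -- the whole reason the statement refers to \cite{GK1} -- is \emph{overconvergence}: one exhausts the Stein space by dagger (surconvergent) affinoids, whose de Rham cohomology is \emph{finite-dimensional}; then $H_{\rm dR}^i(X)=\varprojlim_n H_{\rm dR}^i(U_n^{\dagger})$ with finite-dimensional, in particular Hausdorff, terms, the projective system trivially satisfies Mittag-Leffler, and $\mathrm{Im}(d_i)$ is closed as the preimage of $0$ under a continuous map to a Hausdorff space. (For the first step one should also say a word on why Gro\ss e-Kl\"onne's dagger cohomology agrees with naive de Rham cohomology here; this holds because a Stein space is partially proper.)

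Your last step is wrong for the same underlying reason: nuclearity of a Fr\'echet space does \emph{not} imply that its strong dual carries the finest locally convex topology. The space $\O(D)$ of functions on the open disk is nuclear Fr\'echet, yet its strong dual is a compact-type inductive limit of Banach spaces and not a direct sum of lines: if its topology were the finest locally convex one, every linear form on the dual would be continuous, and reflexivity would identify $\O(D)$ with the full algebraic dual of its topological dual, which is absurd for dimension reasons. What the assertion actually uses -- and this is precisely the one-line justification the paper gives after the statement, the proposition itself being quoted from \cite{GK1} without proof -- is again the finite-dimensionality of the de Rham cohomology of overconvergent affinoids: it exhibits $H_{\rm dR}^i(X)$ as a countable projective limit of finite-dimensional spaces, hence topologically a closed subspace of $K^{\mathbf{N}}$, whose strong dual is a countable inductive limit of finite-dimensional spaces and therefore carries the finest locally convex topology. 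So both of the delicate claims in the proposition hinge on a finiteness input from the dagger theory that your proposal never brings in.
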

La derniËre assertion traduit le fait que la cohomologie de de Rham des affinoÔdes surconvergents est de dimension finie. 

\begin{lemme} \label{commutetenseur}
Soit $K$ un corps $p$-adique (c'est-‡-dire une extension finie de $\qp$), $W$ un $K$-espace de Banach et $\mathcal{C}^{\bullet}$ un complexe strict de $K$-espaces de FrÈchet. Pour tout $i$, 
\[ H^i(\mathcal{C}^{\bullet})  \widehat{\otimes}_K W \simeq H^i(\mathcal{C}^{\bullet}  \widehat{\otimes}_K W). \]
\end{lemme}
\begin{proof}
On sait qu'il existe un ensemble $I$ tel que $W\simeq \ell_0(I,K)$, car $K$ est de valuation discrËte. Comme $\mathcal{C}^{\bullet}$ un complexe de FrÈchets, d'aprËs \cite[\S 17]{schneider}, on a pour tout $i$
\[ \mathcal{C}^i \widehat{\otimes}_K W \simeq \ell_0(I,\mathcal{C}^i). \]
On en dÈduit immÈdiatement que pour tout $i$
\[ H^i(\mathcal{C}^{\bullet} ~ \widehat{\otimes}_K W) \simeq \ell_0(I, H^i(\mathcal{C}^{\bullet})), \]
d'o˘ le lemme.
\end{proof}

On en dÈduit la
\begin{proposition} \label{basechange}
Soit $X$ une variÈtÈ rigide lisse et Stein sur un corps $p$-adique $K$. Pour toute $K$-algËbre de Banach $W$ et tout $i$,
\[ H_{\rm dR}^i(X_W) \simeq H_{\rm dR}^i(X) \widehat{\otimes}_K W. \]
\end{proposition}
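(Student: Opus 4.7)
On souhaite appliquer le lemme \ref{commutetenseur} au complexe strict de Fr\'echets $\Omega^{\bullet}(X)$ fourni par la proposition \ref{stein}, qui calcule $H_{\rm dR}^{\bullet}(X)$. La partie substantielle de la preuve est l'identification naturelle
\[ \Omega^{\bullet}(X_W) \simeq \Omega^{\bullet}(X) \widehat{\otimes}_K W, \]
apr\`es laquelle l'\'enonc\'e d\'ecoule directement du lemme appliqu\'e au Banach $W$ et au complexe strict $\Omega^{\bullet}(X)$.

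Pour cette identification, je proc\'ederais en deux temps. D'abord, pour $U=\mathrm{Sp}(A)$ affino\"{\i}de lisse, on dispose de l'\'egalit\'e standard $\Omega^i(U_W) = \Omega^i(A) \widehat{\otimes}_K W$, cons\'equence du fait que les diff\'erentielles continues de la $K$-alg\`ebre de Banach $A \widehat{\otimes}_K W$ (sur $W$) s'obtiennent en \og tensorisant avec $W$\fg{} celles de $A$ sur $K$. Ensuite, comme $X$ est Stein, on fixe un recouvrement admissible croissant $X=\bigcup_n U_n$ par des affino\"{\i}des avec $U_n \Subset U_{n+1}$, de sorte que $\Omega^i(X) = \varprojlim_n \Omega^i(U_n)$ comme espace de Fr\'echet, et de m\^eme $\Omega^i(X_W) = \varprojlim_n \Omega^i(U_{n,W})$. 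Il suffit alors de v\'erifier que le foncteur $(\cdot) \widehat{\otimes}_K W$ commute \`a cette limite projective d\'enombrable. Ceci suit de la pr\'esentation $W \simeq \ell_0(I,K)$ utilis\'ee dans la preuve du lemme \ref{commutetenseur} : le foncteur $\ell_0(I,\cdot)$ commute aux limites projectives d\'enombrables de Fr\'echets, car une suite $I$-index\'ee \`a valeurs dans $\varprojlim_n F_n$ tend vers $0$ si et seulement si elle tend vers $0$ dans chaque $F_n$.

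Avec l'identification $\Omega^{\bullet}(X_W) = \Omega^{\bullet}(X) \widehat{\otimes}_K W$ en main, le lemme \ref{commutetenseur} donne imm\'ediatement
\[ H_{\rm dR}^i(X_W) = H^i\bigl(\Omega^{\bullet}(X) \widehat{\otimes}_K W\bigr) \simeq H^i\bigl(\Omega^{\bullet}(X)\bigr) \widehat{\otimes}_K W = H_{\rm dR}^i(X) \widehat{\otimes}_K W, \]
comme souhait\'e. Le point qui m\'erite le plus d'attention sera la validit\'e de l'\'enonc\'e affino\"{\i}de $\Omega^i(U_W) = \Omega^i(A) \widehat{\otimes}_K W$ pour une $K$-alg\`ebre de Banach $W$ quelconque, ainsi que la d\'efinition pr\'ecise de $X_W$ et de sa cohomologie de de Rham dans ce cadre (au sens de \cite{GK1}) : il faudra s'assurer que le complexe des diff\'erentielles globales calcule encore la cohomologie de de Rham, ce qui est une cons\'equence formelle du caract\`ere Stein de $X$ (la cohomologie coh\'erente de $X_W$ restant concentr\'ee en degr\'e $0$) et de la lissit\'e.
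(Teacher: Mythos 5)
Votre preuve suit exactement l'argument du papier : la proposition \ref{stein} fournit un complexe strict de Fr\'echets calculant $H_{\rm dR}^{\bullet}(X)$, et le lemme \ref{commutetenseur} appliqu\'e \`a ce complexe et \`a $W$ donne le r\'esultat. La d\'emonstration du papier tient en une phrase et laisse implicite l'identification $\Omega^{\bullet}(X_W) \simeq \Omega^{\bullet}(X) \widehat{\otimes}_K W$ (ainsi que le sens pr\'ecis de $H_{\rm dR}^i(X_W)$ pour $W$ Banach quelconque) que vous v\'erifiez explicitement et correctement via le cas affino\"{\i}de et la commutation de $\ell_0(I,\cdot)$ aux limites projectives d\'enombrables.
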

\begin{proof}
La proposition \ref{stein} permet d'appliquer le lemme aux complexes des sections globales du complexe de de Rham de $X$ et ‡ $W$. \end{proof}

\begin{proposition} \label{poincare}
Soit $X$ une variÈtÈ rigide lisse sur un corps $p$-adique $K$, de dimension $n$. On a une suite exacte de faisceaux pro-Ètales sur $X$ :
\[ 0 \to \mathbb{B}_{\mathrm{dR},X} \to \O \mathbb{B}_{\mathrm{dR},X} \to \O \mathbb{B}_{\mathrm{dR},X} \otimes_{\O_{X}} \Omega_{X}^1 \to \dots \to \O \mathbb{B}_{\mathrm{dR},X} \otimes_{\O_{X}}  \Omega_{X}^n \to 0. \]
ainsi que pour tout $r \in \z$ une suite exacte de faisceaux pro-Ètales sur $X$ :
\[ 0 \to \mathrm{Fil}^r \mathbb{B}_{\mathrm{dR},X} \to \mathrm{Fil}^r \O \mathbb{B}_{\mathrm{dR},X} \to \mathrm{Fil}^{r-1} \O \mathbb{B}_{\mathrm{dR},X} \otimes_{\O_{X}} \Omega_{X}^1 \to \dots \to \mathrm{Fil}^{r-n} \O \mathbb{B}_{\mathrm{dR},X} \otimes_{\O_X}  \Omega_{X}^n \to 0. \]
\end{proposition}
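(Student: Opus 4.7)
The plan is to reduce both sequences to a classical formal Poincaré lemma by passing to a suitable perfectoid pro-étale cover of $X$ on which $\O\mathbb{B}_{\mathrm{dR}}^+$ admits an explicit description as a formal power series ring over $\mathbb{B}_{\mathrm{dR}}^+$. Since the sequences consist of pro-étale sheaves on $X$ and exactness can be tested on a pro-étale cover, we may first localize: shrink to a small affinoid $U = \mathrm{Spa}(R,R^+) \subset X$ admitting an étale map $U \to \mathbf{T}^n := \mathrm{Spa}(K\langle T_1^{\pm 1}, \ldots, T_n^{\pm 1}\rangle)$, and then pull back along the pro-étale cover $\tilde{U} := U \times_{\mathbf{T}^n} \tilde{\mathbf{T}}^n \to U$, where $\tilde{\mathbf{T}}^n$ is the perfectoid torus obtained by adjoining all $p$-power roots of the coordinates.

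On $\tilde{U}$ one has, following Scholze, an isomorphism of filtered $\mathbb{B}_{\mathrm{dR}}^+$-algebras with connection
\begin{equation*}
\O\mathbb{B}_{\mathrm{dR}}^+|_{\tilde{U}} \;\cong\; \mathbb{B}_{\mathrm{dR}}^+|_{\tilde{U}}[[Y_1, \ldots, Y_n]],
\end{equation*}
where $Y_i := T_i - [T_i^\flat]$, the filtration on the right-hand side is given by $\mathrm{Fil}^r = \bigl\{\sum_I a_I Y^I : a_I \in \mathrm{Fil}^{r-|I|}\mathbb{B}_{\mathrm{dR}}^+\bigr\}$ (so that in particular $Y_i \in \mathrm{Fil}^1$), and the derivation $d$ acts as $\sum_{i=1}^n dT_i \otimes \partial/\partial Y_i$.

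Under this identification, the two sequences of the proposition become the Koszul complex of the commuting derivations $\partial/\partial Y_i$ on the formal power series ring $\mathbb{B}_{\mathrm{dR}}^+|_{\tilde{U}}[[Y_1,\ldots,Y_n]]$ and its filtered avatar. Exactness is the classical formal Poincaré lemma, proved by the explicit $Y_i$-antidifferentiation homotopy; since $\int Y_i^k\, dY_i = Y_i^{k+1}/(k+1)$ sends $\mathrm{Fil}^r$ into $\mathrm{Fil}^{r+1}$, this homotopy is compatible with the filtration in precisely the way needed, and the filtered Poincaré lemma is immediate. The unfiltered first sequence follows from the filtered one by inverting $t$, which is an exact operation that kills the filtration and recovers $\mathbb{B}_{\mathrm{dR}}$, $\O\mathbb{B}_{\mathrm{dR}}$ from their $+$-versions.

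The main obstacle is the local description of $\O\mathbb{B}_{\mathrm{dR}}^+$ on the perfectoid cover $\tilde{U}$, with filtration and connection behaving as claimed. This is a non-trivial computation with period sheaves on a perfectoid space, but it is carried out in Scholze's work on $p$-adic Hodge theory for rigid analytic varieties and can be invoked here; once in hand, the remainder of the argument is purely formal.
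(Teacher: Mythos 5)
Your proposal is correct and coincides with how the result is actually established: the paper itself gives no argument but simply cites \cite[Cor. 6.13]{Shodge}, and your sketch --- localizing to a small affinoid étale over the torus, passing to the perfectoid toric cover, invoking the description $\O\mathbb{B}_{\mathrm{dR}}^+ \simeq \mathbb{B}_{\mathrm{dR}}^+[[Y_1,\dots,Y_n]]$ with its filtration and connection, and concluding by the filtered formal Poincaré lemma via the $Y_i$-antidifferentiation homotopy (harmless here since the topology is $\ker(\theta)$-adic, not $p$-adic, so the denominators $k+1$ cause no convergence issue) --- is precisely Scholze's proof of that cited corollary. The only point worth flagging is that the local structure result you invoke requires the corrected definition of $\O\mathbb{B}_{\mathrm{dR}}^+$ (as in the paper's appendix, taking the direct limit over $i$ before completing along $\ker(\theta)$), which is the form under which Scholze's Proposition 6.10 holds.
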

\begin{proof}
Voir \cite[Cor. 6.13]{Shodge}. Nous attirons l'attention du lecteur sur le fait que la notation $\O_X$, $\Omega_X^i$ dÈsigne ici les faisceaux pro-Ètales $\nu^* \O_X$, $\nu^* \Omega_X^i$, $\nu$ Ètant le morphisme de sites de $X_{\mathrm{pro\acute{e}t}}$ vers $X_{\mathrm{\acute{e}t}}$.
\end{proof}

Nous aurons Ègalement besoin du fait suivant. On dispose de morphisme de topos :
\[ \widetilde{X}_{\mathrm{pro\acute{e}t}} \overset{\nu'}\longrightarrow \widetilde{X}_{C,\mathrm{\acute{e}t}} \overset{\lambda}\longrightarrow \widetilde{X}_{\mathrm{\acute{e}t}},  \]
dont la composÈe est le morphisme de topos $\nu : \widetilde{X}_{\mathrm{pro\acute{e}t}} \to \widetilde{X}_{\mathrm{\acute{e}t}}$. Le morphisme $\lambda$ est induit par le morphisme de sites $X_{C,\mathrm{\acute{e}t}} \to X_{\mathrm{\acute{e}t}}$ tel que si $U \to X$ est Ètale, $\lambda^*(U)=U_C$. Le thÈorËme d'Elkik (\cite{elkik}) donne une Èquivalence :
\[ \underset{K'|K}\varprojlim \widetilde{X}_{K',\mathrm{\acute{e}t}} \simeq \widetilde{X}_{C,\mathrm{\acute{e}t}}, \]
$K'$ parcourant les extensions de degrÈ fini de $K$.
Si $i<j$ sont deux entiers et $\mathcal{F}$ un fibrÈ vectoriel sur $X$, $\mathcal{F} \widehat{\otimes}_K \mathrm{Fil}^i B_{\rm dR}/\mathrm{Fil}^j B_{\rm dR}$\footnote{Soit $\mathcal{G}$ un faisceau cohÈrent sur un espace rigide sur $K$ et $W$ un $K$-espace de Banach. Le faisceau $\mathcal{G} \widehat{\otimes}_K W$ est le faisceau dont les sections sur un ouvert quasi-compact $U$ sont $\mathcal{G}(U) \widehat{\otimes}_K W$ (noter que $\mathcal{G}(U)$ a une structure naturelle de $K$-espace de Banach). Il s'agit bien d'un faisceau, puisque $\hat{\otimes}_K W$ prÈserve les suites exactes, cf. le lemme \ref{commutetenseur}.} est un faisceau de $\O_{X_{K'}}$-modules pour toute extension finie $K'$ de $K$, car $B_{\rm dR}$ est une $\bar{K}$-algËbre. On peut donc par l'Èquivalence d'Elkik voir $\mathcal{F} \widehat{\otimes}_K \mathrm{Fil}^i B_{\rm dR}/\mathrm{Fil}^j B_{\rm dR}$ comme un faisceau Ètale sur $X_C$, que l'on notera encore $\mathcal{F} \widehat{\otimes}_K \mathrm{Fil}^i B_{\rm dR}/\mathrm{Fil}^j B_{\rm dR}$ pour simplifier, en particulier dans l'ÈnoncÈ suivant.

\begin{proposition} \label{rmuÈtoile}
Soit $\mathcal{F}$ un fibrÈ vectoriel sur $X$. Alors, pour tout $i<j$,
\[ R\nu'_* (\mathrm{Fil}^i \O\mathbb{B}_{\mathrm{dR},X}/\mathrm{Fil}^j \O\mathbb{B}_{\mathrm{dR},X} \otimes_{\O_X} \mathcal{F}) = \mathcal{F} \widehat{\otimes}_K \mathrm{Fil}^i B_{\mathrm{dR}}/ \mathrm{Fil}^j B_{\rm dR}. \]
\end{proposition}
\begin{proof}
Nous affirmons que la flËche naturelle
\[ \mathcal{F} \widehat{\otimes}_K \mathrm{Fil}^i B_{\mathrm{dR}}/ \mathrm{Fil}^j B_{\rm dR} \to R\nu'_*(\mathrm{Fil}^i \O\mathbb{B}_{\mathrm{dR},X}/\mathrm{Fil}^j \O\mathbb{B}_{\mathrm{dR},X} \otimes_{\O_X} \mathcal{F}), \]
est un quasi-isomorphisme filtrÈ, pour les filtrations naturelles des deux cÙtÈs. Il suffit de le tester sur les graduÈs.

On a, par la formule de projection, pour tout $k$ :
\[ R \nu'_* (\mathrm{gr}^k \O\mathbb{B}_{\mathrm{dR},X} \otimes_{\O_X} \mathcal{F}) = R \nu'_*  \mathrm{gr}^k \O\mathbb{B}_{\mathrm{dR},X} \otimes_{\lambda^{-1} \O_X} \lambda^{-1} \mathcal{F}. \]
Or, d'aprËs \cite[Prop. 6.16 (i)]{Shodge},
\[ R \nu'_* \mathrm{gr}^k \O\mathbb{B}_{\mathrm{dR},X} = \nu'_* \mathrm{gr}^k \O\mathbb{B}_{\mathrm{dR},X} = \O_{X_C}(k). \]
DÈcrivons le faisceau $\lambda^{-1} \mathcal{F} \otimes_{\lambda^{-1} \O_X} \O_{X_C}$. Soit $U$ un ouvert Ètale de $X_C$. Quitte ‡ rÈtrÈcir $U$, on peut supposer que $U\to X_C$ provient par extension des scalaires d'un ouvert Ètale $V\to X_{K'}$, avec $K'/K$ finie. Alors on a
\[ \lambda^{-1} \mathcal{F} (U) = \underset{K''/K' ~ \mathrm{finie}} \varinjlim \mathcal{F}(V_{K"}). \]
D'o˘ :
\[ (\lambda^{-1} \mathcal{F} \otimes_{\lambda^{-1} \O_X} \O_{X_C})(U) = \underset{K''/K ~ \mathrm{finie}} \varinjlim \mathcal{F}(V_{K''}) \otimes_{V_{K''}} U = \mathcal{F}_C(U). \]
Par consÈquent, on a bien
\[ R \nu'_* (\mathrm{gr}^k \O\mathbb{B}_{\mathrm{dR},X} \otimes_{\O_X} \mathcal{F}) \simeq \lambda^{-1} \mathcal{F} \widehat{\otimes}_K C(k). \]
\end{proof}

\begin{proposition} \label{bdrplusstein}
Soit $n\geq 1$. On a $H^0(\mathbf{A}_C^n,\mathbb{B}_{\rm dR}^+)= B_{\rm dR}^+$ et pour tout $i>0$, 
\[ H^i(\mathbf{A}_C^n, \mathbb{B}_{\rm dR}^+) = \mathrm{Ker}(d_i), \]
avec les notations du thÈorËme \ref{cohespaffine}. 
\end{proposition}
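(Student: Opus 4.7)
Le plan est d'utiliser le lemme de Poincar\'e filtr\'e (Proposition \ref{poincare}, cas $r=0$) pour r\'esoudre $\mathbb{B}_{\mathrm{dR}}^+$ sur le site pro-\'etale de $\mathbf{A}_C^n$, de pousser cette r\'esolution sur le site \'etale via $R\nu'_*$ gr\^ace \`a la Proposition \ref{rmu�toile}, puis d'exploiter le caract\`ere Stein de $\mathbf{A}_C^n$ pour ramener $R\Gamma(\mathbf{A}_C^n, \mathbb{B}_{\mathrm{dR}}^+)$ au calcul de la cohomologie d'un complexe explicite de sections globales.

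Plus pr\'ecis\'ement, la Proposition \ref{poincare} fournit une r\'esolution
\[ \mathbb{B}_{\mathrm{dR}}^+ \simeq \bigl[ \mathrm{Fil}^0 \O\mathbb{B}_{\mathrm{dR}} \to \mathrm{Fil}^{-1} \O\mathbb{B}_{\mathrm{dR}} \otimes_{\O} \Omega^1 \to \cdots \to \mathrm{Fil}^{-n} \O\mathbb{B}_{\mathrm{dR}} \otimes_{\O} \Omega^n \bigr]. \]
En \'ecrivant chaque $\mathrm{Fil}^{-k}\O\mathbb{B}_{\mathrm{dR}}$ comme limite projective des $\mathrm{Fil}^{-k}/\mathrm{Fil}^j$, la Proposition \ref{rmu�toile} appliqu\'ee \`a chaque quotient fournit, par passage \`a la limite (en annulant les $R^1\varprojlim$ via les Propositions \ref{mittag} et \ref{replete}),
\[ R\nu'_*\bigl( \mathrm{Fil}^{-k}\O\mathbb{B}_{\mathrm{dR}} \otimes_{\O} \Omega^k \bigr) = \Omega^k_{\mathbf{A}_C^n} \widehat{\otimes}_C \mathrm{Fil}^{-k} B_{\mathrm{dR}}, \]
concentr\'e en degr\'e $0$ (les twists de Tate implicites \'etant ignor\'es comme convenu).

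Ensuite, le caract\`ere Stein de $\mathbf{A}_C^n$ assure que, pour tout $k\geq 0$ et tout $i>0$, $H^i(\mathbf{A}_C^n, \Omega^k \widehat{\otimes}_C \mathrm{Fil}^{-k} B_{\mathrm{dR}}) = 0$ : modulo $\mathrm{Fil}^j$, c'est une extension successive finie de copies de $\Omega^k$, dont la cohomologie sup\'erieure est nulle par Stein-it\'e (Proposition \ref{basechange}), et le passage \`a la limite en $j$ repose sur Mittag-Leffler (Proposition \ref{mittag}). La suite spectrale de Leray identifie donc $R\Gamma(\mathbf{A}_C^n, \mathbb{B}_{\mathrm{dR}}^+)$ au complexe de sections globales
\[ \O(\mathbf{A}_C^n) \widehat{\otimes}_C B_{\mathrm{dR}}^+ \to \Omega^1(\mathbf{A}_C^n) \widehat{\otimes}_C \mathrm{Fil}^{-1} B_{\mathrm{dR}} \to \cdots \to \Omega^n(\mathbf{A}_C^n) \widehat{\otimes}_C \mathrm{Fil}^{-n} B_{\mathrm{dR}}, \]
de diff\'erentielle $d\otimes\mathrm{id}$ (la composante ``cristalline'' de la connexion de Poincar\'e s'annulant par horizontalit\'e de $\mathbb{B}_{\mathrm{dR}}$ dans $\O\mathbb{B}_{\mathrm{dR}}$).

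Le calcul de la cohomologie de ce complexe constituera l'\'etape la plus d\'elicate. En degr\'e $0$, $\mathrm{Ker}(d_0\otimes\mathrm{id})$ est form\'e des $f\otimes b$ avec $f\in C$ constante, d'o\`u $H^0 = B_{\mathrm{dR}}^+$. Pour $i>0$, l'acyclicit\'e de de Rham de $\mathbf{A}_C^n$ donne $\mathrm{Ker}(d_i) = \mathrm{Im}(d_{i-1})$, et il en d\'ecoule que
\[ H^i(\mathbf{A}_C^n, \mathbb{B}_{\mathrm{dR}}^+) = \mathrm{Ker}(d_i) \widehat{\otimes}_C \bigl( \mathrm{Fil}^{-i} B_{\mathrm{dR}} / \mathrm{Fil}^{-(i-1)} B_{\mathrm{dR}} \bigr) \cong \mathrm{Ker}(d_i), \]
le quotient du $B_{\mathrm{dR}}$ \'etant isomorphe \`a $C$ (modulo twist de Tate). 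L'obstacle principal est la v\'erification rigoureuse de ces manipulations de produits tensoriels compl\'et\'es et de passages au quotient, qui repose de mani\`ere essentielle sur la strictit\'e de la diff\'erentielle de de Rham de $\mathbf{A}_C^n$ (Proposition \ref{stein}).
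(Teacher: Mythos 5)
Your proof is correct in outline, but it is not the proof the paper gives: the paper establishes Proposition \ref{bdrplusstein} by running the spectral sequence of the filtered complex $\mathbb{B}_{\rm dR}^+/t^k$, identifying the $E_1$-page with (twisted) global differential forms via the graded Poincar\'e lemma, and then invoking Galois equivariance twice in an essential way --- once to force degeneration at $E_2$ (a nonzero higher differential would equate mismatched Tate twists), and once to show that the transition maps from level $k+1$ to level $k$ kill the $\mathrm{Coker}(d_i)$-part of the extensions
\[ 0 \to \mathrm{Coker}(d_i)(k-i-1) \to H^i(\mathbf{A}_C^n,\mathbb{B}_{\rm dR}^+/t^k) \to \mathrm{Ker}(d_i)(-i) \to 0, \]
so that only $\mathrm{Ker}(d_i)$ survives in the limit $\mathbb{B}_{\rm dR}^+ = R\varprojlim_k \mathbb{B}_{\rm dR}^+/t^k$. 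You instead push the untruncated filtered Poincar\'e resolution forward by $R\nu'_*$ and compute the cohomology of the resulting explicit complex directly; this is precisely the alternative route the paper itself records in Remark \ref{rnuprimebdrplus}. Your route buys a genuine simplification: since the pushed-forward complex computes $R\Gamma(\mathbf{A}_C^n,\mathbb{B}_{\rm dR}^+)$ on the nose, there is no Ker/Coker extension problem and no degeneration to check, so the Tate-twist bookkeeping that drives the paper's argument becomes dispensable (your convention of ignoring twists is harmless here, whereas in the paper's proof the twists are the engine). The price, which you correctly identify, is the functional-analytic care needed to pass $\widehat{\otimes}$ and $\varprojlim$ through kernels and images, resting on the strictness of the de Rham differentials (Propositions \ref{stein} and \ref{commutetenseur}).

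One point in your write-up is genuinely wrong as stated and must be repaired: all your completed tensor products are taken over $C$, as in $\Omega^k_{\mathbf{A}_C^n} \widehat{\otimes}_C \mathrm{Fil}^{-k}B_{\rm dR}$ and $\mathrm{Ker}(d_i)\widehat{\otimes}_C \bigl(\mathrm{Fil}^{-i}B_{\rm dR}/\mathrm{Fil}^{-(i-1)}B_{\rm dR}\bigr)$. These expressions do not make sense: $B_{\rm dR}^+$ is a $\overline{\mathbf{Q}}_p$-algebra with residue field $C$ but carries no continuous $C$-algebra structure (sections of $\theta$ exist only abstractly), and, just as importantly, Lemma \ref{commutetenseur} --- on which all your manipulations of $\widehat{\otimes}$ rest --- is proved only over a discretely valued base $K$, since it needs $W \simeq \ell_0(I,K)$. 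This is exactly why the paper's proposition computing $R\nu'_*\bigl(\mathrm{Fil}^i\O\mathbb{B}_{\mathrm{dR},X}/\mathrm{Fil}^j\O\mathbb{B}_{\mathrm{dR},X}\otimes_{\O_X}\mathcal{F}\bigr)$ descends everything to a finite extension $K$ of $\qp$ and takes $\widehat{\otimes}_K$. The fix is routine within your own framework: work with $\mathbf{A}^n_{\qp}$ and its de Rham complex over $\qp$, so that the pushed-forward complex reads $\O(\mathbf{A}_{\qp}^n)\widehat{\otimes}_{\qp}B_{\rm dR}^+ \to \Omega^1(\mathbf{A}_{\qp}^n)\widehat{\otimes}_{\qp}t^{-1}B_{\rm dR}^+ \to \cdots \to \Omega^n(\mathbf{A}_{\qp}^n)\widehat{\otimes}_{\qp}t^{-n}B_{\rm dR}^+$, compute its cohomology as you do (the constants are then $\qp$, whence $H^0=B_{\rm dR}^+$, and $H^i = \mathrm{Ker}(d_i^{\qp})\widehat{\otimes}_{\qp} t^{-i}B_{\rm dR}^+/t^{-i+1}B_{\rm dR}^+$ for $i>0$), and recover the statement over $C$ at the end via $\mathrm{Ker}(d_i^{\qp})\widehat{\otimes}_{\qp} C = \mathrm{Ker}(d_i)$, which follows from Proposition \ref{basechange} and strictness. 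Two smaller attributions should also be corrected: the Stein vanishing you invoke is Kiehl-type vanishing of coherent cohomology plus Mittag-Leffler (Proposition \ref{mittag}), not Proposition \ref{basechange}; and Proposition \ref{replete} applies to inverse systems on the pro-\'etale site with surjective transitions, so you should either take the limit $\mathrm{Fil}^{-k}\O\mathbb{B}_{\rm dR} = R\varprojlim_j \mathrm{Fil}^{-k}\O\mathbb{B}_{\rm dR}/\mathrm{Fil}^j$ there before applying $R\nu'_*$, or handle the limit on the \'etale site by the EGA III Mittag-Leffler argument. With these corrections your argument goes through.
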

\begin{proof}
Le dÈbut de la dÈmonstration s'applique ‡ n'importe quel espace rigide lisse Stein de dimension $n$ dÈfini sur une extension finie $K$ de $\qp$. Soit $k\geq 1$, et $i>0$. ConsidÈrons la suite spectrale de complexes filtrÈs
\[ E_1^{p,i-p} = H^{i}(X_C, \mathrm{gr}^p(\mathbb{B}_{\rm dR}^+/t^k)) \Longrightarrow H^i(X_C, \mathbb{B}_{\rm dR}^+/t^k). \]
On va supposer $k>n$ et $k>2$, ce qui est loisible, puisque l'on prendra ‡ la fin la limite sur $k \to +\infty$. Calculons les termes de la premiËre page. Si $p<0$ ou $p\geq k$, $E_1^{p,i-p}=0$. Sinon,
\[ \mathrm{gr}^p(\mathbb{B}_{\rm dR}^+/t^k) = \widehat{\O}_X(p). \]
Or, le lemme de PoincarÈ \ref{poincare} donne une rÈsolution :
\[ 0 \to \widehat{\O}_X \to \mathrm{gr}^0 \O \mathbb{B}_{\rm dR} \to \mathrm{gr}^{-1} \O \mathbb{B}_{\rm dR} \otimes_{\O_X} \Omega_X^1 \to \dots \to \mathrm{gr}^{-n} \O \mathbb{B}_{\rm dR} \otimes_{\O_X} \Omega_X^n \to 0. \]
Comme on l'a notÈ au cours de la preuve de la proposition \ref{bdrstein},
\[ R\nu'_* ( \mathrm{gr}^{-k} \O \mathbb{B}_{\rm dR} \otimes_{\O_X} \Omega_X^k) = \Omega_{X_C}^k(-k). \]
Les diffÈrentielles dans la rÈsolution obtenue en appliquant $R\nu'_*$ ‡ la rÈsolution prÈcÈdente vont donc de $\Omega_{X_C}^k(-k)$ vers $\Omega_{X_C}^{k+1}(-k-1)$ et sont $C$-linÈaires et compatibles ‡ l'action de Galois, donc forcÈment nulles. On en dÈduit que pour tout $p\geq 0$,
\[ R\nu'_* \widehat{\O}_X(p) = \bigoplus_{k=0}^n \Omega_{X_C}^k(p-k)[-k] \]
et donc que si $0 \leq p <k$,
\[ E_1^{p,i-p} = H^i(X_{C,\mathrm{\acute{e}t}}, R\nu'_* \widehat{\O}_X(p)) = \Omega^i(X_C)(p-i), \]
puisque $X_C$ est Stein.

La diffÈrentielle $d_1$ envoie $E_1^{p,q}= \Omega^{p+q}(X_C)(-q)$ vers $E_1^{p+1,q}= \Omega^{p+1+q}(X_C)(-q)$ et s'identifie ‡ $d_{p+q}(-q)$. Comme on a : 
\[ E_2^{p,i-p} =  \mathrm{Ker} (E_1^{p,i-p} \to E_1^{p+1,i-p}) / \mathrm{Im} (E_1^{p-1,i-p} \to E_1^{p,i-p}), \]
on en dÈduit que si $0<p<k-1$,
\[ E_2^{p,i-p} = H_{\rm dR}^i(X_C)(p-i) = H_{\rm dR}^i(X)  \widehat{\otimes}_K C(p-i) \]
(la derniËre ÈgalitÈ venant de la proposition \ref{basechange}), tandis que pour $p=0$,
\[ E_2^{0,i} = \mathrm{Ker}(d_i)(-i) \]
et pour $p=k-1$,
\[ E_2^{k-1,i-k+1} = \mathrm{Coker}(d_i)(k-i-1). \] 
Prenons maintenant $K=\qp$ et $X=\mathbf{A}_{\qp}^n$. Sa cohomologie de de Rham en degrÈ positif est triviale. 

Montrons par rÈcurrence sur $r\geq 2$ que $E_2^{p,i-p} = E_r^{p,i-p}$ pour tout $p$ et pour tout $i>0$. Supposons le rÈsultat connu pour un $r\geq 2$. Il suffit pour obtenir le rÈsultat pour $r+1$ de montrer que toutes les diffÈrentielles ‡ la $r$-Ëme page sont nulles. Alors $d_r$ envoie $E_r^{p,i-p}=E_2^{p,i-p}$ vers $E_r^{p+r,i-p-r+1}=E_2^{p+r,i-p-r+1}$. C'est donc par les calculs prÈcÈdents la flËche nulle, sauf Èventuellement si $p=0$ et $r=k-1$ ; dans ce cas, c'est une flËche de $\mathrm{Ker}(d_i)(-i)$ vers $\mathrm{Coker}(d_{i+1})(k-i-2)$. Comme cette flËche est compatible ‡ l'action de Galois, si elle Ètait non nulle, on aurait $-i=k-i-2$, i.e. $k=2$, ce qu'on a exclu. On en dÈduit finalement que la suite spectrale dÈgÈnËre ‡ la deuxiËme page. Pour tout $i>0$, on a donc une extension :
\[ 0 \to \mathrm{Coker}(d_i)(k-i-1) \to H^i(\mathbf{A}_C^n,\mathbb{B}_{\rm dR}^+/t^k) \to \mathrm{Ker}(d_i)(-i) \to 0. \]
Quand on passe de $k$ ‡ $k+1$, la flËche $H^i(\mathbf{A}_C^n,\mathbb{B}_{\rm dR}^+/t^{k+1}) \to H^i(\mathbf{A}_C^n,\mathbb{B}_{\rm dR}^+/t^k)$ envoie le sous-espace $\mathrm{Coker}(d_i)(k-i)$ sur zÈro, par Galois Èquivariance. Comme d'aprËs la proposition \ref{replete}, 
\[  \mathbb{B}_{\rm dR}^+ = R ~ \underset{k}\varprojlim \mathbb{B}_{\rm dR}^+/t^k, \]
on a en dÈfinitive, si $i>0$ et que l'on oublie l'action de Galois :
\[ H^i(\mathbf{A}_C^n,\mathbb{B}_{\rm dR}^+) = \mathrm{Ker}(d_i). \]
Il reste ‡ calculer la cohomologie en degrÈ $0$. La rÈsolution ci-dessus et la proposition \ref{rmuÈtoile} donnent que pour tout $k\geq 0$,
\[ H^0(\mathbf{A}_C^n,\mathbb{B}_{\rm dR}^+/t^k) = \mathrm{Ker} \left( \O(\mathbf{A}_{\qp}^n) \widehat{\otimes}_{\qp} B_{\rm dR}^+/t^k \to \Omega^1(\mathbf{A}_{\qp}^n) \widehat{\otimes}_{\qp} t^{-1} B_{\rm dR}^+/t^{k-1} B_{\rm dR}^+ \right). \]
C'est donc une extension
\[ 0 \to B_{\rm dR}^+/t^k \to H^0(\mathbf{A}_C^n,\mathbb{B}_{\rm dR}^+/t^k) \to \O(\mathbf{A}_{\qp}^n)/\qp \widehat{\otimes}_{\qp} C(k-1) \to 0. \]
Quand on passe de $k$ ‡ $k+1$, la flËche naturelle de $H^0(\mathbf{A}_C^n,\mathbb{B}_{\rm dR}^+/t^{k+1})$ vers $H^0(\mathbf{A}_C^n,\mathbb{B}_{\rm dR}^+/t^k)$ est la flËche Èvidente sur le terme de gauche et le morphisme nul sur celui de droite. En prenant la limite inverse sur $k$, on rÈcupËre donc finalement :
\[ H^0(\mathbf{A}_C^n,\mathbb{B}_{\rm dR}^+) = B_{\rm dR}^+. \]
\end{proof}

\begin{remarque} \label{rnuprimebdrplus}
PlutÙt que d'utiliser la suite spectrale d'un complexe filtrÈ, on pourrait utiliser le quasi-isomorphisme, valable pour toute variÈtÈ rigide lisse sur une extension finie $K$ de $\qp$, de dimension $n$ :
\[ R\nu'_* \mathbb{B}_{\mathrm{dR},X}^+ = \O_X \widehat{\otimes}_K B_{\rm dR}^+ \to \Omega_X^1 \widehat{\otimes}_K t^{-1} B_{\rm dR}^+ \to \dots \to \Omega_X^n \widehat{\otimes}_K t^{-n}B_{\rm dR}^+ \]
(la diffÈrentielle du complexe de droite Ètant donnÈe par la diffÈrentielle du complexe de de Rham de $X$), qui se montre en reprenant les arguments de la proposition \ref{rmuÈtoile} et dont on reparlera plus bas (remarque \ref{lecasdeladim1}). 
\end{remarque}

\begin{proposition}\label{bdrstein}
Soit $n\geq 1$. On a $H^0(\mathbf{A}_C^n,\mathbb{B}_{\rm dR}) = B_{\rm dR}$ et pour tout $i>0$, $H^i(\mathbf{A}_C^n,\mathbb{B}_{\rm dR}) =0$. 
\end{proposition}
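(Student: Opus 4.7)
The plan is to imitate the strategy of Proposition \ref{bdrplusstein}, but now using the unfiltered de Rham Poincar\'e lemma to directly resolve $\mathbb{B}_{\mathrm{dR}}$, rather than analyzing the finite-length quotients $\mathbb{B}_{\mathrm{dR}}^+/t^k$. Writing $\mathbb{B}_{\mathrm{dR}} = \varinjlim_k t^{-k}\mathbb{B}_{\mathrm{dR}}^+$ and $\O\mathbb{B}_{\mathrm{dR}} = \varinjlim_k \mathrm{Fil}^{-k}\O\mathbb{B}_{\mathrm{dR}}$, I would take the filtered colimit as $r \to -\infty$ of the filtered Poincar\'e resolution in the second part of Proposition \ref{poincare}. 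Since filtered colimits are exact, this yields the unfiltered resolution (also stated in Proposition \ref{poincare})
\[ 0 \to \mathbb{B}_{\mathrm{dR},X} \to \O\mathbb{B}_{\mathrm{dR},X} \to \O\mathbb{B}_{\mathrm{dR},X} \otimes_{\O_X} \Omega_X^1 \to \dots \to \O\mathbb{B}_{\mathrm{dR},X} \otimes_{\O_X} \Omega_X^n \to 0, \]
on the pro-\'etale site of any smooth rigid variety $X$ of dimension $n$ over a finite extension $K$ of $\qp$.

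Next, I compute $R\nu'_*$ applied to this resolution. Combining Proposition \ref{rmu�toile} (for the filtered pieces $\mathrm{Fil}^i/\mathrm{Fil}^j$) with the passage to the colimit $j \to +\infty$ (Proposition \ref{replete}/\ref{mittag} as needed) and then the colimit $i \to -\infty$, and arguing exactly as in Remark \ref{rnuprimebdrplus}, I expect
\[ R\nu'_*\mathbb{B}_{\mathrm{dR},X} \simeq [\O_X \widehat{\otimes}_K B_{\mathrm{dR}} \to \Omega_X^1 \widehat{\otimes}_K B_{\mathrm{dR}} \to \dots \to \Omega_X^n \widehat{\otimes}_K B_{\mathrm{dR}}], \]
where $\mathcal{F}\widehat{\otimes}_K B_{\mathrm{dR}}$ is understood as the filtered colimit $\varinjlim_k \mathcal{F}\widehat{\otimes}_K t^{-k}B_{\mathrm{dR}}^+$ of the Fr\'echet completions; the differential is induced by the de Rham differential on $X$. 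Specializing to $X=\mathbf{A}_{\qp}^n$ and noting that $\mathbf{A}_C^n$ is Stein (so higher coherent cohomology vanishes), the Leray spectral sequence collapses and gives
\[ H^i(\mathbf{A}_C^n, \mathbb{B}_{\mathrm{dR}}) = H^i\bigl(\O(\mathbf{A}_{\qp}^n)\widehat{\otimes}_{\qp} B_{\mathrm{dR}} \to \Omega^1(\mathbf{A}_{\qp}^n)\widehat{\otimes}_{\qp} B_{\mathrm{dR}} \to \dots \bigr). \]

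To finish, I apply Lemma \ref{commutetenseur} level by level to each Banach quotient $t^{-k-1}B_{\mathrm{dR}}^+/t^{-k}B_{\mathrm{dR}}^+ \simeq C(-k-1)$ and then reassemble by the $R\varprojlim$/$\varinjlim$ sequences (Propositions \ref{mittag} and \ref{replete}): since $\O(\mathbf{A}_{\qp}^n) \to \Omega^1(\mathbf{A}_{\qp}^n) \to \dots$ is strict with cohomology $\qp$ in degree $0$ and vanishing in higher degree, the cohomology of the tensored complex is $H^i_{\mathrm{dR}}(\mathbf{A}_{\qp}^n)\widehat{\otimes}_{\qp} B_{\mathrm{dR}}$, which equals $B_{\mathrm{dR}}$ in degree $0$ and vanishes otherwise. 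The main obstacle in this sketch is topological: $B_{\mathrm{dR}}$ is not a Banach space but only a countable filtered colimit of Fr\'echet spaces, so one must verify carefully that $R\nu'_*$ and cohomology commute with the colimit $\varinjlim_k t^{-k}(\cdot)$, and that Lemma \ref{commutetenseur} applies piece by piece. One safeguard is to restrict first to the Stein exhaustion $\mathbf{A}_C^n = \bigcup_m \bar{B}_m$ by closed polydisks (which are qcqs, so filtered colimits of sheaves commute with cohomology) and then reassemble using the Mittag-Leffler sequence, as was done for the case of $\qp$-coefficients in Corollary \ref{cohodroiteaff}.
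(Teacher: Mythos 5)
Your target formula (cohomology of $\mathbb{B}_{\rm dR}$ on a Stein space given by de Rham cohomology completed-tensored with $B_{\rm dR}$) is the right one, and the first half of your sketch is sound: the unfiltered Poincar\'e lemma is already stated in Proposition \ref{poincare} (no colimit construction needed), and the identification of $R\nu'_*\mathbb{B}_{\mathrm{dR},X}^+$ with the $B_{\rm dR}^+$-de Rham complex is Remark \ref{rnuprimebdrplus}. But the proposal breaks down exactly at the step you flag as ``the main obstacle'', and the safeguard you propose does not repair it, for two concrete reasons. First, on a closed polydisk $\bar{B}_m$ the de Rham complex of global sections is a complex of Banach spaces whose differentials are \emph{not} strict: $d\colon \O(\bar{B}_m)\to\Omega^1(\bar{B}_m)$ has dense but non-closed image (antiderivatives lose the denominators $1/(n+1)$, and $H^1_{\rm dR}$ of a closed ball is infinite-dimensional and non-Hausdorff). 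So Proposition \ref{stein} and Lemma \ref{commutetenseur} are unavailable on the closed polydisks, and in fact $H^i(\bar{B}_{m,C},\mathbb{B}_{\rm dR})$ does \emph{not} vanish for $i>0$: the non-Hausdorff de Rham classes survive both the completed tensor and the inversion of $t$. Your inverse system therefore has nonzero higher terms, and nothing in your argument makes them die. Second, the interchange of $R\varprojlim_m$ (over the exhaustion) with $\varinjlim_k$ (inverting $t$) is not covered by Proposition \ref{mittag}: the groups $\mathcal{F}(\bar{B}_m)\,\widehat{\otimes}_K B_{\rm dR}=\varinjlim_k \mathcal{F}(\bar{B}_m)\,\widehat{\otimes}_K t^{-k}B_{\rm dR}^+$ are countable colimits of Banach spaces, hence not metrizable, so the topological Mittag-Leffler criterion does not apply, and the algebraic one is unclear (compatible families along the exhaustion may have unbounded pole order in $t$). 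This is genuinely different from Corollary \ref{cohodroiteaff}, where the relevant system consisted of complete metrizable groups.

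The missing idea is the interleaving trick on which the paper's proof rests, and with it the proof becomes very short. Write $R\Gamma(\mathbf{A}^n_C,\mathbb{B}_{\rm dR})=R\varprojlim_k\bigl(R\Gamma(\bar{U}_k,\mathbb{B}^+_{\rm dR})[1/t]\bigr)$, inverting $t$ commuting with $R\Gamma$ by quasi-compactness of the closed balls $\bar{U}_k$ (as in your sketch); then observe that each transition map factors through $R\Gamma(U_{k+1},\mathbb{B}^+_{\rm dR})[1/t]$ for the \emph{open} ball $U_{k+1}$, so the closed balls may be replaced by open ones in the derived limit. The open ball is Stein with trivial de Rham cohomology in positive degrees, so the proof of Proposition \ref{bdrplusstein} applies to it verbatim and shows --- via the Galois-twist bookkeeping, $H^i(U_k,\mathbb{B}^+_{\rm dR})$ being identified with $\mathrm{Ker}(d_i)(-i)$ --- that $H^i(U_k,\mathbb{B}^+_{\rm dR})$ is \emph{killed by $t$} for $i>0$, while $H^0(U_k,\mathbb{B}^+_{\rm dR})=B^+_{\rm dR}$. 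Hence inverting $t$ annihilates all higher cohomology termwise, \emph{before} any limit is taken, and $R\varprojlim_k$ of the constant system $B_{\rm dR}$ gives the proposition; no Mittag-Leffler argument on non-metrizable spaces, and no new resolution of $\mathbb{B}_{\rm dR}$, is needed. (Your expected answer $H^i_{\rm dR}\,\widehat{\otimes}_K B_{\rm dR}$ for general Stein $X$ is indeed what Remark \ref{lecasdeladim1} records, but it is obtained by this route, not by a direct limit-colimit interchange.)
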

\begin{proof}
Pour $k\geq 1$, notons $\bar{U}_k$ la boule fermÈe de rayon $p^k$ et $U_k$ la boule ouverte de rayon $p^k$. Observons que 
\[ R\Gamma(\mathbf{A}_C^n, \mathbb{B}_{\rm dR}) = R ~ \underset{k} \varprojlim ~ R\Gamma(\bar{U}_k, \mathbb{B}_{dR}) = R ~ \underset{k} \varprojlim (R\Gamma(\bar{U}_k, \mathbb{B}_{dR}^+)[1/t]), \]
par quasi-compacitÈ de $\bar{U}_k$. Pour tout $k$, la flËche 
\[ R\Gamma(\bar{U}_{k+1}, \mathbb{B}_{dR}^+)[1/t] \to R\Gamma(\bar{U}_k, \mathbb{B}_{\rm dR}^+)[1/t] \]
se factorise ‡ travers $R\Gamma(U_{k+1},\mathbb{B}_{\rm dR}^+)[1/t]$. Par consÈquent, l'isomorphisme prÈcÈdent peut se rÈÈcrire :
\[ R\Gamma(\mathbf{A}_C^n, \mathbb{B}_{\rm dR}) = R ~ \underset{k} \varprojlim (R\Gamma(U_k, \mathbb{B}_{dR}^+)[1/t]). \]
Or la proposition \ref{bdrplusstein}, ou plutÙt sa preuve (qui s'adapte au cas du disque ouvert, puisque sa cohomologie de de Rham en degrÈ strictement positif est elle aussi triviale), montre que pour tout $i>0$ et tout $k>0$, $H^i(U_k, \mathbb{B}_{\rm dR}^+)$ est annulÈ par $t$ et que $H^0(U_k,\mathbb{B}_{\rm dR}^+)= B_{\rm dR}^+$. On en dÈduit l'ÈnoncÈ cherchÈ.
\end{proof}

D'o˘ finalement :
\begin{proposition} \label{bdraffine}
Le groupe $H^0(\mathbf{A}_C^n,\mathbb{B}_{\rm dR}/\mathbb{B}_{\rm dR}^+)$ est une extension de $\mathrm{Ker}(d_1)=\O(\mathbf{A}_C^n)/C$ par $B_{\rm dR}/B_{\rm dR}^+$ et pour tout $i>0$, $H^i(\mathbf{A}_C^n,\mathbb{B}_{\rm dR}/\mathbb{B}_{\rm dR}^+)= \mathrm{Ker}(d_{i+1})=\mathrm{Im}(d_i)$. 
\end{proposition}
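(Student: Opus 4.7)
The proof should be a direct consequence of the two preceding propositions, applied to the short exact sequence of pro-\'etale sheaves
\[ 0 \to \mathbb{B}_{\mathrm{dR}}^+ \to \mathbb{B}_{\mathrm{dR}} \to \mathbb{B}_{\mathrm{dR}}/\mathbb{B}_{\mathrm{dR}}^+ \to 0. \]
The plan is to write the associated long exact sequence on $\mathbf{A}_C^n$ and feed in the computations already at hand.

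More precisely, the propositions \ref{bdrplusstein} and \ref{bdrstein} provide:
\[ H^0(\mathbf{A}_C^n,\mathbb{B}_{\mathrm{dR}}^+) = B_{\mathrm{dR}}^+,\ H^i(\mathbf{A}_C^n,\mathbb{B}_{\mathrm{dR}}^+) = \mathrm{Ker}(d_i)\ (i>0), \]
\[ H^0(\mathbf{A}_C^n,\mathbb{B}_{\mathrm{dR}}) = B_{\mathrm{dR}},\ H^i(\mathbf{A}_C^n,\mathbb{B}_{\mathrm{dR}}) = 0\ (i>0). \]
Inserting these in the long exact sequence, the piece in degrees $0$ and $1$ reads
\[ 0 \to B_{\mathrm{dR}}^+ \to B_{\mathrm{dR}} \to H^0(\mathbf{A}_C^n,\mathbb{B}_{\mathrm{dR}}/\mathbb{B}_{\mathrm{dR}}^+) \to \mathrm{Ker}(d_1) \to 0, \]
which gives exactly the extension description of $H^0$ stated in the proposition. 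For $i>0$, the vanishing of $H^i(\mathbf{A}_C^n,\mathbb{B}_{\mathrm{dR}})$ and $H^{i+1}(\mathbf{A}_C^n,\mathbb{B}_{\mathrm{dR}})$ forces the connecting homomorphism to be an isomorphism
\[ H^i(\mathbf{A}_C^n,\mathbb{B}_{\mathrm{dR}}/\mathbb{B}_{\mathrm{dR}}^+) \xrightarrow{\sim} H^{i+1}(\mathbf{A}_C^n,\mathbb{B}_{\mathrm{dR}}^+) = \mathrm{Ker}(d_{i+1}). \]

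Finally, the identification $\mathrm{Ker}(d_{i+1})=\mathrm{Im}(d_i)$ (for $i\geq 0$, with the convention $\mathrm{Im}(d_0)=\O(\mathbf{A}_C^n)/C$) follows from the classical (holomorphic) Poincar\'e lemma on the affine space, which says that the algebraic de Rham cohomology of $\mathbf{A}_C^n$ is concentrated in degree zero, combined with Proposition \ref{basechange} to descend the vanishing from $\mathbf{A}_{\qp}^n$ to $\mathbf{A}_C^n$; equivalently, one can verify it by inspection on the explicit rings of analytic functions. There is no real obstacle here: once the two preceding propositions are granted, the argument is a mechanical long-exact-sequence manipulation, and the only point demanding care is to make sure the comparison $H^{i+1}(\mathbb{B}_{\mathrm{dR}}^+)=\mathrm{Ker}(d_{i+1})$ is being used with the correct indexing.
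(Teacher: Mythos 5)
Votre preuve est correcte et suit exactement la voie du papier : la proposition \ref{bdraffine} y est donnée comme conséquence immédiate (\og D'où finalement \fg{}) des propositions \ref{bdrplusstein} et \ref{bdrstein}, via la suite exacte longue associée à $0 \to \mathbb{B}_{\rm dR}^+ \to \mathbb{B}_{\rm dR} \to \mathbb{B}_{\rm dR}/\mathbb{B}_{\rm dR}^+ \to 0$, que vous déroulez avec la bonne indexation. L'identification $\mathrm{Ker}(d_{i+1})=\mathrm{Im}(d_i)$ par trivialité de la cohomologie de de Rham de l'espace affine en degré strictement positif est également celle qu'utilise le papier (dans la preuve de la proposition \ref{bdrplusstein}).
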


\begin{remarque} \label{lecasdeladim1}
La mÈthode utilisÈe permettrait de dÈcrire plus gÈnÈralement la cohomologie de $\mathbb{B}_{\mathrm{dR},X}^+$ et $\mathbb{B}_{\mathrm{dR},X}$ pour $X$ un espace Stein lisse sur un corps $p$-adique. La cohomologie de $\mathbb{B}_{\rm dR}^+$ se calcule ‡ l'aide du quasi-isomorphisme :
\[ R\nu'_* \mathbb{B}_{\mathrm{dR},X}^+ = \O_X \widehat{\otimes}_K B_{\rm dR}^+ \to \Omega_X^1 \widehat{\otimes}_K t^{-1} B_{\rm dR}^+ \to \dots \to \Omega_X^n \widehat{\otimes}_K t^{-n}B_{\rm dR}^+, \]
ÈvoquÈ dans la remarque \ref{rnuprimebdrplus}. On dÈduit de cet isomorphisme l'existence d'un triangle distinguÈ :
\[ \Omega_X^{\bullet} \widehat{\otimes}_K B_{\rm dR}^+ \to R\nu'_* \mathbb{B}_{\mathrm{dR},X}^+ \to (0 \to \Omega_X^1 \widehat{\otimes}_K t^{-1} B_{\rm dR}^+/t \to \dots \to \Omega_X^n \widehat{\otimes}_K t^{-n}B_{\rm dR}^+/t). \]
Le complexe de droite se dÈvisse lui-mÍme ‡ nouveau comme extension de
\[ 0 \to 0 \to \Omega_X^2 \widehat{\otimes}_K t^{-2} B_{\rm dR}^+/t^{-1} \to \dots \to \Omega_X^n \widehat{\otimes}_K t^{-n}B_{\rm dR}^+/t^{-1} \]
par $\Omega_X^{\bullet} \widehat{\otimes}_K C(-1)$. L'hypercohomologie de ces complexes se calcule facilement ‡ l'aide de la proposition \ref{basechange}. Pour le faisceau $\mathbb{B}_{\rm dR}$, sa cohomologie devrait pouvoir se calculer comme dans la preuve de la proposition \ref{bdrstein}, en choisissant un recouvrement affinoÔde admissible $(U_k)_k$ de $X$ et une prÈsentation surconvergente de chaque $U_k$ (l'existence d'une telle prÈsentation est garantie par \cite[Th. 7]{elkik}) : de cette faÁon, on peut prÈsenter $X$ comme limite inverse d'espaces Stein $(V_k)$, avec pour tout $k$, $V_k \subset U_k$. 

Voici le rÈsultat que l'on obtient pour $X$ Stein lisse de dimension $1$, sous l'hypothËse additionnelle que $X$ est connexe. On a :
\[ H^0(X_C,\mathbb{B}_{\rm dR}^+)=B_{\rm dR}^+ ~ ; ~ H^i(X_C,\mathbb{B}_{\rm dR}^+)=0, ~ \mathrm{si}~ i>1 \]
et une extension
\[ 0 \to H_{\rm dR}^1(X) \widehat{\otimes}_K B_{\rm dR}^+ \to H^1(X_C,\mathbb{B}_{\rm dR}^+) \to \Omega^1(X) \widehat{\otimes}_K t^{-1} B_{\rm dR}^+/B_{\rm dR}^+ \to 0. \]
(on utilise le fait que $H_{\rm dR}^i(X)=0$ si $i>2$, et aussi pour $i=2$ puisque par dualitÈ de PoincarÈ $H_{\rm dR}^2(X)=H_{\mathrm{dR},c}^0(X)^*=0$). On a aussi :
\[ H^0(X_C,\mathbb{B}_{\rm dR})=H_{\rm dR}^1(X) \widehat{\otimes}_K B_{\rm dR} ~ ; ~ H^i(X_C,\mathbb{B}_{\rm dR})=0, ~ \mathrm{si}~ i>1 \]
Via ces identifications, la flËche naturelle $H^1(X_C,\mathbb{B}_{\rm dR}^+) \to H^1(X_C,\mathbb{B}_{\rm dR})$ se dÈcrit comme suit : sur le sous-espace $H_{\rm dR}^1(X) \widehat{\otimes}_K B_{\rm dR}^+$, c'est la flËche Èvidente $H_{\rm dR}^1(X) \widehat{\otimes}_K B_{\rm dR}^+ \to H_{\rm dR}^1(X) \widehat{\otimes}_K B_{\rm dR}$ ; sur le quotient $\Omega^1(X) \widehat{\otimes}_K t^{-1} B_{\rm dR}^+/B_{\rm dR}^+$, c'est la composÈe de la projection $\Omega^1(X) \widehat{\otimes}_K t^{-1} B_{\rm dR}^+/B_{\rm dR}^+ \to H_{\rm dR}^1(X) \widehat{\otimes}_K  t^{-1} B_{\rm dR}^+/B_{\rm dR}^+$ avec l'inclusion $H_{\rm dR}^1(X) \widehat{\otimes}_K  B_{\rm dR}/B_{\rm dR}^+$.

Par consÈquent, le groupe $H^0(X_C,\mathbb{B}_{\rm dR}/\mathbb{B}_{\rm dR}^+)$ est une extension de $\O(X_C)/C$ par $B_{\rm dR}/B_{\rm dR}^+$. On a $H^1(X_C,\mathbb{B}_{\rm dR}/\mathbb{B}_{\rm dR}^+) = H_{\rm dR}^1(X) \widehat{\otimes}_K B_{\rm dR}/t^{-1}B_{\rm dR}^+$ et $H^i(X_C,\mathbb{B}_{\rm dR}/\mathbb{B}_{\rm dR}^+)=0$ si $i>1$. 
\end{remarque}

L'analyse de la cohomologie de $\mathbb{B}[1/t]^{\varphi=1}$ est plus subtile. On va prouver le rÈsultat suivant.

\begin{proposition} \label{bcrisaffine}
Soit $n\geq 1$. On note $D^n$ le disque unitÈ ouvert de dimension $n$. Pour tout $i>0$,
\[ H^i(D_C^n, \mathbb{B}[1/t]^{\varphi=1}) = 0. \]
De mÍme, pour tout $i>0$,
\[ H^i(\mathbf{A}_C^n, \mathbb{B}[1/t]^{\varphi=1}) = 0. \]
\end{proposition}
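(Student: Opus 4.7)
Mon plan suit le m\^eme sch\'ema que les d\'emonstrations des Propositions \ref{bdrplusstein} et \ref{bdrstein}: ramener le calcul sur l'espace Stein \`a celui sur les affino\"ides exhauseurs via la Proposition \ref{replete}, et traiter ces derniers gr\^ace aux propri\'et\'es des faisceaux de p\'eriodes sur les recouvrements pro-\'etales perfecto\"ides. Pour le disque ouvert $D_C^n$, j'\'ecris $D_C^n=\varprojlim_k \bar U_k$ avec $\bar U_k=\mathrm{Spa}(C\langle p^{-1/k}T_1,\ldots,p^{-1/k}T_n\rangle)$. Pour $\mathbf{A}_C^n$, j'\'ecris $\mathbf{A}_C^n$ comme r\'eunion de tels disques de rayon croissant, comme dans la preuve du Corollaire \ref{cohodroiteaff}.

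Le c\oe ur technique est de montrer que $H^i(\bar U_k,\mathbb{B}[1/t]^{\varphi=1})=0$ pour tout $i>0$. Pour cela, j'utiliserai la suite exacte de Frobenius de faisceaux pro-\'etales
\[ 0 \to \mathbb{B}[1/t]^{\varphi=1} \to \mathbb{B}[1/t] \overset{\varphi-1}\longrightarrow \mathbb{B}[1/t] \to 0, \]
exacte gr\^ace \`a la surjectivit\'e locale de $\varphi-1$ sur $B[1/t]$ pour les alg\`ebres perfecto\"ides affino\"ides. Sur un recouvrement pro-\'etale perfecto\"ide affino\"ide de $\bar U_k$ (par exemple celui obtenu en adjoignant les racines $p$-i\`emes des coordonn\'ees), le faisceau $\mathbb{B}[1/t]$ est acyclique par les m\'ethodes standards de \cite{Shodge}. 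Une suite spectrale de type \v{C}ech-Cartan-Leray donne alors $H^i(\bar U_k,\mathbb{B}[1/t])=0$ pour $i>0$, et la suite exacte longue associ\'ee \`a la suite de Frobenius ram\`ene le probl\`eme \`a la seule surjectivit\'e de $\varphi-1$ sur $H^0(\bar U_k,\mathbb{B}[1/t])$.

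Le passage de $\bar U_k$ \`a $D_C^n$, puis \`a $\mathbf{A}_C^n$, se fera comme pour $\mathbb{B}_{\rm dR}$ (Proposition \ref{bdrstein} et Corollaire \ref{cohodroiteaff}), via la Proposition \ref{mittag} pour annuler les termes $R^1\varprojlim$ : les fl\`eches de restriction $\O(\bar U_{k+1}) \to \O(\bar U_k)$ \'etant d'image dense, la m\^eme propri\'et\'e se transmet aux sections de $\mathbb{B}[1/t]^{\varphi=1}$ par fonctorialit\'e de la construction des faisceaux de p\'eriodes.

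L'obstacle principal sera la surjectivit\'e de $\varphi-1$ sur $H^0(\bar U_k,\mathbb{B}[1/t])$. Elle est imm\'ediate pour les anneaux de p\'eriodes attach\'es \`a un point perfecto\"ide (o\`u $B_e$ est un anneau principal, ce qui permet de r\'esoudre $\varphi-1$ par inversion formelle), mais au niveau des sections globales sur $\bar U_k$ il faudra un contr\^ole fin de la convergence $p$-adique des s\'eries utilis\'ees pour inverser $\varphi-1$, compatible aux morphismes de transition du recouvrement perfecto\"ide. Une voie alternative, plus proche de la strat\'egie filtr\'ee de la Proposition \ref{bdrplusstein}, serait d'exploiter la d\'ecomposition $\mathbb{B}[1/t]^{\varphi=1}=\varinjlim_n t^{-n}\mathbb{B}^{\varphi=p^n}$, o\`u chaque $\mathbb{B}^{\varphi=p^n}$ correspond aux sections d'un fibr\'e en droites sur la courbe de Fargues-Fontaine relative, et de passer \`a la limite apr\`es avoir trait\'e chaque cran par une suite exacte du type $0\to \mathbb{B}^{\varphi=p^{n-1}} \overset{t}\to \mathbb{B}^{\varphi=p^n} \to \O_C \to 0$ analogue \`a la suite exacte fondamentale.
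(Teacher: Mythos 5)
Votre squelette g\'en\'eral est bien celui du texte (suite de Frobenius $0\to\mathbb{B}[1/t]^{\varphi=1}\to\mathbb{B}[1/t]\overset{\varphi-1}\longrightarrow\mathbb{B}[1/t]\to 0$, acyclicit\'e de $\mathbb{B}[1/t]$ sur l'espace consid\'er\'e, r\'eduction \`a la surjectivit\'e de $\varphi-1$ sur les sections globales), mais l'\'etape que vous pr\'esentez comme standard est pr\'ecis\'ement celle qui est fausse telle quelle : $\mathbb{B}_I[1/t]$ n'est \emph{pas} acyclique sur les boules ferm\'ees $\bar U_k$. Le calcul de Cartan--Leray sur la tour obtenue en adjoignant les racines $p$-i\`emes des coordonn\'ees ne donne pas z\'ero : il identifie la cohomologie \`a celle d'un complexe de Koszul pour $\zp^n$, qui apr\`es d\'ecomposition en parties enti\`ere et non enti\`ere est quasi-isomorphe \`a $\Omega^{\bullet}(\bar U_k)\,\widehat{\otimes}_{\qp}B_I[1/t]$ (c'est la proposition \ref{boule}). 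Or la cohomologie de de Rham d'un polydisque \emph{ferm\'e} ne s'annule pas en degr\'e $>0$ : la primitive de $\sum_n a_nT^n\,dT$ a pour coefficients $a_n/(n+1)$, qui ne tendent plus vers z\'ero en g\'en\'eral, et $H^1_{\rm dR}(\bar U_k)$ est \'enorme. L'annulation n'a lieu que pour les espaces Stein ouverts (disque ouvert, espace affine), dont la cohomologie de de Rham en degr\'e $>0$ est triviale, et le passage des boules ferm\'ees aux ouverts n'est pas la simple densit\'e d'images que vous invoquez pour les sections $\varphi$-invariantes : c'est l'argument de Mittag--Leffler des corollaires \ref{couronnebis} et \ref{boulebis}, qui remplace chaque boule ferm\'ee par une famille strictement embo\^it\'ee pour faire appara\^itre la cohomologie de de Rham \emph{surconvergente}, seule \`a v\'erifier Mittag--Leffler. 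Deux points techniques de plus : la tour torique n'est pas pro-\'etale sur $\bar U_k$ (elle est ramifi\'ee le long de $T_i=0$), seulement quasi-pro-\'etale, et ses produits fibr\'es ne sont pas $\tilde D_C\times\zp^{k-1}$ -- leur description exacte est l'objet du lemme sur les alg\`ebres $A_{n,k}$ dans la preuve de la proposition \ref{boule} ; enfin il faut justifier $R\varprojlim_I\mathbb{B}_I[1/t]=\varprojlim_I\mathbb{B}_I[1/t]$ (\cite[Lem. 3.18]{Shodge}), ce que votre plan passe sous silence.

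Votre voie alternative est en outre circulaire. Le d\'evissage de $\mathbb{B}^{\varphi=p^n}$ par $0\to\mathbb{B}^{\varphi=p^{n-1}}\overset{t}\longrightarrow\mathbb{B}^{\varphi=p^n}\to\mathbf{G}_a\to 0$ ram\`ene, au cran $n=0$, \`a la cohomologie de $\mathbb{B}^{\varphi=1}=\qp$ et \`a celle de $\mathbf{G}_a$ ; or $H^i(D_C^n,\qp)$ est exactement l'inconnue que la proposition \ref{bcrisaffine} sert \`a calculer (th\'eor\`eme \ref{cohespaffine}), et $H^i(D_C^n,\mathbf{G}_a)=\Omega^i(D_C^n)\neq 0$. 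Les crans individuels ne sont d'ailleurs pas acycliques : en dimension $1$, la suite $0\to\qp\to\mathbb{B}^{\varphi=p}\to\mathbf{G}_a\to 0$ et la proposition \ref{cohodisque} donnent $H^1(D_C,\mathbb{B}^{\varphi=p})\simeq\Omega^1(D_C)\neq 0$, puisque le morphisme de connexion $\O(D_C)\to H^1(D_C,\qp)\simeq\O(D_C)_0$ est la fl\`eche quotient, donc surjective ; seule la colimite $\varinjlim_n t^{-n}\mathbb{B}^{\varphi=p^n}$ peut \^etre acyclique, et on ne peut pas \og traiter chaque cran \fg{} puis passer \`a la limite. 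En revanche, votre r\'eduction finale est correcte et co\"incide avec celle du texte : une fois acquise l'annulation de $H^i(\cdot,\varprojlim_I\mathbb{B}_I[1/t])$ pour $i>0$ sur les espaces Stein, la cohomologie de $\mathbb{B}[1/t]^{\varphi=1}$ s'annule en degr\'e $>1$ et vaut en degr\'e $1$ le conoyau de $1-\varphi$ sur $B[1/t]$, qui est nul.
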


Dans tout ce paragraphe, $I$ dÈsigne un sous-intervalle compact de $]0,1[$ ‡ extrÈmitÈs des nombres rationnels. Nous allons commencer par dÈcrire la cohomologie du faisceau $\mathbb{B}_I[1/t]$. Modulo une hypothËse formulÈe ci-dessous (preuve de la proposition \ref{couronne}) et appelÈe $(*)$\footnote{Et que nous avons depuis d\'emontr\'ee : cf. \cite[Prop. 3.11]{ACLB}.}, nous obtiendrons des rÈsultats plus fins, sans inverser $t$. Ces rÈsultats conditionnels n'interviennent pas dans la dÈmonstration de la proposition \ref{bcrisaffine} ; nous les mentionnons seulement car ils mettent en Èvidence l'importance du foncteur dÈcalage $L\eta_t$. 
\\

Rappelons briËvement pour commencer ce que sont les foncteurs dÈcalage de Berthelot-Ogus. Soit $A$ un anneau et $f\in A$, non diviseur de zÈro. Les foncteurs dÈcalage ont ÈtÈ introduits pour la premiËre fois dans \cite{BO}, sur une suggestion de Deligne.

\begin{definition}
Soit $\delta : \z \to \mathbf{N}$. Si $K^{\bullet}$ est un complexe de $A$-modules tel que $K^i$ est sans $f$-torsion pour tout $i$, on dÈfinit un nouveau complexe $\eta_{\delta,f} K^{\bullet}$ par
\[ (\eta_{\delta,f} K^{\bullet})^j = \{ x \in f^{\delta(j)} K^j, dx \in f^{\delta(j+1)} K^{j+1} \}. \]
Si $\delta=\mathrm{Id}$, $\eta_{\mathrm{Id},f} K^{\bullet}$ est simplement $\eta_f K^{\bullet}$.
\end{definition}

\begin{proposition} \label{leta}
Si $\delta$ est croissante, $\eta_{\delta,f}$ transforme les quasi-isomorphismes en des quasi-isomorphismes, donc s'Ètend en un foncteur notÈ $L\eta_{\delta,f}$ entre catÈgories dÈrivÈes.

\end{proposition}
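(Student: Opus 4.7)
Le plan consiste à calculer explicitement, pour tout complexe $K^\bullet$ de $A$-modules sans $f$-torsion, la cohomologie du sous-complexe $\eta_{\delta,f} K^\bullet$ de $K^\bullet$ en fonction de celle de $K^\bullet$. Une fois cette description fonctorielle établie, la préservation des quasi-isomorphismes en découle immédiatement, puis l'extension à la catégorie dérivée via les résolutions libres (qui sont automatiquement sans $f$-torsion puisque $f$ n'est pas diviseur de zéro dans $A$).

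Commençons par observer que $\eta_{\delta,f} K^\bullet$ est bien un sous-complexe : la condition $dx \in f^{\delta(j+1)} K^{j+1}$ apparaissant dans la définition de $(\eta_{\delta,f} K^\bullet)^j$ équivaut exactement à $dx \in (\eta_{\delta,f} K^\bullet)^{j+1}$ puisque $d(dx) = 0$. Ensuite, tout cocycle de $(\eta_{\delta,f} K^\bullet)^i$ s'écrit $x = f^{\delta(i)} z$ avec $z \in Z^i(K^\bullet)$ (en utilisant l'absence de $f$-torsion), et tout cobord $dy$ se met sous la forme $d(f^{\delta(i-1)} w) = f^{\delta(i-1)} dw$ avec $dw \in f^{\delta(i)-\delta(i-1)} K^i$, la croissance de $\delta$ garantissant que $\delta(i)-\delta(i-1) \geq 0$. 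On déduit de ce calcul un isomorphisme naturel
\[ H^i(\eta_{\delta,f} K^\bullet) \cong H^i(K^\bullet)/H^i(K^\bullet)[f^{\delta(i)-\delta(i-1)}], \]
envoyant la classe de $z$ sur celle de $f^{\delta(i)} z$. Le point clé dans la vérification est que si $f^{\delta(i)-\delta(i-1)} z = dw$ pour un certain $w \in K^{i-1}$, alors $f^{\delta(i)} z = d(f^{\delta(i-1)} w)$ est un cobord dans $\eta_{\delta,f} K^\bullet$ (puisque $f^{\delta(i-1)} w \in (\eta_{\delta,f} K^\bullet)^{i-1}$), et réciproquement, d'où la bonne définition et l'injectivité du morphisme.

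Comme le membre de droite de cette formule ne dépend fonctoriellement que de $H^i(K^\bullet)$, tout quasi-isomorphisme $u : K^\bullet \to L^\bullet$ entre complexes sans $f$-torsion induit un quasi-isomorphisme $\eta_{\delta,f}(u)$. Pour étendre $\eta_{\delta,f}$ en un foncteur $L\eta_{\delta,f}$ sur la catégorie dérivée, il suffit alors de remplacer chaque complexe par une résolution libre, d'appliquer $\eta_{\delta,f}$, et de constater grâce à ce qui précède que le résultat ne dépend pas, à quasi-isomorphisme près, du choix de la résolution. L'obstacle principal réside dans l'établissement de la formule de cohomologie : il s'agit d'un calcul purement algébrique, mais qui requiert une manipulation attentive des puissances de $f$ et l'usage constant de l'hypothèse d'absence de $f$-torsion ; une fois cette étape franchie, tout le reste est formel.
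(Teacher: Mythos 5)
Votre preuve est correcte et suit essentiellement la d\'emonstration standard : le texte renvoie simplement \`a \cite[Prop. 8.19]{BO}, et votre calcul explicite $H^i(\eta_{\delta,f}K^\bullet) \cong H^i(K^\bullet)/H^i(K^\bullet)[f^{\delta(i)-\delta(i-1)}]$ est pr\'ecis\'ement l'argument contenu dans cette r\'ef\'erence (et dans \cite[Lem. 6.4]{BMS} pour $\delta=\mathrm{Id}$), y compris l'usage de la croissance de $\delta$ pour que l'exposant $\delta(i)-\delta(i-1)$ soit positif et celui de l'absence de $f$-torsion pour les divisions. Le passage final \`a la cat\'egorie d\'eriv\'ee par r\'esolutions libres (donc sans $f$-torsion, $f$ \'etant non diviseur de z\'ero) est \'egalement le m\'ecanisme attendu.
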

\begin{proof}
Voir \cite[Prop. 8.19]{BO}.
\end{proof}
On aura besoin de la proposition suivante, qui explicite l'action des foncteurs dÈcalage sur les complexes de Koszul.
\begin{proposition} \label{koszul}
Soit $A$ un anneau, $g_1,\dots,g_n \in A$ et $f\in A$ non diviseurs de zÈro. Soit $M$ un $A$-module sans $f$-torsion. Si $h_1,\dots, h_d$ sont des endomorphismes de $M$ qui commutent, on note $K_M(h_1,\dots,h_d)$ le complexe de Koszul
\[ M \to \bigoplus_{1 \leq i \leq d} M \to \bigoplus_{1 \leq i_1<i_2 \leq d} M \to \dots \to \bigoplus_{1\leq i_1<\dots < i_k \leq d} M \to \dots \]
o˘ la diffÈrentielle de $M$ en position $i_1< \dots < i_k$ vers $M$ en position $j_1<\dots, < j_{k+1}$ est non nulle seulement si $\{i_1,\dots,i_k \} \subset \{j_1, \dots,j_{k+1}\}$ et vaut dans ce cas $(-1)^{m-1} h_m$, $m$ Ètant l'unique indice entre $1$ et $k+1$ tel que $j_m \notin \{i_1,\dots,i_k\}$. 

$\mathrm{(i)}$ Si $f$ divise tous les $g_i$,
\[ \eta_f K_M(g_1,\dots,g_n) = K_M(g_1/f,\dots,g_n/f). \]

$\mathrm{(ii)}$ S'il existe un $i$ tel que $g_i$ divise $f$, $\eta_f K_M(g_1,\dots,g_n)$ est acyclique.

\end{proposition}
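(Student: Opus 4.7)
The plan for part (i) is a direct computation. Assuming $f \mid g_j$ for every $j$, each component of the Koszul differential involves multiplication by some $g_j \in fA$, so the differential automatically carries $f^k K^k$ into $f^{k+1} K^{k+1}$. The membership condition in the definition of $\eta_f$ then becomes vacuous, and $\eta_f K^k = f^k K^k$. Since $M$ is $f$-torsion-free, multiplication by $f^k$ gives an isomorphism $K^k \overset{\sim}{\to} f^k K^k$, and under these identifications the restricted differential sends $y \in K^k$ to $\sum_m \pm (g_m/f)\, y_{\cdots}$, which is exactly the Koszul differential for $(g_1/f, \dots, g_n/f)$. This yields (i).

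For (ii), I would assume without loss of generality that $g_1 \mid f$ and write $f = g_1 v$. The strategy is to build an explicit nullhomotopy of multiplication by $f$ on $K_M(g_1, \dots, g_n)$. The interior product $\iota_1 : K^k \to K^{k-1}$ contracting against the first Koszul coordinate satisfies the classical formula $d\iota_1 + \iota_1 d = g_1 \cdot \mathrm{id}$, and hence $h := v\iota_1$ is an $A$-linear chain homotopy with $dh + hd = f \cdot \mathrm{id}$. Now let $x \in \eta_f K^k$ be a cocycle with $k \geq 1$. Then $fx = d(h(x))$; since $x \in f^k K^k$ and $h$ is $A$-linear, $h(x) \in f^k K^{k-1}$, so one may define $y := h(x)/f \in f^{k-1} K^{k-1}$, the division being legitimate because $M$, hence $K^{k-1}$, is $f$-torsion-free. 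Then $dy = x$, and automatically $dy \in f^k K^k$, so $y \in \eta_f K^{k-1}$. This settles $k \geq 1$. For $k = 0$, an element $x \in \eta_f K^0 = M$ with $dx = 0$ satisfies $g_1 x = 0$; but the relation $f = g_1 v$ together with $f$ being a non-zero-divisor on $M$ forces $g_1$ also to be a non-zero-divisor on $M$, so $x = 0$.

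The main obstacle, modest but real, is the explicit verification of the nullhomotopy $d\iota_1 + \iota_1 d = g_1 \cdot \mathrm{id}$ on the Koszul complex, a standard sign computation, together with the bookkeeping ensuring that all divisions by $f$ appearing in the argument take place inside $f$-torsion-free modules, so that $y = h(x)/f$ is legitimately defined. Once these technical points are in place, both parts reduce to a direct verification.
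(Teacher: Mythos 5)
Your proof is correct, and it is worth noting that the paper itself gives no argument for this statement: it simply refers to \cite[Lem. 7.9]{BMS}. Your part (i) coincides with the computation done there (with $f \mid g_j$ for all $j$ the differential automatically maps $f^kK^k$ into $f^{k+1}K^{k+1}$, so $\eta_f K^k = f^kK^k$, and dividing by $f^k$ identifies the result with $K_M(g_1/f,\dots,g_n/f)$). For part (ii), however, BMS argue along a slightly different route: they invoke their general formula $H^i(\eta_f C) \simeq H^i(C)/H^i(C)[f]$ together with the classical fact that each $g_i$ annihilates the cohomology of the Koszul complex, so that $f = g_1 v$ kills every $H^i(K_M(g_1,\dots,g_n))$ and the cohomology of $\eta_f$ of it vanishes. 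You inline this mechanism instead: the homotopy identity $d\iota_1 + \iota_1 d = g_1\,\mathrm{id}$ is exactly what proves the annihilation statement, and your explicit bookkeeping --- $h(x) = v\,\iota_1(x) \in f^k K^{k-1}$ for a cocycle $x \in \eta_f K^k$ with $k \geq 1$, division by $f$ legitimate by $f$-torsion-freeness, and the observation that $y = h(x)/f$ lies in $\eta_f K^{k-1}$ because $y \in f^{k-1}K^{k-1}$ and $dy = x \in f^k K^k$ --- replaces the appeal to the general lemma on $H^*(\eta_f C)$. Your route buys a self-contained, elementary proof requiring nothing about $\eta_f$ beyond its definition; the BMS route buys a reusable statement about the cohomology of $\eta_f$ in general. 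One micro-remark: your separate degree-$0$ argument via the non-zero-divisibility of $g_1$ is fine but unnecessary, since the same homotopy handles it: for $x \in \eta_f K^0$ with $dx = 0$ one has $fx = d(h(x)) + h(dx) = 0$ because $h$ lands in $K^{-1} = 0$, whence $x = 0$ directly.
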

\begin{proof}
Voir \cite[Lem. 7.9]{BMS}.
\end{proof}

Ces dÈfinitions s'Ètendent ‡ un cadre plus gÈnÈral (\cite[\S 6.1]{BMS}) : si $(T,\O_T)$ est un topos annelÈ, $f\in \O_T$ engendrant un idÈal inversible et $K^{\bullet}$ un complexe de $\O_T$-modules sans $f$-torsion, on peut dÈfinir $\eta_f K^{\bullet}$ comme prÈcÈdemment et on montre que le foncteur $\eta_f$ s'Ètend ‡ la catÈgorie dÈrivÈe $D(\O_T)$ des $\O_T$-modules.

Soit $Z$ un espace rigide sur $C$. SpÈcialisant au cas o˘ $T$ est le topos des faisceaux Ètales de $B_I$-modules sur $Z$, $\O_T=B_I$ et $f=t$, on obtient un complexe $L\eta_t R\nu'_* \mathbb{B}_{I,Z}$ dans la catÈgorie dÈrivÈe des faisceaux Ètales de $B_I$-modules sur $Z$. 
\\

Avant de calculer la cohomologie du disque ouvert ou de l'espace affine, traitons le cas d'une couronne ouverte. Soit $r, r'$ deux nombres rationnels. Notons 
\[ \bar{\mathcal{C}}_{r,r'} = \mathrm{Spa}(\qp \langle p^rT_1,\dots,p^{r}T_n,p^{r'}T_1^{-1},\dots, p^{r'}T_n^{-1} \rangle) \]
la couronne fermÈe de rayons $p^{-r'}$ et $p^r$. On notera simplement $\underline{T}$ pour $T_1,\dots,T_n$ et de mÍme pour les autres variables qui apparaissent. On note $\mathcal{C}_{r,r'}$ la couronne ouverte de rayons $p^{-r'}$ et $p^r$ ; c'est une variÈtÈ Stein.

\begin{lemme} \label{birinfty}
Notons
\[ (R_{\infty},R_{\infty}^+) = (C\langle (p^r\underline{T})^{1/p^{\infty}},(p^{r'}\underline{T}^{-1})^{1/p^{\infty}} \rangle, \O_C\langle (p^r\underline{T})^{1/p^{\infty}},(p^{r'}\underline{T}^{-1})^{1/p^{\infty}} \rangle). \]
Alors
\[ B_I(R_{\infty},R_{\infty}^+) = B_I \langle ([p^{\flat}]^r\underline{X})^{1/p^{\infty}}, ([p^{\flat}]^{r'}\underline{X}^{-1})^{1/p^{\infty}} \rangle, \]
avec pour tout $i$, $X_i = [T_i]$.
\end{lemme}
\begin{proof}
Pour toute algËbre affinoÔde perfectoÔde $(S,S^+)$, on a par dÈfinition
\[ B_I(S,S^+)= W(S^+) \left \langle \frac{[\alpha]}{p}, \frac{p}{[\beta]} \right \rangle, \]
si $I=[a,b]$ et $|\alpha|=a$, $|\beta|=b$. On a donc : 
\[ B_I(R_{\infty},R_{\infty}^+)= W(R_{\infty}^+) \left \langle \frac{[\alpha]}{p}, \frac{p}{[\beta]} \right \rangle. \]
Or 
\[ W(R_{\infty}^+) = A_{\rm inf} \langle ([p^{\flat}]^r\underline{X})^{1/p^{\infty}}, ([p^{\flat}]^{r'}\underline{X}^{-1})^{1/p^{\infty}} \rangle, \]
comme on le voit immÈdiatement en utilisant l'adjonction entre vecteurs de Witt et basculement. D'o˘
\[ B_I(R_{\infty},R_{\infty}^+) = A_{\rm inf} \langle ([p^{\flat}]^r\underline{X})^{1/p^{\infty}}, ([p^{\flat}]^{r'}\underline{X}^{-1})^{1/p^{\infty}} \rangle \left \langle \frac{[\alpha]}{p}, \frac{p}{[\beta]} \right \rangle = B_I \langle ([p^{\flat}]^r\underline{X})^{1/p^{\infty}}, ([p^{\flat}]^{r'}\underline{X}^{-1})^{1/p^{\infty}} \rangle, \]
la derniËre ÈgalitÈ Ètant obtenue en prenant $(S,S^+)=(C,\O_C)$ dans la formule ci-dessus.
\end{proof}

\begin{proposition}  \label{couronne}
On a des quasi-isomorphismes : 
\[ R\Gamma(\bar{\mathcal{C}}_{r,r',C}, \mathbb{B}_I[1/t])  \simeq \Omega^{\bullet}(\bar{\mathcal{C}}_{r,r'}) ~ \widehat{\otimes}_{\qp} B_I[1/t] . \]
En outre, si l'on admet l'hypothËse $(*)$ ci-dessous, on a mÍme
\[ R\Gamma(\bar{\mathcal{C}}_{r,r',C,\mathrm{\acute{e}t}}, L\eta_t R\nu'_* \mathbb{B}_I)  \simeq \Omega^{\bullet}(\bar{\mathcal{C}}_{r,r'}) ~ \widehat{\otimes}_{\qp} B_I. \]
\end{proposition}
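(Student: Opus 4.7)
The strategy follows the standard recipe for computing pro-étale cohomology of smooth affinoids via their toric perfectoid covers, in the spirit of \cite{Shodge}: Galois descent, Koszul complex, and weight decomposition under the Galois action.

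The perfectoid $\tilde{\mathcal{C}} := \mathrm{Spa}(R_\infty,R_\infty^+)$ of Lemma \ref{birinfty} is a pro-étale Galois cover of $\bar{\mathcal{C}}_{r,r',C}$ with group $\Gamma \simeq \Z_p^n$, the generator $\gamma_i$ acting by $T_i^{1/p^m} \mapsto \epsilon^{(m)} T_i^{1/p^m}$. Since $\mathbb{B}_I$ is a pro-étale sheaf and $\tilde{\mathcal{C}}$ is affinoid perfectoid, Čech-to-derived-functor gives
\[ R\Gamma(\bar{\mathcal{C}}_{r,r',C}, \mathbb{B}_I) \simeq R\Gamma_{\mathrm{cts}}(\Gamma, \mathbb{B}_I(\tilde{\mathcal{C}})), \]
which is computed by the Koszul complex $K(\gamma_1-1,\dots,\gamma_n-1)$ on $\mathbb{B}_I(\tilde{\mathcal{C}})$.

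By Lemma \ref{birinfty}, the ring $\mathbb{B}_I(\tilde{\mathcal{C}})$ decomposes topologically as $\widehat{\bigoplus}_{k \in \Z[1/p]^n} B_I \cdot \underline{X}^k$, with $\gamma_i$ acting on weight $k$ as multiplication by $[\epsilon]^{k_i}$. For $k \in \Z^n$, one factors $[\epsilon]^{k_i}-1 = \mu \cdot u_{k_i}$, where $\mu = [\epsilon]-1$ and $u_{k_i} \equiv k_i \pmod{\mu}$, so $u_{k_i}$ is a unit in $B_I$ when $k_i \neq 0$ and zero when $k_i = 0$. For $k \notin \Z^n$, some $k_i \in \Z[1/p] \setminus \Z$, and $[\epsilon^{k_i}]-1$ becomes a unit in $B_I[1/t]$: this is the key technical input, which reduces to the fact that $[\epsilon^{1/p^m}] - 1$ and $t = \log[\epsilon]$ generate the same ideal in $B_{\mathrm{dR}}^+$, suitably transferred to $B_I$.

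After inverting $t$, the weight-$k$ subcomplex for $k \notin \Z^n$ is acyclic by Proposition \ref{koszul}(ii). For $k \in \Z^n$, the Koszul operators $(\mu u_{k_1}, \dots, \mu u_{k_n})$ are, by Proposition \ref{koszul}(i), quasi-isomorphic after dividing out $\mu$ to the Koszul complex with operators $(k_1,\dots,k_n)$. Summing over $k \in \Z^n$, this yields precisely $\Omega^\bullet(\bar{\mathcal{C}}_{r,r'}) \widehat{\otimes}_{\qp} B_I[1/t]$, since $d(\underline{T}^k) = \sum_i k_i \underline{T}^k \, dT_i/T_i$ is the de Rham differential in the monomial basis. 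For the conditional statement, the ad hoc rescaling by $\mu^{-1}$ is replaced by the décalage functor: hypothesis $(*)$ guarantees that $L\eta_t$ commutes with the Čech calculation on $\tilde{\mathcal{C}}$, after which Proposition \ref{koszul}(i) produces the de Rham complex tensored with $B_I$ (without inverting $t$), while Proposition \ref{koszul}(ii) applied to the non-integer weight pieces kills them under $L\eta_t$.

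The main obstacle is the precise control of $[\epsilon^{k}] - 1$ inside $B_I$ and $B_I[1/t]$ for $k \in \Z[1/p] \setminus \{0\}$: its invertibility in $B_I[1/t]$ for non-integer $k$, and the explicit factorization through $\mu$ together with the identification $u_{k_i} \in B_I^\times$ for non-zero integer $k_i$. These facts are classical in $B_{\mathrm{dR}}^+$, but careful norm estimates on the interval $I$ are needed to transfer them to $B_I$ (where $t$ is not invertible) and to ensure that the topological direct sum decomposition $\widehat{\bigoplus}_k B_I \cdot \underline{X}^k$ is compatible with the scaling and décalage operations. The conditional refinement additionally rests on the compatibility $(*)$ of $L\eta_t$ with Čech cohomology, whose verification is postponed.
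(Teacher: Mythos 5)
Your skeleton is exactly the paper's: Cartan--Leray for the pro-\'etale $\zp^n$-cover $\tilde{\mathcal{C}} \to \bar{\mathcal{C}}_{r,r',C}$ (acyclicity of $\mathbb{B}_I$ on affinoid perfectoids, Proposition \ref{descriptionbi}, plus the identification of sections on fibre products with continuous cochains, Proposition \ref{debile}), Koszul complex for $\zp^n$, splitting into integral and non-integral monomials, $L\eta_t$ via Proposition \ref{koszul}, and the de Rham identification in the basis $d\log X_i$. One point you leave implicit: treating $t$ and $\mu=[\epsilon]-1$ as interchangeable in $B_I$ is only valid after first reducing, via $\mathbb{B}_I \simeq \mathbb{B}_{\varphi^k(I)}$ and $\varphi(t)=pt$, to the case $I \subset [p^{-1},1[$; the paper performs this reduction at the outset, and it is also where the hypothesis on $I$ needed below comes from.

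The genuine gap is in your treatment of the non-integral part. You decompose $B_I(R_{\infty},R_{\infty}^+)$ as $\widehat{\bigoplus}_{k\in\z[1/p]^n} B_I\cdot \underline{X}^k$ and dispose of each weight $k \notin \z^n$ by invertibility of $[\epsilon^{k_i}]-1$ in $B_I[1/t]$. But weightwise acyclicity does not pass to the completed direct sum: the norms $\|([\epsilon^{m/p^r}]-1)^{-1}\|$ blow up as $r\to\infty$, so the inverse operators do not assemble into an operator on the completion; a completed sum of two-term complexes $[B_I \xrightarrow{u} B_I]$ with $u$ a unit of unbounded inverse norm has non-zero $H^1$ in general, and that $H^1$ need not even be $t$-torsion. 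Hence neither the acyclicity after inverting $t$ nor the vanishing under $L\eta_t$ (which requires the cohomology to be killed by $t$) follows from your per-weight argument. What the paper proves instead, following \cite[Lem. 9.6]{BMS}, is the uniform, operator-level statement: multiplication by $t$ on the whole complex $R\Gamma_{\rm cont}(\zp^n, B_I(R_{\infty},R_{\infty}^+)^{\rm nonint})$ is homotopic to zero, the homotopy being built from the divisibility of $\gamma_i^{p^{r-1}}-1$ by $t$ (this is where $I \subset [p^{-1},1[$ is used); this simultaneously gives acyclicity after inverting $t$ and $L\eta_t(\cdot)=0$. The same uniformity issue, in milder form, affects your integral part: you compare weight by weight the units $u_{k_i}$ with the integers $k_i$, whereas the paper invokes the uniform identity of \cite[Lem. 12.3]{BMS}, namely that $(\gamma_i-1)/t$ and $X_i\,d/dX_i$ agree up to a single unit of $B_I$ on the whole algebra at once. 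Your closing caveat about norm estimates and compatibility of the decomposition with d\'ecalage names the right worry, but the fix is not a bound on individual inverses --- it is the null-homotopy of $t$ on the non-integral subcomplex, and that is the one idea your proposal is missing.
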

\begin{proof}
Tout d'abord, comme $\mathbb{B}_I \simeq \mathbb{B}_{\varphi^k(I)}$ pour tout entier $k\in \z$ et comme (pour la deuxiËme assertion) $\varphi(t)=pt$, avec $p$ inversible dans $B_I$, on peut supposer $I \subset [p^{-1},1[$. On sait alors que $t$ et $[\epsilon]-1$ diffËrent par une unitÈ de $B_I$. 

Notons $R=C\langle p^r\underline{T}, p^{r'}\underline{T}^{-1} \rangle$ et
\[ R_{\infty} = C\langle (p^r\underline{T})^{1/p^{\infty}},(p^{r'}\underline{T}^{-1})^{1/p^{\infty}} \rangle, \]
comme dans le lemme. Notons $\tilde{C}_{r,r',C}= \mathrm{Spa}(R_{\infty},R_{\infty}^+)$. On sait que
\[ H^0(\tilde{C}_{r,r',C},\mathbb{B}_I)= B_I(R_{\infty},R_{\infty}^+) \]
et que 
\[ H^i(\tilde{C}_{r,r',C},\mathbb{B}_I)=0 \]
si $i>0$, d'aprËs la proposition \ref{descriptionbi}. En outre, la proposition \ref{debile} dit que les sections de $\mathbb{B}_I$ sur le produit fibrÈ de $\tilde{\mathcal{C}}_{r,r',C}$ $k$-fois avec lui-mÍme au-dessus de $\bar{\mathcal{C}}_{r,r',C}$ sont
\[ \mathcal{C}^0(\zp^{k-1}, B_I(R_{\infty},R_{\infty}^+)). \]
La suite spectrale de Cartan-Leray pour le recouvrement pro-Ètale $\tilde{C}_{r,r',C} \to \bar{C}_{r,r',C}$ dÈgÈnËre donc et identifie le complexe de cohomologie de $\mathbb{B}_I$ sur $\bar{\mathcal{C}}_{r,r',C}$ au complexe de cohomologie continue du groupe $\zp^n$ agissant sur $B_I(R_{\infty},R_{\infty}^+)$ ; autrement dit, la flËche
\[ R\Gamma_{\rm cont}(\zp^n,B_I(R_{\infty},R_{\infty}^+) ) \to R\Gamma(\bar{C}_{r,r',C}, \mathbb{B}_I) \]
est un quasi-isomorphisme. On a donc Ègalement, en appliquant $L\eta_t$ des deux cÙtÈs, un quasi-isomorphisme
\[  L\eta_t R\Gamma_{\rm cont}(\zp^n,B_I(R_{\infty},R_{\infty}^+) ) \to L\eta_t R\Gamma(\bar{C}_{r,r',C,\mathrm{\acute{e}t}}, R\nu'_* \mathbb{B}_I). \]

Le rÈsultat suivant devrait Ítre vrai mais nous ne l'avons pas dÈmontrÈ\footnote{Il l'est d\'esormais : cf. \cite[Prop. 3.11]{ACLB}.}. Nous le formulons donc comme une hypothËse.
\\

\textit{\textbf{HypothËse $(*)$.}}
Soit $S$ un espace affinoÔde lisse sur $\mathrm{Spa}(C,\O_C)$. La flËche naturelle
\[ L\eta_t R\Gamma(S,\mathbb{B}_I) \to R\Gamma(S_{\mathrm{\acute{e}t}}, L\eta_t R\nu'_* \mathbb{B}_I) \]
est un quasi-isomorphisme.
\\

Si l'on admet cette hypothËse, on a donc un quasi-isomorphisme
\[  L\eta_t R\Gamma_{\rm cont}(\zp^n,B_I(R_{\infty},R_{\infty}^+) ) \to  R\Gamma(\bar{C}_{r,r',C,\mathrm{\acute{e}t}}, L\eta_tR\nu'_* \mathbb{B}_I). \] 

Pour obtenir le premier quasi-isomorphisme de l'ÈnoncÈ de la proposition, il ne reste donc plus qu'‡ calculer le membre de gauche, ce que nous allons faire ‡ l'aide des complexes de Koszul. On note $(\gamma_1,\dots,\gamma_n)$ le systËme de gÈnÈrateurs canonique de $\zp^n$. Le lemme \ref{birinfty} affirme que
\[ B_I(R_{\infty},R_{\infty}^+) = B_I \langle ([p^{\flat}]^r\underline{X})^{1/p^{\infty}}, ([p^{\flat}]^{r'}\underline{X}^{-1})^{1/p^{\infty}} \rangle, \]
avec pour tout $i$, $X_i = [T_i]$. On peut donc dÈcomposer :
\[ B_I(R_{\infty},R_{\infty}^+) = B_I(R_{\infty},R_{\infty}^+)^{\rm int} \oplus B_I(R_{\infty},R_{\infty}^+)^{\rm nonint}, \]
avec $B_I(R_{\infty},R_{\infty}^+)^{\rm int}= B_I \langle [p^{\flat}]^r\underline{X}, [p^{\flat}]^{r'} \underline{X}^{-1}\rangle$ et $B_I(R_{\infty},R_{\infty}^+)^{\rm nonint}$ le sous-$B_I \langle [p^{\flat}]^r\underline{X}, [p^{\flat}]^{r'} \underline{X}^{-1}\rangle$-module complet de $B_I(R_{\infty},R_{\infty}^+)$ engendrÈ par les monÙmes ‡ coefficients non entiers.

Pour tout $i\geq 0$, le groupe de cohomologie $H_{\rm cont}^i(\zp^n,B_I(R_{\infty},R_{\infty}^+)^{\rm nonint})$ est annulÈ par $t$, et donc
\[ L\eta_t R\Gamma_{\rm cont}(\zp^n,B_I(R_{\infty},R_{\infty}^+)^{\rm non int} ) = 0. \]
La preuve de cette assertion est tout ‡ fait analogue ‡ celle de \cite[Lem. 9.6]{BMS} : on montre que la multiplication par $t$ sur $R\Gamma_{\rm cont}(\zp^n,B_I(R_{\infty},R_{\infty}^+)^{\rm nonint})$ est homotope ‡ zÈro. En Ècrivant ce complexe de cohomologie comme complexe de Koszul, on se ramËne ‡ fabriquer l'homotopie pour le complexe
\[ B_I(R_{\infty},R_{\infty}^+)^{\rm int}. ([p^{\flat}]^{\rho_i} X_i)^{a(i)} \prod_{j \neq i} ([p^{\flat}]^{\rho_j}X_j)^{a(j)} \overset{\gamma_i-1}\longrightarrow B_I(R_{\infty},R_{\infty}^+)^{\rm int}. ([p^{\flat}]^{\rho_i} X_i)^{a(i)} \prod_{j \neq i} ([p^{\flat}]^{\rho_j}X_j)^{a(j)}, \]
avec $1\leq i \leq n$, $a(i)=m/p^r$, $r\geq 1$ et $m \in \z \backslash p\z$, qui est quasi-isomorphe au complexe
\[ B_I(R_{\infty},R_{\infty}^+)^{\rm int} \overset{\gamma_i [\epsilon^{m/p^r}]-1}\longrightarrow B_I(R_{\infty},R_{\infty}^+)^{\rm int}. \] 
On conclut pour ce complexe en utilisant le fait que $t$ divise $\gamma_i^{p^{r-1}}-1$ (ici on utilise l'hypothËse sur $I$). Nous renvoyons ‡ loc. cit. pour les dÈtails. 


Au contraire, dans le complexe de Koszul de $B_I(R_{\infty},R_{\infty}^+)^{\rm int}$, toutes les flËches sont divisibles par $t$, donc on a d'aprËs la proposition \ref{koszul} (ii) :
\[ L\eta_t K_{B_I(R_{\infty},R_{\infty}^+)^{\rm int}}(\gamma_1-1,\dots,\gamma_n-1)=K_{B_I \langle [p^{\flat}]^r\underline{X}, [p^{\flat}]^r \underline{X}^{-1}\rangle}(\frac{\gamma_1-1}{t},\dots,\frac{\gamma_n-1}{t}) . \]
On sait qu'‡ une unitÈ de $B_I$ prËs, pour chaque $i$, $\frac{\gamma_i-1}{t}$ et $X_i d/dX_i$ coÔncident (\cite[Lem. 12.3]{BMS}). Le complexe considÈrÈ est donc quasi-isomorphe au complexe 
\[  K_{B_I \langle [p^{\flat}]^r\underline{X},[p^{\flat}]^{r'} \underline{X}^{-1}\rangle} \left(X_1\frac{d}{dX_1},\dots, X_n\frac{d}{dX_n} \right), \]
qui est lui-mÍme quasi-isomorphe au complexe de de Rham
\[ \Omega^{\bullet}(\bar{\mathcal{C}}_{r,r'}) ~ \widehat{\otimes}_{\qp} B_I. \]
en utilisant la base $d\log(X_1), \dots,d\log(X_n)$. Ceci prouve le deuxiËme quasi-isomorphisme de l'ÈnoncÈ, modulo l'hypothËse $(*)$.

Enfin, notons que $R\Gamma(\bar{C}_{r,r',C,\mathrm{\acute{e}t}}, R\nu'_* \mathbb{B}_I)$ et $L\eta_t R\Gamma(\bar{C}_{r,r',C}, R\nu'_* \mathbb{B}_I)$ sont tous deux isomorphes ‡ $R\Gamma(\bar{C}_{r,r',C,\mathrm{\acute{e}t}}, R\nu'_* \mathbb{B}_I[1/t])$ aprËs inversion de $t$ (par quasi-compacitÈ de $\bar{\mathcal{C}}_{r,r'}$) ; par consÈquent le premier quasi-isomorphisme de l'ÈnoncÈ est vrai de faÁon inconditionnelle. 
\end{proof}

\begin{remarque}
La partie de la preuve ci-dessous qui consiste ‡ exprimer la cohomologie de $\mathbb{B}_I$ comme cohomologie d'un complexe de Koszul est standard depuis Faltings, et valable si l'on remplace l'algËbre $R$ qui y apparaÓt par une algËbre affinoÔde \og petite \fg{} au sens de Faltings. NÈanmoins pour une telle algËbre $R$, il ne semble pas Èvident de dÈcrire le complexe de Koszul de $\mathbb{B}_I(R_{\infty})$ ($R_{\infty}$ Ètant l'extension perfectoÔde pro-Ètale de $R$ dÈterminÈe par le choix d'une carte) en termes du complexe de de Rham de $\mathrm{Spa}(R)$. Cela est probablement possible aprËs choix d'un modËle formel convenable de $R$, mais s˚rement dÈlicat ‡ mettre en oeuvre, surtout si l'on veut des quasi-isomorphismes fonctoriels. 

En outre, le complexe de de Rham d'un affinoÔde n'est pas trËs sympathique ; il vaut mieux travailler avec des affinoÔdes surconvergents. Ce point de vue est explor\'e dans \cite{ACLB}.
\end{remarque}

Passons au cas du disque. On note $\bar{D}=\mathrm{Spa}(\qp \langle \underline{T} \rangle)$ le disque unitÈ fermÈ de dimension $n$ et $\tilde{D}_C=\mathrm{Spa}(C \langle \underline{T}^{1/p^{\infty}} \rangle)$ le disque unitÈ perfectoÔde sur $C$.

\begin{proposition} \label{boule}
On a :
\[ R\Gamma(\bar{D}_C, \mathbb{B}_I[1/t]) = \Omega_{\bar{D}}^{\bullet} ~ \widehat{\otimes}_{\qp} B_I[1/t]. \] 
En outre, si l'hypothËse $(*)$ est valide,
\[ R\Gamma(\bar{D}_{C,\mathrm{\acute{e}t}}, L\eta_t R\nu'_* \mathbb{B}_I) = \Omega_{\bar{D}}^{\bullet} ~ \widehat{\otimes}_{\qp} B_I. \]
\end{proposition}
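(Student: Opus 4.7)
The plan is to mimic the proof of Proposition \ref{couronne}, which is actually simpler since the disk has no negative-index variables. The disk being quasi-compact allows us to reduce the first (unconditional) assertion to a question at a fixed level $I$, and both assertions then follow from the same Koszul computation.

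First, I would introduce the pro-\'etale perfecto\"\i d cover $\tilde{D}_C = \mathrm{Spa}(C\langle \underline{T}^{1/p^{\infty}}\rangle) \to \bar{D}_C$, which is a $\zp^n$-torsor with generators $(\gamma_1,\dots,\gamma_n)$ acting by $\gamma_i(T_j^{1/p^k}) = \epsilon^{\delta_{ij}/p^k} T_j^{1/p^k}$. Exactly as in Lemma \ref{birinfty}, one has
\[ B_I(R_\infty,R_\infty^+) = B_I\langle \underline{X}^{1/p^\infty}\rangle, \qquad X_i = [T_i], \]
and the sections of $\mathbb{B}_I$ on the $k$-fold fiber product of $\tilde{D}_C$ over $\bar{D}_C$ are $\mathcal{C}^0(\zp^{k-1},B_I(R_\infty,R_\infty^+))$, by Propositions \ref{descriptionbi} and \ref{debile}. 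The Cartan--Leray spectral sequence for the cover $\tilde{D}_C \to \bar{D}_C$ therefore degenerates and yields a quasi-isomorphism
\[ R\Gamma_{\mathrm{cont}}(\zp^n,B_I(R_\infty,R_\infty^+)) \;\overset{\sim}\longrightarrow\; R\Gamma(\bar{D}_C,\mathbb{B}_I), \]
and, applying $L\eta_t$ and invoking hypothesis $(*)$ (which is legal on the smooth affino\"\i d $\bar{D}_C$), also a quasi-isomorphism
\[ L\eta_t R\Gamma_{\mathrm{cont}}(\zp^n,B_I(R_\infty,R_\infty^+)) \;\overset{\sim}\longrightarrow\; R\Gamma(\bar{D}_{C,\mathrm{\acute{e}t}},L\eta_t R\nu'_* \mathbb{B}_I). \]

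Next I would split $B_I(R_\infty,R_\infty^+) = B_I\langle \underline{X}\rangle \oplus B_I(R_\infty,R_\infty^+)^{\mathrm{nonint}}$ as a direct sum of a $\zp^n$-stable subspace of ``integral'' monomials and its complement spanned by the monomials with at least one non-integer exponent. As in the annulus case (following \cite[Lem. 9.6]{BMS}), multiplication by $t$ is homotopic to zero on the Koszul complex computing $R\Gamma_{\mathrm{cont}}(\zp^n,B_I(R_\infty,R_\infty^+)^{\mathrm{nonint}})$: the key point is that on a monomial of exponent $m/p^r$ with $p\nmid m$, the operator $\gamma_i - 1$ can be compared to $[\epsilon^{m/p^r}]-1$, and $t$ divides $\gamma_i^{p^{r-1}} - 1$, which provides the homotopy. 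Hence $L\eta_t$ annihilates this summand.

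It remains to analyze the integral part. Every differential in the Koszul complex
\[ K_{B_I\langle \underline{X}\rangle}(\gamma_1-1,\dots,\gamma_n-1) \]
is divisible by $t$, so by Proposition \ref{koszul}(i) we get
\[ L\eta_t K_{B_I\langle \underline{X}\rangle}(\gamma_1-1,\dots,\gamma_n-1) = K_{B_I\langle \underline{X}\rangle}\left(\tfrac{\gamma_1-1}{t},\dots,\tfrac{\gamma_n-1}{t}\right), \]
and by \cite[Lem. 12.3]{BMS} each $(\gamma_i-1)/t$ agrees, up to a unit of $B_I$, with $X_i\, d/dX_i$. Using the basis $d\log(X_1),\dots,d\log(X_n)$---or equivalently $dX_1,\dots,dX_n$ after multiplying by $X_i$, which is legitimate here since $X_i$ is a unit in no sense but the operators still combine into the de Rham differential on $B_I\langle \underline{X}\rangle = \O(\bar D)\widehat{\otimes}_{\qp} B_I$---this Koszul complex becomes
\[ \Omega_{\bar D}^\bullet \,\widehat{\otimes}_{\qp}\, B_I. \]
This yields the second quasi-isomorphism of the statement, conditional on $(*)$.

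Finally, for the unconditional assertion, I would observe that inverting $t$ makes $L\eta_t$ act as the identity on the derived category of $B_I[1/t]$-modules (since $t$ becomes a unit), and that $\bar{D}_C$ being quasi-compact allows one to commute $R\Gamma$ with $[1/t]$. Consequently
\[ R\Gamma(\bar D_C,\mathbb{B}_I[1/t]) \simeq R\Gamma(\bar D_{C,\mathrm{\acute{e}t}},R\nu'_* \mathbb{B}_I)[1/t] \simeq R\Gamma(\bar D_{C,\mathrm{\acute{e}t}},L\eta_t R\nu'_* \mathbb{B}_I)[1/t], \]
and the Koszul argument above, after inverting $t$, proceeds without appeal to $(*)$, giving $\Omega_{\bar D}^\bullet \widehat{\otimes}_{\qp} B_I[1/t]$. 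The main obstacle, as in Proposition \ref{couronne}, is really the use of hypothesis $(*)$ in the unconditional-free version; the rest is a careful combination of the perfecto\"\i d period-sheaf computations of Scholze with the Berthelot--Ogus d\'ecalage functor and standard Koszul bookkeeping.
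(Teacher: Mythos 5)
Your argument breaks at its very first step, and the failure propagates through everything that follows. The morphism $\tilde{D}_C \to \bar{D}_C$ is \emph{not} a pro-\'etale $\zp^n$-torsor: over the coordinate hyperplanes $T_i=0$ the action on fibres degenerates (for $n=1$ the fibre over the origin is a single point), so Proposition \ref{debile}, whose hypothesis is a pro-\'etale \emph{Galois} cover, does not apply, and your Cartan--Leray reduction to $R\Gamma_{\mathrm{cont}}(\zp^n,B_I(R_\infty,R_\infty^+))$ is invalid. The paper flags this explicitly (\og{}n'est pas un recouvrement pro-\'etale, bien s\^ur\fg{}) and only uses that the map is a \emph{quasi-pro-\'etale} cover, for which \v{C}ech descent still holds but with the \emph{true} fibre products. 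Computing these is the real content of the paper's proof: the $k$-fold fibre product is $\mathrm{Spa}(A_{n,k})$, where $A_{n,k}$ is the proper subalgebra of $\mathcal{C}^0(\zp^{k-1},\O_C\langle \underline{T}^{1/p^{\infty}}\rangle)[1/p]$ cut out by compatibility conditions under the evaluations $\mathrm{ev}_i$ at $T_i=0$ (proved for $n=1$, $k=2$ via Scholze's criterion on rank-one points), not the full algebra of continuous functions that your spectral sequence requires.

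Moreover your intermediate claim is genuinely false, not merely unjustified. After your Koszul manipulations the integral part yields $K_{B_I\langle \underline{X}\rangle}\bigl(X_1\frac{d}{dX_1},\dots,X_n\frac{d}{dX_n}\bigr)$, which is the de Rham complex with \emph{logarithmic poles} along $X_i=0$, and this is not quasi-isomorphic to $\Omega_{\bar{D}}^{\bullet}\widehat{\otimes}_{\qp}B_I[1/t]$: already for $n=1$ the constant $1$ is never of the form $X f'$, so the class $[d\log X]\in\mathrm{coker}\bigl(X\frac{d}{dX}\bigr)$ is a nonzero extra class, which restricts to the nonzero residue class on a closed annulus, whereas every class coming from $\Omega^1(\bar{D})\widehat{\otimes}B_I[1/t]$ has residue zero there. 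Your own parenthetical remark that \og{}$X_i$ is a unit in no sense\fg{} is precisely the point at which the identification with the plain de Rham complex fails. The paper kills these spurious $d\log$ classes structurally: the \v{C}ech complex built from the $A_{n,k}$ is d\'eviss\'e by the exact sequence \eqref{devissage}; the innermost piece is the Koszul complex on $X_1\cdots X_n B_I\langle\underline{X}\rangle$, where $g\,d\log X_{i_1}\wedge\dots\wedge d\log X_{i_{k-1}}$ does land among regular forms satisfying the divisibility condition by $T_1\cdots T_n/(T_{i_1}\cdots T_{i_{k-1}})$; and induction on $n$ then assembles the full complex $\Omega_{\bar{D}}^{\bullet}\widehat{\otimes}_{\qp}B_I$. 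Your treatment of the non-integral summand and the final $t$-inversion step for the unconditional statement do match the paper, but without the fibre-product lemma and the d\'evissage your computation produces the cohomology of a torus-like object, not of the disk.
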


\begin{proof}
Le morphisme $\tilde{D}_C \to \bar{D}_C$ n'est pas un recouvrement pro-Ètale de $\bar{D}_C$, bien s˚r, mais c'en est un recouvrement quasi-pro-Ètale (c'est-‡-dire localement pro-Ètale pour la topologie pro-Ètale), et cela nous suffira, puisqu'un faisceau pro-Ètale est automatiquement un faisceau pour la topologie quasi-pro-Ètale. La cohomologie de $\mathbb{B}_I$ sur $\tilde{D}_C$ est nulle (proposition \ref{descriptionbi}) ; on peut donc calculer la cohomologie de $\mathbb{B}_I$ sur $\bar{D}_C$ comme cohomologie du complexe de Cech pour le recouvrement $\tilde{D}_C \to \bar{D}_C$. Pour le dÈcrire, on utilise le lemme suivant.

\begin{lemme} 
Pour $i=1,\dots,n$, on note $\mathrm{ev}_i$ l'application d'Èvaluation en $T_i=0$. Pour tout $k\geq 1$, le produit fibrÈ $k$-fois $\tilde{D}_C \times_{\bar{D}_C} \dots \times_{\bar{D}_C} \tilde{D}_C$ est affinoÔde perfectoÔde, d'algËbre affinoÔde $(A_{n,k},A_{n,k}^+)$, avec $A_{n,k}=A_{n,k}^+[1/p]$ et $A_{n,k}^+$ est l'ensemble des $f \in \mathcal{C}^0(\zp^{k-1}, \O_C \langle \underline{T}^{1/p^{\infty}} \rangle)$ telles que pour tout $0 \leq i_1,\dots,i_j \leq n$, et tout $x=(x_1,\dots,x_{k-1}), y=(y_1,\dots,y_{k-1}) \in \zp^{k-1}$ avec $x_l=y_l$ dËs que $l\notin \{i_1,\dots,i_j \}$, alors
\[ \mathrm{ev}_{i_1} \circ \dots \circ \mathrm{ev}_{i_j} \circ f (x) = \mathrm{ev}_{i_1} \circ \dots \circ \mathrm{ev}_{i_j} \circ f(y). \]
\end{lemme}

\begin{proof}
Tout d'abord, il est clair que l'algËbre affinoÔde $(A_{n,k},A_{n,k}^+)$ est une algËbre affinoÔde perfectoÔde. 

Pour allÈger les notations, nous ne traitons que le cas $n=1$, $k=2$, mais le cas gÈnÈral est parfaitement similaire. Montrons que $\tilde{D} \times_{\bar{D}} \tilde{D}$ et $\mathrm{Spa}(A_{1,2})$ sont isomorphes. On peut dÈfinir deux morphismes d'algËbres continus $\lambda, \mu : C \langle \underline{T}^{1/p^{\infty}} \rangle \to A_{1,2}$ : $\lambda$ envoie $f \in C \langle T^{1/p^{\infty}} \rangle$ sur la fonction constante Ègale ‡ $f$ sur $\zp$ ; le second envoie $f \in C \langle T^{1/p^{\infty}} \rangle$ sur la fonction 
\[ \mu(f) : \zp \to C \langle T^{1/p^{\infty}} \rangle ~ ; ~ x \mapsto x \cdot f, \]
$x \cdot f$ dÈsignant l'ÈlÈment de $C \langle T^{1/p^{\infty}} \rangle$ obtenu en remplaÁant dans l'Ècriture de $f$ chaque terme $T^{a/p^m}$ par $\zeta_{p^m}^{ax} T^{a/p^m}$. L'image de ce morphisme est incluse dans $A_{1,2}$. De plus si $f\in C \langle T \rangle$, l'image de $f$ par ces deux morphismes est la mÍme. En d'autres termes, les deux morphismes d'espaces adiques $\lambda^*, \mu^* : \mathrm{Spa}(A_{1,2}) \to \tilde{D}$ correspondants sont les mÍmes aprËs composition avec le morphisme $\tilde{D} \to \bar{D}$. On a donc un morphisme $\mathrm{Spa}(A_{1,2}) \to \tilde{D} \times_{\bar{D}} \tilde{D}$. 

Pour montrer que c'est un isomorphisme, nous utiliserons le critËre suivant (\cite[Lem. 5.4]{S17}) : un morphisme $f : X\to Y$ quasi-compact quasi-sÈparÈ entre espaces perfectoÔdes est un isomorphisme si et seulement s'il induit une bijection entre les espaces topologiques sous-jacents et des isomorphismes entre corps rÈsiduels en tous les points de rang $1$. Comme le $v$-recouvrement $\tilde{D} \to \bar{D}$ donne en restriction au disque privÈ de l'origine un recouvrement pro-Ètale $\tilde{D} \backslash \{0\} \to \bar{D} \backslash \{0\}$ de groupe $\zp$, on sait dÈj‡ que le morphisme $\mathrm{Spa}(A_{1,2}) \to \tilde{D} \times_{\bar{D}} \tilde{D}$ considÈrÈ est un isomorphisme en dehors de l'unique point de $\tilde{D} \times_{\bar{D}} \tilde{D}$ au-dessus de l'origine. Par le critËre ci-dessus, il suffit pour conclure de montrer que la prÈ-image de ce point est un seul point, de corps rÈsiduel $C$. Soit donc $K$ un corps contenant $C$ et $\alpha : A_{1,2} \to K$ un morphisme d'algËbres tel que
\[ \alpha \circ \lambda =\alpha \circ \mu : C \langle T^{1/p^{\infty}} \rangle \to K ~ ; ~ f \mapsto f(0). \]
On peut Ècrire tout ÈlÈment de $A_{1,2}$ comme somme d'une constante et d'un ÈlÈment de $\mathcal{C}^0(\zp,C \langle T^{1/p^{\infty}} \rangle)$ dont le terme constant est nul. L'image de ce dernier est nulle. En effet, on peut approcher une fonction localement constante sur $\zp$ ‡ valeurs dans $C \langle T^{1/p^{\infty}} \rangle$, sans terme constant. L'identitÈ
\[ \forall x \in \zp, ~ \mathbf{1}_{i+p^l \zp} (x) = \frac{1}{p^l} \sum_{\zeta \in \mu_{p^l}} \zeta^{x-i} \]
si $i \in \z$ et $l>0$  montre qu'il suffit de vÈrifier que l'image de la fonction $x \mapsto \zeta_{p^l}^x T^{a/p^m}$, $a\neq 0$, $l, m \in \mathbf{N}$, est nulle. Or on peut toujours Ècrire cette fonction comme un multiple de $\mu(T^{1/p^k})$, pour $k$ assez grand, par une fonction de $\zp$ dans $C\langle T^{1/p^{\infty}} \rangle$ sans terme constant. Comme l'image de $\mu(T^{1/p^k})$ est nulle, c'est gagnÈ.
\end{proof}

On a une suite exacte
\begin{eqnarray} 0 \to  \mathcal{C}^0(\zp^{k-1}, \cap_{i=1}^n \mathrm{Ker}(\mathrm{ev}_i)) \to A_{n,k} \to A_{n-1,k-1}^n \to 0, \label{devissage} \end{eqnarray}
la deuxiËme flËche Ètant $f \mapsto \oplus ~ \mathrm{ev}_i \circ f$ et $A_{-1,k}$ Ètant $C$ par convention pour tout $k$.

Montrons que le sous-complexe du complexe de Cech de $\mathbb{B}_I$ (o˘ l'on suppose comme prÈcÈdemment que $I\subset [p^{-1},1[$) pour le recouvrement $\tilde{D}_C \to \bar{D}_C$ dont le $k$-Ëme terme est donnÈ par :
\[ \mathbb{B}_I (\mathcal{C}^0(\zp^{k-1}, \cap_{i=1}^n \mathrm{Ker}(\mathrm{ev}_i))) \]
est isomorphe \textit{aprËs application du foncteur dÈcalage $L\eta_t$} au sous-complexe du complexe $\Omega_{\bar{D}}^{\bullet} ~\widehat{\otimes}_{\qp} B_I$ dont le $k$-Ëme terme est formÈ des $\sum f_{i_1,\dots,i_{k-1}} dT_{i_1} \wedge \dots \wedge dT_{i_{k-1}}$ tels que pour chaque $(i_1,\dots,i_{k-1})$, $f_{i_1,\dots,i_{k-1}}$ est divisible par $T_1\dots T_n/(T_{i_1}\dots T_{i_{k-1}})$. 

Le $k$-Ëme terme de ce complexe se rÈÈcrit :
\[ \mathcal{C}^0(\zp^{k-1}, \cap_{i=1}^n \mathrm{Ker}(\mathrm{ev}'_i)), \]
o˘ cette fois-ci, $\mathrm{ev}'_i : B_I \langle \underline{X}^{1/p^{\infty}} \rangle \to B_I$ est l'Èvaluation en $X_i=0$, d'aprËs la proposition \ref{debile} et le lemme \ref{birinfty}. 

C'est donc le complexe standard des cochaÓnes continues pour l'action du groupe $\zp^n$ sur $\cap_{i=1}^n \mathrm{Ker}(\mathrm{ev}'_i)$. Il est isomorphe au complexe de Koszul
\[ K_{\cap_{i=1}^n \mathrm{Ker}(\mathrm{ev}'_i)}(\gamma_1-1,\dots,\gamma_n-1). \]
La suite de l'argument est semblable au raisonnement employÈ pour prouver \ref{couronne}. On dÈcompose :
\[ \cap_{i=1}^n \mathrm{Ker}(\mathrm{ev}'_i) = (\cap_{i=1}^n \mathrm{Ker}(\mathrm{ev}'_i))^{\rm int} \oplus (\cap_{i=1}^n \mathrm{Ker}(\mathrm{ev}'_i))^{\rm non int}, \]
avec $(\cap_{i=1}^n \mathrm{Ker}(\mathrm{ev}'_i))^{\rm int}=X_1\dots X_n B_I \langle \underline{X} \rangle$, puis l'on montre exactement de la mÍme maniËre que la multiplication par $t$ est homotope ‡ zÈro sur $K_{(\cap_{i=1}^n \mathrm{Ker}(\mathrm{ev}'_i))^{\rm nonint}}(\gamma_1-1,\dots,\gamma_n-1)$ et que
\[ L\eta_t K_{(\cap_{i=1}^n \mathrm{Ker}(\mathrm{ev}'_i))^{\rm int}}(\gamma_1-1,\dots,\gamma_n-1) = K_{(\cap_{i=1}^n \mathrm{Ker}(\mathrm{ev}'_i))^{\rm int}}(\frac{\gamma_1-1}{t},\dots,\frac{\gamma_n-1}{t}). \]
Il ne reste donc ‡ la fin que
\[ K_{X_1\dots X_n B_I \langle \underline{X} \rangle}(X_1 \frac{d}{dX_1},\dots, X_n \frac{d}{dX_n}). \] 
Un petit calcul montre que ce complexe est isomorphe au sous-complexe du complexe de de Rham mentionnÈ ci-dessus, via la flËche qui envoie $X_1\dots X_n f_{i_1,\dots,i_{k-1}} dX_{i_1} \wedge \dots \wedge dX_{i_{k-1}}$ en degrÈ $k$ sur $X_1\dots X_n X_{i_1}^{-1}\dots X_{i_{k-1}}^{-1} f_{i_1,\dots,i_{k-1}} dX_{i_1} \wedge \dots \wedge dX_{i_{k-1}}$ en degrÈ $k$. 

Finalement, on obtient par rÈcurrence sur $n$ avec la suite exacte \eqref{devissage} que le complexe de Cech de $\mathbb{B}_I$ pour le recouvrement $\tilde{D}_C \to \bar{D}_C$ devient quasi-isomorphe, aprËs qu'on lui a appliquÈ $L\eta_t$, ‡ $\Omega_{\bar{D}}^{\bullet} \hat{\otimes}_{\qp} B_I$. Cela conclut la preuve des quasi-isomorphismes de l'ÈnoncÈ, modulo l'hypothËse $(*)$ pour le second.
\end{proof}

ArmÈ des propositions \ref{couronne} et \ref{boule}, nous pouvons enfin calculer la cohomologie du faisceau $L\eta_t R\nu'_* \mathbb{B}_I$ pour les couronnes et les disques ouverts, ainsi que pour l'espace affine.

\begin{corollaire} \label{couronnebis}
Soit $r, r'$ deux rationnels et $i\geq 0$ (Èventuellement $r$ ou $r'$ est $+\infty$). On a
\[ H^i(\mathcal{C}_{r,r',C}, \mathbb{B}_I[1/t])= H_{\rm dR}^i(\mathcal{C}_{r,r'}) \widehat{\otimes}_{\qp} B_I[1/t] = \bigwedge^i B_I[1/t]^n. \]
Si l'hypothËse $(*)$ est vÈrifiÈe, on a mÍme :
\[ H^i(\mathcal{C}_{r,r',C,\mathrm{\acute{e}t}}, L\eta_t R\nu'_* \mathbb{B}_I)= H_{\rm dR}^i(\mathcal{C}_{r,r'}) \widehat{\otimes}_{\qp} B_I = \bigwedge^i B_I^n. \]
\end{corollaire}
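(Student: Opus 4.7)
The strategy is to reduce the statement on the open Stein space $\mathcal{C}_{r,r',C}$ to its closed subcoronas, for which Proposition \ref{couronne} already provides the answer. I would write $\mathcal{C}_{r,r'}$ as an increasing union of closed subcoronas $\bar{\mathcal{C}}_{r_k, r'_k}$, with rational sequences $r_k \searrow r$ and $r'_k \nearrow r'$ (with the obvious modifications if one of the bounds is infinite, the sequence simply growing without bound on the appropriate side). The $\bar{\mathcal{C}}_{r_k, r'_k}$ are quasi-compact affinoid and form a cofinal system of affinoid subdomains of $\mathcal{C}_{r,r',C}$, so
\[ R\Gamma(\mathcal{C}_{r,r',C}, \mathbb{B}_I[1/t]) = R\varprojlim_k R\Gamma(\bar{\mathcal{C}}_{r_k,r'_k,C}, \mathbb{B}_I[1/t]). \]
Applying Proposition \ref{couronne} termwise and extracting cohomology yields the Milnor sequence
\[ 0 \to R^1\varprojlim_k H^{i-1}(\bar{\mathcal{C}}_{r_k,r'_k,C}, \mathbb{B}_I[1/t]) \to H^i(\mathcal{C}_{r,r',C}, \mathbb{B}_I[1/t]) \to \varprojlim_k H^i(\bar{\mathcal{C}}_{r_k,r'_k,C}, \mathbb{B}_I[1/t]) \to 0. \]

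The main obstacle is the vanishing of the $R^1\varprojlim$ term. The quasi-isomorphism of Proposition \ref{couronne} endows each $H^j(\bar{\mathcal{C}}_{r_k,r'_k,C}, \mathbb{B}_I[1/t])$ with a natural topology as a quotient of a Fr\'echet $B_I[1/t]$-module, and the transition maps are induced by restriction of rigid analytic differential forms between closed coronas, hence are continuous with dense image. The hypotheses of Proposition \ref{mittag} are therefore satisfied and $R^1\varprojlim$ vanishes.

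To conclude, I would compute the remaining inverse limit. The de Rham cohomology of each $\bar{\mathcal{C}}_{r_k, r'_k}$ is freely generated over $\qp$ by wedge products of the forms $d\log T_j$; these classes are independent of $k$ and extend to $\mathcal{C}_{r,r'}$, so the inverse system $\{H^i(\bar{\mathcal{C}}_{r_k,r'_k,C}, \mathbb{B}_I[1/t])\}_k$ is essentially constant with limit $\bigwedge^i B_I[1/t]^n = H_{\rm dR}^i(\mathcal{C}_{r,r'}) \widehat{\otimes}_{\qp} B_I[1/t]$. This gives the unconditional equality. The conditional strengthening under hypothesis $(*)$ is identical, using the second quasi-isomorphism of Proposition \ref{couronne} in place of the first.
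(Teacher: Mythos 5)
Your overall skeleton (exhaust the open corona by closed ones, pass to the limit via a Milnor-type sequence, identify the limit) is indeed the paper's skeleton, but both points at which you dispose of the difficulties are precisely where the real work lies, and both of your justifications fail. First, the claim that the de Rham cohomology of a \emph{closed} corona $\bar{\mathcal{C}}_{r_k,r'_k}$ is freely generated by the wedges of the $d\log T_j$ is false: taking a primitive of $\sum_{m\neq -1} a_m T^m\,dT$ divides the coefficients by $m+1$, which destroys convergence at the boundary radii, so already $H_{\rm dR}^1$ of a closed annulus is infinite-dimensional, and non-Hausdorff (the image of $d$ is dense in the zero-residue part but not closed). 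Consequently the inverse system $\{H^i(\bar{\mathcal{C}}_{r_k,r'_k,C},\mathbb{B}_I[1/t])\}_k$ is not ``essentially constant'' for the reason you give. Second, your appeal to Proposition \ref{mittag} for the vanishing of $R^1\varprojlim$ is unjustified: that proposition requires complete metric groups with uniformly continuous transition maps, whereas these cohomology groups, equipped with their natural quotient topology, are not even Hausdorff, and $B_I[1/t]$ is not Fr\'echet but only an increasing union of Banach spaces.

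What is true --- and what actually has to be proved --- is that the \emph{transition maps} kill the excess classes: a closed form on $\bar{\mathcal{C}}_{r_{k+1},r'_{k+1}}$, once restricted to the strictly smaller $\bar{\mathcal{C}}_{r_k,r'_k}$, becomes cohomologous to a combination of $d\log$-wedges, because the geometric decay gained from the strict inclusion beats the at most polynomial size of $|1/(m+1)|_p^{-1}$. This overconvergence phenomenon is exactly how the paper proceeds: it introduces a second, strictly nested family of closed coronas $\bar{\mathcal{C}}_{r_{n,k},r'_{n,k}}$ with intersection $\bar{\mathcal{C}}_{r_n,r'_n}$, rewrites each term of the projective system as the colimit $\varinjlim_k H^i(\bar{\mathcal{C}}_{r_{n,k},r'_{n,k},C},\cdot)$, identifies this colimit, via Proposition \ref{couronne}, Lemme \ref{commutetenseur} and the exactness of filtered colimits, with the \emph{overconvergent} de Rham cohomology $H_{\rm dR}^i(\mathcal{C}_{r_n,r'_n})^{\dagger}\,\widehat{\otimes}_{\qp}\,B_I$, which is finitely generated over $B_I$, and only then invokes the topological Mittag-Leffler criterion of \cite[Rem. 13.3.2 (ii)]{ega3}. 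Your argument becomes correct if you replace your last two paragraphs by this device (or if you directly prove the stabilization of the images of the transition maps, which gives algebraic Mittag-Leffler and hence the vanishing of $R^1\varprojlim$ with no topology at all); as written, it assumes the key point.
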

\begin{proof}
Les deux formules se dÈduisent de la mÍme faÁon de la proposition \ref{couronne}. Nous expliquons l'argument pour le complexe de faisceaux $L\eta_t R\nu'_* \mathbb{B}_I$. En appliquant le mÍme argument en inversant $t$ ‡ chaque Ètape, on obtiendrait de mÍme le rÈsultat pour $\mathbb{B}_I[1/t]$. 

On choisit des rÈels $r_n, r'_n$ pour tout $n$ de sorte que les couronnes fermÈes $\bar{\mathcal{C}}_{r_n,r'_n}$ forment un recouvrement admissible de la couronne $\mathcal{C}_{r,r'}$. Si $\iota_n$ dÈnote l'inclusion $\bar{\mathcal{C}}_{r_n,r'_n,C} \to \mathcal{C}_{r,r',C}$, notons $\mathcal{F}_n$ le complexe $R\iota_{n,*} L\eta_t R\nu'_* \mathbb{B}_{I,\bar{\mathcal{C}}_{r_n,r'_n,C}}$, de faÁon que
\[ L\eta_t R\nu'_* \mathbb{B}_{I,\mathcal{C}_{r,r',C}} = \underset{n} \varprojlim ~ \mathcal{F}_n. \]
Nous affirmons que pour tout $i$,
\[ H^i(\mathcal{C}_{r,r',C,\mathrm{\acute{e}t}}, \underset{n}\varprojlim ~ \mathcal{F}_n) = \underset{n}\varprojlim ~H^i(\mathcal{C}_{r,r',C,\mathrm{\acute{e}t}}, \mathcal{F}_n) = \underset{n}\varprojlim ~ H^i(\mathcal{C}_{r_n,r'_n,C,\mathrm{\acute{e}t}}, L\eta_t R\nu'_* \mathbb{B}_I) \]
(la derniËre ÈgalitÈ est une consÈquence directe de la dÈfinition de $\mathcal{F}_n$). Pour cela on applique \cite[Prop. 13.3.1]{ega3}, ou plutÙt \cite[Rem. 13.3.2 (ii)]{ega3}, dans sa version topologique. Les conditions (i)-(iii) de loc. cit sont trivialement vÈrifiÈes en prenant comme base les ouverts Ètales quasi-compacts\footnote{Dans le cas du faisceau pro-Ètale $\mathbb{B}_I[1/t]$, on pourrait aussi directement appliquer la proposition \ref{replete}.}. Il faut seulement s'assurer que le systËme projectif formÈ des groupes de cohomologie des $\mathcal{F}_n$ vÈrifie Mittag-Leffler (topologique). Pour chaque $n$, choisissons des suites $r_{n,k}, r'_{n,k}$ de rÈels de sorte que les couronnes fermÈes $\bar{\mathcal{C}}_{r_{n,k},r'_{n,k},C}$ soient strictement emboÓtÈes d'intersection $\bar{\mathcal{C}}_{r_n,r'_n,C}$, et considÈrons
\[ \underset{k}\varinjlim ~H^i(\bar{\mathcal{C}}_{r_{n,k},r'_{n,k},C,\mathrm{\acute{e}t}},L\eta_t R\nu'_* \mathbb{B}_I). \]
On a 
\[ R~ \underset{n}\varprojlim ~ H^i(\mathcal{C}_{r_n,r'_n,C,\mathrm{\acute{e}t}}, L\eta_t R\nu'_* \mathbb{B}_I) = R ~ \underset{n} \varprojlim ~(\underset{k}\varinjlim ~H^i(\bar{\mathcal{C}}_{r_{n,k},r'_{n,k},C,\mathrm{\acute{e}t}},L\eta_t R\nu'_* \mathbb{B}_I)). \]
Il suffit donc de considÈrer le membre de droite. Or on a pour tout $n$, d'aprËs la proposition prÈcÈdente :
\[  \underset{k}\varinjlim ~ H^i(\bar{\mathcal{C}}_{r_{n,k},r'_{n,k},C,\mathrm{\acute{e}t}},L\eta_t R\nu'_* \mathbb{B}_I)) \simeq \underset{k} \varinjlim ~ H^i(\Omega^{\bullet}(\bar{\mathcal{C}}_{r_{n,k},r'_{n,k}}) \widehat{\otimes}_{\qp} B_I). \]
Notons que le membre de droite peut aussi se rÈÈcrire 
\[ \underset{k}\varinjlim ~ H^i(\Omega^{\bullet}(\mathcal{C}_{r_{n,k},r'_{n,k}}) \widehat{\otimes}_{\qp} B_I). \]
Comme une couronne ouverte est Stein, on peut appliquer le lemme \ref{commutetenseur} avec $W=B_I$ comme dans la preuve de la proposition \ref{basechange} ; puisque le foncteur limite inductive est exact, on obtient
finalement que pour tout $i$ :
\[ \underset{k}\varinjlim ~H^i(\bar{\mathcal{C}}_{r_{n,k},r'_{n,k},C,\mathrm{\acute{e}t}},L\eta_t R\nu'_* \mathbb{B}_I)) = \underset{k}\varinjlim ~ (H_{\rm dR}^i(\mathcal{C}_{r_{n,k},r'_{n,k}}) \widehat{\otimes}_{\qp} B_I). \]
Le systËme projectif quand $n$ varie des groupes de droite est le mÍme que le systËme projectif formÈ des $H_{\rm dR}^i(\mathcal{C}_{r_n,r'_n})^{\dagger} \widehat{\otimes}_{\qp} B_I$, et ces groupes vÈrifient Mittag-Leffler. Finalement, on a donc pour tout $i$ :
\[ H^i(\mathcal{C}_{r,r',C,\mathrm{\acute{e}t}},L\eta_t R\nu'_*\mathbb{B}_{I,\mathcal{C}_{r,r'}}) = \underset{n} \varprojlim ~ H_{\rm dR}^i(\mathcal{C}_{r_n,r'_n}) \widehat{\otimes}_{\qp} B_I, \]
qui est $H_{\rm dR}^i(\mathcal{C}_{r,r'}) \widehat{\otimes}_{\qp} B_I$.
\end{proof}

Exactement les mÍmes arguments, en utilisant cette fois la proposition \ref{boule}, donnent le

\begin{corollaire}  \label{boulebis}
Soit $X$ le disque ouvert de dimension $n$ et de rayon $r$ (Èventuellement $r=+\infty$, i.e. $X=\mathbf{A}^n$). On a $H^0(X_C,\mathbb{B}_I[1/t])=B_I[1/t]$ et $H^i(X_C, \mathbb{B}_I[1/t])=0$ si $i>0$. Si l'hypothËse $(*)$ est vraie, on a $H^0(X_{C,\mathrm{\acute{e}t}},L\eta_t R\nu'_* \mathbb{B}_I)=B_I$ et $H^i(X_{C,\mathrm{\acute{e}t}}, L\eta_t R\nu'_* \mathbb{B}_I)=0$ si $i>0$.
\end{corollaire}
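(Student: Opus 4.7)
The plan is to mimic the argument of Corollary \ref{couronnebis}, replacing the exhaustion by closed annuli with an exhaustion by closed polydisks and replacing Proposition \ref{couronne} by Proposition \ref{boule}. I treat the two statements in parallel; the argument for $\mathbb{B}_I[1/t]$ is unconditional, while the one for $L\eta_t R\nu'_* \mathbb{B}_I$ depends on $(*)$ as in the annulus case.

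First I would fix a cofinal sequence of rational radii $r_1<r_2<\dots<r$ (with $r_n\to r$, allowing $r=+\infty$) and write the open disk $X$ as the union of the closed polydisks $\bar D_{r_n}$. Setting $\mathcal{F}_n := R\iota_{n,*}\,L\eta_t R\nu'_*\mathbb{B}_{I,\bar D_{r_n,C}}$ (resp.\ its $[1/t]$-inverted analogue), we have $L\eta_t R\nu'_*\mathbb{B}_{I,X_C}=R\varprojlim_n \mathcal{F}_n$ (resp.\ the analogous formula after inverting $t$). By Proposition \ref{boule}, applied for each $n$, the cohomology of $\mathcal{F}_n$ on $\bar D_{r_n,C}$ is computed by the complex $\Omega^{\bullet}(\bar D_{r_n})\,\widehat\otimes_{\qp} B_I$ (or $B_I[1/t]$ in the inverted version).

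Next I would run the same \cite[Rem. 13.3.2]{ega3} argument used in the proof of Corollary \ref{couronnebis}: it suffices to verify the topological Mittag-Leffler condition for the inverse system $\{H^i(\bar D_{r_n,C,\mathrm{\acute{e}t}},\mathcal{F}_n)\}_n$. For this one picks, for each $n$, strictly nested closed polydisks $\bar D_{r_{n,k}}\supsetneq \bar D_{r_n}$ with $\bigcap_k \bar D_{r_{n,k}}=\bar D_{r_n}$ and rewrites
\[ R\varprojlim_n H^i(\bar D_{r_n,C,\mathrm{\acute{e}t}},\mathcal{F}_n) = R\varprojlim_n \varinjlim_k H^i(\bar D_{r_{n,k},C,\mathrm{\acute{e}t}},L\eta_t R\nu'_*\mathbb{B}_I). \]
By Proposition \ref{boule} each inner term is $\varinjlim_k H^i(\Omega^{\bullet}(\bar D_{r_{n,k}})\,\widehat\otimes_{\qp} B_I)$; since the open polydisk $D_{r_n}$ is Stein, the tensor-product lemma \ref{commutetenseur} (together with exactness of filtered colimits) yields
\[ \varinjlim_k H^i(\bar D_{r_{n,k},C,\mathrm{\acute{e}t}},L\eta_t R\nu'_*\mathbb{B}_I)=\varinjlim_k H^i_{\rm dR}(\bar D_{r_{n,k}})\,\widehat\otimes_{\qp} B_I = H^i_{\rm dR}(D_{r_n})^{\dagger}\,\widehat\otimes_{\qp} B_I. \]

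Finally, the Poincaré lemma for Stein open polydisks (valid also for $\mathbf{A}^n$) gives $H^0_{\rm dR}(D_{r_n})=\qp$ (the disks being connected) and $H^i_{\rm dR}(D_{r_n})=0$ for $i>0$, so the inverse system is constant in degree $0$ and trivial in higher degrees; in particular Mittag-Leffler is automatic. Passing to the projective limit one obtains $H^0(X_{C,\mathrm{\acute{e}t}},L\eta_t R\nu'_*\mathbb{B}_I)=B_I$ and vanishing in positive degrees, and the same argument after inverting $t$ at each step gives the unconditional statement for $\mathbb{B}_I[1/t]$. The only delicate point is really the Mittag-Leffler verification, and this is handled exactly as in Corollary \ref{couronnebis}; the $[1/t]$-version can alternatively be obtained more directly using Proposition \ref{replete} since the pro-étale sheaf $\mathbb{B}_I[1/t]$ has surjective transition maps along the overconvergent presentation.
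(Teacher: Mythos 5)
Votre preuve est correcte et suit exactement la démonstration du texte : le corollaire \ref{boulebis} y est obtenu « par exactement les mêmes arguments » que le corollaire \ref{couronnebis}, en remplaçant l'exhaustion par des couronnes fermées par une exhaustion par des polydisques fermés et la proposition \ref{couronne} par la proposition \ref{boule}, avec la même vérification de Mittag--Leffler topologique via \cite[Rem. 13.3.2]{ega3}, le lemme \ref{commutetenseur} et la trivialité de la cohomologie de de Rham du polydisque ouvert. Rien à redire.
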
 

\begin{proof}[DÈmonstration de la proposition \ref{bcrisaffine}]
L'inclusion naturelle
\[ \mathbb{B}[1/t]^{\varphi=1} \to (\underset{I}\varprojlim ~ \mathbb{B}_I[1/t])^{\varphi=1}, \]
$I$ dÈcrivant les sous-intervalles compacts de $]0,1[$, est un isomorphisme. On a :
\[ R \varprojlim_I \mathbb{B}_I[1/t] = \varprojlim_I \mathbb{B}_I[1/t], \]
en vertu de \cite[Lem. 3.18]{Shodge} (appliqu\'e en choisissant comme base d'ouverts les affino\"ides perfecto\"ides). On dÈduit donc du corollaire \ref{boulebis} que la cohomologie de $\varprojlim_I \mathbb{B}_I[1/t]$ en degrÈ positif est nulle. Par consÈquent, la cohomologie de $\mathbb{B}[1/t]^{\varphi=1}$ est nulle en degrÈ $>1$ et est en degrÈ $1$ le conoyau de $1-\varphi$ sur $B[1/t]$, c'est-‡-dire zÈro.\end{proof}

On peut dÈsormais conclure la preuve du thÈorËme \ref{cohespaffine}.

\begin{proof}[DÈmonstration du thÈorËme \ref{cohespaffine}]
C'est maintenant immÈdiat : on applique pour finir la suite exacte longue qui se dÈduit de la \og suite exacte fondamentale faisceautique \fg{} \eqref{fonda} et les propositions \ref{bdraffine} et \ref{bcrisaffine} donnent que pour tout $i$,
\[ H^i(\mathbf{A}_C^n,\qp) = \mathrm{Ker}(d_{i})=\mathrm{Im}(d_{i-1}). \]
\end{proof}

\begin{remarque} \label{cdnsynto}
Colmez, Dospinescu et Nizio\l{} ont annoncÈ (\cite{cdn}) une description de la cohomologie pro-Ètale de $\qp$ sur un un espace Stein lisse sur un corps $p$-adique, \textit{ayant un modËle formel $p$-adique strictement semi-stable}, en termes de la cohomologie de Hyodo-Kato de la fibre spÈciale de ce modËle et de la cohomologie de de Rham. Leur dÈmonstration repose sur un thÈorËme de comparaison entre cohomologies Ètale et syntomique. Dans le cas particulier de l'espace affine, leur argument est prÈsentÈ dans \cite{cn2}.

Notre dÈmonstration est diffÈrente puisqu'elle ne fait pas appel ‡ la thÈorie syntomique. Toutefois, les deux approches sont liÈes : en effet, la relation entre cohomologies Ètale et syntomique peut s'interprÈter en termes du foncteur dÈcalage $L\eta_t$, comme nous le montrons dans \cite[\S 2.8]{lbsynto}.
\end{remarque}

\section{Groupes d'extensions de certains faisceaux pro-Ètales}\label{extproÈtales}
L'objectif de cette section est la dÈmonstration du rÈsultat suivant.
\begin{theoreme}\label{breen}
Dans la catÈgorie abÈlienne des faisceaux de groupes abÈliens sur $\mathrm{Perf}_{C,v}$, on a
\begin{center}
   \begin{tabular}{| l | c | r | }
     \hline
     $\mathrm{Hom}$ & $\mathbf{G}_a$ & $\qp$ \\ \hline
     $\mathbf{G}_a$ & $C$ & $0$ \\ \hline
     $\qp$ & $C$ & $\qp$ \\
     \hline
   \end{tabular}
  \quad 
   \begin{tabular}{| l | c | r | }
     \hline
     $\mathrm{Ext}^1$ & $\mathbf{G}_a$ & $\qp$ \\ \hline
     $\mathbf{G}_a$ & $C$ & $C$ \\ \hline
     $\qp$ & $0$ & $0$ \\
     \hline
   \end{tabular}
   \quad 
   \begin{tabular}{| l | c | r | }
     \hline
     $\mathrm{Ext}^2$ & $\mathbf{G}_a$ & $\qp$ \\ \hline
     $\mathbf{G}_a$ & $0$ & $0$ \\ \hline
     $\qp$ & $0$ & $0$ \\
     \hline
   \end{tabular}
    \end{center}
    (ces tableaux se lisant de gauche ‡ droite). 
\end{theoreme}
La stratÈgie gÈnÈrale pour ce faire est la suivante :

Soit $G$ un groupe abÈlien dans un topos $\mathcal{T}$. Il existe une rÈsolution canonique de $G$ de longueur infinie par un complexe dont chaque composante est une somme de termes de la forme $\z[G^i \times \z^j]$, dont on trouvera une construction dans le style cubique dans \cite{maclane} ou dans le cadre simplicial dans \cite{illusie}. Soit $n>0$ un entier. Si on ne s'intÈresse qu'aux calculs des groupes $\mathrm{Ext}^i$ avec $i<n$, il suffit de se donner une rÈsolution partielle de $G$ de longueur $n$, i.e. un complexe $C$ d'objets de ce topos, concentrÈ en degrÈs nÈgatifs, muni d'un morphisme d'augmentation $C \to G$ ($G$ vu comme complexe concentrÈ en degrÈ $0$) induisant un isomorphisme $H^0(C) \simeq G$ et tel que $H^i(C)=0$ si $i\neq 0, -n$. Le triangle distinguÈ
\[ H^{-n}(C)[n] \to C \to G \overset{+1} \longrightarrow \]
montre que si $G'$ est un autre groupe abÈlien du topos $\mathcal{T}$, en appliquant ‡ ce triangle le foncteur $R\mathrm{Hom}(\cdot,G')$ on obtient des isomorphismes 
\[ \mathrm{Ext}_{\mathcal{T}}^i(G,G') \simeq \mathrm{Ext}_{\mathcal{T}}^i(C,G') \]
pour tout $0 \leq i <n$. 

Pour $n=3$, ce complexe est construit dans \cite{bbm} en tronquant le complexe d'Eilenberg-MacLane stabilisÈ de \cite{maclane} et \cite{illusie}, et s'explicite comme suit :
\[ \z[G^4] \times \z[G^3] \times \z[G^3] \times \z[G^2] \times \z[G] \overset{\partial_3} \longrightarrow \z[G^3] \times \z[G^2] \overset{\partial_2} \longrightarrow \z[G^2] \overset{\partial_1} \longrightarrow \z[G]. \]
L'augmentation vers $G$ est le morphisme canonique $\epsilon : \z[G] \to G$. Les flËches sont dÈfinies par :
\[ \partial_1 [x,y] = [x+y] -[x]-[y], \]
\[ \partial_2 [x,y,z] = [x+y,z]-[y,z] -[x,y+z] +[x,y], \]
\[ \partial_2[x,y] = [x,y]-[y,x], \]
\[ \partial_3[x,y,z,w] = -[y,z,w]+[x+y,z,w]-[x,y+z,w] +[x,y,z+w] -[x,y,z], \]
\[ \partial_3 [x,y,z] = -[y,z] +[x+y,z] -[x,z] -[x,y,z]+[x,z,y] -[z,x,y] \]
pour le premier facteur $\z[G^3]$,
\[ \partial_3[x,y,z]=-[x,z]+[x,y+z]-[x,y]+[x,y,z]-[y,x,z]+[y,z,x] \]
pour le second facteur $\z[G^3]$, 
\[ \partial_3[x,y]=[x,y]+[y,x], \]
\[ \partial_3[x] = [x,x]. \]
Si $G'$ est un autre groupe abÈlien du topos $\mathcal{T}$, on a une suite spectrale (complexe filtrÈ)
\[ E_1^{pq} = \mathrm{Ext}_{\mathcal{T}}^q(C^{-p},G') \Rightarrow \mathrm{Ext}_{\mathcal{T}}^{p+q}(C,G'). \]
Le point est maintenant que les groupes $E_1^{pq}$ se calculent plus facilement, puisque si $G$ et $G'$ sont deux groupes abÈliens du topos des faisceaux sur un espace muni d'une topologie de Grothendieck et que $G$ est reprÈsentable, alors pour tout $i\geq 0$ et tout $k\geq 1$,
\[ \mathrm{Ext}_{\mathcal{T}}^i(\z[G],G') = H_{\mathcal{T}}^i(G,G'). \]

Dans la suite, le topos $\mathcal{T}$ sera le topos $\z-\mathrm{Sh}$ des faisceaux de groupes abÈliens sur $\mathrm{Perf}_{C,\mathrm{pro\acute{e}t}}$ ; $G$ et $G'$ seront soit le faisceau constant $\qp$ soit $\mathbf{G}_a$.

Pour calculer les groupes d'extensions entre $G$ et $G'$, on est donc ramenÈ ‡ calculer des groupes de cohomologie pro-Ètale et ‡ analyser des morphismes entre ces groupes de cohomologie. 

\begin{remarque} \label{vtopproet}
On pourrait en fait, comme on l'a indiquÈ dans la remarque \ref{remcarp}, remplacer partout la topologie pro-Ètale par la $v$-topologie. En effet, d'aprËs \cite[Prop. 14.7]{S17}, on sait que si $X$ est un espace adique analytique sur $C$ et $X$ le diamant sur $\mathrm{Spa}(C^{\flat})$ associÈ, $\mathcal{F}$ un faisceau pro-Ètale de groupes abÈliens sur $X$ et $\mu : X_v^{\diamond} \to X_{\mathrm{pro\acute{e}t}}^{\diamond}$ le morphisme de sites, alors pour tout $i\geq 0$,
\[ H_v^i(X^{\diamond},\mu^* \mathcal{F}) = H_{\mathrm{pro\acute{e}t}}^i(X,\mathcal{F}), \]
et donc les calculs de la section prÈcÈdente peuvent aussi Ítre vus comme des calculs de $v$-cohomologie.
\end{remarque}

\subsection{Le cas $G=\mathbf{G}_a$, $G'=\qp$}

On veut calculer les groupes $\mathrm{Ext}^i(\mathbf{G}_a,\qp)$ pour $i=0, 1, 2$. On va montrer que pour $i=1$ ce groupe est un $C$-espace vectoriel de dimension $1$ et qu'il est nul pour $i=0, 2$. 

Pour $i=0$, on a une flËche  $\mathrm{Hom}(C^{0},\qp) \to \mathrm{Hom}(C^{-1},\qp)$, qui est
\[ H^0(\mathbf{A}_C^1,\qp)=\qp \to H^0(\mathbf{A}_C^2,\qp)=\qp \quad ; \quad x \mapsto x-x-x=-x, \]
donc Èvidemment injective. Par consÈquent, $\mathrm{Hom}(\mathbf{G}_a,\qp)=0$. 
\\

Pour $i=2$, il y a trois flËches ‡ analyser :
\[ \mathrm{Hom}(C^{-2},\qp) \to \mathrm{Hom}(C^{-3},\qp), \]
\[ \mathrm{Ext}^1(C^{-1},\qp) \to \mathrm{Ext}^1(C^{-2},\qp), \]
\[ \mathrm{Ext}^2(C^{0},\qp) \to \mathrm{Ext}^2(C^{-1},\qp). \]
La derniËre flËche a un noyau nul, puisque $\mathrm{Ext}^2(C^0,\qp)=H^2(\mathbf{A}_C^1,\qp)=0$ d'aprËs la proposition \ref{cohodroiteaff}. La premiËre flËche envoie $(x,y) \in \qp^2 = H^0(\mathbf{A}_C^3,\qp) \oplus H^0(\mathbf{A}_C^2,\qp)$ sur $(-x,-2y,2(x-y),2y,y)$, donc est injective. Il reste ‡ Ètudier la deuxiËme flËche. Elle va de $H^1(\mathbf{A}_C^2,\qp)\simeq \O(\mathbf{A}_C^2)_0$ vers $H^1(\mathbf{A}_C^3,\qp) \oplus H^1(\mathbf{A}_C^2,\qp) \simeq \O(\mathbf{A}_C^3)_0 \oplus \O(\mathbf{A}_C^2)_0$ et envoie $f(X,Y)$ sur $(f(X+Y,Z)-f(X,Y+Z)+f(X,Y)-f(Y,Z),f(X,Y)-f(Y,X))$. Soit $f$ dans le noyau. Ecrivons $f=\sum_{q\geq 1} f_q$, o˘ pour tout $q$, $f_q$ est un polynÙme homogËne en $X, Y$ de degrÈ $q$. D'aprËs \cite[Lem. 3]{lazard}, $f_q$ est un multiple scalaire de $C_q$ pour tout $q\geq 2$. Autrement dit, $f$ est dans l'image de la flËche 
\[ \mathrm{Ext}^1(C^0,\qp) \to \mathrm{Ext}^1(C^{-1},\qp) \quad ; \quad f(X) \mapsto f(X+Y)-f(X)-f(Y). \]
Tous les termes en degrÈ $2$ de la deuxiËme page de la suite spectrale sont nuls et on a donc bien en dÈfinitive $\mathrm{Ext}^2(\mathbf{G}_a,\qp)=0$.
\\

Pour $i=1$, il s'agit d'analyser les flËches
\[ \mathrm{Hom}(C^{-1},\qp) \to \mathrm{Hom}(C^{-2},\qp), \]
\[ \mathrm{Ext}^1(C^0,\qp) \to \mathrm{Ext}^1(C^{-1},\qp). \]
La premiËre flËche est la flËche nulle de $H^0(\mathbf{A}_C^2,\qp)=\qp$ vers $H^0(\mathbf{A}_C^3,\qp)\oplus H^0(\mathbf{A}_C^2,\qp)=\qp^2$. On a aussi une flËche $\mathrm{Hom}(C^{0},\qp) \to \mathrm{Hom}(C^{-1},\qp)$, qui est
\[ H^0(\mathbf{A}_C^1,\qp)=\qp \to H^0(\mathbf{A}_C^2,\qp)=\qp \quad ; \quad x \mapsto x-x-x=-x, \]
et est donc surjective. Donc le noyau de $\mathrm{Hom}(C^{-1},\qp) \to \mathrm{Hom}(C^{-2},\qp)$ est inclus dans l'image de $\mathrm{Hom}(C^{0},\qp) \to \mathrm{Hom}(C^{-1},\qp)$. Donc $E_2^{1,0}=0$.   

La seconde flËche va de $\mathrm{Ext}^1(C^0,\qp)=H^1(\mathbf{A}_C^1,\qp)$ vers $\mathrm{Ext}^1(C^{-1},\qp)=H^1(\mathbf{A}_C^2,\qp)$. Elle envoie $f(X) \in \O(\mathbf{A}_C^1)_0$ sur $f(X+Y) - f(X)-f(Y)$. Si $f$ est dans le noyau, $f$ est donc nÈcessairement un multiple scalaire de l'identitÈ $\mathrm{Id}$. Donc $E_2^{0,1}=C$.

A la $r$-Ëme page de la suite spectrale, la diffÈrentielle $d_r$ envoie $E_r^{p,q}$ vers $E_r^{p+r,q-r+1}$. Donc
\[ E_r^{0,1} = \mathrm{Ker}(E_r^{0,1} \to E_r^{r,2-r}) / \mathrm{Im}(E_r^{-r,r} \to E_r^{0,1}) = \mathrm{Ker}(E_r^{0,1} \to E_r^{r,2-r}). \]
Or on vient de voir que pour $r=2$, $E_2^{k,2-k}=0$ pour tout $k$. Il en est donc de mÍme pour tout $r\geq 2$ et on a ainsi pour tout $r$, $E_r^{0,1}=E_2^{0,1}$. On a donc finalement $\mathrm{Ext}^1(\mathbf{G}_a,\qp) \simeq C$. Si $G$ et $G'$ sont deux faisceaux abÈliens avec $G'$ reprÈsentable par $Z$, une extension 
\[ 0 \to G \to F \to G' \to 0 \] 
dÈtermine une classe dans $H^1(Z,G)$ qui est l'image de $\mathrm{Id} \in \mathrm{Hom}(G',G')=G'(Z)=H^0(Z,G')$ par la flËche de connexion dÈduite de la suite exacte prÈcÈdente. Ici, l'isomorphisme $\O(\mathbf{A}_C^1)/C \simeq H^1(\mathbf{A}_C^1,\qp)$ utilisÈ est le morphisme de connexion dÈduit de la suite exacte de faisceaux
\[ 0 \to \qp \to \mathbb{B}[1/t]^{\varphi=1} \to \mathbb{B}_{\rm dR}/t\mathbb{B}_{\rm dR}^+ \to 0. \]
En prenant le tirÈ en arriËre de cette extension le long de $\mathbb{B}_{\rm dR}^+/t\mathbb{B}_{\rm dR}^+ \to \mathbb{B}_{\rm dR}/t\mathbb{B}_{\rm dR}^+$, on obtient une extension
\[ 0 \to \qp \to \mathbb{B}[1/t]^{\varphi=1} \to \mathbb{B}_{\rm dR}^+/t\mathbb{B}_{\rm dR}^+ \simeq \mathbf{G}_a \to 0, \]
qui est celle donnÈe par l'application $\theta$ de Fontaine. L'identitÈ de $\mathbf{G}_a $ dans lui-mÍme correspond ‡ la fonction identitÈ dans $H^0(\mathbf{A}_C^1,\mathbf{G}_a)=\O(\mathbf{A}_C^1)$. La flËche composÈe $H^0(\mathbf{A}_C^1,\mathbf{G}_a) \to H^0(\mathbf{A}_C^1,\mathbb{B}_{\rm dR}/t\mathbb{B}_{\rm dR}^+) \to H^1(\mathbf{A}_C^1,\qp)$ est la flËche quotient $\O(\mathbf{A}_C^1) \to \O(\mathbf{A}_C^1)/C$. On en dÈduit que l'ÈlÈment $\mathrm{Id} \in \mathrm{Ext}^1(\mathbf{G}_a,\qp)$ exhibÈ ci-dessus correspond ‡ l'extension $\mathbb{B}^{\varphi=p}$ de $\mathbf{G}_a$ par $\qp$ donnÈe par l'application $\theta$ de Fontaine.

\subsection{Le cas $G=G'=\mathbf{G}_a$}

Pour $i=0$, on a une flËche  $\mathrm{Hom}(C^{0},\mathbf{G}_a) \to \mathrm{Hom}(C^{-1},\mathbf{G}_a)$, qui s'identifie ‡
\[ H^0(\mathbf{A}_C^1,\mathbf{G}_a)=\O(\mathbf{A}_C^1) \to H^0(\mathbf{A}_C^2,\mathbf{G}_a)=\O(\mathbf{A}_C^2) \quad ; \quad f(X) \mapsto f(X+Y)-f(X)-f(Y). \]
Son noyau est de dimension $1$, engendrÈ par $f(X)=X$. Ainsi $\mathrm{Hom}(\mathbf{G}_a,\mathbf{G}_a)=C$. 
\\

Pour $i=2$, on regarde 
\[ \mathrm{Hom}(C^{-2},\mathbf{G}_a) \to \mathrm{Hom}(C^{-3},\mathbf{G}_a), \]
\[ \mathrm{Ext}^1(C^{-1},\mathbf{G}_a) \to \mathrm{Ext}^1(C^{-2},\mathbf{G}_a), \]
\[ \mathrm{Ext}^2(C^{0},\mathbf{G}_a) \to \mathrm{Ext}^2(C^{-1},\mathbf{G}_a). \]
Le dernier terme est nul. Analysons le premier : la flËche va de $\O(\mathbf{A}_C^3) \times \O(\mathbf{A}_C^2)$ vers $\O(\mathbf{A}_C^4) \times \O(\mathbf{A}_C^3) \times \O(\mathbf{A}_C^3) \times \O(\mathbf{A}_C^2) \times \O(\mathbf{A}_C^1)$. Les applications considÈrÈes (sauf la derniËre) prÈservent le degrÈ. Soit $(f,g)$ dans le noyau. On Ècrit $f(x,y,z)=\sum_{k,j,l} a_{k,j,l} x^k y^j z^l$ et $g(x,y)=\sum_{k,j} b_{k,l} x^k y^l$. L'annulation sur la composante $\mathbf{A}_C^4$ donne :
\[ {k+j \choose k} a_{k+j,l,m} - {j+l \choose j} a_{k,j+l,m}  + {l+m \choose l} a_{k,j,l+m} = 0, \]
pour $k, m \neq 0$ (en particulier $a_{k,0,l}=0$ pour $k,l>0$). Pour $m\neq 0$, on a aussi
\[ {j+l \choose j} a_{0,j+l,m} - {l+m \choose l} a_{0,j,l+m} = 0 \]
et de mÍme en Èchangeant les rÙles de $k$ et $m$. Enfin $a_{0,j,0}=0$. 

Si $a_{k,1,l}= (k+1)c_{k+1,l} - (l+1)c_{k,l+1}$ pour $k, l>0$, on vÈrifie immÈdiatement par rÈcurrence avec la relation ci-dessus que
\[ a_{k,j,l} = {k+j \choose j} c_{k+j,l} - {j+l \choose j} c_{k,j+l}, \]
pour tout $j > 0$. On pose aussi $j c_{0,j}=a_{0,j-1,1}$ pour $j>0$. Une rÈcurrence immÈdiate ‡ l'aide de la relation ci-dessus donne $a_{0,j,l} = {j+l \choose j} c_{0,j+l}$ pour tout $l>0$. Enfin on pose $c_{1,k}-c_{k,1}=b_{1,k}$. 

Si les $c_{j,k}$ sont ainsi dÈfinis, alors la premiËre flËche vers $\mathbf{A}_C^3$ donne $b_{j,k}=c_{j,k}-c_{k,j}$ pour $j >1$. L'autre flËche donne la mÍme ÈgalitÈ pour $k>1$. En fin de compte, si l'on pose $h(x,y)=\sum_{k,j} c_{k,j} x^k y^j$, on a 
\[ f(x,y,z)= h(x+y,z)-h(y,z)-h(x,y+z)+h(x,y) ~ ; ~ g(x,y)=h(x,y)-h(y,x). \]
En d'autres termes, $(f,g)$ est dans l'image de la flËche $\mathrm{Hom}(C^{-1},\mathbf{G}_a) \to \mathrm{Hom}(C^{-2},\mathbf{G}_a)$. 

Enfin on regarde le terme du milieu. Soit $\omega \in H^1(\mathbf{A}_C^2,\mathbf{G}_a)=\Omega^1(\mathbf{A}_C^2)$ dans le noyau, que l'on Ècrit $\omega=f(x,y)dx+g(x,y)dy$. On a 
\[ f(x+y,z)-f(x,y+z)+f(x,y) = 0, \]
\[ f(x+y,z)-f(y,z)-g(x,y+z)+g(x,y) = 0, \]
\[ g(x+y,z)-g(y,z)-g(x,y+z)= 0. \]
On vÈrifie aisÈment l'existence de $h \in \O(\mathbf{A}_C^1)$ telle que $f(x,y)=h(x+y)-h(x)$ et $g(x,y)=g(x+y)-g(x)$. En dÈfinitive, $\mathrm{Ext}^2(\mathbf{G}_a,\mathbf{G}_a)=0$. 
\\

Pour $i=1$, il s'agit d'analyser les flËches
\[ \mathrm{Hom}(C^{-1},\mathbf{G}_a) \to \mathrm{Hom}(C^{-2},\mathbf{G}_a), \]
\[ \mathrm{Ext}^1(C^0,\mathbf{G}_a) \to \mathrm{Ext}^1(C^{-1},\mathbf{G}_a). \]
La premiËre va de $H^0(\mathbf{A}_C^2,\mathbf{G}_a)=\O(\mathbf{A}_C^2)$ dans $H^0(\mathbf{A}_C^3,\mathbf{G}_a) \oplus H^0(\mathbf{A}_C^2,\mathbf{G}_a)=\O(\mathbf{A}_C^3) \oplus \O(\mathbf{A}_C^2)$ et envoie $f(X,Y)$ sur $(f(X+Y,Z)-f(Y,Z)-f(X,Y+Z)+f(X,Y),f(X,Y)-f(Y,X))$. Son noyau, toujours d'aprËs \cite[Lem. 3]{lazard}, est inclus dans l'image de la flËche $\mathrm{Hom}(C^{0},\mathbf{G}_a) \to \mathrm{Hom}(C^{-1},\mathbf{G}_a)$ ci-dessus. 

La deuxiËme flËche va de $H^1(\mathbf{A}_C^1,\mathbf{G}_a)=\Omega^1(\mathbf{A}_C^1)$ vers $H^1(\mathbf{A}_C^2,\mathbf{G}_a)=\Omega^1(\mathbf{A}_C^2)$. Elle envoie $\omega$ sur $m^*\omega -p_1^*\omega-p_2^*\omega$ (on a notÈ $p_1, p_2$ les deux morphismes de projection de $\mathbf{A}_C^1 \times \mathbf{A}_C^1$ vers $\mathbf{A}_C^1$ et $m : \mathbf{A}_C^1 \times \mathbf{A}_C^1 \to \mathbf{A}_C^1$ le morphisme de multiplication). Son noyau est l'espace des formes diffÈrentielles invariantes par translation sur $\mathbf{A}_C^1$, donc est de dimension $1$. 

Le cas $i=2$ dÈj‡ traitÈ permet comme ci-dessus de voir que $E_r^{0,1}=E_2^{0,1}$ pour tout $r\geq 2$ : ainsi, $\mathrm{Ext}^1(\mathbf{G}_a,\mathbf{G}_a)=C$. Identifions la classe correspondant ‡ la forme diffÈrentielle invariante $dT$. L'identification $R^1\nu_* \mathbf{G}_{a,X} \simeq \Omega_X^1$ est le morphisme de connexion dÈduit de la suite exacte
\[ 0 \to \mathbf{G}_{a,X}(1) \to \mathrm{gr}^1 \O \mathbb{B}_{\rm dR} \to \Omega_X^1 \otimes_{\O_X} \mathbf{G}_{a,X} \to 0 \]
(le premier graduÈ du lemme de PoincarÈ). Si $X=\mathbf{A}_C^1$, $f \mapsto fdT$ donne un isomorphisme $\O_X \simeq \Omega_X^1$. On a donc que l'extension correspondant ‡ $dT$ est $\mathrm{gr}^1 \O \mathbb{B}_{\rm dR}$. 

Or, comme les flËches du lemme de PoincarÈ sont strictes exactes, on a une suite exacte
\[ 0 \to \mathbb{B}_{\rm dR}^+/\mathrm{Fil}^2 \to \O \mathbb{B}_{\rm dR}/\mathrm{Fil}^2 \to \mathrm{gr}^0 \O \mathbb{B}_{\rm dR} \otimes \Omega^1 \] 
qui donne un isomorphisme $\mathbb{B}_{\rm dR}^+/\mathrm{Fil}^2 \simeq \mathrm{gr}^1 \O \mathbb{B}_{\rm dR}$.

 

\subsection{Le cas $G=\qp$ et $G'=\qp$ ou $G'=\mathbf{G}_a$}
Notons tout d'abord que le groupe $\mathrm{Ext}^i(\qp,G')$ s'insËre dans une suite exacte :
\[ 0 \to R^1 \underset{\times p} \varprojlim ~ \mathrm{Ext}^{i-1}(\zp,G') \to \mathrm{Ext}^i(\qp,G') \to \underset{\times p} \varprojlim ~ \mathrm{Ext}^i(\zp,G') \to 0. \]
Notons $$X=\mathrm{Spa}(\con(\zp,C),\con(\zp,\O_C)).$$ Cet espace adique reprÈsente le faisceau pro-Ètale $\zp$. 
\begin{proposition}
Pour tout $n\geq 1$, on a $H^0(X^n,\qp)=\con(\zp^n,\qp)$ et $H^i(X^n,\qp)=0$ pour tout $i>0$. On a $H^0(X^n,\mathbf{G}_a)=\con(\zp^n,C)$ et $H^i(X^n,\mathbf{G}_a)=0$ si $i>0$. 
\end{proposition}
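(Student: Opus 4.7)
Le plan est de traiter les quatre assertions, en s'appuyant d'abord sur la repr\'esentabilit\'e de $X^n$. Par Yoneda, $X^n$ repr\'esente le faisceau $\underline{\zp}^n = \underline{\zp^n}$ ; la discussion qui suit la proposition sur $\underline{T}(S)$ dans le paragraphe \ref{sec1} identifie alors $X^n$ \`a $\mathrm{Spa}(\con(\zp^n,C),\con(\zp^n,\O_C))$ (ce qui se v\'erifie aussi directement \`a partir de l'isomorphisme $\con(\zp,C) \widehat{\otimes}_C \con(\zp,C) \simeq \con(\zp^2,C)$, obtenu par passage \`a la limite depuis les ensembles finis). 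J'en d\'eduirai imm\'ediatement $H^0(X^n,\mathbf{G}_a) = \con(\zp^n,C)$ par d\'efinition du faisceau $\mathbf{G}_a$. Pour $\qp$, j'appliquerai la m\^eme proposition, valide pour $T$ totalement discontinu (ce qui est le cas de $\qp$ avec sa topologie $p$-adique) et $S$ qcqs : comme $\pi_0(X^n) = \zp^n$ au niveau topologique, ceci donnera $H^0(X^n,\qp) = \con(\zp^n,\qp)$.

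L'annulation $H^i(X^n,\mathbf{G}_a) = 0$ en degr\'e $i > 0$ se r\'eduira au r\'esultat classique de Scholze d'acyclicit\'e du faisceau $\widehat{\O} = \mathbf{G}_a$ (apr\`es tensorisation par $\qp$ du r\'esultat correspondant pour $\widehat{\O}^+$) sur tout espace affino\"ide perfecto\"ide ; en effet, $X^n$ est bien affino\"ide perfecto\"ide.

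L'annulation $H^i(X^n,\qp) = 0$ en degr\'e $i > 0$ requiert un d\'evissage pour se ramener aux coefficients de torsion. J'\'ecrirai $X^n = \varprojlim_l Y_l$ avec $Y_l = \underline{(\z/p^l\z)^n}$, r\'eunion disjointe de $p^{ln}$ copies de $\mathrm{Spa}(C,\O_C)$. La cohomologie \'etale de chaque $Y_l$ \`a coefficients dans $\z/p^k\z$ s'annule en degr\'e strictement positif ($C$ \'etant alg\'ebriquement clos), et la continuit\'e standard de la cohomologie \'etale sur les syst\`emes cofiltrants d'espaces adiques qcqs donnera $H_{\mathrm{\acute{e}t}}^i(X^n,\z/p^k\z) = 0$ pour tout $i > 0$ et tout $k$. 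La proposition \ref{competproet} transportera alors cette annulation en cohomologie pro-\'etale, et la proposition \ref{replete} appliqu\'ee au syst\`eme projectif $(\underline{\z/p^k\z})_k$ \`a morphismes de transition surjectifs fournira $H^i(X^n,\zp) = \varprojlim_k H^i(X^n,\z/p^k\z) = 0$ pour $i > 0$ ; l'inversion de $p$ terminera la d\'emonstration. L'obstacle principal tient \`a la justification rigoureuse du passage \`a la limite cofiltrante pour la cohomologie \'etale des $Y_l$, mais il ne s'agit que d'un r\'esultat standard de continuit\'e.
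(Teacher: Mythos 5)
Votre d\'emonstration est correcte dans ses grandes lignes et suit le m\^eme squelette que celle du texte : identification de $X^n$ avec $\mathrm{Spa}(\con(\zp^n,C),\con(\zp^n,\O_C))$ pour les $H^0$, acyclicit\'e de $\mathbf{G}_a$ sur un affino\"ide perfecto\"ide, r\'eduction aux coefficients de torsion, comparaison \'etale/pro-\'etale via la proposition \ref{competproet}, passage \`a la limite sur $k$, puis inversion de $p$ par quasi-compacit\'e. La seule divergence r\'eelle concerne l'annulation $H_{\mathrm{\acute{e}t}}^i(X^n,\z/p^k)=0$ pour $i>0$ : le texte l'obtient en une ligne en observant que $X^n$ est profini sur $C$, donc strictement totalement discontinu, alors que vous passez par la continuit\'e de la cohomologie \'etale le long de la limite cofiltrante $X^n=\varprojlim_l Y_l$. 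Votre route aboutit (la continuit\'e invoqu\'ee est effectivement standard pour les syst\`emes cofiltrants d'espaces quasi-compacts quasi-s\'epar\'es \`a coefficients de torsion, cf. \cite{S17}), mais elle mobilise un th\'eor\`eme plus lourd que l'argument direct du texte ; en revanche, votre calcul de $H^0(X^n,\qp)$ par la formule $\underline{T}(S)=\con(\pi_0(S),T)$, avec $\pi_0(X^n)=\zp^n$, est une alternative propre que le texte ne d\'etaille pas.

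Un point doit toutefois \^etre compl\'et\'e : vous \'ecrivez directement $H^i(X^n,\zp)=\varprojlim_k H^i(X^n,\z/p^k)$, mais la proposition \ref{replete} ne donne que $\zp=R\varprojlim_k \underline{\z/p^k}$, d'o\`u pour chaque $i$ une suite exacte
\[ 0 \to R^1 \underset{k}\varprojlim ~ H^{i-1}(X^n,\z/p^k) \to H^i(X^n,\zp) \to \underset{k}\varprojlim ~ H^i(X^n,\z/p^k) \to 0. \]
Pour $i\geq 2$, le terme de gauche est nul par ce qui pr\'ec\`ede, mais pour $i=1$ il reste \`a v\'erifier que $R^1\varprojlim_k \con(\zp^n,\z/p^k)=0$ ; c'est vrai, car les morphismes de transition $\con(\zp^n,\z/p^{k+1}) \to \con(\zp^n,\z/p^k)$ sont surjectifs (une fonction continue \`a valeurs dans le groupe discret $\z/p^k$ est localement constante, donc se rel\`eve), et la proposition \ref{mittag} conclut. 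C'est pr\'ecis\'ement la v\'erification que le texte signale par \og pour $i=1$, on v\'erifie \`a la main que $R^1\varprojlim$ s'annule \fg{}. Une fois ce point ajout\'e, votre preuve est compl\`ete.
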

\begin{proof} 
Soit $i\geq 1$ et $k\geq 0$. Comme $X^n$ est profini sur $C$ (donc strictement totalement discontinu), $H^i(X^n,\z/p^k)=H_{\mathrm{\acute{e}t}}^i(X^n,\z/p^k)=0$. On en dÈduit avec la suite exacte
\[ 0 \to R^1\varprojlim H^{i-1}(X^n,\z/p^k) \to H^i(X^n,\zp) \to \varprojlim H^i(X^n,\z/p^k) \to 0 \]
que $H^i(X^n,\zp)=0$ pour tout $i\geq 1$ (pour $i=1$, on vÈrifie ‡ la main que $R^1\varprojlim$ s'annule). Comme $H^i(X^n\qp)=H^i(X^n,\zp)[1/p]$ par quasi-compacitÈ de $X^n$, on a le rÈsultat. La deuxiËme phrase est une consÈquence du fait que $$X^n=\mathrm{Spa}(\con(\zp^n,C),\con(\zp^n,\O_C))$$ est affinoÔde perfectoÔde, cf. \cite[Th. 6.5 (ii)]{KL}. 
\end{proof}
Supposons d'abord $G'=\mathbf{G}_a$. ConsidÈrons le complexe 
\[ C \to \con(\zp,C) \to \con(\zp^2,C)\to \con(\zp^3,C) \oplus \con(\zp^2,C) \]
\[ \to \con(\zp^4,C) \oplus \con(\zp^3,C)^2 \oplus \con(\zp^2,C) \oplus \con(\zp,C), \]
la premiËre flËche Ètant $a \mapsto (x \mapsto ax)$ et les flËches suivantes Ètant les flËches Èvidentes dÈduites de la dÈfinition du complexe $C$. On va montrer que ce complexe est exact.

Si $f$ est dans le noyau de la premËre flËche, $f$ est un multiple scalaire de l'identitÈ sur $\z$ donc sur $\zp$ par densitÈ. Par consÈquent, $\mathrm{Hom}(\zp,\mathbf{G}_a)=C$. VÈrifions l'exactitude en $\con(\zp^2,C)$ : comme $\con(\zp^2,C)=\con(\zp,C) ~ \widehat{\otimes} ~ \con(\zp,C)$ (puisque $\zp$ est compact), les fonctions $(x,y) \mapsto {x \choose k} {y \choose l}$, avec $k, l \in \mathbf{N}$, forment une base orthonormale de $\con(\zp^2,C)$ (dÈveloppement de Mahler d'une fonction continue sur $\zp$). Soit $f=\sum_{k,l \geq 0} a_{k,l} {x \choose k} {y \choose l}$, avec $a_{k,l} \to 0$ quand $k+l \to 0$, vÈrifiant 
\[ f(x+y,z)-f(y,z)-f(x,y+z)+f(x,y)=0 \quad ; \quad f(x,y)-f(y,x)=0. \]
A l'aide de ces deux ÈgalitÈs et de la relation ${x+y \choose k} = \sum_{j=0}^k {x \choose j} {y \choose k-j}$, on obtient immÈdiatement que $a_{k,l}$ ne dÈpend que de la somme $k+l$ si $k, l>0$, et que $a_{k,0}=a_{0,l}=0$ si $k,l>0$. Posons alors $b_n=a_{k,l}$ si $n>1$ avec $k,l$ non nuls tels que $k+l=n$, $b_0=a_{0,0}$ et $b_1$ quelconque. On vÈrifie que
\[ f(x,y)=g(x+y)-g(x)-g(y), \]
avec $g(x)=\sum_n b_n {x \choose n}$. Notons que si l'on Ècrit plutÙt $f$ sous la forme $f= \sum_{k,l \geq 0} a'_{k,l} k!{x \choose k} l!{y \choose l}$, sans se prÈocupper du comportement asymptotique des coefficients, comme $k!{x+y \choose k} = \sum_{j=0}^k {k \choose j} j!{x \choose j} (k-j)!{y \choose k-j}$, les relations obtenues sur les coefficients $a'_{k,l}$ sont exactement les mÍmes que celles obtenues ci-dessus dans le calcul de $\mathrm{Ext}^1(\mathbf{G}_a,\mathbf{G}_a)$. Cette observation permet de montrer aussi l'exactitude en $\con(\zp^3,C) \oplus \con(\zp^2,C)$ : il suffit de reprendre le mÍme calcul que celui montrant l'annulation de $\mathrm{Ext}^2(\mathbf{G}_a,\mathbf{G}_a)$ ci-dessus (la convergence ne pose pas de problËme).

Si $G'=\qp$, il faut considÈrer le complexe
\[ \qp \to \con(\zp,\qp) \to \con(\zp^2,\qp)  \to \con(\zp^3,\qp) \oplus \con(\zp^2,\qp) \]
\[ \to \con(\zp^4,\qp) \oplus \con(\zp^3,\qp)^2 \oplus \con(\zp^2,\qp) \oplus \con(\zp,\qp), \]
avec les mÍmes flËches que ci-dessus, dont on montre exactement de la mÍme maniËre qu'il est exact.

\section{Faisceaux cohÈrents sur la courbe de Fargues-Fontaine} \label{faisceauxsurlacourbe}

\subsection{Rappels sur la courbe de Fargues-Fontaine} \label{rappelscourbe}
On fixe une extension finie $E$ de $\qp$ d'uniformisante $\pi$ et de corps rÈsiduel $\mathbf{F}_q$. Soit $S=\Spa(R,R^+)$ un espace affinoÔde perfectoÔde de caractÈristique $p$. On dÈfinit l'espace 
\[ Y_{S,E} = \Spa(W_{\O_E}(R^{\circ}),W_{\O_E}(R^+)) \backslash V(\pi[\varpi]), \]
$\varpi$ Ètant une pseudo-uniformisante (ÈlÈment inversible topologiquement nilpotent) de $R$. Il s'agit d'un espace adique (i.e. le prÈfaisceau structural est un faisceau), qui est l'analogue en caractÈristique mixte du disque unitÈ ouvert ÈpointÈ sur la base $S$, si l'on pense aux vecteurs de Witt comme ‡ des \og fonctions holomorphes de la variable $\pi$ \fg{}. On peut Ècrire 
\[ Y_{S,E}=\bigcup_{n,m \geq 1} Y_{S,E,n,m} = \bigcup_{n,m\geq 1} \mathrm{Spa}(W_{\O_E}(R^{\circ}) \langle \frac{[\varpi]^n}{\pi}, \frac{\pi^m}{[\varpi]} \rangle, W_{\O_E}(R^{+}) \langle \frac{[\varpi]^n}{\pi}, \frac{\pi^m}{[\varpi]} \rangle). \]
L'espace $Y_{S,E}$ est muni d'un Frobenius $\varphi$ qui agit sur les fonctions par
\[ \varphi(\sum_n [x_n]\pi^n) = \sum_n [x_n^q]\pi^n. \]
L'anneau $\O(Y_{S,E})$ des fonctions sur $Y_{S,E}$ est le complÈtÈ $B_E(R)$ de
\[ B_E^b(R):= \left\{ \sum_{n \gg -\infty} [x_n]\pi^n, x_n \in R, (x_n) ~ \mathrm{born\acute{e}e}  \right\} = W_{\O_E}(R^{\circ})[1/\pi,1/[\varpi]], \]
l'anneau des \og fonctions mÈromorphes le long des diviseurs $\pi=0$ et $[\varpi]=0$ \fg{}, relativement aux normes $\|.\|_{\rho}$, $0< \rho < 1$, dÈfinies par 
\[ \|\sum_n [x_n]\pi^n \|_{\rho} = \sup_n \|x_n\| \rho^n, \]
$\|.\|$ Ètant une norme sur $R$ multiplicative pour les puissances qui fait de $R^{\circ}$ la boule unitÈ de $R$. 

A l'intÈrieur de $B_E^b(R)$ vit
\[ B_E^{b +}(R) = \left\{ \sum_{n \gg -\infty} [x_n]\pi^n, x_n \in R, \|x_n\| \leq 1 \right\} = W_{\O_E}(R^{\circ})[1/\pi], \]
dont l'adhÈrence dans $B_E(R)$ est notÈe $B_E^+(R)$.

Si $f \in B_E(R)$, $\|\varphi(f)\|_{\rho}=\|f\|_{\rho^{1/q}}^q$. L'action de $\varphi$ sur $Y_{S,E}$ est donc proprement discontinue et on peut passer au quotient. L'espace adique $X_{S,E}:=Y_{S,E}/\varphi^{\z}$ ainsi obtenu est \textit{la courbe de Fargues-Fontaine relative sur $S$, pour le corps local $E$}. Si $E=\mathbf{Q}_{p^h}$ est l'extension non ramifiÈe de degrÈ $h$ de $\qp$, on Ècrira $Y_{S,h}$ et $X_{S,h}$ au lieu de $Y_{S,E}$ et $X_{S,E}$ ; si $h=1$ on oubliera carrÈment l'indice $1$. 

\begin{remarque}
Les anneaux $B_E(R)$ et $B_E^+(R)$ sont ÈtudiÈs en dÈtail dans \cite{FF} lorsque $R=C^{\flat}$ et dans \cite{KL} dans le cas gÈnÈral, avec des notations diffÈrentes : l'anneau $B(R)$ (resp. $B^+(R)$) y est notÈ $\tilde{\mathcal{R}}_R$ (resp. $\tilde{\mathcal{R}}_R^+$).
\end{remarque} 

Si $S=\Spa(C^{\flat})$, on notera simplement $Y_E$ et $X_E$ au lieu de $Y_{S,E}$ et $X_{S,E}$ (par exemple, en accord avec la convention prÈcÈdente, $X$ dÈsigne la courbe associÈe au choix $S=\mathrm{Spa}(C^{\flat})$ et $E=\qp$). Il s'agit alors de la courbe de Fargues-Fontaine originale, ÈtudiÈe en dÈtail dans \cite{FF}. La courbe adique $X_E$ admet un pendant schÈmatique $X_E^{\rm sch}:=\mathrm{Proj}(P_E)$, o˘ $P_E$ est l'algËbre graduÈe :
\[ P_E= \bigoplus_{d\geq 0} \O(Y_E)^{\varphi=\pi^d} =\bigoplus_{d\geq 0} B_E^{\varphi=\pi^d} \]
(on a notÈ $B_E=B_E(C)$). Le schÈma $X_E^{\rm sch}$ est rÈgulier noethÈrien de dimension $1$ sur $E$, mais trËs loin d'Ítre de type fini : le corps rÈsiduel de $X_E^{\rm sch}$ en un point fermÈ $x$ est un corps perfectoÔde de caractÈristique $0$ dont le basculÈ est isomorphe ‡ $C^{\flat}$, donc en particulier de dimension infinie sur $E$ ! Toutefois, tout se passe comme si $X_E$ Ètait la courbe adique associÈe ‡ $X_E^{\rm sch}$. En particulier, on a une Èquivalence de type GAGA entre faisceaux cohÈrents sur $X_E$ et sur $X_E^{\rm sch}$ (\cite[Th. 8.7.7]{KL}). En outre, bien que $X_E^{\rm sch}$ ne soit pas de type fini, Fargues et Fontaine prouvent que $X_E$ est une courbe \textit{complËte}, au sens o˘ si $f \in E(X_E^{\rm sch})$, 
\[ \sum_{x \in |X_E^{\rm sch}|} v_x(f)=0 \]
($v_x$ est la valuation sur l'anneau local de $X_E^{\rm sch}$ en le point fermÈ $x$, qui est un anneau de valuation discrËte puisque $X_E^{\rm sch}$ est rÈgulier de dimension $1$). Cela permet de dÈfinir le degrÈ d'un fibrÈ en droites et, en utilisant le dÈterminant, le \textit{degrÈ} d'un fibrÈ de rang quelconque sur $X_E^{\rm sch}$. Si l'on dÈfinit la \textit{pente} d'un fibrÈ vectoriel comme le quotient de son degrÈ par son rang, tout fibrÈ $\mathcal{F}$ sur $X_E^{\rm sch}$ admet une unique filtration croissante (la \textit{filtration de Harder-Narasimhan}) 
\[ 0 = \mathcal{F}_0 \subset \mathcal{F}_1 \subset \dots \subset \mathcal{F}_n =\mathcal{F}, \]
telle que chaque $\mathcal{F}_{i+1}/\mathcal{F}_i$ soit semi-stable de pente $\mu_{i+1}$ (tout sous-fibrÈ est de pente $\leq \mu_{i+1}$) avec $\mu_1>\dots > \mu_n$. Si l'on dÈcrËte en outre qu'un faisceau cohÈrent de torsion est de pente $+\infty$, le formalisme de Harder-Narasimhan s'Ètend aux faisceaux cohÈrents sur $X_E^{\rm sch}$, puisque, $X_E^{\rm sch}$ Ètant rÈgulier de dimension $1$, tout faisceau cohÈrent sur $X_E^{\rm sch}$ est somme d'un fibrÈ et d'un faisceau de torsion. Ces rÈsultats se tranposent via l'Èquivalence GAGA ‡ $X_E$.
\\

Soit $(D,\varphi)$ un isocristal sur l'extension non ramifiÈe maximale $\breve{E}$ de $E$. On peut associer ‡ $(D,\varphi)$ un fibrÈ vectoriel sur $X_E$, notÈ $\mathcal{E}(D)$, dont la rÈalisation gÈomÈtrique est le quotient $Y \times_{\varphi} D$, $\varphi$ agissant diagonalement. Le thÈorËme GAGA pour la courbe donne un fibrÈ vectoriel $\mathcal{E}(D)^{\rm sch}$ sur $X_E^{\rm sch}$ qui est en fait le faisceau quasi-cohÈrent associÈ au $P_E$-module graduÈ
\[ \bigoplus_{d\geq 0} (B_E \otimes_{\breve{E}} D)^{\varphi=\pi^d} \]
(qui est donc en fait un fibrÈ). La catÈgorie $\varphi-\mathrm{Mod}_{\breve{E}}$ des isocristaux sur $\breve{E}$ est semi-simple, avec pour chaque rationnel $\lambda$ un unique objet simple de pente $\lambda$. On a donc pour chaque $\lambda \in \q$ un fibrÈ $\O_{X_E}(\lambda)$ sur $X_E$ dÈfini comme l'image par $\mathcal{E}$ de l'unique isocristal simple de pente $-\lambda$. C'est le fibrÈ associÈ au diviseur $\lambda.\infty$ si $\lambda$ est entier. Pour tout $\lambda$, $\O_{X_E}(\lambda)$ est de pente $\lambda$\footnote{Ce qui explique la convention de signe.}. Dans la suite de ce texte, nous nous intÈresserons aux fibrÈs sur $X$ mais le fait suivant est utile pour les arguments : si $\lambda=d/h$, le fibrÈ de rang $h$ $\O_X(\lambda)$ est le poussÈ en avant du fibrÈ en droites $\O_{X_h}(d)$ sur $X_h$.

Le thÈorËme principal de \cite{FF} concernant les fibrÈs sur $X_E$ (ou $X_E^{\rm sch}$) est le suivant.
\begin{theoreme} \label{classification}
Le foncteur $\mathcal{E} : D \mapsto \mathcal{E}(D)$ de $\varphi-\mathrm{Mod}_{\breve{E}}$ vers la catÈgorie des fibrÈs vectoriels sur $X_E$ est essentiellement surjectif. En d'autres termes, tout fibrÈ sur la courbe se dÈcompose (non canoniquement) comme somme de fibrÈs $\O_{X_E}(\lambda)$, $\lambda \in \q$, et les $\lambda$ qui apparaissent sont uniquement dÈterminÈs ‡ permutation prËs.
\end{theoreme}

\begin{remarque} a) En particulier, la filtration de Harder-Narasimhan des faisceaux cohÈrents sur $X$ est (non canoniquement) scindÈe. Cela se traduit par le fait que $\mathrm{Ext}^1(\O_{X}(\lambda),\O_{X}(\mu))=\bigoplus H^1(X,\O_X(\mu-\lambda))=0$ si $\lambda \leq \mu$. Les objets semi-stables de pente $\lambda$ sont somme directe de copies de l'unique objet stable de pente $\lambda$, $\O(\lambda)$.

b) Le foncteur $\mathcal{E}$ est trËs loin d'Ítre pleinement fidËle. A titre d'exemple, le $\qp$-espace vectoriel $\mathrm{Hom}(\O_X,\O_X(1))=B^{\varphi=p}$ est de dimension infinie, alors que le groupe correspondant dans la catÈgorie des isocristaux est nul.
\end{remarque}

\subsection{Paires de torsion et coeurs abÈliens} \label{generalites}

CommenÁons par une dÈfinition gÈnÈrale.
\begin{definition}
Soit $\mathcal{A}$ une catÈgorie abÈlienne. Une \textit{paire de torsion} de $\mathcal{A}$ est la donnÈe de deux sous-catÈgories pleines $\mathcal{T}$ et $\mathcal{T}'$ de $\mathcal{A}$, telles que $\mathrm{Hom}_{\mathcal{A}}(T,T')=0$ pour tout $T \in \mathcal{T}$ et $T' \in \mathcal{T}'$ et telles que pour tout objet $A \in \mathcal{A}$, il existe une suite exacte
\[ 0 \to T \to A \to T' \to 0 \]
avec $T \in \mathcal{T}$ et $T' \in \mathcal{T}'$.
\end{definition}

Soit $D$ la catÈgorie dÈrivÈe bornÈe de $\mathcal{A}$. On suppose que l'on s'est donnÈ une paire de torsion $(\mathcal{T},\mathcal{T}')$. Le fait suivant est standard (voir, par exemple, \cite[Ch. 1]{TorsionH}).
\begin{proposition}
Soit $k \in \z$. La sous-catÈgorie pleine
\[ \mathcal{A}_{\mathcal{T},\mathcal{T}',k} := \{ A \in D, H^i(A)=0 ~ \mathrm{pour} ~ i \neq k-1, k ~ ; ~ H^{k-1}(A) \in \mathcal{T}', H^k(A) \in \mathcal{T} \} \]
est un coeur de $D$. 
\end{proposition}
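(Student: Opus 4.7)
Par translation, on se ramène au cas $k=0$. Le plan, classique (tilting de Happel-Reiten-Smal{\o}), est de construire une $t$-structure $(D_{\mathrm{new}}^{\leq 0}, D_{\mathrm{new}}^{\geq 0})$ sur $D$ en posant
\[ D_{\mathrm{new}}^{\leq 0} = \{A \in D, ~ H^i(A) = 0 \text{ pour } i > 0, ~ H^0(A) \in \mathcal{T}\}, \]
\[ D_{\mathrm{new}}^{\geq 0} = \{A \in D, ~ H^i(A) = 0 \text{ pour } i < -1, ~ H^{-1}(A) \in \mathcal{T}'\}, \]
dont le coeur $D_{\mathrm{new}}^{\leq 0} \cap D_{\mathrm{new}}^{\geq 0}$ co\"\i ncide par construction avec $\mathcal{A}_{\mathcal{T},\mathcal{T}',0}$. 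Il suffit donc de v\'erifier les trois axiomes d'une $t$-structure.

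Les inclusions $D_{\mathrm{new}}^{\leq -1} \subset D_{\mathrm{new}}^{\leq 0}$ et $D_{\mathrm{new}}^{\geq 1} \subset D_{\mathrm{new}}^{\geq 0}$ sont imm\'ediates sur les d\'efinitions. Pour l'annulation des $\mathrm{Hom}$, soient $X \in D_{\mathrm{new}}^{\leq 0}$ et $Y \in D_{\mathrm{new}}^{\geq 1}$. Les triangles de troncation standards
\[ \tau^{\leq -1} X \to X \to H^0(X)[0] \overset{+1}\to, \quad H^0(Y)[0] \to Y \to \tau^{\geq 1} Y \overset{+1}\to \]
ram\`enent le calcul de $\mathrm{Hom}(X,Y)$, d'une part, \`a des $\mathrm{Hom}$ entre complexes en degr\'es totalement s\'epar\'es (qui s'annulent par la $t$-structure standard sur $D$), et d'autre part \`a $\mathrm{Hom}_{\mathcal{A}}(H^0(X), H^0(Y))$, qui est nul par d\'efinition de la paire de torsion puisque $H^0(X) \in \mathcal{T}$ et $H^0(Y) \in \mathcal{T}'$.

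Le point le plus d\'elicat sera la construction des triangles de troncation. \'Etant donn\'e $A \in D$, on commence par d\'ecomposer $H^0(A)$ via la paire de torsion en une suite exacte $0 \to T \to H^0(A) \to F \to 0$ avec $T \in \mathcal{T}$ et $F \in \mathcal{T}'$. La compos\'ee $\tau^{\leq 0} A \to H^0(A)[0] \to F[0]$ a une fibre $X$ dont la suite exacte longue de cohomologie (associ\'ee au triangle $X \to \tau^{\leq 0} A \to F[0] \overset{+1}\to$) donne $H^i(X) = H^i(A)$ pour $i < 0$, $H^0(X) = T \in \mathcal{T}$, et $H^i(X) = 0$ pour $i > 0$ : autrement dit, $X \in D_{\mathrm{new}}^{\leq 0}$. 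L'axiome octa\'edral appliqu\'e \`a la compos\'ee $X \to \tau^{\leq 0} A \to A$ fournit alors un triangle distingu\'e $X \to A \to Y \overset{+1}\to$ o\`u $Y$ s'ins\`ere dans un triangle $F[0] \to Y \to \tau^{\geq 1} A \overset{+1}\to$ ; un calcul de cohomologie analogue donne $H^0(Y) = F \in \mathcal{T}'$ et $H^i(Y) = 0$ pour $i < 0$, d'o\`u $Y \in D_{\mathrm{new}}^{\geq 1}$. C'est pr\'ecis\'ement \`a ce stade que l'hypoth\`ese de paire de torsion joue un r\^ole essentiel, en permettant de scinder $H^0(A)$ en un morceau dans $\mathcal{T}$ (absorb\'e par $X$) et un morceau dans $\mathcal{T}'$ (rejet\'e dans $Y$).
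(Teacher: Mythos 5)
Votre preuve est correcte et suit exactement l'argument standard de Happel--Reiten--Smal{\o} auquel le texte se contente de renvoyer (\cite[Ch. 1]{TorsionH}) : d\'efinition de la $t$-structure bascul\'ee, annulation des $\mathrm{Hom}$ en combinant la $t$-structure standard et l'axiome $\mathrm{Hom}_{\mathcal{A}}(\mathcal{T},\mathcal{T}')=0$, et triangles de troncation construits en d\'ecomposant $H^0(A)$ selon la paire de torsion puis en invoquant l'axiome octa\'edral. La seule pr\'ecision que vous pourriez ajouter est que la $t$-structure obtenue est born\'ee (imm\'ediat, puisque tout objet de $D$ est \`a cohomologie born\'ee), ce qui justifie pleinement le terme de \og coeur \fg{} au sens o\`u l'entend le texte.
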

En d'autres termes, il existe une $t$-structure bornÈe sur $D$ dont le coeur est $\mathcal{A}_{\mathcal{T},\mathcal{T}',k}$. 
\\

L'exemple fondamental sera pour nous le suivant. Soit $\mathcal{A}$ une catÈgorie abÈlienne munie de deux fonctions rang et degrÈ, telle que les objets de $\mathcal{A}$ admettent une filtration de Harder-Narasimhan relativement ‡ la fonction pente associÈe. Si $m \in \mathbf{R}$, on note $\mathcal{T}_m^-$ (resp. $\mathcal{T}_m^{'-}$) comme la sous-catÈgorie pleine de $\mathcal{A}$ formÈe des objets dont tous les quotients de la filtration de Harder-Narasimhan sont de pente $\geq m$ (resp. $< m$). C'est une paire de torsion de la catÈgorie dÈrivÈe bornÈe de $\mathcal{A}$. On dÈfinit de mÍme une paire de torsion $(\mathcal{T}_m^+,\mathcal{T}_m^{'+})$ en remplaÁant $\geq m$ par $>m$ et $<m$ par $\leq m$. Pour allÈger les notations, on notera dans la suite :
\[ \mathcal{A}^-:=\mathcal{A}_{\mathcal{T}_0^-,\mathcal{T}_0^{'-},0} \quad ; \quad \mathcal{A}^+=\mathcal{A}_{\mathcal{T}_0^+,\mathcal{T}_0^{'+},1}. \]
Si l'on Ètend additivement ‡ la catÈgorie dÈrivÈe les fonctions rang et degrÈ, la catÈgorie $\mathcal{A}^{-}$ est munie de fonctions degrÈ, rang et pente dÈfinies par 
\[ \mathrm{deg}^{-}=-\mathrm{rg} \quad ; \quad \mathrm{rg}^{-}=\mathrm{deg} \quad ; \quad \mu^{-}=-\mathrm{rg}/\mathrm{deg} \]
et ses objets admettent des filtrations de Harder-Narasimhan pour $\mu^{-}$. Si $A \in \mathcal{A}^-$, le triangle exact
\[ H^{-1}(A)[1] \to A \to H^0(A) \overset{+1} \longrightarrow \]
donne une suite exacte dans la catÈgorie abÈlienne $\mathcal{A}^-$
\[ 0 \to H^{-1}(A)[1] \to A \to H^0(A) \to 0. \]
Dans cette suite exacte, le terme de gauche correspond ‡ la partie de pentes $>0$ et le terme de droite ‡ la partie de pentes $\leq 0$\footnote{Noter que les objets de torsion sont de pente $-\infty$ dans la nouvelle catÈgorie.}. On a des formules analogues pour $\mathcal{A}^{+}$ en posant cette fois-ci $\mathrm{deg}^{+}=\mathrm{rg}$ et $\mathrm{rg}^{+}=-\mathrm{deg}$. 
\\

Choisissons pour catÈgorie abÈlienne la catÈgorie $\mathrm{Coh}_X$ des faisceaux cohÈrents sur $X$. Soit $\mathcal{F} \in  \mathrm{Coh}_X^{-}$. Comme
\begin{align*}
\mathrm{Ext}_{\mathrm{Coh}_X^{-}}^1(H^0(\mathcal{F}),H^{-1}(\mathcal{F})[1]) &=\mathrm{Ext}_{D^b(\mathrm{Coh}_X)}^1(H^0(\mathcal{F}),H^{-1}(\mathcal{F})[1]) \\
& =\mathrm{Ext}_{\mathrm{Coh}_X}^2(H^0(\mathcal{F}),H^{-1}(\mathcal{F}))=0
\end{align*}
(car $X$ est noethÈrien rÈgulier de dimension $1$), $\mathcal{F} \simeq H^{-1}(\mathcal{F})[1] \oplus H^0(\mathcal{F})$ (non canoniquement) ; on peut donc penser ‡ un ÈlÈment de $\mathrm{Coh}_{X}^{0,-}$ comme ‡ un couple $(\mathcal{F}',\mathcal{F}'')$ de faisceaux cohÈrents sur $X$, avec $\mathcal{F}'$ ‡ pentes strictement nÈgatives, $\mathcal{F}''$ ‡ pentes positives. De mÍme, le fait que la filtration de Harder-Narasimhan soit scindÈe permet de voir un faisceau cohÈrent sur $X$ comme un couple de faisceaux $(\mathcal{F}',\mathcal{F}'')$ comme prÈcÈdemment. Mais bien s˚r les morphismes entre deux tels couples $(\mathcal{F}',\mathcal{F}'')$ et $(\mathcal{G}',\mathcal{G}'')$ dans les deux catÈgories sont diffÈrents. On a 
\[ \mathrm{Hom}_{\mathrm{Coh}_X}((\mathcal{F}',\mathcal{F}''),(\mathcal{G}',\mathcal{G}'')) = \begin{pmatrix} \mathrm{Hom}(\mathcal{F}',\mathcal{G}') & 0 \\ \mathrm{Hom}(\mathcal{F}',\mathcal{G}'') & \mathrm{Hom}(\mathcal{F}'',\mathcal{G}'') \end{pmatrix} \]
et 
\[ \mathrm{Hom}_{\mathrm{Coh}_X^{-}}((\mathcal{F}',\mathcal{F}''),(\mathcal{G}',\mathcal{G}'')) = \begin{pmatrix} \mathrm{Hom}(\mathcal{F}',\mathcal{G}') & \mathrm{Ext}^1(\mathcal{F}'',\mathcal{G}') \\ 0 & \mathrm{Hom}(\mathcal{F}'',\mathcal{G}'') \end{pmatrix}. \]

Si l'on Ètend additivement ‡ la catÈgorie dÈrivÈe les fonctions rang et degrÈ, la catÈgorie $\mathrm{Coh}_X^{-}$ est munie de fonctions degrÈ, rang et pente dÈfinies par 
\[ \mathrm{deg}^{0,-}=-\mathrm{rg} \quad ; \quad \mathrm{rg}^{0,-}=\mathrm{deg} \quad ; \quad \mu^{0,-}=-\mathrm{rg}/\mathrm{deg} \]
et ses objets admettent des filtrations de Harder-Narasimhan pour $\mu^{0,-}$. Dans la suite exacte dans $\mathrm{Coh}_X^{-}$ :
\[ 0 \to H^{-1}(\mathcal{F})[1] \to \mathcal{F} \to H^0(\mathcal{F}) \to 0 \]
le terme de gauche correspond ‡ la partie de pentes $>0$ et le terme de gauche ‡ la partie de pentes $\leq 0$\footnote{Noter que les objets de torsion sont de pente $-\infty$ dans la nouvelle catÈgorie.}. On a des formules analogues pour $\mathrm{Coh}_X^{+}$ en posant cette fois-ci $\mathrm{deg}^{+}=\mathrm{rg}$ et $\mathrm{rg}^{+}=-\mathrm{deg}$. Cette observation permet de donner un sens ‡ l'ÈnoncÈ suivant. 

\begin{proposition}\label{recupere}
Les catÈgories abÈliennes $\mathrm{Coh}_X$ et $(\mathrm{Coh}_X^{-})^{+}$ sont naturellement Èquivalentes et via cette Èquivalence, les fonctons rang et degrÈ se correspondent.
\end{proposition}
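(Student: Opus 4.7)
L'id�e est d'exploiter le formalisme g�n�ral du tilt : tilter deux fois une cat�gorie ab�lienne, avec les d�calages prescrits, ram�ne (dans les bons cas) � la cat�gorie de d�part. Concr�tement, on montre que $(\mathrm{Coh}_X^-)^+$ et $\mathrm{Coh}_X$ co�ncident comme sous-cat�gories pleines de la cat�gorie triangul�e $D^b(\mathrm{Coh}_X) = D^b(\mathrm{Coh}_X^-)$. Deux $t$-structures born�es sur une cat�gorie triangul�e qui partagent le m�me coeur �tant �gales, ceci fournit imm�diatement l'�quivalence naturelle annonc�e.

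Pour la premi�re inclusion : par le th�or�me de classification \ref{classification}, tout $\mathcal{F} \in \mathrm{Coh}_X$ se d�compose (non canoniquement) comme $\mathcal{F}_{<0} \oplus \mathcal{F}_{\geq 0}$, la partie de torsion �tant int�gr�e au facteur de pente positive. La cohomologie de $\mathcal{F}$ pour la $t$-structure tilt�e de coeur $\mathrm{Coh}_X^-$ est alors $H^0_-(\mathcal{F}) = \mathcal{F}_{\geq 0}$ et $H^1_-(\mathcal{F}) = \mathcal{F}_{<0}[1]$ : en effet $\mathcal{F}_{\geq 0} \in \mathrm{Coh}_X^-$ tautologiquement, et $\mathcal{F}_{<0}[1] \in \mathrm{Coh}_X^-$ puisque $\mathcal{F}_{<0}$ est � pentes standards $<0$. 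Un calcul imm�diat (� partir de $\deg^- = -\mathrm{rg}$ et $\mathrm{rg}^- = \deg$) montre que $\mathcal{F}_{\geq 0}$ est de nouvelle pente $\mu^- \leq 0$ et $\mathcal{F}_{<0}[1]$ de nouvelle pente $\mu^- > 0$ ; par cons�quent $\mathcal{F} \in (\mathrm{Coh}_X^-)^+$.

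R�ciproquement, pour $B \in (\mathrm{Coh}_X^-)^+$, les conditions de pentes dans $\mathrm{Coh}_X^-$ (jointes au fait que tout objet de $\mathrm{Coh}_X^-$ se d�compose comme $\mathcal{A}'' \oplus \mathcal{A}'[1]$ via l'annulation $\mathrm{Ext}^2_{\mathrm{Coh}_X}=0$) se traduisent en : $H^0_-(B)$ est concentr� en degr� $0$ de la $t$-structure standard, � pentes classiques $\geq 0$, et $H^1_-(B) = \mathcal{G}[1]$ pour un $\mathcal{G} \in \mathrm{Coh}_X$ � pentes $<0$. Le triangle distingu� associ� exhibe $B$ (dans $D^b$) comme extension de $\mathcal{G}$ par $H^0_-(B)$, classifi�e par un �l�ment de $\mathrm{Ext}^1_{\mathrm{Coh}_X}(\mathcal{G}, H^0_-(B))$. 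Par la remarque suivant le th�or�me \ref{classification}, ce groupe est nul, car il se r�duit � $H^1(X, \mathcal{O}_X(\mu-\lambda)) = 0$ d�s que $\mu-\lambda \geq 0$. L'extension se scinde donc, et $B \simeq \mathcal{G} \oplus H^0_-(B)$ appartient bien � $\mathrm{Coh}_X$.

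La compatibilit� des fonctions num�riques est alors formelle : en appliquant deux fois les d�finitions de la section \ref{generalites}, on obtient $\mathrm{rg}^+ = -\deg^- = -(-\mathrm{rg}) = \mathrm{rg}$ et $\deg^+ = \mathrm{rg}^- = \deg$. L'obstacle conceptuel principal dans cette approche est la v�rification, dans les deux sens, du calcul de cohomologie pour la $t$-structure tilt�e ; mais l'obstacle technique crucial est l'annulation du $\mathrm{Ext}^1$ entre parties de pentes bien ordonn�es, qui rep�re la structure tr�s sp�cifique de $X$ (filtration de Harder-Narasimhan scind�e et annulation $H^1(X, \mathcal{O}_X(n)) = 0$ pour $n \geq 0$), tout le reste relevant de la machinerie g�n�rale des paires de torsion.
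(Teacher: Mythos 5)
Your proof is correct in substance, and it takes a genuinely different route from the paper's. The paper constructs the realization functor $\iota : D^b(\mathrm{Coh}_X^-) \to D^b(\mathrm{Coh}_X)$ (via Beilinson), shows using the splitting $\mathcal{F} \simeq \mathcal{F}' \oplus \mathcal{F}''[-1]$ that $\iota$ carries $(\mathrm{Coh}_X^{-})^{+}$ into $\mathrm{Coh}_X$ and is essentially surjective, and then proves full faithfulness by matching the $2\times 2$ matrices of $\mathrm{Hom}$'s and $\mathrm{Ext}^1$'s on the two sides. You instead prove that the two hearts coincide as full subcategories of a single triangulated category: one inclusion by slope bookkeeping under the double tilt (your computation of $H^0_-(\mathcal{F})=\mathcal{F}_{\geq 0}$, $H^1_-(\mathcal{F})=\mathcal{F}_{<0}[1]$ and of the new slopes is correct), the other by splitting the truncation triangle via the vanishing of $\mathrm{Ext}^1_{\mathrm{Coh}_X}(\mathcal{G},H^0_-(B))$. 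This buys something real: it eliminates the Hom-matrix verification and isolates the only special inputs about $X$, namely the split Harder-Narasimhan filtration and $H^1(X,\O_X(\nu))=0$ for $\nu \geq 0$. Two small repairs: your $\mathrm{Ext}^1$-vanishing must also cover the torsion components of $H^0_-(B)$ (which is fine, since $\mathrm{Ext}^1_{\mathrm{Coh}_X}(\mathcal{G},i_{x,*}M)=0$ for $\mathcal{G}$ a bundle and $M$ of finite length); and the appeal to uniqueness of bounded $t$-structures with a given heart is superfluous — equality of hearts as full subcategories already yields the equivalence of abelian categories, because short exact sequences in a heart are exactly the distinguished triangles with all vertices in the heart.

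There is, however, one genuine gap in your framing: the opening assertion $D^b(\mathrm{Coh}_X)=D^b(\mathrm{Coh}_X^-)$, stated without proof. In the paper this identity is precisely Remark (a) following the proposition, \emph{deduced from} it; taking it as an input is therefore circular relative to the paper's logic, and it is not a formal fact (for a general tilt at a torsion pair, the realization functor being a derived equivalence requires an argument). Fortunately your argument does not actually use it: the second tilt can be performed directly inside $D^b(\mathrm{Coh}_X)$, on the $t$-structure whose heart is $\mathrm{Coh}_X^-$, and all of your computations take place there, involving only $\mathrm{Hom}$'s and $\mathrm{Ext}^1$'s between objects of that heart. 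To compare the resulting heart with $(\mathrm{Coh}_X^{-})^{+}$ as defined in the paragraph \ref{generalites} (which lives in $D^b(\mathrm{Coh}_X^-)$), you still need the realization functor of the paper's first line: since it is exact, induces the identity on $\mathrm{Coh}_X^-$, and induces isomorphisms on $\mathrm{Hom}$ and $\mathrm{Ext}^1$ between objects of the heart, it identifies the two tilted subcategories, after which your proof is complete. So: replace the first sentence by this reduction, and keep the rest.
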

\begin{proof}
Par dÈfinition, $\mathrm{Coh}_X^{-}$ est une sous-catÈgorie de $D^b(\mathrm{Coh}_X)$, donc on a un foncteur exact (\cite[Appendix]{beilinson})
\[ D^b(\mathrm{Coh}_X^{-}) \to D^b(\mathrm{Coh}_X), \]
induisant le foncteur identitÈ sur $\mathrm{Coh}_X^-$, et donc un foncteur exact $\iota : (\mathrm{Coh}_X^{-})^{+} \to D^b(\mathrm{Coh}_X)$. Soit $\mathcal{F} \in (\mathrm{Coh}_X^{-})^{+}$ ; on a $\mathcal{F}=\mathcal{F}' \oplus \mathcal{F}''[-1]$, avec $\mathcal{F}' \in \mathrm{Coh}_X^-$ ‡ pentes $\leq 0$ et $\mathcal{F}'' \in \mathrm{Coh}_X^-$ ‡ pentes $>0$. On a donc $\mathcal{F}' \in \mathrm{Coh}_X$ ‡ pentes $\geq 0$ et $\mathcal{F}''=\mathcal{F}'''[1]$, avec $\mathcal{F}''' \in \mathrm{Coh}_X$ ‡ pentes $<0$. Finalement, $\iota(\mathcal{F})=\mathcal{F}''' \oplus \mathcal{F}'$. Ceci prouve que $\iota(\mathcal{F}) \in \mathrm{Coh}_X$ et aussi que $\iota$ est essentiellement surjectif.

Pour la pleine fidÈlitÈ, on regarde
\begin{align*} \mathrm{Hom}_{(\mathrm{Coh}_X^{-})^{+}}(\mathcal{F},\mathcal{G}) &= \mathrm{Hom}_{(\mathrm{Coh}_X^{-})^{+}}((\mathcal{F}',\mathcal{F}''),(\mathcal{G}',\mathcal{G}'')) \\
&= \begin{pmatrix} \mathrm{Hom}_{\mathrm{Coh}_X^{-}}(\mathcal{F}',\mathcal{G}') & \mathrm{Ext}_{\mathrm{Coh}_X^{-}}^1(\mathcal{F}'',\mathcal{G}') \\ 0 & \mathrm{Hom}_{\mathrm{Coh}_X^{-}}(\mathcal{F}'',\mathcal{G}'') \end{pmatrix}.
\end{align*}
Or :
\[ \mathrm{Hom}_{\mathrm{Coh}_X^{-}}(\mathcal{F}',\mathcal{G}') = \mathrm{Hom}_{\mathrm{Coh}_X}(\mathcal{F}',\mathcal{G}'), \]
\[ \mathrm{Hom}_{\mathrm{Coh}_X^{-}}(\mathcal{F}'',\mathcal{G}'') = \mathrm{Hom}_{D^b(\mathrm{Coh}_X)}(\mathcal{F}'''[1],\mathcal{G}'''[1]) = \mathrm{Hom}_{\mathrm{Coh}_X}(\mathcal{F}''',\mathcal{G}''') \]
et
\[ \mathrm{Ext}_{\mathrm{Coh}_X^{-}}^1(\mathcal{F}'',\mathcal{G}') = \mathrm{Ext}_{D^b(\mathrm{Coh}_X)}^1(\mathcal{F}'''[1],\mathcal{G}') = \mathrm{Hom}_{\mathrm{Coh}_X}(\mathcal{F}''',\mathcal{G}'). \]
D'o˘
\begin{align*}
\mathrm{Hom}_{(\mathrm{Coh}_X^{-})^{+}}(\mathcal{F},\mathcal{G}) &= \begin{pmatrix} \mathrm{Hom}_{\mathrm{Coh}_X}(\mathcal{F}',\mathcal{G}') & \mathrm{Hom}_{\mathrm{Coh}_X}(\mathcal{F}''',\mathcal{G}') \\ 0 & \mathrm{Hom}_{\mathrm{Coh}_X}(\mathcal{F}''',\mathcal{G}''') \end{pmatrix} \\ 
&= \mathrm{Hom}_{\mathrm{Coh}_X}((\mathcal{F}''',\mathcal{F}'),(\mathcal{G}''',\mathcal{G}')) \\
&= \mathrm{Hom}_{\mathrm{Coh}_X}(\iota(\mathcal{F}),\iota(\mathcal{G})). 
\end{align*} 
Enfin, le fait que les fonctions rang et degrÈ se correspondent via cette Èquivalence est facile.
\end{proof}

\begin{remarks} a) Cela entraÓne en particulier que $D^b(\mathrm{Coh}_X^-)=D^b(\mathrm{Coh}_X)$.

b) On a exploitÈ le fait que la filtration de Harder-Narasimhan des faisceaux cohÈrents sur $X$ Ètait scindÈe. Pour une catÈgorie abÈlienne $\mathcal{A}$ avec un formalisme de Harder-Narasimhan comme ci-dessus, on peut vÈrifier qu'on a toujours que l'image de $(\mathcal{A}^-)^+$ par $\iota$ tombe dans $\mathcal{A}$, mais ces catÈgories n'ont pas de raison d'Ítre Èquivalentes.
\end{remarks} 

\section{Une description alternative de la catÈgorie $\mathrm{Coh}_X^{-}$} \label{perversitÈ}

\subsection{AlgËbres sympathiques}
Les algËbres sympathiques ont ÈtÈ introduites par Colmez pour dÈfinir les espaces de Banach-Colmez.

\begin{definition} \label{defsympa}
Une $C$-algËbre de Banach $R$ est dite \textit{sympathique} si elle est uniforme (ce qui signifie que $R^{\circ}$ est bornÈ, ou, de faÁon Èquivalente, qu'il existe sur $R$ une norme multiplicative pour les puissances induisant la topologie de $R$) et \textit{$p$-close} : tout ÈlÈment de $1+R^{\circ \circ}$ admet une racine $p$-Ëme.
\end{definition}

\begin{remarque} 
Une algËbre de Banach $R$ est uniforme si et seulement si la norme qui dÈfinit sa topologie est Èquivalente ‡ la semi-norme spectrale de $R$ (\cite[Def. 2.8.1]{KL}). La dÈfinition prÈcÈdente est donc la mÍme que celle de Colmez (\cite[\S 4]{bc}), la condition de connexitÈ -- qui n'est pas essentielle -- mise ‡ part.  
\end{remarque}

\begin{proposition}\label{sympabase}
Une algËbre sympathique est perfectoÔde. Pour toute $C$-algËbre perfectoÔde $R$, il existe une $R$-algËbre sympathique pro-finie-Ètale. En particulier, les spectres affinoÔdes d'algËbres sympathiques forment une base de la topologie du site $\mathrm{Perf}_{C,\mathrm{pro\acute{e}t}}$. 
\end{proposition}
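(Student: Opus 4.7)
Mon approche traite s\'epar\'ement les deux premi\`eres affirmations, la troisi\`eme en \'etant un corollaire imm\'ediat. Pour montrer qu'une alg\`ebre sympathique $R$ est perfecto\"ide, je v\'erifierais la condition de Fontaine : l'existence d'une pseudo-uniformisante $\pi_0 \in R$ avec $\pi_0^p \mid p$ telle que $\Phi : R^\circ/\pi_0 \to R^\circ/\pi_0^p$ soit un isomorphisme. L'id\'ee cl\'e serait d'exploiter la $p$-cl\^oture en extrayant, pour $x \in R^\circ$ donn\'e, une racine $p$-i\`eme $1 + z$ de l'\'el\'ement $1 + px \in 1 + R^{\circ\circ}$ ; le d\'eveloppement de $(1+z)^p = 1 + px$ combin\'e \`a la divisibilit\'e de $\binom{p}{k}$ par $p$ pour $0 < k < p$ forcerait $z \in \pi R^\circ$ avec $\pi = p^{1/p} \in C$, d'o\`u, apr\`es division par $\pi^p = p$, la relation $w^p - x \in \pi R^\circ$ en posant $w = z/\pi$. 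En choisissant $\pi_0 = p^{1/p^2}$, de sorte que $\pi_0^p = \pi$, ceci donnerait la surjectivit\'e de $\Phi$ ; l'injectivit\'e r\'esulterait de l'estim\'ee $|w^p| \leq |\pi| \Rightarrow |w| \leq |\pi_0|$.

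Pour la seconde affirmation, je construirais une $R$-alg\`ebre sympathique pro-finie-\'etale $S$ par it\'eration : partant de $R_0 = R$, je d\'efinirais $R_{n+1}$ comme l'extension obtenue en adjoignant des racines $p$-i\`emes de tous les \'el\'ements de $1 + R_n^{\circ\circ}$. Chaque polyn\^ome $T^p - (1+a)$, pour $a \in R_n^{\circ\circ}$, est s\'eparable sur $R_n[1/p]$ (puisque $1 + a$ et $p$ y sont inversibles) et d\'efinit donc une extension finie \'etale de $R_n[1/p]$, qui s'\'etend \`a $R_n$ par cl\^oture int\'egrale et reste perfecto\"ide par presque puret\'e. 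L'alg\`ebre $S$ serait alors le compl\'et\'e uniforme de $\varinjlim_n R_n$ : pro-finie-\'etale sur $R$, perfecto\"ide comme compl\'et\'ee d'une limite filtrante d'alg\`ebres perfecto\"ides, et sympathique par construction.

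La troisi\`eme affirmation suit alors imm\'ediatement : tout $\mathrm{Spa}(R,R^+)$ admettrait un recouvrement pro-\'etale (en fait pro-fini \'etale) par le spectre d'une alg\`ebre sympathique, de sorte que les spectres affino\"ides de telles alg\`ebres formeraient bien une base de $\mathrm{Perf}_{C,\mathrm{pro\acute{e}t}}$. L'obstacle principal que j'anticipe est la v\'erification que $S$ est effectivement $p$-close : il faudrait montrer que toute unit\'e principale $1 + \alpha \in 1 + S^{\circ\circ}$ admet une racine $p$-i\`eme dans $S$, ce qui repose sur l'approximation de $1 + \alpha$ par des unit\'es principales $1 + \alpha_n$ vivant dans des $R_n$, sur la pr\'esence des racines correspondantes $1 + \beta_n$ dans les $R_{n+1}$, et sur la convergence de la suite $(\beta_n)$ dans $S$ via la compl\'etude et un raffinement \`a la Hensel.
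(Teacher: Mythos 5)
Votre proposition est correcte dans son approche, mais il faut d'abord signaler que le texte ne donne pas de d\'emonstration propre de cet \'enonc\'e : sa \og preuve \fg{} consiste en un renvoi \`a \cite[Lemme 1.15 (iii)]{bc} et \cite[Prop. 4.20 (i)]{bc}, et votre esquisse reconstruit pour l'essentiel les arguments de Colmez. Sur la premi\`ere partie, votre calcul fonctionne tel quel, et m\^eme sans avoir \`a choisir la racine : si $u^p=1+px$ et $z=u-1$, alors $|u|=1$ donc $|z|\leq 1$ ponctuellement, et la relation $z^p=px-pz-\sum_{k=2}^{p-1}{p \choose k}z^k$, o\`u chaque terme du membre de droite est de norme $\leq |p|$, donne $|z|\leq |p|^{1/p}$ en tout point -- il n'y a donc pas de racine de l'unit\'e parasite \`a \'ecarter ; l'uniformit\'e de $R$ transforme cette borne ponctuelle en $z\in \pi R^{\circ}$ (et fournit de m\^eme l'injectivit\'e que vous affirmez), puis la surjectivit\'e de $\Phi$ modulo $\pi_0^p$ se propage modulo $p$ par approximations successives, ce qui est exactement le contenu du crit\`ere de Fontaine que vous invoquez (cf. \cite{FontBourbaki}). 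Sur la deuxi\`eme partie, l'obstacle que vous anticipez est r\'eel mais se comble de fa\c{c}on standard, sans raffinement \`a la Hensel : pour $1+\alpha\in 1+S^{\circ\circ}$, la densit\'e de la colimite non compl\'et\'ee fournit $\alpha'$ avec $|\alpha'|<1$ et $|\alpha-\alpha'|<|p|^{p/(p-1)}$ ; alors $1+\alpha=(1+\alpha')(1+\beta)$ avec $|\beta|=|\alpha-\alpha'|$, la s\'erie binomiale $\sum_{k} {1/p \choose k}\beta^k$ converge dans l'alg\`ebre compl\`ete $S$ et fournit une racine $p$-i\`eme de $1+\beta$, tandis que $1+\alpha'$ en a une par construction de la tour. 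Deux points m\'eriteraient enfin d'\^etre explicit\'es : (i) un \'el\'ement topologiquement nilpotent de la colimite qui vit dans $R_n$ l'est d\'ej\`a dans $R_n$, car les normes spectrales se restreignent le long des extensions finies \'etales consid\'er\'ees, celles-ci \'etant surjectives sur les spectres adiques (chaque $T^p-(1+a)$ a des fibres non vides en tout point) ; (ii) cette m\^eme surjectivit\'e, jointe \`a un argument de compacit\'e dans la limite projective, assure que $\mathrm{Spa}(S,S^+)\to\mathrm{Spa}(R,R^+)$ est bien un \textit{recouvrement} pro-\'etale et pas seulement un morphisme pro-fini-\'etale, ce qui est n\'ecessaire pour l'assertion finale sur la base de la topologie de $\mathrm{Perf}_{C,\mathrm{pro\acute{e}t}}$.
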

\begin{proof}
Pour la preuve, voir \cite[Lemme 1.15 (iii)]{bc} et \cite[Prop. 4.20 (i)]{bc}.
\end{proof}

\begin{examples} a) Le corps $C$ est sympathique. 

b) La $C$-algËbre $C\langle T^{1/p^{\infty}} \rangle$ (obtenue en complÈtant $p$-adiquement $\bigcup_{n\geq 1} \O_C[T^{1/p^n}]$ puis en inversant $p$) n'est pas sympathique, bien qu'elle soit perfectoÔde. En effet, soit $a \in \O_C$, avec $|p|<|a|$. Alors 
\[ y_a=\sum_{k=0}^{\infty} {1/p \choose k} a^k T^k \]
est une racine $p$-Ëme de $1+aT$ dans $C [\![ T^{1/p^{\infty}}]\!]$, qui contient $C\langle T^{1/p^{\infty}} \rangle$. Si $1+aT$ avait une racine $p$-Ëme dans $C\langle T^{1/p^{\infty}} \rangle$, $y_a$ serait dans $\O_C\langle T^{1/p^{\infty}} \rangle$ : c'est absurde, puisque par choix de $a$ le coefficient de $T$ n'est pas dans $\O_C$. 
\end{examples}

\begin{proposition}
Une $C$-algËbre de Banach uniforme $R$ est sympathique si et seulement si l'application logarithme $\log : 1+R^{\circ \circ} \to R$ est surjective.
\end{proposition}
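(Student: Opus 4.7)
The plan is to prove each implication separately, exploiting the mutually inverse maps $\log$ and $\exp$ on the disk $\{r \in R : |r|_{\mathrm{sp}} < |p|^{1/(p-1)}\}$. Since $R$ is uniform of characteristic zero, the series $\log(1+z) = \sum_{n\geq 1}(-1)^{n+1}z^n/n$ converges for every $z \in R^{\circ\circ}$, so the map $\log : 1+R^{\circ\circ}\to R$ in the statement is well-defined.

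For the direction sympathique $\Rightarrow$ $\log$ surjective, I fix $a \in R$. The key observation is that while $\exp(a)$ need not converge, multiplying $a$ by a large power of $p$ shrinks it into the convergence disk, since $|p|<1$. Choosing $n$ large enough, I set $1+y := \exp(p^n a) \in 1+R^{\circ\circ}$, which gives $\log(1+y) = p^n a$; iterated $p$-th root extraction, permitted by sympathy, then yields $w \in 1+R^{\circ\circ}$ with $w^{p^n} = 1+y$, and applying $\log$ and dividing by $p^n$ in the $\mathbf{Q}$-vector space $R$ produces $\log(w) = a$.

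For the converse, I start from $1+x \in 1+R^{\circ\circ}$ and set $a := \log(1+x)$. Surjectivity provides $w_0 \in 1+R^{\circ\circ}$ with $\log(w_0)=a/p$, so that $\log(w_0^p(1+x)^{-1})=0$. The crux is a lemma asserting that $\ker(\log) \cap (1+R^{\circ\circ}) \subset \mu_{p^\infty}(R)$: the non-archimedean estimate $|(1+z)^p - 1|_{\mathrm{sp}} \leq \max(|p|, |z|_{\mathrm{sp}}^{p-1})\cdot |z|_{\mathrm{sp}}$ shows that $(1+z)^{p^k}$ tends to $1$ in $R$ for every $z \in R^{\circ\circ}$, and once $(1+z)^{p^k}-1$ lies inside the disk where $\log$ is injective, the identity $\log((1+z)^{p^k}) = p^k \log(1+z) = 0$ forces $(1+z)^{p^k} = 1$. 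Consequently $\zeta := w_0^p(1+x)^{-1}$ is a $p$-power root of unity; and since $C \subset R$ contains every $p$-power root of unity, I can pick $\eta \in \mu_{p^\infty}$ with $\eta^p = \zeta$. The element $w := \eta^{-1} w_0$ stays in $1+R^{\circ\circ}$ (as $\eta - 1 \in \O_C^{\circ\circ}$) and satisfies $w^p = 1+x$, establishing $p$-closeness.

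The main obstacle is the kernel lemma for $\log$, which rests on a careful non-archimedean estimate on $(1+z)^{p^k}$; the remaining steps amount to standard manipulations with the $\exp$ and $\log$ series, together with the systematic use of the fact that $R$ inherits all $p$-power roots of unity from $C$.
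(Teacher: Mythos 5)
Your proof is correct and follows essentially the same route as the paper: for surjectivity you scale $a$ by $p^n$ into the convergence disk of $\exp$ and extract iterated $p$-th roots, and for $p$-closedness you divide $\log(1+x)$ by $p$ and reduce to the kernel of $\log$, the only real difference being that you actually prove the kernel lemma $\ker(\log)\cap(1+R^{\circ\circ})\subset\mu_{p^\infty}$ via the estimate on $(1+z)^{p^k}$, which the paper simply asserts (\og Donc $y'^{-p}y\in\mu_{p^\infty}$ \fg). One small caveat: since the paper's definition of sympathique drops connectedness, $\zeta$ need not lie in $C$ (think of $R=C\times C$), so your phrase \og I can pick $\eta\in\mu_{p^\infty}$ with $\eta^p=\zeta$ \fg{} should be justified by the $p$-divisibility of $\mu_{p^\infty}(R)$ itself --- immediate from the idempotent decomposition attached to the split separable polynomial $X^{p^k}-1$, with each such root of unity automatically in $1+R^{\circ\circ}$ --- rather than by $\mu_{p^\infty}(C)\subset R$ alone.
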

\begin{proof}
Supposons $R$ sympathique. Soit $x\in R$. On choisit $n$ assez grand pour que $p^nx$ soit dans l'image de $\log$ (ce qui est possible, puisque l'exponentielle converge sur un voisinage de $0$ et y dÈfinit un inverse du logarithme). On a donc $p^n x=\exp(y')$, et comme $y' \in 1+R^{\circ \circ}$, il existe par hypothËse $y \in 1+R^{\circ \circ}$ tel que $y^{p^n}=y'$, i.e. tel que $\exp(y)=x$.

RÈciproquement, soit $y \in 1+R^{\circ \circ}$. Il existe $y'\in 1+R^{\circ \circ}$ tel que $\log(y')=p^{-1}x$ et donc $\log(y^{'-p}y)=0$. Donc $y^{'-p}y \in \mu_{p^{\infty}}$ et $y$ admet donc une racine $p$-Ëme. 
\end{proof}

\begin{remarque}
Soit $h\geq 1$ et $d\in \z$. On montre (\cite[Cor. 5.2.12]{KL}) que
\[ B(R)^{\varphi^h=p^d} = B^+(R)^{\varphi^h=p^d} \]
et comme $B^+(R)= \cap_{n\geq 0} \varphi^n(B_{\rm cris}^+(R))$, c'est aussi la mÍme chose que $B_{\rm cris}^+(R)^{\varphi^h=p^d}$. Ainsi dans la dÈfinition du corollaire \ref{explicite}, on aurait aussi bien pu dÈfinir $\mathbf{U}_{\lambda}$ avec l'anneau $B(R)$. 
\end{remarque}

\begin{corollaire}\label{thetasympa}
Une $C$-algËbre de Banach uniforme $R$ est sympathique si et seulement si l'application $\theta : B(R^{\flat})^{\varphi=p} \to R$ de Fontaine est surjective.
\end{corollaire}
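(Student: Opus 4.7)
Le plan est de d\'eduire ce corollaire de la proposition pr\'ec\'edente en identifiant explicitement, sur $B(R^\flat)^{\varphi=p}$, l'application $\theta$ au logarithme $p$-adique usuel. La cl\'e sera l'interpr\'etation de ce $\qp$-espace vectoriel comme rev\^etement universel du groupe $p$-divisible $\mu_{p^{\infty}}$ sur $\O_C$ : par le cas $G=\mu_{p^{\infty}}$ du \S \ref{revunivpdiv} et la remarque \ref{utile}, on a l'identification
\[ B(R^\flat)^{\varphi=p} \simeq \widetilde{\mu_{p^{\infty}}}(R) \simeq 1 + R^{\flat\circ\circ}, \]
donn\'ee par $\hat u \mapsto \log[\hat u]$, la s\'erie convergeant dans $B_{\mathrm{cris}}^+(R)$ gr\^ace aux puissances divis\'ees sur $\ker\theta$. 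La formule $\theta(\log[\hat u]) = \log(\theta[\hat u]) = \log(\hat u^{(0)})$ montrera alors que $\theta$ se r\'ealise, via cette identification, comme la compos\'ee
\[ 1+R^{\flat\circ\circ} \overset{\mathrm{pr}_0}\longrightarrow 1+R^{\circ\circ} \overset{\log}\longrightarrow R, \quad \hat u = (u_n)_n \mapsto u_0 \mapsto \log u_0. \]

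Muni de cette description, les deux implications deviendront directes. Pour le sens direct, je supposerai $R$ sympathique : la proposition pr\'ec\'edente fournit la surjectivit\'e de $\log : 1+R^{\circ\circ} \to R$, tandis que le caract\`ere $p$-clos permet, par it\'eration, d'extraire de tout $u_0 \in 1+R^{\circ\circ}$ un syst\`eme compatible $(u_n)_{n\geq 0}$ de racines $p^n$-i\`emes dans $1+R^{\circ\circ}$, d'o\`u un rel\`evement $\hat u \in 1+R^{\flat\circ\circ}$ de $u_0$. Donc $\mathrm{pr}_0$ est surjective, et $\theta = \log\circ\,\mathrm{pr}_0$ l'est aussi. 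R\'eciproquement, si $\theta$ est surjective, pour tout $x\in R$, un ant\'ec\'edent $\hat u$ donne $u_0 = \hat u^{(0)} \in 1+R^{\circ\circ}$ avec $\log u_0 = x$ ; donc le logarithme $1+R^{\circ\circ} \to R$ est surjectif, et par la proposition pr\'ec\'edente, $R$ est sympathique.

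Le point demandant le plus de soin sera donc la v\'erification de l'identification $B(R^\flat)^{\varphi=p} \simeq 1+R^{\flat\circ\circ}$ ainsi que de sa compatibilit\'e avec les deux applications vers $R$ : il s'agit essentiellement d'observer que la suite exacte logarithmique \eqref{log} de la section \ref{revunivpdiv}, appliqu\'ee \`a $G=\mu_{p^{\infty}}$, donne ce que l'on attend sur les sections, via les formules de commutation standard entre $\theta$, le Teichm\"uller $[\cdot]$ et le logarithme $p$-adique. Une fois ces ingr\'edients admis, les implications proprement dites sont formelles.
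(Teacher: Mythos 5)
Your proposal is correct and follows essentially the same route as the paper: the paper's proof likewise identifies $\theta$ on $B(R^{\flat})^{\varphi=p}$, via the universal cover of $\mu_{p^{\infty}}$, with the map $\varprojlim_{x \mapsto x^p}(1+R^{\circ\circ}) \to R$, $(x_n) \mapsto \log(x_0)$, and then invokes the argument of the preceding proposition. You merely make explicit what the paper leaves implicit, namely that the forward direction uses $p$-closedness to show $\mathrm{pr}_0$ is surjective, while the converse only needs that surjectivity of the composite forces surjectivity of $\log$.
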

\begin{proof}
En effet comme on l'a observÈ dans la section \ref{revunivpdiv}, la thÈorie des groupes $p$-divisibles\footnote{Entre autres possibilitÈs...} permet d'identifier 
\[ \theta : B(R^{\flat})^{\varphi=p} \to R \]
et 
\[ \underset{x \mapsto x^p}\varprojlim 1+R^{\circ \circ} \to R, ~ (x_n) \mapsto \log(x_0). \]
En outre, l'argument de la preuve prÈcÈdente montre que $R$ est sympathique si et seulement la deuxiËme application est surjective.
\end{proof}

\subsection{La catÈgorie $\mathrm{Coh}_X^-$ comme catÈgorie de faisceaux \og pervers cohÈrents \fg{} (au sens de \cite{bridgeland})}
Soit $S$ un espace perfectoÔde sur $C^{\flat}$. On peut penser en premiËre approximation ‡ l'espace adique $X_S$ comme ‡ une famille $(X_{k(s)})$ de courbes de Fargues-Fontaine usuelles, indexÈe par les points gÈomÈtriques de $S$. NÈanmoins il faut prendre garde au fait qu'il n'y a pas de morphisme naturel d'espaces adiques $X_S \to S$ ! En caractÈristique $p$ cela vient du fait qu'on a quotientÈ par $\varphi$ ; en caractÈristique mixte, c'est encore pire, il n'y a mÍme pas de tel morphisme au niveau de $Y_S$. Toutefois, bien que $X_S$ ne vive pas au dessus de $S$, la construction de $X_S$ est fonctorielle en $S$. On vÈrifie de plus (en Ètendant les scalaires ‡ $\qp^{\rm cyc}$ et en basculant en caractÈristique $p$) que si $f : S\to S'$ est un morphisme pro-Ètale (resp. surjectif), le morphisme induit $X_S \to X_{S'}$ est pro-Ètale (resp. surjectif). De plus, $f$ commute aux limites projectives finies. 

En d'autres termes, on a un morphisme de topos $\tau$ du topos associÈ au gros site pro-Ètale de $X$ vers le topos $(\mathrm{Perf}_{C^{\flat},\mathrm{pro\acute{e}t}})^{\sim}$. En particulier, si $\mathcal{F}$ est un complexe de faisceaux cohÈrents sur $X$, on peut lui associer un complexe de faisceaux $R\tau_* \mathcal{F}$ sur $\mathrm{Perf}_{C^{\flat},\mathrm{pro\acute{e}t}}$. 

\begin{proposition}\label{bridgeland}
Soit $\mathcal{F} \in \mathrm{Coh}_X$. Alors :
\begin{itemize} 
\item Si tous les quotients de la filtration de Harder-Narasimhan de $\mathcal{F}$ sont ‡ pentes $\geq 0$, $R^i \tau_*\mathcal{F}=0$ pour tout $i\neq 0$.
\item Si tous les quotients de la filtration de Harder-Narasimhan de $\mathcal{F}$ sont ‡ pentes $<0$, $R^i \tau_*\mathcal{F}=0$ pour tout $i\neq 1$. 
\end{itemize}
\end{proposition}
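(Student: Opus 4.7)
The plan is to reduce, via the classification theorem \ref{classification}, to an explicit cohomology computation on the relative Fargues-Fontaine curve $X_S$ for $S$ affinoid perfectoid over $C^\flat$. Since affinoid perfectoids form a basis of $\mathrm{Perf}_{C^\flat,\mathrm{pro\acute{e}t}}$, the sheaf $R^i\tau_*\mathcal{F}$ is the sheafification of the presheaf $S \mapsto H^i(X_S, \mathcal{F}|_{X_S})$, so it suffices to prove vanishing of these cohomology groups whenever $S$ is affinoid perfectoid.

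I will proceed as follows. A general coherent sheaf on $X$ is an extension of a vector bundle by a torsion sheaf; torsion sheaves have slope $+\infty$ and appear only in the first case, and their pullback to $X_S$ is supported on relative Cartier divisors coming from untilts of $S$, whose higher coherent cohomology is easily seen to vanish. By Theorem \ref{classification}, a vector bundle whose Harder-Narasimhan slopes are all $\geq 0$ (resp.\ all $<0$) is isomorphic to a direct sum of sheaves $\mathcal{O}_X(\lambda)$ with $\lambda \geq 0$ (resp.\ $\lambda < 0$). Writing $\lambda = d/h$ and using that $\mathcal{O}_X(\lambda) = \pi_* \mathcal{O}_{X_h}(d)$ along the finite \'etale map $\pi : X_h \to X$, a construction which is compatible with the base change $X \rightsquigarrow X_S$, the problem reduces to showing, for any affinoid perfectoid $S$ and any integer $d$: $H^i(X_{S,h}, \mathcal{O}_{X_{S,h}}(d)) = 0$ for $i \geq 2$; $H^0(X_{S,h}, \mathcal{O}_{X_{S,h}}(d)) = 0$ for $d < 0$; and $H^1(X_{S,h}, \mathcal{O}_{X_{S,h}}(d)) = 0$ for $d \geq 0$.

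All three facts are proved in Kedlaya-Liu \cite{KL}. The space $X_{S,h}$ is covered by two affinoid opens obtained from the standard annulus cover of $Y_{S,h}$ modulo $\varphi^{\Z}$, so a Cech argument yields vanishing in degrees $\geq 2$. The vanishing $H^0(X_{S,h}, \mathcal{O}_{X_{S,h}}(d)) = B_{E_h}(R)^{\varphi^h = \pi^d} = 0$ for $d < 0$ follows from growth estimates on elements of $B(R)$ over the annuli of $Y_{S,h}$, just as in the absolute case of \cite{FF}. The main obstacle, which is the technical heart of the argument, is the $H^1$-vanishing for $d \geq 0$: this is established in \cite{KL} by globalising in families the fundamental exact sequence of $p$-adic Hodge theory, or equivalently by an inductive argument on $d$ based on the residue short exact sequence at $\infty$ with base case $H^1(X_S, \mathcal{O}_{X_S}) = 0$. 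Once this input is in hand, the proposition follows by reassembling the direct-sum decompositions above.
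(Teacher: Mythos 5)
There is a genuine gap, and it sits exactly at the technical heart of the proposition. You assert that $H^1(X_{S,h},\O_{X_{S,h}}(d))=0$ for every $d\geq 0$ and \emph{every} affinoid perfectoid $S$, with base case $H^1(X_S,\O_{X_S})=0$, attributed to Kedlaya--Liu. That base case is false, and it is not in \cite{KL}: what \cite[Prop. 6.2.2]{KL} gives is the vanishing of $\mathrm{coker}(1-p^{-d}\varphi)$ on $B(R)$ for $d>0$ only. For $d=0$, writing $S=\mathrm{Spa}(R^{\flat},R^{\flat+})$, the short exact sequence $0 \to \O_{X_S} \to \O_{X_S}(1) \to i_{\infty,*}\O_{S^{\sharp}} \to 0$ together with $H^1(X_S,\O_{X_S}(1))=0$ identifies $H^1(X_S,\O_{X_S})$ with the cokernel of Fontaine's map $\theta : B(R^{\flat})^{\varphi=p} \to R$, and surjectivity of $\theta$ is \emph{equivalent} to $R$ being sympathique (corollaire \ref{thetasympa}). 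A perfectoid algebra need not be sympathique: for the paper's example $R=C\langle T^{1/p^{\infty}}\rangle$ one gets $H^1(X_S,\O_{X_S})\neq 0$. So your opening reduction --- ``it suffices to prove vanishing of these cohomology groups whenever $S$ is affinoid perfectoid'' --- demands more than is true at slope $0$, and your proposed induction on $d$ collapses at its base case.

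The repair is the one the paper makes: since $R^i\tau_*\mathcal{F}$ is the sheafification of $S \mapsto H^i(X_S,\mathcal{F}_S)$, vanishing only needs to hold on a \emph{basis} of the pro-\'etale topology, and sympathique affinoids form such a basis (proposition \ref{sympabase}); for sympathique $R$ the surjectivity of $\theta$ is exactly corollaire \ref{thetasympa}, which settles the remaining case $i=1$, $\lambda=0$. Everything else in your outline is sound and essentially parallel to the paper: the reduction via th\'eor\`eme \ref{classification} and $\O_X(d/h)=\pi_{h*}\O_{X_h}(d)$; the torsion sheaves handled through untilts $S^{\sharp}$, where the relevant vanishing is Scholze's theorem for $\mathbb{B}_{\mathrm{dR}}^+/t^k$ on affinoid perfectoids (not quite ``easily seen,'' but a citable input); the $H^0$ vanishing for $d<0$ by growth estimates as in \cite{FF}; the vanishing in degrees $\geq 2$ (the paper uses the $\varphi^{\z}$-cover $Y_S \to X_S$, killing $H^i(Y_S,\O_{Y_S})$ by the affinoid perfectoid exhaustion $Y_{S,n,m}$ plus Mittag-Leffler, and a Cartan--Leray argument which moreover computes $H^1(X_S,\O_{X_S}(d))$ as $\mathrm{coker}(1-p^{-d}\varphi)$ --- your two-chart \v{C}ech cover would also work); and the $H^1$-vanishing for $d>0$ via \cite[Prop. 6.2.2]{KL}.
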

\begin{proof}
Le thÈorËme de classification des fibrÈs montre qu'il suffit de vÈrifier que :
\begin{itemize}
\item $R^i \tau_* \O_X(\lambda)=0$ pour tout $i\neq 0$, si $\lambda \geq 0$ ;
\item $R^i \tau_* \O_X(\lambda)=0$ pour tout $i \neq1$, si $\lambda < 0$ ;
\item $R^i\tau_* \iota_{x,*} B_{\rm dR}^+(C_x)/t^k B_{\rm dR}^+(C_x)=0$ pour tout $i\neq 0$, si $x$ est un point fermÈ de $X$, $C_x$ le corps rÈsiduel de $X$ en $x$ et $k>0$. 
\end{itemize}
Rappelons que $R^i \tau_* \mathcal{F}$ est le faisceau associÈ au prÈfaisceau $S \mapsto H^i(X_S,\mathcal{F}_S)$. On peut donc supposer $S=\mathrm{Spa}(R,R^+)$ affinoÔde perfectoÔde.
\\

VÈrifions d'abord le troisiËme point. Le corps $C_x$ est un corps perfectoÔde de basculÈ $C^{\flat}$ et donne naissance ‡ un dÈbasculement $S^{\sharp}$ de $S$ au-dessus de $C_x$ par l'Èquivalence de Scholze entre espaces perfectoÔdes sur $C_x$ et sur $C^{\flat}$, et ‡ un morphisme $\iota : S^{\sharp} \to X_S$ (par produit fibrÈ de $\iota$ et $X_S \to X$, puisque $\mathrm{Hom}(S^{\sharp},X)=\mathrm{Hom}(S,C^{\flat})$). On a
\[ H^i(X_S,\iota_* \mathbb{B}_{\mathrm{dR},S^{\sharp}}^+/t^k\mathbb{B}_{\mathrm{dR},S^{\sharp}}^+)= H^i(S^{\sharp},\mathbb{B}_{\mathrm{dR},S^{\sharp}}^+/t^k\mathbb{B}_{\mathrm{dR},S^{\sharp}}^+) \]
qui est nul si $i>0$ car $S^{\sharp}$ est affinoÔde perfectoÔde (\cite[Th. 6.5 (ii)]{Shodge}).
\\

Traitons maintenant le cas des fibrÈs $\O_{X_S}(\lambda)$, $\lambda \in \q$. Si $i=0$, il s'agit de calculer $B(R)^{\varphi^h=p^d}$ si $\lambda=d/h$. On a pour tout $f\in B(R)$ :
\[ \| \varphi(f)\|_{\rho} = \| f \|_{\rho^{1/p}}^p. \]
Donc si $\varphi^h(f)=p^d f$, on a par une rÈcurrence immÈdiate que pour tout $k\geq 1$
\[ \|f\|_{\rho^{p^{kh}}} = \rho^{-dp^{kh}} \|f\|_{\rho}^{p^{kh}}. \]
Quitte ‡ multiplier $f$ par une constante, on peut supposer $\|f\|_{\rho} < 1$. On en dÈduit que si $d \leq 0$, le membre de droite tend vers $0$, et donc 
\[ \lim_{\rho \to 0} \|f\|_{\rho} = 0. \]
Donc $f$ a une singularitÈ Èliminable en $0$ (la preuve de \cite[Prop. 1.9.1]{FF} s'adapte sans problËme au cas d'une algËbre perfectoÔde $R$) et donc $f \in W(R)$ (pour le voir, on se ramËne au cas connu des corps : cf. \cite[Lem. 5.2.5]{KL}). Or on voit directement que $W(R)^{\varphi^h=p^d}$ est nul dËs que $d\neq 0$ et vaut $W(R^{\varphi=1})=\qp^{\pi_0(S)}$ si $d=0$. Par consÈquent, on a bien $R^0\tau_* \O_X(\lambda)=0$ si $\lambda<0$. On a aussi obtenu au passage que $R^0\tau_* \O_X=\qp$.  

Pour comprendre ce qui se passe en degrÈ $i>0$, notons pour commencer qu'on peut supposer $\lambda=d$ entier, quitte ‡ remplacer $X$ par $X_h$, $h\geq 1$. En effet, si $\lambda=d/h$, $\O_X(\lambda)=\pi_{h*}\O_{X_h}(d)$, avec $\pi_h : X_h \to X$ Ètale et donc
\[ H^i(X_S,\O_{X_S}(\lambda))=H^i(X_h,\O_{X_{S,h}}(\lambda)). \]
Si $\pi : Y_S \to X_S$ est la surjection canonique, $\pi^*\O_{X_S}(d)=\O_{Y_S}$ par dÈfinition de $\O_X(d)$. Pour tout $i>0$, $H^i(Y_S,\O_{Y_S})$ est nul. En effet, on peut Ècrire $Y_S=\cup_{n,m} Y_{S,n,m}$ comme ci-dessus et $H^0(Y_S,\O_{Y_S})=\varprojlim H^0(Y_{S,n,m}, \O_{Y_S})$, les morphismes de transition Ètant continus et d'image dense. On a donc une suite exacte pour tout $i>0$ :
\[ 0 \to R^1\varprojlim H^{i-1}(Y_{S,n,m},\O_{Y_S}) \to H^i(Y_S,\O_{Y_S}) \to \varprojlim H^i(Y_{S,n,m},\O_{Y_S}) \to 0. \]
Or chaque $Y_{S,n,m}$ est affinoÔde perfectoÔde et donc le terme de droite est nul pour $i>0$ (\cite[Lemma 9.2.8]{KL}). Cela donne l'annulation cherchÈe pour $i>1$. Pour $i=1$, on utilise que de plus 
\[ R^1\varprojlim H^{0}(Y_{S,n,m},\O_{Y_S})= 0, \]
d'aprËs la proposition \ref{mittag}. De la suite spectrale de Hochschild-Serre pour $\pi : Y_S \to X_S$ on dÈduit que 
\[ H^i(X_S,\O_{X_S}(d))=0 \]
si $i>1$ et que
\begin{align*}
H^1(X_S,\O_{X_S}(d)) &= \mathrm{coker}(H^0(Y_S,\pi^*\O_{X_S}(d)) \overset{\mathrm{Id}-\varphi} \longrightarrow H^0(Y_S,\pi^*\O_{X_S}(d))) \\
&= \mathrm{coker}(B(R) \overset{\mathrm{Id}-p^{-d}\varphi} \longrightarrow B(R)),
\end{align*}
Or ce dernier groupe est nul si $d>0$, d'aprËs \cite[Prop. 6.2.2]{KL}. 
 
Il ne reste donc plus ‡ Ètudier que le cas $i=1$, $\lambda=0$, qui est plus dÈlicat. Comme la question est locale pour la topologie pro-Ètale, on peut supposer $S=\mathrm{Spa}(R^{\flat},R^{\flat +})$, avec $R$ une $C$-algËbre sympathique (proposition \ref{sympabase}). Regardons la suite exacte de fibrÈs
\[ 0 \to \O_{X_S} \to \O_{X_S}(1) \to i_{\infty,*} \O_{S^{\sharp}} \to 0. \]
(avec $S^{\sharp}=\mathrm{Spa}(R,R^+)$) et prenons la suite exacte longue de cohomologie qui s'en dÈduit en appliquant $\Gamma(X_S,\cdot)$. On voit qu'il suffit de montrer que la flËche $$H^0(X_S,\O_{X_S}(1)) \to H^0(X_S,i_{\infty,*} \O_{S^{\sharp}})$$ est surjective pour avoir que $H^1(X_S,\O_{X_S})=0$, puisque l'on sait dÈj‡ que $H^1(X_S,\O_{X_S}(1))=0$. Mais cette flËche n'est autre que $\theta : B(R^{\flat})^{\varphi=p} \to R$. Le corollaire \ref{thetasympa} permet de conclure que $R^1\tau_* \O_X=0$.
\end{proof}

\begin{remarque} 
La preuve montre que $R^0\tau_* \O_X=\qp$. Si $\lambda=d/h >0$, $R^0\tau_* \O_X(\lambda)= \mathbf{U}_{\lambda}$ qui est non nul. Si $\lambda=d/h<0$, $R^1\tau_* \O_X(\lambda) \neq 0$ : cela se vÈrifie aisÈment ‡ l'aide des calculs prÈcÈdents et de la suite exacte
\[ 0 \to \O_{X_E}(d) \to \O_{X_E} \to \iota_{\infty,*} B_{\rm dR}^+/t^d B_{\rm dR}^+ \to 0, \]
o˘ $E=\q_{p^h}$.
\end{remarque}

De la proposition et de la remarque prÈcÈdentes, on dÈduit le

\begin{corollaire}
La catÈgorie $\mathrm{Coh}_X^{-}$ est exactement la sous-catÈgorie pleine de $D^b(\mathrm{Coh}_X)$ suivante
\[ \left\{ \mathcal{F}, H^i(\mathcal{F})=0 ~\mathrm{si} ~i\neq 0, -1, ~ R^1\tau_* H^0(\mathcal{F})=0, R^0\tau_* H^{-1}(\mathcal{F})=0 \right\}. \]
Autrement dit, le foncteur $R^0\tau_*$ restreint ‡ la catÈgorie $\mathrm{Coh}_X^{-}$ est exact. 
\end{corollaire}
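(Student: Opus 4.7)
Le plan se d�roule en trois temps. \emph{Inclusion facile.} Pour $\mathcal{F} \in \mathrm{Coh}_X^{-}$, la condition $H^i(\mathcal{F})=0$ si $i\neq -1,0$ est satisfaite par d�finition. Par la proposition \ref{bridgeland}, puisque $H^0(\mathcal{F})\geq 0$ (tous les quotients de la filtration de Harder-Narasimhan sont � pentes positives, en incluant la partie de torsion trait�e dans le troisi�me point de la preuve), on a $R^1\tau_* H^0(\mathcal{F})=0$ ; de m�me, puisque $H^{-1}(\mathcal{F})<0$, on a $R^0\tau_* H^{-1}(\mathcal{F})=0$.

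\emph{Inclusion r�ciproque.} Soit $\mathcal{F}$ dans la sous-cat�gorie d�crite. Utilisant le th�or�me de classification \ref{classification} et la scission de la suite exacte entre partie de torsion et partie fibr� vectoriel d'un faisceau coh�rent sur $X$, on peut d�composer $H^0(\mathcal{F}) = T_0 \oplus V_0^{\geq 0} \oplus V_0^{<0}$ et $H^{-1}(\mathcal{F}) = T_{-1} \oplus V_{-1}^{\geq 0} \oplus V_{-1}^{<0}$. Par additivit� de $R\tau_*$ et les calculs rappel�s dans la remarque qui suit la proposition \ref{bridgeland} ainsi que la preuve de cette proposition (en particulier $R^1\tau_* \O_X(\lambda)\neq 0$ pour $\lambda<0$ ; $R^0\tau_* \O_X(\lambda)\neq 0$ pour $\lambda\geq 0$ ; $R^0\tau_*$ d'un faisceau de torsion non nul est non nul), la condition $R^1\tau_* H^0(\mathcal{F})=0$ force $V_0^{<0}=0$, donc $H^0(\mathcal{F})\geq 0$ ; la condition $R^0\tau_* H^{-1}(\mathcal{F})=0$ force $T_{-1}=0$ et $V_{-1}^{\geq 0}=0$, donc $H^{-1}(\mathcal{F})<0$.

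\emph{Exactitude de $R^0\tau_*$.} Soit $\mathcal{F}\in \mathrm{Coh}_X^{-}$. Le triangle distingu� $H^{-1}(\mathcal{F})[1]\to \mathcal{F} \to H^0(\mathcal{F})\overset{+1}\to$ dans $D^b(\mathrm{Coh}_X)$, combin� aux �galit�s $R^i\tau_* H^{-1}(\mathcal{F})=0$ pour $i\neq 1$ et $R^i\tau_* H^0(\mathcal{F})=0$ pour $i\neq 0$, montre par la suite exacte longue que $R^i\tau_*\mathcal{F}=0$ pour $i\neq 0$ (il y a une unique contribution potentielle provenant de $R^1\tau_* H^{-1}(\mathcal{F})$, absorb�e dans $R^0\tau_*\mathcal{F}$ en une extension de $R^0\tau_* H^0(\mathcal{F})$ par $R^1\tau_* H^{-1}(\mathcal{F})$). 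Une suite exacte courte $0\to \mathcal{F}'\to \mathcal{F}\to \mathcal{F}''\to 0$ dans $\mathrm{Coh}_X^{-}$ (cat�gorie ab�lienne, coeur d'une $t$-structure) correspond � un triangle distingu� dans $D^b(\mathrm{Coh}_X)$ ; en lui appliquant $R\tau_*$ et en utilisant la concentration en degr� $0$ qui vient d'�tre �tablie, la suite exacte longue d�g�n�re en $0\to R^0\tau_*\mathcal{F}'\to R^0\tau_*\mathcal{F}\to R^0\tau_*\mathcal{F}''\to 0$, ce qui conclut.

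La partie la plus d�licate est clairement la r�ciproque du crit�re cohomologique, qui repose sur les r�sultats de non-annulation de la remarque (notamment l'identification $R^0\tau_*\O_X=\qp$ et le fait que $R^1\tau_*\O_X(\lambda)\neq 0$ pour $\lambda<0$), et non pas uniquement sur les annulations de la proposition \ref{bridgeland}.
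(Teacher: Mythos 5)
Votre preuve est correcte et suit essentiellement la d\'emonstration implicite du texte : l'inclusion directe et l'exactitude d\'ecoulent des annulations de la proposition \ref{bridgeland} (via le triangle $H^{-1}(\mathcal{F})[1] \to \mathcal{F} \to H^0(\mathcal{F})$ et l'argument standard de suite exacte longue sur le coeur), et la r\'eciproque du th\'eor\`eme de classification \ref{classification} combin\'e aux non-annulations de la remarque ($R^0\tau_*\O_X(\lambda) \neq 0$ pour $\lambda \geq 0$, $R^1\tau_*\O_X(\lambda)\neq 0$ pour $\lambda<0$, $R^0\tau_*$ non nul sur la torsion). Vous explicitez simplement ce que le papier laisse au lecteur en \'ecrivant \og De la proposition et de la remarque pr\'ec\'edentes, on d\'eduit \fg{}, y compris le point, correctement identifi\'e comme essentiel, que la caract\'erisation requiert les non-annulations et pas seulement les annulations.
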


\section{La catÈgorie $\mathcal{BC}$ en termes de la courbe} \label{sectionfinale}

\subsection{La catÈgorie $\mathcal{BC}$ comme coeur abÈlien de $D^b(\mathrm{Coh}_X)$}
On vient de voir que le foncteur $R^0\tau_*$ de $\mathrm{Coh}_X^-$ dans la catÈgorie des faisceaux sur $\mathrm{Perf}_{C,\mathrm{pro\acute{e}t}}$ Ètait exact. On va voir maintenant qu'on a en fait beaucoup mieux. Le thÈorËme \ref{eqscholze} de Scholze permet d'identifier les sites $\mathrm{Perf}_{C,\mathrm{pro\acute{e}t}}$ et $\mathrm{Perf}_{C^{\flat},\mathrm{pro\acute{e}t}}$, ce que l'on fait dÈsormais, en particulier dans l'ÈnoncÈ suivant, qui est le rÈsultat principal de ce texte. 
\begin{theoreme}\label{mainequi}
Le foncteur $R^0\tau_*$ rÈalise une Èquivalence de catÈgories entre $\mathrm{Coh}_X^{-}$ et $\mathcal{BC}$.
\end{theoreme}
\begin{proof}
Montrons tout d'abord que l'image de $\mathrm{Coh}_X^-$ par $R^0\tau_*$ tombe dans $\mathcal{BC}$. Le groupe de Grothendieck $K_0(\mathrm{Coh}_X^-)$ est isomorphe ‡ $K_0(\mathrm{Coh}_X)$ via la flËche $[\mathcal{F}] \mapsto [H^0(\mathcal{F})]-[H^{-1}(\mathcal{F})]$. Or
\[ K_0(\mathrm{Coh}_X) \simeq K_0(\mathrm{Fib}_X) \simeq \z [\O_X] \oplus \mathrm{Pic}(X) \simeq \z [\O_X] \oplus \z [\O_X(1)]. \]
Comme la catÈgorie $\mathcal{BC}$ est par dÈfinition abÈlienne et stable par extensions, il suffit donc de vÈrifier que $R^0\tau_* \O_X, R^0\tau_* \O_X(1) \in \mathcal{BC}$. Or on a $R^0\tau_* \O_X(1)=\mathbf{U}_1$ et $R^0\tau_* \O_X=\qp$ comme on l'a vu au cours de la preuve de la proposition \ref{bridgeland}.
\\

Soit $\tilde{\mathcal{BC}}$ la catÈgorie dÈfinie de faÁon analogue ‡ $\mathcal{BC}$ en remplaÁant faisceaux de $\qp$-espaces vectoriels par faisceaux de groupes abÈliens. La catÈgorie $\mathcal{BC}$ est une sous-catÈgorie de $\tilde{\mathcal{BC}}$. Pour prouver le thÈorËme, on va montrer que $R^0\tau_*$ induit une Èquivalence entre $\mathrm{Coh}_X^{-}$ et $\tilde{\mathcal{BC}}$. Observons alors que si $\mathcal{F}, \mathcal{G} \in \mathrm{Coh}_X^-$, tout ÈlÈment de $\mathrm{Hom}_{\mathrm{Coh}_X^-}(\mathcal{F},\mathcal{G})$ induit par $R^0\tau_*$ un morphisme $\qp$-linÈaire entre les faisceaux de $\qp$-vectoriels $R^0 \tau_*(\mathcal{F})$ et $R^0 \tau_*(\mathcal{G})$. On en dÈduit donc que les trois catÈgories $\mathrm{Coh}_X^-$, $\mathcal{BC}$ et $\tilde{\mathcal{BC}}$ sont nÈcessairement Èquivalentes.

L'exactitude du foncteur $R^0\tau_* : \mathrm{Coh}_X^- \to \tilde{\mathcal{BC}}$ donne une flËche pour tout $\mathcal{F}, \mathcal{G} \in \mathrm{Coh}_X^-$ et tout $i\geq 0$
\[ (*)_{i,\mathcal{F},\mathcal{G}} : \mathrm{Ext}_{\mathrm{Coh}_X^-}^i(\mathcal{F},\mathcal{G}) \to \mathrm{Ext}_{\z-\mathrm{Sh}}^i(R^0\tau_*\mathcal{F},R^0\tau_*\mathcal{G}).  \]
Par dÈfinition de $\tilde{\mathcal{BC}}$ comme plus petite-sous catÈgorie abÈlienne stable par extensions de la catÈgorie des faisceaux de groupes abÈliens sur $\mathrm{Perf}_{C,\mathrm{pro\acute{e}t}}$ contenant les $\qp$-Espaces Vectoriels de dimension finie et les $C$-Espaces Vectoriels de dimension finie, il s'agit donc pour dÈmontrer le thÈorËme de prouver que la flËche ci-dessus est un isomorphisme pour $i=0, 1$ et pour tout $\mathcal{F}, \mathcal{G} \in \mathrm{Coh}_X^-$.

\begin{proposition} \label{breenbis}
On suppose que $\mathcal{F}, \mathcal{G}$ sont des $\qp$-Espaces Vectoriels de dimension finie ou des $C$-Espaces Vectoriels de dimension finie. Alors $(*)_{i,\mathcal{F},\mathcal{G}}$ est un isomorphisme pour $i=0,1, 2$. 
\end{proposition}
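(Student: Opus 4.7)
The plan is to reduce to the one-dimensional case, identify explicit preimages in $\mathrm{Coh}_X^{-}$ of the standard generators on each side, compute the Ext groups on the coherent side using the regularity of $X$, and then match the results with the tables of Theorem \ref{breen}, including compatibility of the comparison map $(*)$.

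First, by $\qp$-bilinearity of $\mathrm{Hom}$ and $\mathrm{Ext}$ in both arguments, and of $R^0\tau_{*}$, one reduces immediately to the case where $\mathcal{F}$ and $\mathcal{G}$ are each either $\underline{\qp}$ or $\mathbf{G}_a$. One then verifies that $R^0\tau_{*}$ sends $\O_X$ to $\underline{\qp}$ and $\iota_{\infty,*}C$ to $\mathbf{G}_a$: the first was already obtained in the proof of Proposition \ref{bridgeland}; for the second, one uses that for $S=\mathrm{Spa}(R,R^+)$ a perfectoid affinoid with untilt $S^{\sharp}$, the closed immersion $\iota_{\infty}:S^{\sharp}\hookrightarrow X_S$ gives $H^{0}(X_S,\iota_{\infty,*}\O_{S^{\sharp}})=R=\mathbf{G}_a(S)$, while the higher cohomology vanishes because $S^{\sharp}$ is affinoid perfectoid.

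Since $\O_X$ and $\iota_{\infty,*}C$ both sit in the heart of the standard $t$-structure, and $\mathrm{Coh}_X^{-}$ embeds in $D^b(\mathrm{Coh}_X)$, one has
\[ \mathrm{Ext}_{\mathrm{Coh}_X^{-}}^i(A,B) = \mathrm{Hom}_{D^b(\mathrm{Coh}_X)}(A,B[i]) = \mathrm{Ext}_{\mathrm{Coh}_X}^i(A,B) \]
for every $A,B \in \{\O_X,\iota_{\infty,*}C\}$ and every $i\geq 0$. As $X$ is regular of dimension one, the case $i=2$ vanishes automatically, in agreement with Theorem \ref{breen}. For $i=0,1$, the relevant values are computed directly: $\mathrm{Hom}(\O_X,\O_X)=\qp$ (completeness of $X$); $\mathrm{Hom}(\O_X,\iota_{\infty,*}C)=C$ and $\mathrm{Hom}(\iota_{\infty,*}C,\O_X)=0$ (torsion vs.\ locally free); $\mathrm{Hom}(\iota_{\infty,*}C,\iota_{\infty,*}C)=C$; $\mathrm{Ext}^1(\O_X,\O_X)=H^1(X,\O_X)=0$; $\mathrm{Ext}^1(\O_X,\iota_{\infty,*}C)=H^1(X,\iota_{\infty,*}C)=0$ by punctual support; $\mathrm{Ext}^1(\iota_{\infty,*}C,\O_X)=C$ (read off from the fundamental sequence $0\to\O_X\to\O_X(1)\to\iota_{\infty,*}C\to 0$); and $\mathrm{Ext}^1(\iota_{\infty,*}C,\iota_{\infty,*}C)=C$ (a local computation at $\infty$, where the stalk ring is $B_{\mathrm{dR}}^+$, the extension being represented by $B_{\mathrm{dR}}^+/t^2$). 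These values match exactly the tables of Theorem \ref{breen}.

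The remaining point — and the main technical obstacle — is to verify that the maps $(*)_{i,\mathcal{F},\mathcal{G}}$ themselves are isomorphisms, not only that source and target have the same dimension over their respective base field. For $i=0$ this follows from the naturality of $R^0\tau_{*}$, and for all cases with $i=2$ or with vanishing $\mathrm{Ext}^1$ there is nothing to check. For the two nontrivial $i=1$ cases, one exhibits on each side a canonical generator and checks that it is preserved by $(*)$: the image under $R^0\tau_{*}$ of the short exact sequence $0\to\O_X\to\O_X(1)\to\iota_{\infty,*}C\to 0$ is exactly the fundamental exact sequence $0\to\underline{\qp}\to\mathbf{U}_1\to\mathbf{G}_a\to 0$, which was identified at the end of the computation of $\mathrm{Ext}^1(\mathbf{G}_a,\underline{\qp})$ in the previous section as the generator; similarly, for $\mathrm{Ext}^1(\mathbf{G}_a,\mathbf{G}_a)$, the nonsplit extension at $\infty$ coming from $B_{\mathrm{dR}}^+/t^2\twoheadrightarrow C$ pushes forward to the extension of $\mathbf{G}_a$ by $\mathbf{G}_a$ identified in the previous section via $\mathrm{gr}^1 \O\mathbb{B}_{\mathrm{dR}}$, which is where the real work lies in making the matching precise.
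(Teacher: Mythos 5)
Your proposal is correct and follows essentially the same route as the paper, whose proof of this proposition is exactly the one-line combination of Theorem \ref{breen} with the computation of the groups $\mathrm{Ext}^i$ between $\O_X$ and $i_{\infty,*}C$ on the curve: the generator identifications you invoke (the fundamental sequence $0\to\O_X\to\O_X(1)\to i_{\infty,*}C\to 0$ mapping to the $\theta$-extension $\mathbb{B}^{\varphi=p}$, and $i_{\infty,*}B_{\rm dR}^+/t^2$ mapping to $\mathrm{gr}^1\O\mathbb{B}_{\rm dR}\simeq \mathbb{B}_{\rm dR}^+/\mathrm{Fil}^2$) are precisely what the final paragraphs of the section computing pro-\'etale extension groups were designed to supply. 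The one point you should make explicit is that matching a single generator suffices only because $(*)_{1,\mathcal{F},\mathcal{G}}$ is $C$-linear for the $C$-structures coming from $\mathrm{End}(i_{\infty,*}C)=C=\mathrm{End}(\mathbf{G}_a)$ (functoriality of $R^0\tau_*$ combined with your case $i=0$), since a merely $\qp$-linear map between one-dimensional $C$-vector spaces sending a generator to a generator need not be bijective.
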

\begin{proof}
C'est un corollaire des rÈsultats de la section \ref{extproÈtales} et du calcul des $\mathrm{Ext}^i$ entre faisceaux cohÈrents sur la courbe.
\end{proof}

Montrons pour commencer que l'ÈnoncÈ de la proposition \ref{breenbis} reste valable lorsque $\mathcal{F}$ et $\mathcal{G}$ sont ‡ pentes entre $0$ et $1$, i.e. que $(*)_{i,\mathcal{F},\mathcal{G}}$ est un isomorphisme lorsque $\mathcal{F}$ et $\mathcal{G}$ sont ‡ pentes entre $0$ et $1$ et $i=0, 1, 2$. Il existe alors $V, V'$ deux $\qp$-espaces vectoriels de dimension finie, $W, W'$ deux $C$-espaces vectoriels de dimension finie et des suites exactes
\[ 0 \to V \otimes \O_X \to \mathcal{F} \to i_{\infty,*} W \to 0 \quad ; \quad 0 \to V' \otimes \O_X \to \mathcal{G} \to i_{\infty,*} W' \to 0. \]
D'o˘ des suite exactes
\begin{align*}
0 \to \mathrm{Hom}_{\mathrm{Coh}_X^-}(i_{\infty,*} W,\mathcal{G}) \to \mathrm{Hom}_{\mathrm{Coh}_X^-}(\mathcal{F},\mathcal{G}) & \to \mathrm{Hom}_{\mathrm{Coh}_X^-}(V \otimes \O_X,\mathcal{G}) \\
& \to \mathrm{Ext}_{\mathrm{Coh}_X^-}^1(i_{\infty,*} W,\mathcal{G}), \end{align*}

\begin{align*}  \mathrm{Hom}_{\mathrm{Coh}_X^-}(V \otimes \O_X,\mathcal{G}) \to \mathrm{Ext}_{\mathrm{Coh}_X^-}^1(i_{\infty,*} W,\mathcal{G}) \to \mathrm{Ext}_{\mathrm{Coh}_X^-}^1(\mathcal{F},\mathcal{G}) & \to \mathrm{Ext}_{\mathrm{Coh}_X^-}^1(V \otimes \O_X,\mathcal{G}) \\ & \to \mathrm{Ext}_{\mathrm{Coh}_X^-}^2(i_{\infty,*} W,\mathcal{G}), \end{align*}

et
\[  \mathrm{Ext}_{\mathrm{Coh}_X^-}^1(V \otimes \O_X,\mathcal{G}) \to \mathrm{Ext}_{\mathrm{Coh}_X^-}^2(i_{\infty,*} W,\mathcal{G}) \to \mathrm{Ext}_{\mathrm{Coh}_X^-}^2(\mathcal{F},\mathcal{G})  \to \mathrm{Ext}_{\mathrm{Coh}_X^-}^2(V \otimes \O_X,\mathcal{G}). \]

Ce dernier groupe est nul dans les deux catÈgories considÈrÈes, puisque $\mathrm{Ext}_{\mathrm{Coh}_X^-}^2(V \otimes \O_X,V'\otimes \O_X)$ (resp. $\mathrm{Ext}_{\z-\mathrm{Sh}}^2(V V')$) et $\mathrm{Ext}_{\mathrm{Coh}_X^-}^2(V \otimes \O_X,i_{\infty,*}W')$ (resp. $\mathrm{Ext}_{\z-\mathrm{Sh}}^2(V,W'\otimes \mathbf{G}_a)$) le sont, d'aprËs le thÈorËme \ref{breen}. On peut donc supposer que $\mathcal{F}$ est un $\qp$-Espace Vectoriel ou un $C$-Espace Vectoriel de dimension finie. On Ècrit alors

\begin{align*} 0 \to \mathrm{Hom}_{\mathrm{Coh}_X^-}(\mathcal{F},V' \otimes \O_X) \to \mathrm{Hom}_{\mathrm{Coh}_X^-}(\mathcal{F},\mathcal{G}) & \to \mathrm{Hom}_{\mathrm{Coh}_X^-}(\mathcal{F},i_{\infty,*} W') \\
& \to \mathrm{Ext}_{\mathrm{Coh}_X^-}^1(\mathcal{F},V' \otimes \O_X), \end{align*}

\begin{align*} \mathrm{Hom}_{\mathrm{Coh}_X^-}(\mathcal{F},i_{\infty,*} W') & \to \mathrm{Ext}_{\mathrm{Coh}_X^-}^1(\mathcal{F},V' \otimes \O_X) \\
& \to \mathrm{Ext}_{\mathrm{Coh}_X^-}^1(\mathcal{F},\mathcal{G})  \to \mathrm{Ext}_{\mathrm{Coh}_X^-}^1(\mathcal{F},i_{\infty,*} W')  \to \mathrm{Ext}_{\mathrm{Coh}_X^-}^2(\mathcal{F},V' \otimes \O_X) \end{align*}

et

\[ \mathrm{Ext}_{\mathrm{Coh}_X^-}^1(\mathcal{F},i_{\infty,*} W')  \to \mathrm{Ext}_{\mathrm{Coh}_X^-}^2(\mathcal{F},V' \otimes \O_X) \to \mathrm{Ext}_{\mathrm{Coh}_X^-}^2(\mathcal{F},\mathcal{G})  \to \mathrm{Ext}_{\mathrm{Coh}_X^-}^2(\mathcal{F},i_{\infty,*} W'). \]

Ce dernier groupe est nul dans les deux catÈgories considÈrÈes, d'aprËs le thÈorËme \ref{breen}. Le rÈsultat cherchÈ se dÈduit donc de la proposition \ref{breenbis}. 
\\

Pour prouver que $(*)_{i,\mathcal{F},\mathcal{G}}$ est un isomorphisme pour $i=0, 1$ dans le cas gÈnÈral, nous ferons usage du lemme suivant :

\begin{lemme}\label{suitesexactes}
On a les suites exactes suivantes dans $\mathrm{Coh}_X^-$ pour $d>1$ et $k>0$ :
\begin{eqnarray} 0 \to  \mathcal{O}_X \to \mathcal{O}_X(1) \oplus \mathcal{O}_X(d-1) \to \mathcal{O}_X(d) \to 0, \label{se1} \end{eqnarray}
\begin{eqnarray} 0 \to \mathcal{O}_X \to \mathcal{O}_X(k) \to i_{\infty,*} B_{\rm dR}^+/t^k B_{\rm dR}^+ \to 0, \label{se2} \end{eqnarray}
\begin{eqnarray} 0 \to \mathcal{O}_X \to i_{\infty,*} B_{\rm dR}^+/t^{k} B_{\rm dR}^+ \to \mathcal{O}_X(-k)[1] \to 0. \label{se3} \end{eqnarray}
\end{lemme}
\begin{proof}
Ces suites exactes se dÈduisent respectivement des suites exactes suivantes dans $\mathrm{Coh}_X^-$, vues comme triangles exacts dans la catÈgorie dÈrivÈe : 
\[ 0 \to  \mathcal{O}_X \to \mathcal{O}_X(1) \oplus \mathcal{O}_X(d-1) \to \mathcal{O}_X(d) \to 0, \]
\[ 0 \to \mathcal{O}_X \to \mathcal{O}_X(k) \to i_{\infty,*} B_{\rm dR}^+/t^k B_{\rm dR}^+ \to 0, \]
\[ 0 \to \mathcal{O}_X(-k) \to \mathcal{O}_X \to i_{\infty,*} B_{\rm dR}^+/t^{k} B_{\rm dR}^+ \to 0. \]
Les deux derniËres suites exactes, induites par la multiplication par un ÈlÈment de $H^0(X,\O_X(k))=B^{\varphi=p^k}$ sont les mÍmes ‡ torsion prËs et la traduction gÈomÈtrique de la \og suite exacte fondamentale \fg{} en thÈorie de Hodge $p$-adique : cf \cite[8.2.1.3]{FF}. L'existence de la premiËre suite exacte se dÈmontre comme dans \cite[8.20]{bc}.
\end{proof}
On dÈduit du lemme que pour tout $\mathcal{F} \in \mathrm{Coh}_X^-$, il existe une suite exacte \textit{dans $\mathrm{Coh}_X^-$}
\[ 0 \to V \otimes \O_X \to \mathcal{F}' \to \mathcal{F} \to 0, \]
avec $\mathcal{F}'$ un fibrÈ ‡ pentes entre $0$ et $1$ et $V$ un $\qp$-espace vectoriel de dimension finie. 
\\

Soit donc $\mathcal{F}, \mathcal{G} \in \mathrm{Coh}_X^-$. On Ècrit 
\[ 0 \to V \otimes \O_X \to \mathcal{F}' \to \mathcal{F} \to 0 \quad ; \quad  0 \to V' \otimes \O_X \to \mathcal{G}' \to \mathcal{G} \to 0, \]
avec $\mathcal{F}', \mathcal{G}'$ ‡ pentes entre $0$ et $1$, $V, V'$ des $\qp$-espaces vectoriels de dimension finie. On a une suite exacte
\begin{align*} 0 \to \mathrm{Hom}_{\mathrm{Coh}_X^-}(\mathcal{F},\mathcal{G}) \to \mathrm{Hom}_{\mathrm{Coh}_X^-}(\mathcal{F}',\mathcal{G}) \to \mathrm{Hom}_{\mathrm{Coh}_X^-}(V \otimes \O_X,\mathcal{G}) & \to \mathrm{Ext}_{\mathrm{Coh}_X^-}^1(\mathcal{F},\mathcal{G}) \\ & \to \mathrm{Ext}_{\mathrm{Coh}_X^-}^1(\mathcal{F}',\mathcal{G}) \to \mathrm{Ext}_{\mathrm{Coh}_X^-}^1(V \otimes \O_X,\mathcal{G}). \end{align*} 

On peut donc supposer, ce que l'on fera, $\mathcal{F}$ ‡ pentes entre $0$ et $1$. On a des diagrammes commutatifs de suites exactes
\[ \resizebox{1.1\textwidth}{!} {\xymatrix{
  \mathrm{Hom}_{\mathrm{Coh}_X^-}(\mathcal{F},V' \otimes \O_X) \ar[r] \ar[d] & \mathrm{Hom}_{\mathrm{Coh}_X^-}(\mathcal{F},\mathcal{G}') \ar[r] \ar[d] & \mathrm{Hom}_{\mathrm{Coh}_X^-}(\mathcal{F},\mathcal{G}) \ar[r] \ar[d] & \mathrm{Ext}_{\mathrm{Coh}_X^-}^1(\mathcal{F},V' \otimes \O_X) \ar[d] \ar[r] & \mathrm{Ext}_{\mathrm{Coh}_X^-}^1(\mathcal{F},\mathcal{G}') \ar[d] \\
    \mathrm{Hom}_{\z-\mathrm{Sh}}(R^0\tau_*\mathcal{F},R^0\tau_*V' \otimes \O_X) \ar[r] & \mathrm{Hom}_{\z-\mathrm{Sh}}(R^0\tau_*\mathcal{F},R^0\tau_*\mathcal{G}') \ar[r] & \mathrm{Hom}_{\z-\mathrm{Sh}}(R^0\tau_*\mathcal{F},R^0\tau_*\mathcal{G}) \ar[r] & \mathrm{Ext}_{\z-\mathrm{Sh}}^1(R^0\tau_*\mathcal{F},R^0\tau_*V' \otimes \O_X) \ar[r] & \mathrm{Ext}_{\z-\mathrm{Sh}}^1(R^0\tau_*\mathcal{F},R^0\tau_*\mathcal{G})
  }} \]

et 

\[ \resizebox{1.1\textwidth}{!} {\xymatrix{
  \mathrm{Ext}_{\mathrm{Coh}_X^-}^1(\mathcal{F},V' \otimes \O_X) \ar[r] \ar[d] & \mathrm{Ext}_{\mathrm{Coh}_X^-}^1(\mathcal{F},\mathcal{G}') \ar[r] \ar[d] & \mathrm{Ext}_{\mathrm{Coh}_X^-}^1(\mathcal{F},\mathcal{G}) \ar[r] \ar[d] & \mathrm{Ext}_{\mathrm{Coh}_X^-}^2(\mathcal{F},V' \otimes \O_X) \ar[d] \ar[r] & \mathrm{Ext}_{\mathrm{Coh}_X^-}^2(\mathcal{F},\mathcal{G}') \ar[d] \\
    \mathrm{Ext}_{\z-\mathrm{Sh}}^1(R^0\tau_*\mathcal{F},R^0\tau_*V' \otimes \O_X) \ar[r] & \mathrm{Ext}_{\z-\mathrm{Sh}}^1(R^0\tau_*\mathcal{F},R^0\tau_*\mathcal{G}') \ar[r] & \mathrm{Ext}_{\z-\mathrm{Sh}}^1(R^0\tau_*\mathcal{F},R^0\tau_*\mathcal{G}) \ar[r] & \mathrm{Ext}_{\z-\mathrm{Sh}}^2(R^0\tau_*\mathcal{F},R^0\tau_*V' \otimes \O_X) \ar[r] & \mathrm{Ext}_{\z-\mathrm{Sh}}^2(R^0\tau_*\mathcal{F},R^0\tau_*\mathcal{G})
  }} \]

  On veut montrer que les flËches verticales du milieu $(*)_{0,\mathcal{F},\mathcal{G}}$ et $(*)_{0,\mathcal{F},\mathcal{G}}$ sont des isomorphismes ; gr‚ce au lemme des cinq, il suffit de le vÈrifier pour les autres flËches verticales. Tous les fibrÈs qui apparaissent Ètant dÈsormais ‡ pentes entre $0$ et $1$, c'est gagnÈ.
\end{proof}

\begin{remarque}
Si $\mathcal{F}$ et $\mathcal{G}$ sont deux fibrÈs ‡ pente entre $0$ et $1$, le fait que $(*)_{0,\mathcal{F},\mathcal{G}}$ est un isomorphisme peut aussi se dÈmontrer ‡ l'aide de la thÈorie des groupes $p$-divisibles. En effet, on peut supposer $\mathcal{F}=\O_X(\lambda)$, $\mathcal{G}=\O_X(\mu)$ avec $\lambda, \mu \in \q \cap [0,1]$. Donnons-nous alors deux groupes $p$-divisibles $G_{\lambda}, G_{\mu}$ sur $\O_C$ de revÍtements universels $\mathbf{U}_{\lambda},\mathbf{U}_{\mu}$ ; on note $M(\lambda), M(\mu)$ les modules de DieudonnÈ sur $\breve{\mathbf{Q}}_p$ correspondants. 
D'une part on sait (cf. \cite{durham}, Th. 7.18 et paragraphe aprËs 7.27) que le foncteur de $\varphi-\mathrm{Mod}_{B_{\rm cris}^+}$ vers $\mathrm{Bun}_X$ est pleinement fidËle, c'est-‡-dire que
\[ \mathrm{Hom}_{\mathrm{Coh}_X}(\O_X(\lambda),\O_X(\mu)) = \mathrm{Hom}_{B_{\rm cris}^+,\varphi}(M(\lambda) \otimes_L B_{\rm cris}^+,M(\mu) \otimes_L B_{\rm cris}^+). \]
D'autre part on a vu dans la remarque \ref{utile} que  
\[ \mathrm{Hom}_{\mathcal{BC}}(\mathbf{U}_{\lambda},\mathbf{U}_{\mu})= \mathrm{Hom}(\tilde{G}_{\lambda,\eta}^{\rm ad},\tilde{G}_{\mu,\eta}^{\rm ad})=\mathrm{Hom}_{B_{\rm cris}^+,\varphi}(M(\lambda) \otimes_L B_{\rm cris}^+,M(\mu) \otimes_L B_{\rm cris}^+), \]
ce qui conclut.
\end{remarque}

\begin{remarque} \label{remcarp2} 
Les mÈthodes de cet article montreraient aussi que si $E$ est un corps local de caractÈristique $p$, la catÈgorie des $E$-espaces de Banach-Colmez (cf. remarque \ref{remcarp}) est Èquivalente ‡ la catÈgorie $\mathrm{Coh}_{X_E}^-$, o˘ $X_E$ dÈsigne la courbe de Fargues-Fontaine pour le corps local $E$ (et $F=C^{\flat}$). Le thÈorËme \ref{breen} reste valable en remplaÁant $\mathrm{Perf}_C$ par $\mathrm{Perf}_{C^{\flat}}$ et $\qp$ par $E$, ‡ ceci prËs que $\mathrm{Ext}^1(\mathbf{G}_a,\mathbf{G}_a)=0$. En effet, le faisceau $\mathbf{G}_a$ est reprÈsentÈ par $\mathbf{A}_{C^{\flat}}^{1,\mathrm{perf}}$ et 
\[ \forall i>0, ~ H^i(\mathbf{A}_{C^{\flat}}^{1,\mathrm{perf}},\mathbf{G}_a)=0, \]
d'aprËs \cite[Lem. 4.10 (v)]{Shodge} (comparer avec le thÈorËme \ref{cohga}).  
\end{remarque}

Comme corollaire de la preuve du thÈorËme, on obtient le rÈsultat suivant. 
\begin{corollaire}
Les groupes $\mathrm{Hom}(\mathcal{F},\mathcal{G})$ et $\mathrm{Ext}^i(\mathcal{F},\mathcal{G})$ pour $\mathcal{F}, \mathcal{G} \in \{ \qp, \mathbf{G}_a \}$ sont les mÍmes dans la catÈgorie des faisceaux pro-Ètales de groupes abÈliens et dans celle des faisceaux pro-Ètales de $\qp$-espaces vectoriels. 
\end{corollaire}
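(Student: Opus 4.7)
Le corollaire se d\'eduit directement de la preuve du th\'eor\`eme \ref{mainequi}, qui \'etablit en r\'ealit\'e deux \'equivalences parall\`eles : le foncteur $R^0\tau_*$ r\'ealise $\mathrm{Coh}_X^- \simeq \mathcal{BC}$ \`a l'int\'erieur de la cat\'egorie des faisceaux de $\qp$-espaces vectoriels, et $\mathrm{Coh}_X^- \simeq \tilde{\mathcal{BC}}$ \`a l'int\'erieur de la cat\'egorie $\z-\mathrm{Sh}$ des faisceaux de groupes ab\'eliens. Il en d\'ecoulera que $\mathcal{BC}$ et $\tilde{\mathcal{BC}}$ co\"incident comme sous-cat\'egories pleines de $\z-\mathrm{Sh}$, et que tous deux contiennent $\qp$ et $\mathbf{G}_a$.

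Pour la partie Hom, on observera que $\mathcal{BC}$ et $\tilde{\mathcal{BC}}$ sont par d\'efinition des sous-cat\'egories pleines de leurs topos ambiants respectifs : on a donc pour $\mathcal{F}, \mathcal{G} \in \{ \qp, \mathbf{G}_a \} \subset \mathcal{BC} = \tilde{\mathcal{BC}}$,
\[ \mathrm{Hom}_{\qp-\mathrm{Sh}}(\mathcal{F},\mathcal{G}) = \mathrm{Hom}_{\mathcal{BC}}(\mathcal{F},\mathcal{G}) = \mathrm{Hom}_{\tilde{\mathcal{BC}}}(\mathcal{F},\mathcal{G}) = \mathrm{Hom}_{\z-\mathrm{Sh}}(\mathcal{F},\mathcal{G}). \]
Pour $\mathrm{Ext}^1$, on utilisera en outre la stabilit\'e par extensions de ces sous-cat\'egories dans leurs topos ambiants, qui garantit que les groupes $\mathrm{Ext}^1$ dans $\mathcal{BC}$ (resp. $\tilde{\mathcal{BC}}$) co\"incident avec ceux dans $\qp-\mathrm{Sh}$ (resp. $\z-\mathrm{Sh}$) entre objets de $\mathcal{BC}$ ; la chaine d'\'egalit\'es analogue permet alors de conclure. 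De mani\`ere \'equivalente, on peut invoquer le fait que toute extension dans $\z-\mathrm{Sh}$ entre faisceaux de $\qp$-espaces vectoriels est automatiquement un faisceau de $\qp$-espaces vectoriels (par le lemme du serpent appliqu\'e \`a la multiplication par $p$).

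Pour les $\mathrm{Ext}^i$ en degr\'es $i\geq 2$, la preuve n'est pas une cons\'equence imm\'ediate de l'\'equivalence abstraite, puisque les cat\'egories $\mathcal{BC}$ et $\tilde{\mathcal{BC}}$ ne calculent pas les $\mathrm{Ext}^i$ sup\'erieurs des topos ambiants. L'approche sera alors de reprendre la preuve du th\'eor\`eme \ref{breen} en rempla\c cant partout la cat\'egorie $\z-\mathrm{Sh}$ par celle des faisceaux de $\qp$-espaces vectoriels : la r\'esolution partielle de Berthelot-Breen-Messing se transporte \`a ce cadre en substituant $\qp[G^{\bullet}]$ \`a $\z[G^{\bullet}]$, les groupes $\mathrm{Ext}^i(\qp[G^j],\mathcal{G})$ s'identifient aux m\^emes groupes de cohomologie pro-\'etale de la section \ref{calculcohom} (qui sont naturellement des $\qp$-espaces vectoriels), et la suite spectrale correspondante donne les m\^emes conclusions. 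L'obstacle principal consistera donc \`a justifier proprement cette adaptation au cadre $\qp$-lin\'eaire ; elle devrait toutefois \^etre essentiellement formelle, les constructions d'Eilenberg-MacLane s'effectuant dans une cat\'egorie additive quelconque.
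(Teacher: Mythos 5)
Your treatment of $\mathrm{Hom}$ and $\mathrm{Ext}^1$ is exactly the paper's intended argument: the corollary is stated there as a direct consequence of the proof of the th\'eor\`eme \ref{mainequi}, whose proof shows that the essential image of $R^0\tau_*$ is simultaneously $\mathcal{BC}$ and $\tilde{\mathcal{BC}}$, so that these coincide as full, extension-closed, exact abelian subcategories of their ambient topoi; fullness then gives the $\mathrm{Hom}$ statement and extension-closedness the $\mathrm{Ext}^1$ statement, as you say. However, your parenthetical \og de mani\`ere \'equivalente \fg{} justification is false: the snake lemma applied to the multiplication by $p$ only shows that the middle term of an extension in $\z-\mathrm{Sh}$ of sheaves of $\qp$-vector spaces is uniquely $p$-divisible, i.e.\ a sheaf of $\z[1/p]$-modules. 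A $\qp$-vector space structure is genuinely more (already $\q$ is a uniquely $p$-divisible abelian group admitting no $\qp$-structure), and the fact that the middle terms of abelian extensions between $\qp$ and $\mathbf{G}_a$ do carry compatible $\qp$-structures is part of the content of the corollary, available only \emph{after} the theorem. Delete that sentence; your main argument stands without it.

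For $i\geq 2$ there is a genuine gap. If $\qp[G^{\bullet}]$ means $\z[G^{\bullet}]\otimes_{\z}\qp$ with the same differentials (which is what \og substituer $\qp[G^{\bullet}]$ \`a $\z[G^{\bullet}]$ \fg{} produces), then since $\qp$ is flat over $\z$ the augmented complex has $H^0=G\otimes_{\z}\qp$, which for a sheaf of $\qp$-vector spaces $G$ is \emph{not} $G$: for instance $\mathbf{G}_a\otimes_{\z}\qp=\mathbf{G}_a\otimes_{\qp}(\qp\otimes_{\q}\qp)$, and $\qp\otimes_{\q}\qp$ is huge. Your complex is therefore a partial resolution of $G\otimes_{\z}\qp$, and the spectral sequence you invoke converges to $\mathrm{Ext}^{p+q}_{\qp-\mathrm{Sh}}(G\otimes_{\z}\qp,G')\simeq \mathrm{Ext}^{p+q}_{\z-\mathrm{Sh}}(G,G')$ by the change-of-rings adjunction: the computation is circular and gives no information on $\mathrm{Ext}_{\qp-\mathrm{Sh}}(G,G')$. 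This is precisely why the remark following the corollary in the paper says that a proof by the methods of the section on extension groups would require explicating \emph{en bas degr\'e} the complexes of MacLane (homologie des anneaux et des modules) and of Illusie: to resolve $G$ as a sheaf of $\qp$-vector spaces one needs the $\qp$-linear stabilized standard complexes, which contain additional terms (of the shape $\qp[\qp^{j}\times G^{i}]$, encoding the scalar-multiplication relations $[ax]-a[x]$) and hence additional columns on the $E_1$-page. The individual terms would still be computable by the methods of the section \ref{calculcohom} (which already handles the locally profinite sheaf $\qp$ via $\con(\zp^{n},\cdot)$), but the adaptation is not the formal substitution you describe.
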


\begin{remarque} 
On pourrait certainement le prouver par des mÈthodes semblables ‡ celles de la section \ref{extproÈtales} ; cela demanderait d'expliciter en bas degrÈ les complexes qui apparaissent dans \cite{maclane} et \cite{illusie}.
\end{remarque}

\subsection{Lien avec la dÈfinition originale de Colmez}
Rappelons la dÈfinition originale de Colmez des espaces de Banach-Colmez, appelÈs par lui \textit{Espaces de Banach de dimension finie} (\cite{bc}).

Pour Colmez, un \textit{Espace de Banach} est un foncteur covariant de la catÈgorie des $C$-algËbres sympathiques dans la catÈgorie des $\qp$-espaces de Banach. Une suite d'Espaces de Banach est dite \textit{exacte} si elle l'est quand on l'Èvalue sur toute algËbre sympathique. Un \textit{Espace de Banach de dimension finie} est alors par dÈfinition un Espace de Banach obtenu par extension et quotient comme dans la dÈfinition des Banach-Colmez ‡ partir des $\qp$-Espaces Vectoriels de dimension finie et des $C$-Espaces Vectoriels de dimension finie (que l'on voit ici par restriction comme foncteurs covariants sur la catÈgorie des $C$-algËbre sympathiques). Nous noterons $\mathcal{BC}'$ la catÈgorie des Espaces de Banach de dimension finie.

CommenÁons par une caractÈrisation des algËbres sympathiques.
\begin{proposition}\label{caracterisation}
Soit $S=\mathrm{Spa}(R^{\flat},R^{\flat \circ})$, avec $R$ une $C$-algËbre perfectoÔde. On a Èquivalence entre
\begin{itemize}
\item $R$ est sympathique ;
\item $H^1(S,\qp)=0$.
\end{itemize}
\end{proposition}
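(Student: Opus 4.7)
The strategy is to apply the ``fundamental exact sequence of $p$-adic Hodge theory'', sheafified on $\mathrm{Perf}_{C,\mathrm{pro\acute{e}t}}$:
$$ 0 \to \qp \to \mathbf{U}_1 \overset{\theta}{\longrightarrow} \mathbf{G}_a \to 0, $$
where $\mathbf{U}_1(T) = B(T^{\flat})^{\varphi=p}$ and the second arrow is Fontaine's $\theta$. Surjectivity of $\theta$ as a morphism of sheaves is not automatic pointwise, but it follows from Proposition \ref{sympabase}: every $C$-perfectoid algebra admits a pro-finite-\'etale sympathetic refinement, and on sympathetic algebras $\theta$ is surjective by Corollary \ref{thetasympa}. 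Left-exactness is clear since $\qp = \mathbf{U}_1^{\varphi=1} \cap \ker \theta$, so the sequence is a genuine short exact sequence of pro-\'etale sheaves.

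Evaluating the associated long exact sequence on $S = \mathrm{Spa}(R^{\flat}, R^{\flat\circ})$ yields
$$ \mathbf{U}_1(S) = B(R^{\flat})^{\varphi=p} \overset{\theta}{\longrightarrow} R \to H^1(S,\qp) \to H^1(S,\mathbf{U}_1). $$
The crucial input is then the vanishing $H^1(S,\mathbf{U}_1) = 0$. I would obtain this from the Leray spectral sequence for the morphism $\tau$ combined with the results of Section \ref{perversit�}: Proposition \ref{bridgeland} gives $R^i\tau_* \mathcal{O}_X(1) = 0$ for $i > 0$, since $\mathcal{O}_X(1)$ has positive slope; hence $H^i(S,\mathbf{U}_1) = H^i(X_S, \mathcal{O}_{X_S}(1))$, and the latter was shown to vanish for $i > 0$ in the proof of Proposition \ref{bridgeland} (via $\pi^* \mathcal{O}_X(1) = \mathcal{O}_{Y_S}$, the absence of higher coherent cohomology on the affinoid perfectoid pieces of $Y_S$, the Hochschild--Serre spectral sequence for $\pi : Y_S \to X_S$, and the surjectivity of $1 - p^{-1}\varphi$ on $B(R^{\flat})$).

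Putting the two ingredients together, one deduces
$$ H^1(S,\qp) \simeq R / \theta\bigl(B(R^{\flat})^{\varphi=p}\bigr), $$
so that $H^1(S,\qp) = 0$ if and only if $\theta$ is surjective, which by Corollary \ref{thetasympa} is precisely the condition that $R$ be sympathique. The main subtlety I anticipate is not any single computation but the careful bookkeeping needed to identify the connecting map and the sheafified sequence with Fontaine's $\theta$; however, this identification has already been made in the proof of Proposition \ref{bridgeland} (in the step showing $R^1\tau_* \mathcal{O}_X = 0$ via the surjectivity of $\theta$ over sympathetic bases), so the argument is essentially a matter of assembling results already in the text.
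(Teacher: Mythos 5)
Your proposal is correct and is essentially the paper's own proof reorganized: the paper applies the Leray spectral sequence for $\tau$ to $\mathcal{O}_X$ (using $\tau_*\mathcal{O}_X=\qp$ and $R^i\tau_*\mathcal{O}_X=0$ pour $i>0$, Proposition \ref{bridgeland}) to get $H^1(S,\qp)\cong H^1(X_S,\mathcal{O}_{X_S})$, which the proof of Proposition \ref{bridgeland} had already identified with $\mathrm{coker}\bigl(\theta\colon B(R^{\flat})^{\varphi=p}\to R\bigr)$ via the sequence $0\to\mathcal{O}_{X_S}\to\mathcal{O}_{X_S}(1)\to i_{\infty,*}\mathcal{O}_{S^{\sharp}}\to 0$ and the vanishing $H^1(X_S,\mathcal{O}_{X_S}(1))=0$, so both arguments conclude by Corollary \ref{thetasympa}. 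Pushing that same sequence down to $0\to\qp\to\mathbf{U}_1\to\mathbf{G}_a\to 0$ and running the long exact sequence on $S$, as you do, uses exactly the same inputs; the only blemish is your parenthetical justification of left-exactness (\og $\qp=\mathbf{U}_1^{\varphi=1}\cap\ker\theta$ \fg{} is garbled as written), which should simply be replaced by the exactness of the sequence \eqref{log} for $G=\mu_{p^{\infty}}$, already established in the paper.
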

\begin{proof}
Comme $H^1(S,\O_S)=0$ pour tout affinoÔde perfectoÔde, l'Èquivalence des deux derniers points est immÈdiate. L'argument de la preuve de \ref{bridgeland} montre que $R$ est sympathique si et seulement si $H^1(X_S,\O_{X_S})=0$. Par ailleurs on a remarquÈ au cours de la preuve de la proposition \ref{bridgeland} que $\tau_* \O_X= \qp$ et $R^i\tau_* \O_X=0$ pour $i>0$ d'aprËs la mÍme proposition. Donc la suite spectrale de Leray pour $\tau$ donne pour tout $i \geq 0$ un isomorphisme
\[ H^i(X_S,\O_{X_S}) = H^i(S,\qp), \]
ce qui donne l'Èquivalence des deux premiers points.
\end{proof}

\begin{example} Si $K$ est un corps perfectoÔde contenant $C$, $K$ est sympathique si et seulement si $H^1(\mathcal{G}_K,\qp)=0$ pour tout $i>0$. La suite exacte de faisceaux
\[ 0 \to \zp \to W(\mathbf{G}_a^{\flat}) \overset{\wp :=F-1} \longrightarrow W(\mathbf{G}_a^{\flat}) \to 0 \]
montre que le corps perfectoÔde $K$ est sympathique si $\wp$ est surjectif sur $W(K^{\flat})$. On montre de plus (\cite[Prop. 3.10]{zpext}) que $W(K^{\flat})/\wp W(K^{\flat})$ est isomorphe comme groupe topologique au complÈtÈ $p$-adique de $\bigoplus_{\mathfrak{B}} \zp$, o˘ $\mathfrak{B}$ est une base de $K^{\flat}/\wp K^{\flat}$ sur $\mathbf{F}_p$. Le corps perfectoÔde $K$ est donc sympathique si $\wp$ est surjectif sur $K^{\flat}$ (i.e. $H^1(\mathcal{G}_K,\zp)$ est nul ssi $H^1(\mathcal{G}_K,\mathbf{F}_p)$ est nul). Par exemple, tout corps dont le basculement est un corps valuÈ parfait sphÈriquement complet contenant $C^{\flat}$ convient (en effet, soit $L$ un corps parfait sphÈriquement complet et $a\in L$. Si $|a|>1$, la sÈrie $\sum_{n<0} a^{p^n}$ converge car $L$ est sphÈriquement complet vers une racine $x$ de l'Èquation $\wp(x)=a$. Si $|a|\leq 1$, remplacer $a$ par $a+b^p-b$, avec $|b|>1$). Il existe donc en particulier des corps sympathiques qui ne sont pas algÈbriquement clos.
\end{example} 

\begin{corollaire}
Si $R$ et $R'$ sont deux $C$-algËbres perfectoÔdes telles que $R^{\flat} \simeq R^{'\flat}$, $R$ est sympathique si et seulement si $R'$ l'est.
\end{corollaire}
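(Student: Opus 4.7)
The plan is to deduce this corollary immediately from Proposition \ref{caracterisation}, which characterizes sympathetic $C$-algebras in a way that only involves the tilt. More precisely, the proposition asserts that a $C$-perfectoid algebra $R$ is sympathetic if and only if $H^1(S,\qp)=0$, where $S=\mathrm{Spa}(R^{\flat},R^{\flat\circ})$. Since the right-hand side depends only on the adic space $\mathrm{Spa}(R^{\flat},R^{\flat\circ})$ (its pro-\'etale site and the constant sheaf $\qp$ are intrinsic to $R^{\flat}$), any isomorphism $R^{\flat}\simeq R^{'\flat}$ of perfectoid $C^{\flat}$-algebras induces an isomorphism of the corresponding affinoid perfectoid spaces over $C^{\flat}$, and hence identifies the two cohomology groups.

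Concretely, I would argue as follows. Writing $S=\mathrm{Spa}(R^{\flat},R^{\flat\circ})$ and $S'=\mathrm{Spa}(R^{'\flat},R^{'\flat\circ})$, the assumed isomorphism $R^{\flat}\simeq R^{'\flat}$ sends $R^{\flat\circ}$ to $R^{'\flat\circ}$ (these being intrinsically defined as the subrings of power-bounded elements), and therefore yields an isomorphism $S\simeq S'$ of affinoid perfectoid spaces over $C^{\flat}$. Pulling back sheaves along this isomorphism gives a canonical identification $H^1(S,\qp)=H^1(S',\qp)$. By Proposition \ref{caracterisation}, the left-hand side vanishes iff $R$ is sympathetic, and the right-hand side vanishes iff $R'$ is sympathetic, so the two properties are equivalent.

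There is essentially no obstacle: the whole content is encoded in Proposition \ref{caracterisation}, whose proof in turn rests on the computation $R\tau_*\O_X=\qp$ carried out in Proposition \ref{bridgeland}. The corollary should be presented simply as an observation that, via that characterization, being sympathetic is a property of the tilt.
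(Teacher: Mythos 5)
Your proposal is correct and coincides with the paper's own argument: the paper also deduces the corollary immediately from Proposition \ref{caracterisation}, since the criterion $H^1(\mathrm{Spa}(R^{\flat},R^{\flat\circ}),\qp)=0$ manifestly depends only on the tilt. Your write-up merely makes explicit the (routine) identification of the two affinoid perfectoid spaces and of their pro-\'etale cohomology, which the paper leaves implicit.
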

\begin{proof}
C'est un corollaire immÈdiat de la proposition \ref{caracterisation}.
\end{proof}

\begin{proposition}\label{deforiginale}
Les catÈgories $\mathcal{BC}$ et $\mathcal{BC}'$ sont Èquivalentes.
\end{proposition}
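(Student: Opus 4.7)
Le plan consiste \`a construire un foncteur de restriction $\Phi : \mathcal{BC} \to \mathcal{BC}'$ d\'efini par $\Phi(F)(R) = F(\mathrm{Spa}(R, R^{\circ}))$ pour $R$ une $C$-alg\`ebre sympathique, et \`a montrer que c'est une \'equivalence. Par construction, $\Phi$ envoie le g\'en\'erateur $\underline{\qp}$ de $\mathcal{BC}$ sur le $\qp$-Espace Vectoriel de dimension finie $\qp$, et $\mathbf{G}_a$ sur le $C$-Espace Vectoriel de dimension finie $R \mapsto R$ ; une r\'ecurrence sur la construction par extensions garantit que $\Phi(F)(R)$ est toujours un $\qp$-espace de Banach, et que $\Phi(F)$ est fonctoriel en $R$.

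L'ingr\'edient cl\'e est l'exactitude de $\Phi$. Elle d\'ecoule de l'assertion suivante : \emph{pour tout $K \in \mathcal{BC}$ et toute $C$-alg\`ebre sympathique $R$, on a $H^1(\mathrm{Spa}(R,R^{\circ}), K) = 0$}. Ceci est v\'erifi\'e pour $K = \underline{\qp}$ par la proposition \ref{caracterisation}, et pour $K = \mathbf{G}_a$ par l'annulation de la cohomologie de $\O_S$ sur tout affino\"ide perfecto\"ide $S$. Pour un $K$ g\'en\'eral, on proc\`ede par r\'ecurrence sur la construction par extensions, \`a l'aide de la suite exacte longue : si $0 \to A \to K \to B \to 0$ avec $H^1(S,A) = H^1(S,B) = 0$, alors $H^1(S,K) = 0$. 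Toute suite exacte courte de $\mathcal{BC}$ reste donc exacte apr\`es \'evaluation sur les alg\`ebres sympathiques, si bien que l'image essentielle de $\Phi$ tombe bien dans $\mathcal{BC}'$.

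Pour la pleine fid\'elit\'e et la surjectivit\'e essentielle, on utilise que les deux cat\'egories sont engendr\'ees, comme cat\'egories ab\'eliennes stables par extensions, par les m\^emes g\'en\'erateurs $\underline{\qp}$ et $\mathbf{G}_a$. Gr\^ace \`a l'exactitude de $\Phi$ et \`a une application r\'ep\'et\'ee du lemme des cinq, il suffit de comparer les groupes $\mathrm{Hom}$ et $\mathrm{Ext}^1$ entre g\'en\'erateurs. Dans $\mathcal{BC}$, ces groupes sont donn\'es par le th\'eor\`eme \ref{breen} ; dans $\mathcal{BC}'$, ce sont les calculs effectu\'es par Colmez dans \cite{bc}, qui fournissent les m\^emes r\'eponses. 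La compatibilit\'e repose sur le fait que l'extension fondamentale de faisceaux pro-\'etales $0 \to \underline{\qp} \to \mathbb{B}^{\varphi=p} \to \mathbf{G}_a \to 0$ s'envoie par $\Phi$ sur l'extension fondamentale au sens de Colmez, et engendre $\mathrm{Ext}^1(\mathbf{G}_a,\underline{\qp})$ dans les deux cat\'egories. La surjectivit\'e essentielle s'en d\'eduit par r\'ecurrence : tout objet de $\mathcal{BC}'$ s'obtient par un nombre fini d'extensions successives, chacune se relevant dans $\mathcal{BC}$ par l'isomorphisme des $\mathrm{Ext}^1$.

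L'obstacle principal est la v\'erification de la pleine fid\'elit\'e, car les deux cat\'egories sont d\'efinies de mani\`eres intrins\`equement diff\'erentes : faisceaux pro-\'etales d'un c\^ot\'e, foncteurs Banach-valu\'es sur un type particulier d'alg\`ebres de l'autre. Cet obstacle est lev\'e en combinant deux ingr\'edients : la proposition \ref{sympabase} garantit que les spectres d'alg\`ebres sympathiques forment une base de la topologie pro-\'etale, si bien qu'un faisceau de $\mathcal{BC}$ est enti\`erement d\'etermin\'e par sa restriction ; l'annulation de $H^1$ \'etablie ci-dessus assure de plus que cette restriction pr\'eserve les structures d'extension, ce qui ram\`ene la comparaison \`a l'\'egalit\'e des $\mathrm{Ext}^1$ calcul\'es dans les deux cadres, d\'ej\`a connue sur les g\'en\'erateurs.
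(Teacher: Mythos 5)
Votre strat\'egie globale (foncteur de restriction, annulation cohomologique sur les alg\`ebres sympathiques, base de la topologie pro-\'etale pour la pleine fid\'elit\'e, comparaison des $\mathrm{Ext}^1$ pour l'essentielle surjectivit\'e) est bien celle du texte, mais l'\'etape cl\'e comporte une lacune r\'eelle. Votre assertion $H^1(\mathrm{Spa}(R,R^{\circ}),K)=0$ pour tout $K\in\mathcal{BC}$ est correcte, mais la \og r\'ecurrence sur la construction par extensions \fg{} ne la d\'emontre pas : $\mathcal{BC}$ est la plus petite sous-cat\'egorie \emph{ab\'elienne} stable par extensions, donc ses objets s'obtiennent non seulement par extensions mais aussi comme noyaux et conoyaux, et votre pas de r\'ecurrence ne couvre que les extensions. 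Pour un conoyau $B=K/A$, la suite exacte longue donne $H^1(S,K)\to H^1(S,B)\to H^2(S,A)$ et il faudrait l'annulation d'un $H^2$ (d\'ej\`a pour $\mathbf{G}_a/\qp$, il faut $H^2(S,\qp)=0$, ce que la proposition \ref{caracterisation} ne fournit pas) ; pour un noyau $A\subset K$, on a $H^1(S,A)=\mathrm{coker}\left(H^0(S,K)\to H^0(S,K/A)\right)$, dont l'annulation est pr\'ecis\'ement l'exactitude de l'\'evaluation que vous cherchez \`a \'etablir : l'argument devient circulaire. Le texte contourne cet \'ecueil par un ingr\'edient structurel que votre preuve n'a pas : gr\^ace au th\'eor\`eme \ref{mainequi} (l'\'equivalence $\mathcal{BC}\simeq\mathrm{Coh}_X^-$, d\'emontr\'ee ind\'ependamment), \`a la classification des fibr\'es et au lemme \ref{suitesexactes}, tout objet de $\mathcal{BC}$ est quotient d'un espace \emph{effectif} (extension d'un $C$-Espace Vectoriel par un $\qp$-Espace Vectoriel de dimension finie) par un $\qp$-Espace Vectoriel de dimension finie ; la seule annulation alors n\'ecessaire est $H^1(S,V)=0$ pour $V$ de dimension finie sur $\qp$, c'est-\`a-dire exactement la proposition \ref{caracterisation}, jointe \`a $H^1(S,\mathbf{G}_a)=0$.

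La m\^eme impr\'ecision r\'eappara\^it dans votre essentielle surjectivit\'e : un objet de $\mathcal{BC}'$ est d\'efini par une extension \emph{et un quotient} (par un $\qp$-Espace Vectoriel de dimension finie), donc \og un nombre fini d'extensions successives \fg{} ne suffit pas. Cette partie est toutefois r\'eparable dans votre cadre : on rel\`eve l'extension effective via la comparaison des $\mathrm{Ext}^1$ (th\'eor\`eme \ref{breen} d'un c\^ot\'e, \cite[Prop. 9.16]{bc} de l'autre, comme vous le proposez), puis on rel\`eve le plongement du $\qp$-Espace Vectoriel $V'$ par pleine fid\'elit\'e, et on forme le quotient dans $\mathcal{BC}$, dont la restriction est le bon quotient pr\'ecis\'ement parce que $H^1(S,V')=0$. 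En r\'esum\'e : votre sch\'ema de preuve est le bon, mais sans le d\'evissage fourni par le th\'eor\`eme \ref{mainequi}, l'annulation cohomologique pour un objet g\'en\'eral de $\mathcal{BC}$ -- et donc l'exactitude du foncteur de restriction -- reste non d\'emontr\'ee.
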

\begin{proof}
Notons tout d'abord que tout objet de $\mathcal{BC}$ s'Ècrit comme un quotient d'un espace de Banach-Colmez \textit{effectif}, c'est-‡-dire extension d'un $C$-Espace Vectoriel de dimension finie par un $\qp$-Espace Vectoriel de dimension finie, par un $\qp$-Espace Vectoriel de dimension finie. Cela se ramËne en effet, gr‚ce au thÈorËme \ref{mainequi} ‡ vÈrifier que tout faisceau cohÈrent sur la courbe s'Ècrit comme quotient d'un fibrÈ ‡ pentes entre $0$ et $1$ par un fibrÈ semi-stable de pente $0$. Or ceci est une consÈquence immÈdiate de la classification et du lemme \ref{suitesexactes}. 
 
Montrons alors que le foncteur d'oubli qui ‡ un objet de $\mathcal{BC}$ associe le foncteur sous-jacent sur la catÈgorie des $C$-algËbres sympathiques rÈalise une Èquivalence entre $\mathcal{BC}$ et $\mathcal{BC}'$. Le paragraphe prÈcÈdent et la proposition \ref{caracterisation} montrent que l'image par ce foncteur d'un espace de Banach-Colmez est bien un objet de $\mathcal{BC}'$\footnote{Et explique au passage pourquoi la dÈfinition de Colmez des suites exactes d'Espaces de Banach Ètait \og raisonnable \fg{}.}. La pleine fidÈlitÈ est un corollaire du fait que les algËbres sympathiques forment une base de la topologie pro-Ètale (proposition \ref{sympabase}). Enfin l'essentielle surjectivitÈ dÈcoule du fait qu'on a a aussi dans $\mathcal{BC}'$ :
\[ \mathrm{Ext}_{\mathcal{BC}'}^1(W \otimes \O, V) = \mathrm{Hom}_C(W,V \otimes C) \]
(\cite[Prop. 9.16]{bc}).
\end{proof}

On retrouve ainsi l'un des rÈsultats principaux de \cite{bc}. 
\begin{corollaire}
La catÈgorie $\mathcal{BC}'$ est abÈlienne.
\end{corollaire}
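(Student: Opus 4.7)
La preuve va �tre imm�diate � partir de la proposition \ref{deforiginale}. Le plan est de transporter la structure ab�lienne de $\mathcal{BC}$ vers $\mathcal{BC}'$ via l'�quivalence de cat�gories fournie par cette proposition. En effet, par la d�finition \ref{defbc}, $\mathcal{BC}$ est une sous-cat�gorie ab�lienne (stable par extensions) de la cat�gorie ab�lienne des faisceaux de $\qp$-espaces vectoriels sur $\mathrm{Perf}_{C,\mathrm{pro\acute{e}t}}$, et est donc ab�lienne de mani�re \emph{tautologique}. Puisque le caract�re ab�lien d'une cat�gorie se pr�serve par �quivalence de cat�gories additives, il ne restera rien � faire.

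Le seul point qui m�riterait v�rification est que l'�quivalence $\mathcal{BC} \simeq \mathcal{BC}'$ construite dans la proposition \ref{deforiginale} est bien compatible aux structures additives, et surtout qu'elle transporte les suites exactes d'un c�t� sur les suites exactes de l'autre : rappelons en effet que Colmez d�finit une suite d'Espaces de Banach comme exacte si et seulement si elle l'est apr�s �valuation sur chaque alg�bre sympathique. Mais ce point d�coule directement de la construction : le foncteur d'oubli $\mathcal{BC} \to \mathcal{BC}'$ associe � un faisceau sa restriction � la sous-cat�gorie pleine de $\mathrm{Perf}_{C,\mathrm{pro\acute{e}t}}$ form�e des spectres de $C$-alg�bres sympathiques ; or la proposition \ref{sympabase} affirme que ces derniers forment une base de la topologie pro-�tale, ce qui assure que l'exactitude d'une suite de faisceaux pro-�tales se teste bien sur les alg�bres sympathiques.

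Il n'y a donc pas d'obstacle dans cette d�marche : la difficult� a �t� enti�rement absorb�e par la d�monstration de l'�quivalence de la proposition \ref{deforiginale}, elle-m�me cons�quence de l'�quivalence principale $\mathrm{Coh}_X^- \simeq \mathcal{BC}$ (th�or�me \ref{mainequi}) et de la caract�risation cohomologique des alg�bres sympathiques (proposition \ref{caracterisation}). On souligne au passage le contraste avec la preuve originale de \cite{bc}, qui �tablit directement le caract�re ab�lien de $\mathcal{BC}'$ par une analyse d�licate des morphismes et extensions entre $\qp$-Espaces Vectoriels et $C$-Espaces Vectoriels de dimension finie : ici, ce caract�re ab�lien devient une simple reformulation du fait qu'une extension it�r�e d'objets d'une cat�gorie ab�lienne vit dans une sous-cat�gorie ab�lienne.
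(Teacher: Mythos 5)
Votre preuve est correcte et suit exactement la d\'emarche du texte, qui d\'eduit ce corollaire imm\'ediatement de la proposition \ref{deforiginale} : $\mathcal{BC}$ est ab\'elienne par d\'efinition, et cette propri\'et\'e se transporte par l'\'equivalence de cat\'egories. Votre v\'erification suppl\'ementaire concernant la compatibilit\'e des suites exactes (via la proposition \ref{sympabase}) est pertinente et correspond \`a la remarque du texte selon laquelle la d\'efinition de Colmez des suites exactes \'etait \og raisonnable \fg.
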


En bonus, on a le
\begin{corollaire}
La catÈgorie abÈlienne $\mathcal{BC}$ ne dÈpend que de $C^{\flat}$. 
\end{corollaire}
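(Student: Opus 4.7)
Mon plan est de d\'eduire ce corollaire imm\'ediatement du th\'eor\`eme \ref{mainequi}. L'\'equivalence $\mathcal{BC} \simeq \mathrm{Coh}_X^{-}$ qu'il fournit via le foncteur $R^0\tau_*$ ram\`ene la question \`a v\'erifier que la sous-cat\'egorie pleine $\mathrm{Coh}_X^{-} \subset D^b(\mathrm{Coh}_X)$ ne d\'epend que de $C^{\flat}$.

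Pour cela, je rappellerai la d\'efinition adique $X = Y/\varphi^{\z}$, avec
\[ Y = \mathrm{Spa}(W(\O_{C^{\flat}}),W(\O_{C^{\flat}})) \setminus V(p[p^{\flat}]), \]
et j'observerai que toutes les donn\'ees qui interviennent -- l'anneau $W(\O_{C^{\flat}})$, l'\'el\'ement $[p^{\flat}]$ et le Frobenius $\varphi$ -- sont enti\`erement d\'etermin\'ees par $C^{\flat}$. La cat\'egorie ab\'elienne $\mathrm{Coh}_X$, les fonctions rang et degr\'e ainsi que le formalisme de Harder-Narasimhan rappel\'es au \S\ref{rappelscourbe} sont donc intrins\`equement attach\'es \`a $C^{\flat}$, et il en va de m\^eme de la paire de torsion et de la sous-cat\'egorie $\mathrm{Coh}_X^{-}$ qu'elle d\'ecoupe dans $D^b(\mathrm{Coh}_X)$.

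Concr\`etement, si $C_1$ et $C_2$ sont deux corps perfecto\"ides sur $\qp$ dont les bascul\'es sont isomorphes, ils donnent naissance \`a la m\^eme courbe $X$, donc \`a la m\^eme cat\'egorie $\mathrm{Coh}_X^{-}$ ; en composant les deux \'equivalences fournies par le th\'eor\`eme \ref{mainequi} on obtient alors $\mathcal{BC}_{C_1} \simeq \mathcal{BC}_{C_2}$. Je ne pr\'evois aucune difficult\'e propre \`a ce corollaire : tout le travail substantiel est d\'ej\`a absorb\'e dans le th\'eor\`eme principal, et l'observation cl\'e est simplement que la courbe de Fargues-Fontaine vit du c\^ot\'e du bascul\'e -- ph\'enom\`ene d\'ej\`a implicite dans l'emploi de l'\'equivalence de Scholze \ref{eqscholze} au sein m\^eme de l'\'enonc\'e du th\'eor\`eme \ref{mainequi}.
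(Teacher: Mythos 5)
Votre argument est correct et suit pour l'essentiel la route que le texte a en vue : le corollaire y est donn\'e sans d\'emonstration, comme cons\'equence imm\'ediate du th\'eor\`eme \ref{mainequi} joint au fait que la courbe $X$ (ainsi que le morphisme $\tau$, qui aboutit dans les faisceaux sur $\mathrm{Perf}_{C^{\flat},\mathrm{pro\acute{e}t}}$) est construite uniquement \`a partir de $C^{\flat}$. Une seule pr\'ecision cosm\'etique : l'\'el\'ement $p^{\flat}$ est caract\'eris\'e au moyen du d\'ebasculement via $(p^{\flat})^{\sharp}=p$, mais le lieu $V(p[p^{\flat}])$ n'utilise de $p^{\flat}$ que sa qualit\'e de pseudo-uniformisante de $C^{\flat}$, de sorte que $Y$, et donc $X=Y/\varphi^{\z}$, sont bien intrins\`eques \`a $C^{\flat}$, comme votre argument le requiert.
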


\begin{corollaire}
La catÈgorie $\mathcal{BC}$ permet de reconstruire la courbe de Fargues-Fontaine $X^{\rm sch}$.
\end{corollaire}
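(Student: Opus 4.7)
The plan is to chain three reconstructions: first recover $\mathrm{Coh}_X$ as an abelian category from $\mathcal{BC}$, then invoke GAGA, and finally apply Gabriel's reconstruction theorem.

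By Theorem \ref{mainequi}, $\mathcal{BC}$ is equivalent to $\mathrm{Coh}_X^-$ as an abelian category. The equivalence automatically transports the additive functions rang and degr\'e across (on the $\mathcal{BC}$-side, these are precisely Colmez's hauteur and dimension, which are intrinsic to $\mathcal{BC}$). With these two additive functions at our disposal on $\mathcal{BC} \simeq \mathrm{Coh}_X^-$, one can define, purely in terms of the slope function $\mu^- = -\mathrm{rg}/\mathrm{deg}$, a torsion pair $(\mathcal{T}^+_0, \mathcal{T}^{'+}_0)$ of $\mathrm{Coh}_X^-$ exactly as in Section \ref{generalites}. The tilt $(\mathrm{Coh}_X^-)^+$ inside $D^b(\mathrm{Coh}_X^-) = D^b(\mathrm{Coh}_X)$ is then, by Proposition \ref{recupere}, naturally equivalent to $\mathrm{Coh}_X$, and the functions rang, degr\'e on $\mathrm{Coh}_X$ are recovered as well. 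Thus $\mathcal{BC}$ determines $\mathrm{Coh}_X$ as an abelian category (even with its rank and degree).

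Next, the GAGA theorem of Kedlaya-Liu (\cite[Th. 8.7.7]{KL}), already recalled in Section \ref{rappelscourbe}, supplies an equivalence of abelian categories $\mathrm{Coh}_X \simeq \mathrm{Coh}_{X^{\rm sch}}$. Finally, since $X^{\rm sch} = \mathrm{Proj}(P_E)$ is a regular Noetherian scheme of dimension one, Gabriel's reconstruction theorem applies: a Noetherian scheme is canonically reconstructed from the abelian category of its coherent sheaves (the underlying topological space is recovered as the Gabriel spectrum of indecomposable injectives of the ind-completion, with structure sheaf obtained from the endomorphisms of these objects). Composing these three steps yields $X^{\rm sch}$ from $\mathcal{BC}$.

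The only genuine subtlety lies in the first step, namely making sure that the torsion pair used in the reverse tilt is visible from $\mathcal{BC}$ without already knowing $\mathrm{Coh}_X$. This is where the availability of Colmez's additive functions dimension and hauteur on $\mathcal{BC}$ is crucial, since they match, through the equivalence of Theorem \ref{mainequi}, the functions $\mathrm{rg}^- = \mathrm{deg}$ and $\mathrm{deg}^- = -\mathrm{rg}$ on $\mathrm{Coh}_X^-$; the Harder-Narasimhan formalism for $\mu^-$ and hence the torsion pair are then formal. The remaining ingredients (Proposition \ref{recupere}, GAGA, Gabriel) are invoked as black boxes.
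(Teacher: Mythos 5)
Your proof is correct and follows essentially the same route as the paper: Theorem \ref{mainequi} combined with Proposition \ref{recupere} yields $\mathrm{Coh}_{X^{\rm sch}} \simeq \mathcal{BC}^+$ (the GAGA equivalence being implicit in the paper's statement), after which Gabriel's reconstruction theorem for Noetherian schemes concludes. The only cosmetic differences are that you make explicit the intrinsic definability of the torsion pair through Colmez's dimension and hauteur --- a point the paper leaves tacit, and a worthwhile precision --- and that you phrase Gabriel's theorem via indecomposable injectives of the ind-completion, whereas the paper formulates it via irreducible Serre subcategories $\mathcal{I}$, with the structure sheaf on $D(\mathcal{I})$ given by the center of $\mathrm{Coh}_Z/\mathcal{I}$.
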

\begin{proof}
En effet, le thÈorËme \ref{mainequi} et la proposition \ref{recupere} permettent de reconstruire $\mathrm{Coh}_{X^{\rm sch}}$ ‡ partir de $\mathcal{BC}$, puisqu'ils montrent que
\[ \mathrm{Coh}_{X^{\rm sch}} \simeq \mathcal{BC}^+ \]
(avec les notations du paragraphe \ref{generalites}). Or un thÈorËme de Gabriel (\cite[Ch VI, \S 3]{gabriel}) affirme qu'un schÈma noethÈrien $Z$ peut Ítre reconstruit ‡ partir de la catÈgorie abÈlienne $\mathrm{Coh}_Z$ de ses faisceaux cohÈrents, ‡ Èquivalence prËs ($Z$ s'identifie comme espace topologique ‡ l'ensemble des sous-catÈgories de Serre irrÈductibles de $\mathrm{Coh}_Z$ avec pour ouverts les $D(\mathcal{I})$, ensemble des sous-catÈgories de Serre ne contenant pas une sous-catÈgorie de Serre $\mathcal{I}$ fixÈe ; le faisceau structural ÈvaluÈ sur l'ouvert $D(\mathcal{I})$ est le centre de la catÈgorie abÈlienne $\mathrm{Coh}_Z/\mathcal{I}$).
\end{proof}

\begin{corollaire}\label{diamant}
Les espaces de Banach-Colmez sont des diamants sur $\mathrm{Spa}(C)$. 
\end{corollaire}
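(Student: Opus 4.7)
Le plan est le suivant. Gr\^ace au th\'eor\`eme \ref{mainequi} et au corollaire pr\'ec\'edent, on voit les espaces de Banach-Colmez comme faisceaux pro-\'etales sur $\mathrm{Perf}_{C^{\flat}}$ ; on cherche donc \`a montrer que tout tel faisceau est un diamant sur $\mathrm{Spa}(C^{\flat})$, ce qui revient au m\^eme que sur $\mathrm{Spa}(C)$ via l'\'equivalence du th\'eor\`eme \ref{eqscholze}.

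La premi\`ere \'etape sera de v\'erifier que les g\'en\'erateurs de $\mathcal{BC}$ sont des diamants : sur $C^{\flat}$, le faisceau $\mathbf{G}_a$ est repr\'esent\'e par l'espace perfecto\"ide $\mathbf{A}_{C^{\flat}}^{1,\mathrm{perf}}$, donc est un diamant, et $\qp = \varinjlim_n p^{-n} \underline{\zp}$ est une colimite filtrante de faisceaux profinis chacun repr\'esent\'e par l'affino\"ide perfecto\"ide $\mathrm{Spa}(\con(p^{-n}\zp,C^{\flat}),\con(p^{-n}\zp,\O_{C^{\flat}}))$, donc est \'egalement un diamant.

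On proc\`edera ensuite par induction sur la construction de $\mathcal{BC}$ pour montrer que la classe des diamants est stable par les op\'erations qui la d\'efinissent dans la cat\'egorie des faisceaux pro-\'etales : noyaux, conoyaux et extensions. Pour les noyaux et conoyaux, cela d\'ecoulera de la d\'efinition des diamants comme quotients d'espaces perfecto\"ides par des relations d'\'equivalence pro-\'etales (\cite{S17}). Pour les extensions, l'argument clef sera le suivant : si $0 \to A \to E \to B \to 0$ est une suite exacte avec $A$ et $B$ des diamants, alors $E \to B$ est un $A$-torseur pour la topologie pro-\'etale ; un tel torseur se trivialisant pro-\'etale localement sur $B$, il existera un rev\^etement pro-\'etale $\tilde{B} \to B$ tel que $E \times_B \tilde{B} \simeq A \times \tilde{B}$ soit un diamant, et $E$ en sera alors le quotient par la relation d'\'equivalence pro-\'etale associ\'ee.

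Le point qui demandera le plus de soin sera la v\'erification pr\'ecise de cet argument de trivialisation dans le cadre des faisceaux de $\qp$-espaces vectoriels. Une approche alternative, plus concr\`ete, consisterait \`a exploiter la description via $\mathrm{Coh}_X^-$ : le th\'eor\`eme de classification \ref{classification} et le lemme \ref{suitesexactes} permettent de voir tout objet de $\mathcal{BC}$ comme construit \`a partir des $\mathbf{U}_{\lambda}$ (rev\^etements universels de groupes $p$-divisibles, repr\'esent\'es par des espaces perfecto\"ides d'apr\`es la section \ref{revunivpdiv}), du faisceau $\qp$, et des $C$-Espaces Vectoriels, tous clairement diamantins.
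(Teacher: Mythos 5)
Votre route principale comporte une lacune r\'eelle \`a l'\'etape des conoyaux. La stabilit\'e de la classe des diamants par conoyaux ne \og d\'ecoule \fg{} pas de la d\'efinition : le quotient $F/E$ est le quotient de $F$ par la relation d'\'equivalence $E \times F \rightrightarrows F$ (translation), et cette relation n'est pro-\'etale que si $E$ lui-m\^eme est pro-\'etale au-dessus du point, c'est-\`a-dire localement profini --- ce qui est le cas des $\qp$-Espaces Vectoriels de dimension finie, mais pas de $\mathbf{G}_a$ ni des $\mathbf{U}_{\lambda}$. Or de tels conoyaux apparaissent in\'evitablement dans votre induction : par exemple le quotient de $\mathbf{U}_2$ par $\mathbf{U}_1$ (un $C_x$-Espace Vectoriel pour un d\'ebasculement convenable $C_x$ de $C^{\flat}$), o\`u aucune des deux projections de la relation d'\'equivalence n'est pro-\'etale, et rien dans \cite{S17} ne donne directement ce quotient. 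Votre premi\`ere \'etape contient de plus une erreur : le faisceau $\mathbf{G}_a$ de la d\'efinition \ref{defbc} est le faisceau de caract\'eristique $0$ ($S \mapsto \O_S(S)$ sur $\mathrm{Perf}_C$) ; transport\'e par l'\'equivalence du th\'eor\`eme \ref{eqscholze}, il devient le diamant $(\mathbf{A}_C^1)^{\diamond}$, qui n'est \emph{pas} repr\'esentable par un espace perfecto\"ide (c'est exactement la remarque qui suit ce corollaire dans le texte) ; la repr\'esentabilit\'e par $\mathbf{A}_{C^{\flat}}^{1,\mathrm{perf}}$ ne vaut que pour les $E$-espaces de Banach-Colmez avec $E$ de caract\'eristique $p$ (remarque \ref{remcarp} c)). La conclusion (\og $\mathbf{G}_a$ est un diamant \fg{}) reste vraie, mais pour une autre raison. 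En revanche, votre \'etape d'extension par trivialisation de torseurs est r\'eparable, via le fait qu'un faisceau pro-\'etale recevant une surjection quasi-pro-\'etale d'un espace perfecto\"ide est un diamant (\cite{S17}).

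Votre \og approche alternative \fg{} finale est en fait la preuve du texte, mais il faut la mener \`a son terme, ce qui dispense de toute induction : comme on l'extrait de la preuve de la proposition \ref{deforiginale} (classification \ref{classification} et lemme \ref{suitesexactes}, via le th\'eor\`eme \ref{mainequi} : tout faisceau coh\'erent sur $X$ est quotient d'un fibr\'e \`a pentes dans $[0,1]$ par un fibr\'e semi-stable de pente $0$), tout objet de $\mathcal{BC}$ admet une pr\'esentation comme quotient d'un espace de Banach-Colmez \emph{effectif} --- extension d'un $C$-Espace Vectoriel par un $\qp$-Espace Vectoriel, donc rev\^etement universel $\tilde{G}$ d'un groupe $p$-divisible, repr\'esentable par un espace perfecto\"ide d'apr\`es la section \ref{revunivpdiv} --- par un $\qp$-Espace Vectoriel $\underline{V}$. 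Comme $\underline{V}$ est localement profini, la relation d'\'equivalence $\underline{V} \times \tilde{G} \rightrightarrows \tilde{G}$ est pro-\'etale, et le quotient est un diamant par d\'efinition. C'est ce seul point --- le fait que le quotient se fasse par un objet \emph{pro-\'etale sur le point}, et pas par un objet quelconque de $\mathcal{BC}$ --- qui fait fonctionner l'argument, et c'est pr\'ecis\'ement celui que votre induction g\'en\'erale ne fournit pas.
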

\begin{proof}
On vient de voir que tout espace de Banach-Colmez Ètait le quotient par un $\qp$-Espace Vectoriel de dimension finie (donc par une relation d'Èquivalence pro-Ètale) du revÍtement universel d'un groupe $p$-divisible, qui est un espace perfectoÔde.
\end{proof}

\begin{question} Est-il vrai que les seuls objets reprÈsentables de $\mathcal{BC}$ sont ceux de $\mathcal{BC}^{\rm rep}$ ?
\end{question}

\begin{remarque} Dans la question prÈcÈdente, on a laissÈ de cÙtÈ les $C$-Espaces Vectoriels de dimension finie, qui ne sont pas reprÈsentables par un espace \textit{perfectoÔde}. Si toutefois on se demande quels espaces de Banach-Colmez sont reprÈsentables par des espaces adiques sur $C$, on peut s'attendre ‡ obtenir exactement les sommes directes d'un objet de $\mathcal{BC}^{\rm rep}$ et d'un $C$-Espace Vectoriel de dimension finie, c'est-‡-dire prÈcisÈment les revÍtements universels de \textit{groupes analytiques rigides de type $p$-divisible} au sens de \cite{granalyt}.
\end{remarque}

\subsection{Le \og drÙle de corps \fg{} de Colmez} 

Notons $\mathrm{Coh}_X^{\rm tors}$ la sous-catÈgorie de $\mathrm{Coh}_X$ formÈe des faisceaux de torsion, qui est la sous-catÈgorie de Serre des objets de rang $\mathrm{rg}=0$. Le foncteur fibre gÈnÈrique identifie le quotient $\mathrm{Coh}_X/\mathrm{Coh}_X^{\rm tors}$ ‡ la catÈgorie des $\mathrm{Frac}(B_e)$-espaces vectoriels ($B_e=B_{\rm cris}^+[1/t]$ et $\mathrm{Frac}(B_e)$ est le corps des fonctions de $X$). En particulier, cette catÈgorie est semi-simple avec un seul objet simple.

Si l'on fait la mÍme manipulation avec $\mathrm{Coh}_X^-$, i.e. si l'on quotiente par la sous-catÈgorie de Serre formÈe des objets de rang $\mathrm{rg}^-=0$, qui est la sous-catÈgorie des fibrÈs semi-stables de pente $0$ placÈs en degrÈ $0$, on obtient aussi une catÈgorie semi-simple avec un seul objet simple $S$ ‡ isomorphisme prËs : cela se dÈduit immÈdiatement des suites exactes du lemme \ref{suitesexactes}. Par consÈquent,
\[ \mathscr{C} := \mathrm{End}_{\mathrm{Coh}_X^-/(\mathrm{Coh}_X^-)^{\mathrm{rg}^-=0}}(S) \]
est une algËbre ‡ division. 

Le thÈorËme \ref{mainequi} identifie $\mathrm{Coh}_X^-/(\mathrm{Coh}_X^-)^{\mathrm{rg}^-=0}$ ‡ $\mathcal{BC}/(\qp-\mathrm{EV ~ de ~ d.f.})$. Comme on peut choisir pour $S$ la classe $[i_{\infty,*} C]$ de $i_{\infty,*} C$ dans le quotient, $\mathscr{C}$ est l'algËbre ‡ division ÈtudiÈe par Colmez dans \cite[\S 5 et \S 9]{bc}. On a un morphisme d'algËbres
\[ C= \mathrm{End}_{\mathrm{Coh}_X^-}(i_{\infty,*} C) \to \mathrm{End}_{\mathrm{Coh}_X^-/(\mathrm{Coh}_X^-)^{\mathrm{rg}^-=0}}([i_{\infty,*} C]), \]
qui fait de $C$ une sous-algËbre commutative de $\mathscr{C}$. On pourrait Ègalement choisir comme reprÈsentant de $S$ la classe de $B^{\varphi^h=p}$, $h\geq 1$, ou celle de $i_{x,*} C_x$, $x$ point fermÈ de $x$ ($C_x$ est un corps perfectoÔde de basculÈ $C^{\flat}$). On en dÈduit que $\mathscr{C}$ contient Ègalement comme sous-algËbres toutes les algËbres ‡ division d'invariant $1/h$, $h\geq 1$ et tous les dÈbasculements de $C^{\flat}$ en caractÈristique zÈro !

Colmez prouve que $C$ est une sous-algËbre commutative maximale de $\mathscr{C}$ (\cite[Prop. 5.30]{bc}) et que le centre de $\mathscr{C}$ est rÈduit ‡ $\qp$ (\cite[Prop. 9.22]{bc})\footnote{Peut-on le dÈmontrer plus facilement avec la dÈfinition de $\mathscr{C}$ adoptÈe ci-dessus ?}. La question suivante est due ‡ Laurent Fargues.
\begin{question} La courbe $X$ est-elle, en un sens ‡ prÈciser, \og la variÈtÈ de Severi-Brauer attachÈe ‡ l'algËbre ‡ division $\mathscr{C}$ \fg{} ?
\end{question}

\begin{remarque}
Si $\mathbf{H}$ est l'algËbre des quaternions de Hamilton, et $X_{\mathbf{R}}$ la variÈtÈ de Severi-Brauer qui lui correspond (une conique sans point rÈel), on vÈrifie que la catÈgorie $\mathrm{Coh}_{X_{\mathbf{R}}}^-/(\mathrm{Coh}_{X_{\mathbf{R}}}^-)^{\mathrm{rg}^-=0}$ a un unique objet simple, dont l'algËbre des endomorphismes est isomorphe ‡ $\mathbf{H}$. La question prÈcÈdente va donc dans le sens de l'analogie, mentionnÈe dans \cite{gtorseurs}, entre la courbe de Fargues-Fontaine et la forme tordue $X_{\mathbf{R}}$ de la droite projective (\cite{simpson}).
\end{remarque}

\section{Appendice : faisceaux de pÈriodes} \label{appendice}

Soit $X$ un espace adique sur $\mathrm{Spa}(\qp,\zp)$. 

\begin{definition}
On considËre les faisceaux pro-Ètales suivants.
\begin{itemize}
\item Le faisceau $\mathbb{A}_{\rm inf}= W(\widehat{\O}_{X^{\flat}}^+)$. On a un morphisme de faisceaux $\theta : \mathbb{A}_{\rm inf} \to \widehat{\O}_X^+$, qui s'Ètend en $\theta : \mathbb{A}_{\rm inf}[1/p] \to \widehat{\O}_X$.  
\item Le faisceau $\mathbb{B}_{\rm dR}^+ = \varprojlim_k \mathbb{A}_{\rm inf}[1/p]/(\ker(\theta))^k$. Il est muni d'une filtration dÈfinie par $\mathrm{Fil}^i \mathbb{B}_{\rm dR}^+ = \ker(\theta)^i$. 
\item Soit $t$ un gÈnÈrateur de $\mathrm{Fil}^1 \mathbb{B}_{\rm dR}^+$ (un tel ÈlÈment existe localement pour la topologie pro-Ètale, est unique ‡ une unitÈ prËs et n'est pas un diviseur de zÈro). On pose $\mathbb{B}_{\rm dR} = \mathbb{B}_{\rm dR}^+[1/t]$ et $\mathrm{Fil}^i \mathbb{B}_{\rm dR} = \sum_{j\in \z} t^{-j} \mathrm{Fil}^{i+j} \mathbb{B}_{\rm dR}^+$. 
\item On dÈfinit $\O \mathbb{B}_{\rm dR}^+$ comme le faisceau associÈ au prÈfaisceau dÈfini sur les ouverts pro-Ètales de $X$ de la forme $\mathrm{Spa}(R,R^+)= \varprojlim \mathrm{Spa}(R_i,R_i^+)$ (ces ouverts forment une base de la topologie) comme la limite inductive sur $i$ du complÈtÈ $\ker(\theta)$-adique de
\[ (R_i^+ \widehat{\otimes}_{\zp} \mathbb{A}_{\rm inf}(R,R^+))[1/p], \]
le produit tensoriel complÈtÈ dans la parenthËse Ètant $p$-adique. Ici $$\theta : (R_i^+ \widehat{\otimes}_{\zp} \mathbb{A}_{\rm inf}(R,R^+))[1/p] \to R$$ est le produit tensoriel de $R_i^+ \to R$ et de $\theta : \mathbb{A}_{\rm inf}(R,R^+) \to R$. On a encore un morphisme de faisceaux $\theta : \O \mathbb{B}_{\rm dR}^+ \to \widehat{\O}_X$. On dÈfinit une filtration sur $\O \mathbb{B}_{\rm dR}^+$ par 
$\mathrm{Fil}^i \O\mathbb{B}_{\rm dR}^+ = \ker(\theta)^i$. 
\item On pose $\O\mathbb{B}_{\rm dR} =\O\mathbb{B}_{\rm dR}^+[1/t]$ et $\mathrm{Fil}^i \O\mathbb{B}_{\rm dR} = \sum_{j\in \z} t^{-j} \mathrm{Fil}^{i+j} \O\mathbb{B}_{\rm dR}^+$.
\end{itemize}
\end{definition}  

Soit $X$ un espace adique sur $C$.

\begin{definition}
Soit $I =[a,b] \subset ]0,1]$ un sous-intervalle compact, avec $a, b\in p^{\q}$. Si $\alpha, \beta \in \O_{C^{\flat}}$ sont tels que $|\alpha|=a$, $|\beta|=b$, on dÈfinit :
\[ \mathbb{B}_I = \left(\underset{n}\varprojlim ~ (\mathbb{A}_{\rm inf}[[\alpha]/p,p/[\beta]])/p^n \right)[1/p]. \]
On pose :
\[ \mathbb{B} = \underset{I \subset ]0,1[}\varprojlim ~ \mathbb{B}_I. \]
Si $a \in p^{\q} \cap ]0,1]$, et $\alpha$ est comme prÈcÈdemment, on dÈfinit :
\[ \mathbb{B}_a^+ = \left( \underset{n}\varprojlim ~ (\mathbb{A}_{\rm inf}[[\alpha]/p])/p^n \right)[1/p]. \]
On pose : 
\[ \mathbb{B}^+=\underset{a}\varprojlim ~ \mathbb{B}_a^+. \]
\end{definition}

\begin{proposition} \label{descriptionbi}
Soit $S=\mathrm{Spa}(R,R^+)$ un espace affinoÔde perfectoÔde sur $C$. On a 
\[ H^0(S,\mathbb{B}_I) = B_I(R,R^+) \]
et
\[ H^i(S,\mathbb{B}_I) = 0 \]
si $i>0$.
\end{proposition}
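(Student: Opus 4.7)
The plan is to compute this cohomology by reducing to Scholze's almost purity-type vanishing for the structure sheaf on affinoid perfectoids, and then building up $\mathbb{B}_I$ through a d\'evissage.

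First, by the tilting equivalence (Theorem \ref{eqscholze}), work with the perfectoid space $S^\flat = \mathrm{Spa}(R^\flat, R^{\flat +})$ in characteristic $p$. The starting point is Scholze's fundamental acyclicity result: for an affinoid perfectoid $S$, one has
\[ H^0(S, \widehat{\O}_{X^\flat}^+) \stackrel{a}{=} R^{\flat +}, \qquad H^i(S, \widehat{\O}_{X^\flat}^+) \stackrel{a}{=} 0 \text{ for } i > 0, \]
where $\stackrel{a}{=}$ denotes equality up to modules killed by a power of the maximal ideal of $\O_{C^\flat}$. After inverting $[\varpi]$ for any pseudo-uniformizer $\varpi$, this becomes honest vanishing; in particular it becomes honest after inverting $p$.

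Second, build up the Witt vectors: since $\widehat{\O}_{X^\flat}^+$ is a sheaf of perfect $\mathbf{F}_p$-algebras, for each $n \geq 1$ the sheaf $W_n(\widehat{\O}_{X^\flat}^+) = \mathbb{A}_{\mathrm{inf}}/p^n$ admits a finite filtration whose graded pieces are identified with $\widehat{\O}_{X^\flat}^+$ (via Verschiebung and perfectness). By d\'evissage, the almost vanishing of Step 1 extends to $W_n(\widehat{\O}_{X^\flat}^+)$, with $H^0$ almost equal to $W_n(R^{\flat +})$.

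Third, introduce the rational localization. The sheaf $\mathbb{A}_{\mathrm{inf}}[[\alpha]/p, p/[\beta]]/p^n$ corresponds, after tilting the coefficients appropriately, to the structure sheaf of the rational localization of $S^\flat$ cut out by $|[\alpha]| \leq |p|$ and $|p| \leq |[\beta]|$, which is again affinoid perfectoid. Hence the same almost acyclicity applies, yielding
\[ H^i(S, \mathbb{A}_{\mathrm{inf}}[[\alpha]/p, p/[\beta]]/p^n) \stackrel{a}{=} 0 \text{ for } i>0, \]
with $H^0$ almost equal to $W_n(R^{\flat +})[[\alpha]/p, p/[\beta]]$.

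Fourth, pass to the $p$-adic completion. Using Proposition \ref{replete} (applied to a basis of affinoid perfectoid opens, on which the transition maps of the $\mathbb{A}_{\mathrm{inf}}/p^n$-tower are surjective), the derived inverse limit is the ordinary inverse limit, giving
\[ H^i(S, \varprojlim_n \mathbb{A}_{\mathrm{inf}}[[\alpha]/p, p/[\beta]]/p^n) \stackrel{a}{=} 0 \text{ for } i>0, \]
with $H^0$ almost equal to $W(R^{\flat +})\langle [\alpha]/p, p/[\beta]\rangle$. Finally, inverting $p$ kills the almost ambiguity (the uniform annihilator lies in $\O_{C^\flat}$), producing the desired isomorphism $H^0(S, \mathbb{B}_I) = B_I(R, R^+)$ and $H^i(S, \mathbb{B}_I) = 0$ for $i > 0$.

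The main technical hurdle is the passage from almost vanishing to honest vanishing while commuting with the $p$-adic completion: one needs the almost annihilators at each finite level to be chosen uniformly, so that inverting $p$ at the end is legitimate. This is ensured by working with annihilation by elements of the maximal ideal of $\O_{C^\flat}$, which are $p$-power-torsion-free, together with the Mittag-Leffler-type argument of Proposition \ref{mittag} to justify that one can exchange $H^i$ and $\varprojlim_n$ on the base of affinoid perfectoid opens.
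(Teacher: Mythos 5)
Your overall strategy is exactly the one the paper uses (d\'evissage of $\mathbb{B}_I$ through $\mathbb{A}_{\rm inf}/p^n$ down to Scholze's almost acyclicity of $\widehat{\O}^+$ on affinoid perfectoids, then passage to the $p$-adic completion and inversion of $p$); the paper compresses your Steps 1, 2 and 4 into a citation of \cite[Th. 6.5]{Shodge} plus an induction on $n$. But your final step --- the passage from almost vanishing to honest vanishing --- is justified incorrectly, and this is precisely the one point where the paper's proof has content beyond that citation. The almost ambiguity here is measured by the Teichm\"uller lifts $[c]$, $c \in \mathfrak{m}_{C^\flat}$ (modules killed by all $[p^\flat]^{1/n}$), and inverting $p$ does \emph{not} kill such modules: $[c]$ and $p$ are ``independent'' in $\mathbb{A}_{\rm inf}$, and for instance $W(R^{\flat+})/([c],\, c \in \mathfrak{m}_{C^\flat})$ surjects onto a $p$-torsion-free ring of Witt vectors, so it survives $p$-inversion. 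Your parenthetical remark that the annihilators ``are $p$-power-torsion-free'' in fact points the wrong way: it is exactly because $[c]$ is coprime to $p$ that inverting $p$ is useless against it. What saves the statement is special to $B_I$: by construction $p/[\beta] \in B_I$, so $[\beta]$ divides $p$ and becomes invertible once $p$ is inverted; since $\beta$ is a positive rational power of $p^\flat$ up to a unit, the elements $[p^\flat]^{1/n}$ become invertible in $B_I$, which annihilates the almost-zero modules. This is the paper's closing remark (``$[p^\flat]^{1/n}$ est inversible dans $B_I$'') and it must appear somewhere in your argument; as written, your proof would equally ``show'' honest vanishing for $\mathbb{A}_{\rm inf}[1/p]$, where the argument gives only almost vanishing.

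Step 3 is also misstated: there is no rational subset of $S^\flat$ cut out by $|[\alpha]| \leq |p| \leq |[\beta]|$, since $p=0$ on $S^\flat$. The relevant locus lives in $\mathrm{Spa}(W(R^{\flat+}))$ --- it is the relative space $Y_{S,I}$ of the paper --- and it is not an object to which your Step 1 applies; even granting the Kedlaya--Liu fact that $B_I(R,R^+)$ is a perfectoid ring, acyclicity of the structure sheaf on $Y_{S,I}$ (analytic topology) is not the pro-\'etale acyclicity of the sheaf $\mathbb{B}_I$ on $S$ that you need. The correct move, which is what the paper's ``il suffit par r\'ecurrence sur $m$'' does, is to stay sheaf-theoretic: modulo $p^n$, the localized sheaf $\mathbb{A}_{\rm inf}[[\alpha]/p, p/[\beta]]/p^n$ is built from $W_n(\widehat{\O}^{\flat+})$-modules generated by the monomials in $[\alpha]/p$ and $p/[\beta]$, so your Steps 1--2 already yield the almost vanishing at each finite level, with no geometric reinterpretation needed. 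Your Step 4 (surjective transition maps, Proposition \ref{replete}, Mittag--Leffler via Proposition \ref{mittag}) is then fine, provided you note that the almost constants are uniform in $n$ because the annihilators $[c]$ lift compatibly along the tower.
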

\begin{proof}
Comme le faisceau $\mathbb{B}_I$ est obtenu en complÈtant $p$-adiquement $\mathbb{A}_{\rm inf}[[\alpha]/p,p/[\beta]]$ puis en inversant $p$, on voit qu'il suffit par rÈcurrence sur $m$ de dÈcrire les sections de $\mathbb{A}_{\rm inf}$ et de montrer que sa cohomologie s'annule. C'est ce qui est fait dans \cite[Th. 6.5]{Shodge}, modulo les ÈlÈments tuÈs par toutes les puissances $[p^{\flat}]^{1/n}$ dans $A_{\rm inf}$. Comme $[p^{\flat}]^{1/n}$ est inversible dans $B_I$ pour $n$ assez grand, cela suffit. 
\end{proof}

\begin{proposition} \label{debile}
Soit $G$ un groupe profini. Soit $\tilde{X} \to X$ un recouvrement pro-Ètale galoisien de groupe $G$, avec $\tilde{X}=\mathrm{Spa}(R,R^+)$ affinoÔde perfectoÔde sur $C$. Si $k\geq 1$, notons $\tilde{X}_k$ le produit fibrÈ de $\tilde{X}$ $k$-fois avec lui-mÍme au-dessus de $X$.
\[ \mathbb{B}_I(\tilde{X}_k) = \mathcal{C}^0(G^{k-1}, B_I(R,R^+)). \]
\end{proposition}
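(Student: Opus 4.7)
\demo
L'id\'ee est de se ramener \`a la proposition \ref{descriptionbi} en identifiant les produits fibr\'es $\tilde{X}_k$ \`a des produits avec des copies du groupe profini $G$.

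\emph{\'Etape 1 : identifier $\tilde{X}_k$.} Comme $\tilde{X} \to X$ est un rev\^etement pro-\'etale galoisien de groupe $G$, l'application $\tilde{X} \times \underline{G} \to \tilde{X} \times_X \tilde{X}$, $(x,g) \mapsto (x, g\cdot x)$, est un isomorphisme d'espaces perfecto\"\i des (trivialisation du torseur sous $G$ sur $\tilde X$), o\`u $\underline{G}=\mathrm{Spa}(\con(G,C),\con(G,\O_C))$ est l'espace perfecto\"\i de repr\'esentant le groupe profini $G$. Par r\'ecurrence imm\'ediate sur $k$, on obtient un isomorphisme d'espaces perfecto\"\i des
\[ \tilde{X}_k \simeq \tilde{X} \times \underline{G}^{k-1} \simeq \mathrm{Spa}\!\left( \con(G^{k-1},R),\, \con(G^{k-1},R^+) \right). \]

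\emph{\'Etape 2 : appliquer la proposition \ref{descriptionbi}.} L'alg\`ebre $\con(G^{k-1},R)$ est une $C$-alg\`ebre perfecto\"\i de (produit tensoriel compl\'et\'e de $R$ avec la $C$-alg\`ebre perfecto\"\i de $\con(G^{k-1},C)$), de sorte que $\tilde{X}_k$ est affino\"\i de perfecto\"\i de. La proposition \ref{descriptionbi} donne alors
\[ \mathbb{B}_I(\tilde{X}_k) = B_I\!\left( \con(G^{k-1},R), \con(G^{k-1},R^+) \right). \]

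\emph{\'Etape 3 : commutation de $B_I$ avec $\con(G^{k-1},-)$.} Il reste \`a identifier ce dernier anneau \`a $\con(G^{k-1}, B_I(R,R^+))$. Par d\'efinition de $B_I$, on proc\`ede en trois temps, en utilisant la compacit\'e de $G^{k-1}$ \`a chaque \'etape. D'abord, le basculement et l'adjonction de Witt donnent
\[ W(\con(G^{k-1},R^+)^{\flat,\circ}) = W(\con(G^{k-1},R^{\flat,\circ})) = \con(G^{k-1},W(R^{\flat,\circ})), \]
la derni\`ere \'egalit\'e provenant du fait que $G^{k-1}$ est profini, donc que les fonctions continues \`a valeurs dans une limite projective d'alg\`ebres s\'epar\'ees compl\`etes commutent avec la limite (et que les composantes de Witt sont des polyn\^omes \`a coefficients entiers). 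Ensuite, l'adjonction $\con(G^{k-1},-) \otimes_{\O_C} [\alpha]/p$ et la compl\'etion $p$-adique commutent car $G^{k-1}$ est compact, ce qui permet d'identifier
\[ \con(G^{k-1},W(R^{\flat,\circ}))\left\langle \tfrac{[\alpha]}{p}, \tfrac{p}{[\beta]}\right\rangle = \con(G^{k-1}, W(R^{\flat,\circ})\langle \tfrac{[\alpha]}{p}, \tfrac{p}{[\beta]}\rangle). \]
Enfin, inverser $p$ commute \`a $\con(G^{k-1},-)$ car $G^{k-1}$ est compact (une fonction continue sur un compact est born\'ee, donc multipli\'ee par une puissance de $p$ convenable tombe dans l'anneau des entiers). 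On obtient donc
\[ B_I(\con(G^{k-1},R),\con(G^{k-1},R^+)) = \con(G^{k-1}, B_I(R,R^+)), \]
d'o\`u le r\'esultat.

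La seule difficult\'e technique, mineure, est l'\'etape 3 : il faut justifier proprement que la construction de $B_I$, qui m\'elange vecteurs de Witt, compl\'etion et localisation, commute avec le foncteur $\con(G^{k-1},-)$ appliqu\'e \`a un espace profini. La compacit\'e de $G^{k-1}$ est essentielle pour que la norme sup des fonctions continues soit finie, ce qui rend licites les diverses commutations.
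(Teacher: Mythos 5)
Your proof is correct, and its decomposition of the work differs from the paper's. Both arguments start from the same torsor trivialization $\tilde{X}_k \simeq \tilde{X} \times \underline{G}^{k-1}$. But where you then note that this product is itself an explicit affinoid perfectoid space, namely $\mathrm{Spa}(\con(G^{k-1},R),\con(G^{k-1},R^+))$, apply Proposition \ref{descriptionbi} to it as a black box, and finish by a purely algebraic commutation of the construction of $B_I$ (tilting, Witt vectors, $p$-adic completion, inversion of $p$) with $\con(G^{k-1},-)$, the paper instead re-runs the \emph{proof} of Proposition \ref{descriptionbi} on $\tilde{X}_k$: it reduces to describing the sections of $\mathbb{A}_{\rm inf}$, shows by induction on $m$ that the sections of $W(\O^{\flat +})/p^m$ on $\tilde{X}_k$ are $\varinjlim_i \mathrm{LC}(G_i, \cdot)$ for $G=\varprojlim_i G_i$, together with vanishing of higher cohomology, the case $m=1$ (i.e. $\O^+/p$) being supplied by \cite[Lem. 3.16]{Shodge}. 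Your route buys modularity: no further almost-mathematics input is needed beyond what already went into Proposition \ref{descriptionbi}, the sheaf-theoretic content is concentrated in the one-line representability statement, and the rest is topological algebra; the paper's route stays at the level of sheaf sections and avoids having to verify the functor commutations. One point in your Step 3 deserves more care than your wording suggests: the topology on $\mathbb{A}_{\rm inf}(R,R^+)=W(R^{\flat +})$ is the weak topology, not the $p$-adic one, so identifications such as $\con(G^{k-1},W(R^{\flat +}))/p^n \simeq \con(G^{k-1},W_n(R^{\flat +}))$ need to be justified by writing $W=\varprojlim_n W_n$ in Witt coordinates (division by $p^n$ is continuous because the Frobenius of $R^{\flat +}$ is a homeomorphism) and by passing through the dense subring $\varinjlim_i \mathrm{LC}(G_i,\cdot)$ of locally constant functions, with discrete targets modulo $p^n$ --- which is exactly the device of the paper's proof, so the two arguments converge on the same density computation at the end. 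With that caveat made explicit, your proof is complete and at a level of detail comparable to the paper's.
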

\begin{proof}
On sait que $\tilde{X}_k \simeq \tilde{X} \times G^{k-1}$, puisque $\tilde{X} \to X$ est un revÍtement pro-Ètale de groupe $G$. Ecrivons $G$ comme limite inverse de groupes finis $G_i$. Comme dans la preuve de la proposition \ref{descriptionbi}, il suffit de dÈcrire les sections de $\mathbb{A}_{\rm inf}$ sur $\tilde{X}_k$ et pour cela on montre par rÈcurrence sur $m$ que 
\[ W(\O^{\flat +})/p^m(\tilde{X}_k) = \underset{i} \varinjlim ~ \mathrm{LC}(G_i, W(R^{\flat +})) \]
et que la cohomologie de $W(\O^{\flat +})/p^m$ sur $\tilde{X}_k$ en degrÈ positif est nulle. Le deuxiËme point a dÈj‡ ÈtÈ vu dans la preuve prÈcÈdente, puisque $\tilde{X}_k$ est affinoÔde perfectoÔde. Pour le premier, il suffit de le faire pour $m=1$, i.e. pour $\O^+/p$. C'est alors une consÈquence de \cite[Lem. 3.16]{Shodge}.
\end{proof}

Soit $X$ un espace adique localement noethÈrien sur $\mathrm{Spa}(\qp,\zp)$. 
\begin{proposition}
On a une suite exacte de faisceaux sur $X_{C,\mathrm{pro\acute{e}t}}$ : 
\[ 0 \to \qp \to \mathbb{B}[1/t]^{\varphi=1} \to \mathbb{B}_{\rm dR}/\mathbb{B}_{\rm dR}^+ \to 0. \]
\end{proposition}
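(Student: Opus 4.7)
Le plan consiste à interpréter géométriquement cette suite exacte comme provenant d'une suite exacte de faisceaux cohérents sur la courbe de Fargues-Fontaine relative, en transposant les arguments développés dans la section \ref{perversit�}.

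Je commencerais par construire la flèche $\mathbb{B}[1/t]^{\varphi=1} \to \mathbb{B}_{\rm dR}/\mathbb{B}_{\rm dR}^+$ en utilisant l'inclusion canonique $\mathbb{B}^+ \hookrightarrow \mathbb{B}_{\rm cris}^+ \hookrightarrow \mathbb{B}_{\rm dR}^+$ obtenue par complétion le long de $\ker(\theta)$ : après inversion de $t$, projection modulo $\mathbb{B}_{\rm dR}^+$ et restriction au sous-faisceau des éléments $\varphi$-invariants, on obtient la flèche désirée. L'injectivité de $\qp \hookrightarrow \mathbb{B}[1/t]^{\varphi=1}$ et la nullité de la composée étant immédiates, la seule question réelle est celle de l'exactitude aux deux autres termes.

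L'exactitude étant une question locale, je me ramènerais à tester la suite sur les spectres d'algèbres sympathiques, qui forment une base du site $\mathrm{Perf}_{C,\mathrm{pro\acute{e}t}}$ d'après la proposition \ref{sympabase}. Soit donc $R$ une $C$-algèbre sympathique et $S=\mathrm{Spa}(R^\flat,R^{\flat+})$. Pour chaque $d\geq 1$, on dispose sur la courbe relative $X_S$ d'une suite exacte de faisceaux cohérents
\[ 0 \to \O_{X_S} \to \O_{X_S}(d) \to i_{\infty,*}(\mathbb{B}_{\mathrm{dR},S^\sharp}^+/t^d\mathbb{B}_{\mathrm{dR},S^\sharp}^+) \to 0, \]
version relative de la suite exacte \eqref{se2} du lemme \ref{suitesexactes}. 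La suite exacte longue de cohomologie associée, couplée à l'annulation clé $H^1(X_S,\O_{X_S})=0$ démontrée dans la proposition \ref{bridgeland} pour $S$ sympathique (via le corollaire \ref{thetasympa} sur la surjectivité de $\theta$), donne
\[ 0 \to \qp \to B(R^\flat)^{\varphi=p^d} \to B_{\rm dR}^+(R)/t^d B_{\rm dR}^+(R) \to 0. \]

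Je conclurais en passant à la limite inductive sur $d$ le long des multiplications par $t$ : grâce à l'exactitude de la limite inductive et aux identifications $\bigcup_d t^{-d}B(R^\flat)^{\varphi=p^d} = B(R^\flat)[1/t]^{\varphi=1}$ et $\varinjlim_d t^{-d}B_{\rm dR}^+(R)/B_{\rm dR}^+(R) = B_{\rm dR}(R)/B_{\rm dR}^+(R)$, on obtient la suite exacte voulue sur les sections des algèbres sympathiques, et donc sur les faisceaux. L'obstacle principal est l'annulation $H^1(X_S,\O_{X_S})=0$ sur les bases sympathiques, mais celle-ci est déjà acquise via la proposition \ref{bridgeland} ; il resterait seulement à vérifier soigneusement les identifications $H^0(X_S,\O_{X_S}(d))=B(R^\flat)^{\varphi=p^d}$ (qui résulte de la descente pro-étale le long de la surjection $Y_S \to X_S$ et de l'annulation de la cohomologie cohérente sur $Y_S$ établie dans la preuve de la proposition \ref{bridgeland}) et $H^0(S^\sharp,\mathbb{B}_{\rm dR}^+/t^d)=B_{\rm dR}^+(R)/t^d B_{\rm dR}^+(R)$ (conséquence du caractère affinoïde perfectoïde de $S^\sharp$).
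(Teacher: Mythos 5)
Your proof is correct, but it follows a genuinely different route from the paper's. The paper's own proof is a two-line reduction: since sympathetic affinoids form a base of $X_{C,\mathrm{pro\acute{e}t}}$ (proposition \ref{sympabase}), exactness on sections is precisely the fundamental exact sequence (SEF 3E) of \cite[Prop. 8.25]{bc}, combined with the identification $\mathbb{B}[1/t]^{\varphi=1}=\mathbb{B}^{+}[1/t]^{\varphi=1}$ of \cite[Cor. 5.2.12]{KL} to pass between Colmez's $B^{+}$-version and the sheaf $\mathbb{B}$. You instead re-prove Colmez's sequence over each sympathetic algebra geometrically, from the relative, multiplicity-$d$ version of \eqref{se2}, namely $0\to\O_{X_S}\to\O_{X_S}(d)\to i_{\infty,*}(\mathbb{B}_{\mathrm{dR},S^{\sharp}}^{+}/t^{d})\to 0$, together with the vanishing $H^{1}(X_S,\O_{X_S})=0$ over sympathetic bases established in proposition \ref{bridgeland}, and then take the colimit over $d$. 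This is sound and, importantly, non-circular: proposition \ref{bridgeland} and corollaire \ref{thetasympa} are proved without the present statement, which only enters the paper from section \ref{calculcohom} onward; so your argument internalizes what the paper outsources to \cite{bc}, at the cost of length. Three details to make explicit: since sympathetic algebras here are not assumed connected, $H^{0}(X_S,\O_{X_S})=\qp^{\pi_{0}(S)}$, which is exactly $\underline{\qp}(S)$, as the sheaf statement requires; your construction of the second arrow through $\mathbb{B}^{+}\hookrightarrow\mathbb{B}_{\mathrm{cris}}^{+}\hookrightarrow\mathbb{B}_{\mathrm{dR}}^{+}$ tacitly uses the same identification $\mathbb{B}[1/t]^{\varphi=1}=\mathbb{B}^{+}[1/t]^{\varphi=1}$ that the paper cites from \cite{KL} (alternatively, define it directly via $\mathbb{B}\to\mathbb{B}_{I}\to\mathbb{B}_{\mathrm{dR}}^{+}$ for an interval $I$ containing the radius of the divisor at $\infty$, avoiding the citation altogether); and the final passage to the colimit over $d$ at the level of sections is licit because $S$ is quasi-compact quasi-separated, so $H^{0}(S,\cdot)$ commutes with the filtered colimits defining $\mathbb{B}[1/t]$ and $\mathbb{B}_{\mathrm{dR}}/\mathbb{B}_{\mathrm{dR}}^{+}$ — a point worth stating rather than leaving implicit.
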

\begin{proof}
Comme les algËbres affinoÔdes sympathiques forment une base de $X_{C,\mathrm{pro\acute{e}t}}$, l'exactitude se dÈduit de la suite exacte (SEF 3E) de \cite[Prop. 8.25]{bc} et du fait que $\mathbb{B}[1/t]^{\varphi=1}=\mathbb{B}^+[1/t]^{\varphi=1}$ (car pour tout entier $k$, $\mathbb{B}^{\varphi=p^k}=(\mathbb{B}^+)^{\varphi=p^k}$, cf. par exemple \cite[Cor. 5.2.12]{KL}).
\end{proof}

\end{document}